\documentclass[11pt,reqno,a4paper]{amsart}

\usepackage[T1]{fontenc}
\usepackage[utf8]{inputenc}

\usepackage{libertinus}
\usepackage{microtype}
\usepackage{geometry}
\usepackage{amsmath,amssymb,amsthm,amsfonts}
\usepackage{mathtools}
\usepackage{mathrsfs}
\usepackage{bbm}

\numberwithin{equation}{section}

\usepackage{graphicx}
\usepackage{xcolor}
\usepackage{enumitem}
\setlist{itemsep=2pt,topsep=4pt}

\usepackage[
  colorlinks=true,
  linkcolor=blue,
  citecolor=blue,
  urlcolor=blue,
  pagebackref=true
]{hyperref}

\renewcommand*{\backrefalt}[4]{%
  \ifcase #1\relax
  \or
    {\footnotesize Cited on page #2.}%
  \else
    {\footnotesize Cited on pages #2.}%
  \fi
}

\usepackage[nameinlink,noabbrev]{cleveref}
\crefname{section}{Section}{Sections}
\Crefname{section}{Section}{Sections}
\crefname{subsection}{Subsection}{Subsections}
\Crefname{subsection}{Subsection}{Subsections}

\usepackage{titlesec}

\titleformat{\section}
  {\normalfont\large\bfseries\centering}
  {\thesection.}
  {0.75em}
  {}

\titlespacing*{\section}
  {0pt}
  {3.2ex plus 1ex minus .2ex}
  {1.6ex plus .3ex}

\titleformat{\subsection}
  {\normalfont\normalsize\bfseries}
  {\thesubsection.}
  {0.75em}
  {}

\titlespacing*{\subsection}
  {0pt}
  {2.4ex plus .8ex minus .2ex}
  {1.0ex plus .2ex}

\titleformat{\subsubsection}
  {\normalfont\normalsize\bfseries\itshape}
  {\thesubsubsection.}
  {0.75em}
  {}

\titlespacing*{\subsubsection}
  {0pt}
  {2.0ex plus .6ex minus .2ex}
  {0.7ex plus .2ex}

\usepackage{aliascnt}
\usepackage{etoolbox}

\newtheoremstyle{compact}
  {6pt}
  {1pt}
  {\itshape}
  {}
  {\bfseries}
  {.}
  {0.5em}
  {}

\theoremstyle{compact}
\newtheorem{theorem}{Theorem}[section]

\newaliascnt{proposition}{theorem}
\newtheorem{proposition}[proposition]{Proposition}
\aliascntresetthe{proposition}

\newaliascnt{lemma}{theorem}
\newtheorem{lemma}[lemma]{Lemma}
\aliascntresetthe{lemma}

\newaliascnt{corollary}{theorem}
\newtheorem{corollary}[corollary]{Corollary}
\aliascntresetthe{corollary}

\newtheorem{maintheorem}{Theorem}

\theoremstyle{definition}
\newaliascnt{definition}{theorem}
\newtheorem{definition}[definition]{Definition}
\aliascntresetthe{definition}

\newtheorem*{example}{Example}

\theoremstyle{definition}
\newtheorem{remark}[theorem]{Remark}

\AtBeginEnvironment{proof}{\vspace{6pt}}

\makeatletter
\AtBeginEnvironment{thebibliography}{%
  \def\@mklab#1{#1\hfil}%
}
\makeatother

\newcommand{\HU}{\mathrm{HU}}
\newcommand{\Var}{\operatorname{Var}}
\newcommand{\Cov}{\operatorname{Cov}}
\newcommand{\Vol}{\operatorname{Vol}}
\newcommand{\cM}{\mathcal M}
\newcommand{\Sh}{\operatorname{Sh}}
\newcommand{\sigmak}{\smash[t]{\sigma^{(k)}}}
\newcommand{\bsigmak}{\smash[t]{\boldsymbol{\sigma}^{(k)}}}

\begin{document}

\title{Higher-order hyperuniformity of random measures}

\author{Michael Bj\"orklund}
\address{Department of Mathematics, Chalmers University of Technology and University of Gothenburg, Gothenburg, Sweden}
\email{micbjo@chalmers.se}
\keywords{hyperuniformity, invariant random measures, point processes, Bartlett spectra, stealthiness, determinantal point processes, cut-and-project processes, Fourier quasicrystals}

\subjclass[2020]{Primary 60G57, 60G55; Secondary 37A30, 52C23.}

\begin{abstract}
We study higher-order hyperuniformity through the large-scale variance of
local $k$-point patterns, and write $\HU_k$ for the resulting condition;
$\HU_1$ is ordinary hyperuniformity.

The conditions $\HU_k$ need not coincide: for every $k\geq1$ there exists an
$\mathbb R$-invariant weakly mixing simple point process that belongs to
$\HU_j$ for all $j\leq k$ but not to $\HU_{k+1}$.  Randomly translated
lattices belong to $\HU_k$ for every $k$, whereas sufficiently small
non-degenerate iid perturbations of lattices and projection determinantal
point processes already belong to $\HU_1\setminus\HU_2$.

For regular Euclidean cut-and-project processes we give an exact criterion
for $\HU_k$.  For ball windows, $\HU_k$ is equivalent to $\HU_1$ in internal
dimensions two and three, whereas in every internal dimension $m\geq4$ we
construct ball-window examples in $\HU_1\setminus\HU_2$.  Finally, the nonperiodic Kurasov--Sarnak Fourier-quasicrystalline point process
is stealthy---its first-order spectrum has a gap at the origin---yet does not
belong to $\HU_2$.
\end{abstract}

\maketitle

\section{Introduction}
\label{sec:introduction}

Let $V$ be a finite-dimensional real Euclidean vector space of dimension
$d$, let $B_R\subset V$ be the closed ball of radius $R$ centred at the
origin, and let $\Vol_d$ denote Lebesgue measure.  We study stationary random
measures on $V$, represented by $V$-invariant Borel probability measures
$\mu$ on the space $\cM(V)$ of locally finite positive Radon measures.  When
$\mu$ is supported on simple counting measures, we call it a \emph{point
process}.

For a bounded complex-valued Borel function $f$ with compact support, define
$Sf(p):=p(f)$; we call $Sf$ the \emph{first statistic}.  We call $\mu$ \emph{hyperuniform} if
\begin{equation}\label{eq:intro-hu1}
  \lim_{R\to\infty}
  \frac{\Var_\mu(S\chi_{B_R})}{\Vol_d(B_R)}=0.
\end{equation}
The terminology was introduced by Torquato and Stillinger
\cite{TS03}; see \cite{Tor18} for a broad account and \cite{BB26} for the
first-order theory of invariant random measures.

In this paper, we replace the first statistic $Sf$ by statistics depending on
local $k$-point configurations.  The index $k$ refers to the number of points
in the configuration, not to the order of a moment or cumulant.

For $v=(v_1,\ldots,v_k)\in V^k$, write
\[
  \bar v:=\frac1k\sum_{j=1}^k v_j,
  \qquad
  v^\circ:=(v_1-\bar v,\ldots,v_k-\bar v).
\]
Let $\Sh_k(V)$ denote the space of centred $k$-tuples modulo coordinate
permutations, and write $[v^\circ]$ for the class of $v^\circ$; the precise
quotient topology is fixed in Section~\ref{subsec:pattern-statistics}.  Here
$C_c(Y)$ denotes the real-valued continuous functions on $Y$ with compact
support.

\begin{samepage}
For $f\in C_c(V)$ and $\varphi\in C_c(\Sh_k(V))$, define
\begin{equation}\label{eq:intro-pattern-kernel}
  F_{f,\varphi}(v_1,\ldots,v_k)
  :=f(\bar v)\,\varphi([v^\circ])
\end{equation}
and
\begin{equation}\label{eq:intro-pattern-statistic}
  T_k[f,\varphi](p):=p^{\otimes k}(F_{f,\varphi}).
\end{equation}
\end{samepage}
For a simple counting measure $p=\sum_{x\in\Lambda}\delta_x$, this is the
sum of $F_{f,\varphi}(v_1,\ldots,v_k)$ over all ordered $k$-tuples
$v_1,\ldots,v_k\in\Lambda$, without requiring the entries of the tuple to
be distinct.

For $k=2$, define the even function
\[
  \widetilde\varphi^{(2)}(r)
  :=\varphi\!\left(\left[\left(\frac r2,-\frac r2\right)\right]\right),
  \qquad r\in V.
\]
Then
\[
  T_2[f,\varphi](p)
  =\sum_{x,y\in\Lambda}
  f\!\left(\frac{x+y}{2}\right)\widetilde\varphi^{(2)}(x-y).
\]
Thus $T_2[f,\varphi]$ records pairs according to their relative displacement,
while $f$ localizes their barycentres.

For $k=3$, set
\[
  \widetilde\varphi^{(3)}(r,s)
  :=\varphi\!\left(
  \left[
  \left(
  -\frac{r+s}{3},
  \frac{2r-s}{3},
  \frac{2s-r}{3}
  \right)
  \right]\right),
  \qquad r,s\in V.
\]
Then
\[
  T_3[f,\varphi](p)
  =\sum_{x,y,z\in\Lambda}
  f\!\left(\frac{x+y+z}{3}\right)
  \widetilde\varphi^{(3)}(y-x,z-x).
\]
Thus a three-point shape is described by two relative displacement vectors,
with the permutation symmetry of the three points encoded in
$\widetilde\varphi^{(3)}$; for example,
\[
  \widetilde\varphi^{(3)}(r,s)
  =\widetilde\varphi^{(3)}(s,r)
  =\widetilde\varphi^{(3)}(-r,s-r).
\]

We say that $\mu$ is \emph{$k$-hyperuniform} if
\begin{equation}\label{eq:intro-huk}
  \lim_{R\to\infty}
  \frac{\Var_\mu(T_k[\chi_{B_R},\varphi])}{\Vol_d(B_R)}=0
  \qquad\text{for every }\varphi\in C_c(\Sh_k(V)).
\end{equation}
We write $\HU_k$ for this class.  At $k=1$ this is ordinary
hyperuniformity.

\subsection{Bartlett spectra}
\label{subsec:intro-bartlett}

For fixed $\varphi$, define
$\mathcal P_{k,\varphi}(p)(f):=T_k[f,\varphi](p)$.  A common translation of
the $k$ points leaves $[v^\circ]$ unchanged and translates $\bar v$ by the
same amount, so
$\mathcal P_{k,\varphi}(a.p)=a.\mathcal P_{k,\varphi}(p)$.  Hence, under a
$V$-invariant law, $\mathcal P_{k,\varphi}$ is a stationary signed random
measure on $V$ and has an ordinary scalar Bartlett spectrum.  As $\varphi$
varies, these spectra and their cross-covariances assemble into the
\emph{$k$-th Bartlett spectrum} $\boldsymbol{\sigma}^{(k)}$.  Its coefficient
measures satisfy
\begin{equation}\label{eq:intro-bartlett}
  \Cov_\mu\bigl(T_k[f,\varphi],T_k[g,\psi]\bigr)
  =\int_{V^*}\widehat f(\xi)\overline{\widehat g(\xi)}\,
    d\sigma^{(k)}_{\varphi,\psi}(\xi),
\end{equation}
where the Fourier convention and the weak operator-valued construction are given
in Section~\ref{subsec:bartlett}.  Write $B_\varepsilon^*\subset V^*$ for the
closed dual ball of radius $\varepsilon$.

\begin{maintheorem}[Geometric--spectral equivalence]
\label{thm:intro-geometric-spectral}
Let $\mu$ be a $V$-invariant probability measure on $\cM(V)$ which is locally
$L^{2k}$-integrable.  Then
\begin{equation}\label{eq:intro-geometric-spectral}
  \mu\in\HU_k
  \quad\Longleftrightarrow\quad
  \lim_{\varepsilon\to0}
  \frac{\sigma^{(k)}_{\varphi,\varphi}(B_\varepsilon^*)}
       {\Vol_d(B_\varepsilon^*)}=0
  \quad\text{for every }\varphi\in C_c(\Sh_k(V)).
\end{equation}
\end{maintheorem}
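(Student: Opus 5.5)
Fix $\varphi\in C_c(\Sh_k(V))$ and reduce to first-order hyperuniformity of the signed stationary random measure $\nu:=\mathcal P_{k,\varphi}(p)$. The variance in \eqref{eq:intro-huk} is then $\Var_\mu(\nu(\chi_{B_R}))$, and by \eqref{eq:intro-bartlett} it equals
\[
  \int_{V^*}|\widehat{\chi_{B_R}}(\xi)|^2\,d\sigma^{(k)}_{\varphi,\varphi}(\xi).
\]
The plan is to prove the scalar equivalence: for a positive, positive-definite, translation-bounded measure $\sigma$ on $V^*$,
\[
  \frac{1}{\Vol_d(B_R)}\int|\widehat{\chi_{B_R}}|^2\,d\sigma\to0
  \quad\Longleftrightarrow\quad
  \frac{\sigma(B^*_\varepsilon)}{\Vol_d(B^*_\varepsilon)}\to0 .
\]
This is the first-order statement for invariant random measures (the $\HU_1$ case, as in \cite{BB26}). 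Local $L^{2k}$-integrability of $\mu$ is used only to ensure that $T_k[f,\varphi]\in L^2(\mu)$ for $f\in C_c(V)$. That in turn ensures $\nu$ is locally square integrable with a well-defined Bartlett spectrum, so that \eqref{eq:intro-bartlett} applies. One needs it first for continuous $f$, then extended to $f=\chi_{B_R}$ by monotone approximation together with the bound $|T_k[f,\varphi]|\le T_k[|f|,|\varphi|]$.

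\emph{($\Leftarrow$).} Use $|\widehat{\chi_{B_R}}(\xi)|^2\lesssim \Vol_d(B_R)^2\min(1,(R|\xi|)^{-(d+1)})$, which follows from the Bessel asymptotics. Split $V^*$ into the ball $|\xi|\le \eta/R$ and dyadic shells $2^j\eta/R<|\xi|\le 2^{j+1}\eta/R$.

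On the small ball, the contribution divided by $\Vol_d(B_R)$ is at most
\[
  \Vol_d(B_R)\,\sigma(B^*_{\eta/R})\approx \eta^d\,\frac{\sigma(B^*_{\eta/R})}{\Vol_d(B^*_{\eta/R})},
\]
which tends to $0$ by hypothesis for fixed $\eta$.

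On the shells, we need a growth bound $\sigma(B^*_r)\lesssim r^d$ uniformly for $r\le1$ and $\sigma(B^*_r)\lesssim r^d$ for $r\ge 1$. The latter is translation-boundedness of the Bartlett spectrum, which follows from positive-definiteness since $\sigma$ is the Fourier transform of a reduced covariance measure. With these bounds the dyadic sum converges. Splitting the shells into $|\xi|\le\delta$, where the hypothesis gives smallness, and $|\xi|>\delta$, where the decay $(R|\xi|)^{-(d+1)}\Vol_d(B_R)$ kills the contribution as $R\to\infty$, gives the limit $0$.

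\emph{($\Rightarrow$).} This is the direction I expect to be the main obstacle, since $\widehat{\chi_{B_R}}$ has zeros. The key point is the lower bound $|\widehat{\chi_{B_R}}(\xi)|\ge c\,\Vol_d(B_R)$ for $|\xi|\le c'/R$, with explicit constants $c,c'>0$; this holds because $\widehat{\chi_{B_1}}$ is continuous with value $\Vol_d(B_1)$ at $0$. Then
\[
  \frac{\Var}{\Vol_d(B_R)}\ge c^2\,\Vol_d(B_R)\,\sigma(B^*_{c'/R}),
\]
and setting $\varepsilon=c'/R$ gives the claim. Positivity of $\sigma^{(k)}_{\varphi,\varphi}$ is needed here, and it comes from the weak operator-valued construction of the Bartlett spectrum, as diagonal entries of a positive operator-valued measure.

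The remaining care goes into the integrability and approximation justifications in the first step: making sure the spectral identity holds for indicator functions, and that the relevant measures are finite on bounded sets. This is where the $L^{2k}$ hypothesis enters, through the $k$-fold product $p^{\otimes k}$ appearing inside a second moment.
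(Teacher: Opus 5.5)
Your proposal is correct and takes essentially the same route as the paper. It has the same three ingredients: the ball variance formula obtained by approximating $\chi_{B_R}$ by smooth functions; the lower bound $|\widehat\chi_{B_1}|\geq c$ near the origin together with scaling for $\HU_k\Rightarrow$ spectral; and the $(1+\|\xi\|)^{-(d+1)}$ decay of $|\widehat\chi_{B_1}|^2$ with a dyadic splitting, the small-ball hypothesis near the origin, and translation boundedness for the tail, for the converse.

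There are two minor differences, both only in justification. First, the paper proves translation boundedness directly from the modulated test functions $e^{2\pi i\xi_0(v)}h(v)$, whose pattern-statistic variances are bounded uniformly in $\xi_0$; it does not appeal to Fourier transformability of a reduced covariance measure. Second, when you extend the Bartlett identity to $\chi_{B_R}$, the step you left implicit on the spectral side is handled in the paper as follows: it shows $(\widehat f_n)$ is Cauchy in $L^2(\sigma^{(k)}_{\varphi,\varphi})$ and then identifies the limit with $\widehat\chi_{B_R}$ by pointwise convergence.
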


For fixed $\varphi$, the diagonal coefficient
$\sigma^{(k)}_{\varphi,\varphi}$ is the ordinary Bartlett spectrum of this
random measure.

\subsection{Main results}
\label{subsec:intro-main-results}

The conditions $\HU_k$ need not coincide.  Randomly translated lattices
belong to $\HU_k$ for every $k$, while iid lattice perturbations and
projection determinantal point processes can already belong to
$\HU_1\setminus\HU_2$.

\begin{maintheorem}[Strictness under weak mixing]
\label{thm:intro-weak-mixing}
For every integer $k\geq1$ there exists an $\mathbb R$-invariant weakly
mixing simple point process $\mu$ on $\mathbb R$ such that
\begin{equation}\label{eq:intro-weak-mixing}
  \mu\in\HU_{\leq k}\setminus\HU_{k+1},
  \qquad
  \HU_{\leq k}:=\bigcap_{j=1}^k\HU_j.
\end{equation}
\end{maintheorem}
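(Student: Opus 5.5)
We construct the example as a decorated, randomly translated lattice in which each lattice site carries a small random cluster, so that the first $k$ pattern statistics are insensitive to the decoration while the $(k+1)$-point statistic is not.

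\textbf{Construction.} Fix $N\ge 1$ and a finite set of cluster types $C_1,\dots,C_M\subset(-1/4,1/4)$. Each $C_i$ is a finite set of distinct points, and all $C_i$ have the same cardinality. Let $(\omega_n)_{n\in\mathbb Z}$ be a stationary process of types, and let $\theta$ be uniform on $[0,1)$ and independent of $(\omega_n)$. Put
\[
p=\sum_{n\in\mathbb Z}\sum_{c\in C_{\omega_n}}\delta_{n+\theta+c}.
\]
If $(\omega_n)$ is iid, this is only a random translate of a $\mathbb Z$-stationary process, and it contains a rotation factor $e^{2\pi i\theta}$. That factor would destroy weak mixing.

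To get weak mixing, we replace the lattice by a weakly mixing substitute. The gaps between consecutive cluster centres will be iid lengths $L_n\in\{1,\lambda\}$ with $\lambda$ irrational, or better, drawn from an absolutely continuous law on $[1,2]$. We then form the Palm-stationary (suspension) version. Suspensions of iid renewal sequences with non-lattice gap law are mixing, hence weakly mixing. The catch is that renewal gaps alone destroy $\HU_1$, because the variance grows linearly.

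We therefore instead take a \emph{hyperuniform} weakly mixing base: a stationary process of centres $\Lambda_0$ in $\HU_k$ for all $k$. One candidate is a $1$-dimensional cut-and-project set with generic parameters, made weakly mixing by a Kakutani-type skyscraper over an irrational rotation. Another is a lattice with small iid perturbations that are tuned, using Theorem~\ref{thm:intro-geometric-spectral}, so that the spectra vanish to sufficient order. We decorate each centre $x\in\Lambda_0$ with an iid cluster type. Decorating by clusters chosen iid contributes a Poisson-like term to the variance of each pattern statistic. This term equals $\Vol(B_R)$ times the variance, over the random type, of the number of local $j$-patterns of shape $\varphi$ inside one cluster; the cross-cluster pattern contributions vanish, or are controlled by the base.

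\textbf{Key step: choosing the cluster types.} Using Theorem~\ref{thm:intro-geometric-spectral} together with the cluster decomposition of $\sigma^{(j)}_{\varphi,\varphi}$ near $0$, $\HU_j$ reduces to the following requirement. For every $j$-pattern functional, the within-cluster count
\[
N_j(C)(\varphi)=\sum_{v\in C^j}\varphi([v^\circ])
\]
must be the same for all types, and the same must hold for mixed patterns straddling several clusters. The mixed condition follows if the $j$-point multiset of difference data of $C_i$ agrees for all $i$, for $j\le k$.

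The main obstacle is therefore a combinatorial one. We need two finite sets $C_1,C_2\subset\mathbb R$ whose ordered $j$-tuples have the same distribution of centred shapes for all $j\le k$, but different distributions for $j=k+1$. This is a ``$k$-homometric but not $(k+1)$-homometric'' pair. We would first try the Prouhet--Tarry--Escott construction: split $\{0,\dots,2^{m}-1\}$ by the Thue--Morse sign, and use generating functions $\prod(1-x^{a^i})$. Equality of all $j$-fold difference multisets corresponds to equality of $|\widehat{1_{C}}|$-type correlation polynomials of order $j$. We would realize the pair as $C_i=A+ \epsilon_i B$ (Minkowski sums built from products of cyclotomic-type factors), chosen so that all $j$-point correlation functions agree for $j\le k$ and differ at order $k+1$. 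A failure at order $k+1$ produces a nonzero constant multiple of $\Vol(B_R)$ in the variance, so $\mu\notin\HU_{k+1}$.

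\textbf{Remaining verifications.} The rest is routine. First, simplicity: the clusters are disjoint and the scales are small. Second, local $L^{2k+2}$-integrability: cluster sizes are bounded and the base has locally bounded moments. Third, weak mixing, which is inherited from the base together with the iid marks, since a skew product with an iid marking over a weakly mixing flow remains weakly mixing.
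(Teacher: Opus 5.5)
There is a genuine gap, and it lies in your key step. You need cluster types $C_1,C_2\subset\mathbb R$ whose counting measures $\sum_{v\in C_i^j}\delta_{[v^\circ]}$ on $\Sh_j(\mathbb R)$ agree for $j\le k$ but differ for $j=k+1$. For $k\ge3$ no such pair exists.

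The measure $\sum_{v\in C^3}\delta_{v^\circ}$ is $\mathfrak S_3$-invariant on $V_0^3$. So equality of the $3$-shape measures gives equality of the ordered triple correlations $\sum_{a,b,c\in C_i}\delta_{(b-a,c-a)}$. Their Fourier transforms are $F_i(\xi)F_i(\eta)\overline{F_i(\xi+\eta)}$, with $F_i(\xi):=\sum_{c\in C_i}e^{-2\pi i\xi c}$.
\begin{itemize}
\item Setting $\xi=\eta=0$ and then $\eta=0$ gives $|C_1|=|C_2|$ and $|F_1|=|F_2|$ on $\mathbb R$.
\item Then $h:=F_1/F_2$ is meromorphic and unimodular on $\mathbb R$, so it has no real zeros or poles. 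It is also multiplicative on $\mathbb R$.
\item Hence $h(\xi)=e^{2\pi i t\xi}$, i.e.\ $C_1=C_2-t$. A translate has the same shape distributions at every order, so $3$-point agreement already forces agreement at all orders.
\end{itemize}
Prouhet--Tarry--Escott or Thue--Morse splittings do not help. They equalize the power sums $\sum_c c^j$, i.e.\ the Taylor jets of $F_i$ at $\xi=0$, not the $j$-point correlations.

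Separately, your claim that the cross-cluster condition follows from within-cluster data is false. Counts of patterns straddling neighbouring clusters of types $i,l$ are governed by $F_i\overline{F_l}$, which is not determined by $|F_i|^2$ and $|F_l|^2$. For example, $C$ and $-C$ have the same pair data, yet $F_C\overline{F_{-C}}=F_C^2\neq|F_C|^2$ unless $C$ is symmetric. With iid types, every type-dependent local count, within or across clusters, contributes variance of order $\Vol(B_R)$. The only escape is an exact coboundary cancellation, which you do not arrange.

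The base process is not supplied either.
\begin{itemize}
\item A cut-and-project process is a factor of a torus rotation, so it has pure point spectrum and is never weakly mixing. The skyscraper modification is neither specified nor shown to preserve $\HU_k$.
\item A small iid-perturbed lattice has atoms of $\boldsymbol\sigma^{(1)}$ at nonzero points of $\Gamma^*$, hence Koopman eigenvalues. By Theorem~\ref{thm:perturbed-lattice-separation} it is not even in $\HU_2$.
\end{itemize}

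The paper replaces iid marking as follows. Cluster types follow a constant-length substitution on $\mathsf A=\mathbb F_2^q\times\mathbb F_2$ with character multipliers $S_\alpha=Q^{3/4}$, $S_\beta=Q^{1/4}$, and $S_\gamma=0$ otherwise. Consequently every finite-coordinate observable without an $\alpha$-component has discrepancy $O(N^{1/4})$, even though occupancy and local counts genuinely depend on the type.
\begin{itemize}
\item A $j$-point pattern with $j<q$ involves only characters supported on at most $j$ of the $q$ occupancy coordinates. It therefore has no $\alpha$-component and variance $O(R^{1/2})$.
\item The full $q$-point cluster has $\alpha$-component $2^{-q}\chi_\alpha$, giving variance $\gtrsim R^{3/2}$ along $R_m=LQ^m$.
\item Weak mixing comes from the two-valued roof $L+\varepsilon\chi_\beta$ with $L/\varepsilon\notin\mathbb Q$, together with the repeated-prefix property $c_0=c_1=0$.
\end{itemize}
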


For lattices, Theorem~\ref{thm:lattice-all-order-stealthy} gives
$\mu\in\HU_k$ for every $k$.  In every dimension, sufficiently small
non-degenerate iid perturbations of a lattice belong to
$\HU_1\setminus\HU_2$; see Theorem~\ref{thm:perturbed-lattice-separation}.  The same separation holds
for projection determinantal point processes; see
Theorem~\ref{thm:projection-dpp-hu1-not-hu2}.  The first-order
hyperuniformity of projection determinantal processes is standard; the new
conclusion is failure of $\HU_2$.

We recall the cut-and-project construction used below.  Let $H$ be a locally
compact second countable abelian group and let $\Gamma<V\times H$ be a
lattice such that the projection of $\Gamma$ to $V$ is injective and its
projection to $H$ is dense.  We call $(V,H,\Gamma)$ a cut-and-project
scheme.  Writing $\gamma=(\gamma_V,\gamma_H)\in\Gamma$, a compact window
$W\subset H$ defines, on $\Omega:=\Gamma\backslash(V\times H)$,
\[
  (p_W)_{\Gamma+(v,h)}
  :=\sum_{\gamma\in\Gamma}
     \chi_W(\gamma_H+h)\,\delta_{\gamma_V+v}.
\]
The map $\omega\mapsto(p_W)_\omega$ is $V$-equivariant, and $\mu_W$ denotes
the pushforward of Haar probability on $\Omega$.  When $H$ is Euclidean, we
call a compact window $W$ \emph{regular} if
$W=\overline{\operatorname{int}W}$, its interior is nonempty, and its boundary
has Lebesgue measure zero.

\begin{maintheorem}[Cut-and-project characterization and ball-window threshold]
\label{thm:intro-cut-project}
Let $(V,H,\Gamma)$ be as above, with $H$ Euclidean, and let $W\subset H$ be
a regular window.  For
$\boldsymbol\gamma=(0,\gamma_2,\ldots,\gamma_j)$ define
\[
  W[\boldsymbol\gamma]
  :=\bigcap_{r=1}^j(W-\gamma_{r,H}).
\]
Then the following assertions hold.
\begin{enumerate}[label=\textup{(\roman*)}]
\item For every $k\geq1$,
\begin{equation}\label{eq:intro-cap-characterization}
  \mu_W\in\HU_k
  \quad\Longleftrightarrow\quad
  \mu_{W[\boldsymbol\gamma]}\in\HU_1
\end{equation}
for every $1\leq j\leq k$ and every tuple of distinct lattice vectors
$\boldsymbol\gamma=(0,\gamma_2,\ldots,\gamma_j)$ for which
$W[\boldsymbol\gamma]$ has nonempty interior.  In particular, for regular
Euclidean cut-and-project processes, $\HU_k$ implies $\HU_j$ whenever
$j\leq k$.
\item If $H=\mathbb R^m$ with $m\in\{2,3\}$ and $W$ is a Euclidean ball,
then, for every $k\geq1$,
\begin{equation}\label{eq:intro-cap-low-dim}
  \mu_W\in\HU_k\quad\Longleftrightarrow\quad\mu_W\in\HU_1.
\end{equation}
\item For every $m\geq4$ there is an explicit cut-and-project scheme with
$V=H=\mathbb R^m$ and a Euclidean ball $W\subset H$ for which
\begin{equation}\label{eq:intro-cap-high-dim}
  \mu_W\in\HU_1\setminus\HU_2.
\end{equation}
\end{enumerate}
\end{maintheorem}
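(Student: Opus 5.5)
Part (i) comes first, by decomposing the $k$-point statistic of a cut-and-project process into first-order statistics of smaller windows. Fix $\omega=\Gamma+(v,h)$. An ordered $k$-tuple of points of $(p_W)_\omega$ has the form $(\gamma_{1,V}+v,\ldots,\gamma_{k,V}+v)$ with $\gamma_{r,H}+h\in W$. Group the tuples by the pattern of coincidences among their entries, i.e.\ by a set partition of $\{1,\ldots,k\}$ into $j$ blocks. Then write the distinct entries as $\gamma+\gamma'_r$ with $\gamma'_1=0$ and $\gamma'_2,\ldots,\gamma'_j$ distinct lattice differences. The condition on the tuple becomes $\gamma_H+h\in W[\boldsymbol\gamma']$, and the barycentre is $\gamma_V+v+c(\boldsymbol\gamma')$ for a fixed vector $c(\boldsymbol\gamma')$. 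Hence
\[
T_k[f,\varphi]\bigl((p_W)_\omega\bigr)=\sum_{j\le k}\sum_{\boldsymbol\gamma'}\varphi\bigl([\text{shape}(\boldsymbol\gamma')]\bigr)\,S\bigl(f(\cdot+c(\boldsymbol\gamma'))\bigr)\bigl((p_{W[\boldsymbol\gamma']})_\omega\bigr).
\]
Since $\varphi$ has compact support and the points are uniformly discrete, only finitely many $\boldsymbol\gamma'$ contribute. The shift by $c(\boldsymbol\gamma')$ is harmless for spectra near $0$, because $|\widehat{f(\cdot+c)}|=|\widehat f|$.

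This makes $\sigma^{(k)}_{\varphi,\varphi}$ a finite combination $\sum a_\alpha\bar a_\beta e^{i\langle\xi,c_\alpha-c_\beta\rangle}\sigma_{\alpha\beta}$, where $\sigma_{\alpha\beta}$ are the cross Bartlett spectra of the processes $\mu_{W[\boldsymbol\gamma]}$. These are explicit pure point measures, $\sum_{\gamma^*}\widehat{\chi_{W_\alpha}}(\gamma^*_H)\overline{\widehat{\chi_{W_\beta}}(\gamma^*_H)}\delta_{\gamma^*_V}$ away from $0$, summed over the dual lattice. I then use Theorem~\ref{thm:intro-geometric-spectral}. For ``$\Leftarrow$'', Cauchy--Schwarz on the Gram matrix bounds the mixed terms by the diagonal ones, each of which is $o(\Vol(B_\varepsilon^*))$ by $\HU_1$ of $\mu_{W[\boldsymbol\gamma]}$.

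For ``$\Rightarrow$'' I have to isolate a single $\boldsymbol\gamma$. Choose $\varphi\ge0$ to be a bump concentrated at the shape of $\boldsymbol\gamma$. Uniform discreteness makes the shape set locally finite, so $\varphi$ can vanish on all other shapes. Shapes coming from different partitions can still coincide as elements of $\Sh_k$, but they carry the same geometric data, and the finitely many $\boldsymbol\gamma$ realising a given shape (permutations and translates) have windows differing by translates in $H$. Those give identical first-order spectral moduli, and I would check they combine with nonnegative multiplicities. I expect this isolation, with its attendant positivity, to be the main technical point. The claim ``$\HU_k\Rightarrow\HU_j$'' then follows, since the index set for $j$ is contained in that for $k$.

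For (ii) and (iii), part (i) reduces everything to $\HU_1$ for windows that are intersections of translated balls, i.e.\ lenses. For ball-type windows, $\HU_1$ is governed by the decay of $|\widehat{\chi_W}(\gamma^*_H)|^2$ relative to the counting density of $\gamma^*_V$ in small balls. For a ball in $\mathbb R^m$ we have $|\widehat{\chi_B}(\eta)|\lesssim|\eta|^{-(m+1)/2}$. A lens has corners along an $(m-2)$-sphere, and its transform decays more slowly in directions normal to that edge; the edge behaves like codimension-two singular, giving roughly $|\eta|^{-(m+2)/2}\cdot$(curvature factors). For $m=2,3$ I would show the lens bound is still good enough: whenever the ball is $\HU_1$, the same lattice-counting estimate closes for the lens.

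For $m\ge4$ I would pick an explicit scheme, e.g.\ $\Gamma$ a Minkowski embedding of the ring of integers of a suitable totally real or CM field. In such a scheme the dual points with small $\gamma^*_V$ have $|\gamma^*_H|$ large in a controlled way, and they concentrate in directions aligned with the lens edge normal. For this scheme, ball $\HU_1$ holds but lens $\HU_1$ fails. The hardest step, which I would attempt first, is this lower bound: exhibiting enough dual vectors along the slow-decay directions of $\widehat{\chi_{W\cap(W-\gamma_H)}}$. I would try choosing the lattice so that its dual contains a rank-one sublattice in exactly the edge-normal direction.
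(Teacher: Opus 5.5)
Your part (i) matches the paper's argument. The positivity worry is settled by an exact identity, not by comparing spectral moduli. Injectivity of $\pi_V|_\Gamma$ shows that two admissible lattice realizations of one shape differ by a common lattice translate up to permutation. Reindexing $\lambda'=\lambda-\gamma_s$ then makes them literally the same statistic, so $T_k[f,\varphi]=c_{\mathbf m}S_{W[\boldsymbol\gamma]}(\tau_bf)$ with a multinomial $c_{\mathbf m}>0$. Derived windows with empty interior are null, so they contribute nothing.

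The first genuine gap is in (ii). There the scheme is arbitrary and the only input is $\mu_W\in\HU_1$, i.e.\ $\sum_{\|\xi\|<\varepsilon}|\widehat\chi_W(\eta)|^2=o(\varepsilon^d)$ over $(\xi,\eta)\in\Gamma^\perp\setminus\{0\}$. There is no ``lattice-counting estimate'' behind this hypothesis that you could rerun for the lens. An upper bound $|\widehat\chi_{W'}(\eta)|\lesssim\|\eta\|^{-(m+1)/2}$ cannot be compared with that sum, because $\widehat\chi_W$ vanishes on the Bessel-zero spheres and nothing stops dual points from clustering near them.

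The missing idea is a two-frequency lower bound for the ball. By the Bessel asymptotics, the leading factors at $\eta$ and $2\eta$ are $\cos t$ and $\cos(2t+\pi(m+1)/4)$. These have no common zero when $m\in\{2,3\}$ (they do when $m=1$), so $|\widehat\chi_W(\eta)|^2+|\widehat\chi_W(2\eta)|^2\gtrsim\|\eta\|^{-(m+1)}$ for large $\|\eta\|$. Combine this with:
\begin{itemize}
\item the upper bound for every finite intersection of translates of $W$, not only two-ball lenses, since $k\ge3$ creates more faces;
\item $2\Gamma^\perp\subset\Gamma^\perp$ and injectivity of $(\xi,\eta)\mapsto\xi$ on $\Gamma^\perp$;
\item $\|\eta\|\to\infty$ as $\xi\to0$.
\end{itemize}
Together these yield $\sigma_{W'}(B_\varepsilon^*)\le C(\sigma_W(B_\varepsilon^*)+\sigma_W(B_{2\varepsilon}^*))$. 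Note also that for $m=3$ there is no slack: the axial lens decay $r^{-2}$ equals the ball rate $r^{-(m+1)/2}$.

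In (iii) your decay heuristic points the wrong way. $|\eta|^{-(m+2)/2}$ is faster than the ball's $|\eta|^{-(m+1)/2}$, so if it were the lens rate no separation could occur. That exponent is the generic-direction edge contribution, after stationary phase along the $(m-2)$-sphere. Along the axis through the two centres, the whole edge sits at constant phase. The section function $a_{m,s}(x)=\kappa_{m-1}[1-(|x|+s/2)^2]_+^{(m-1)/2}$ has a derivative jump at $0$, while $a_{m,s}''\in L^1$ near the endpoints for $m\ge4$. Hence $|\widehat\chi_{W_{m,s}}(re_1)|\sim c_{m,s}r^{-2}$, which is slower than $r^{-(m+1)/2}$ exactly when $m\ge4$.

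This slow decay survives only while the transverse part of $\eta$ stays bounded. So you need dual points whose internal component lies on the axis with $\|\xi_n\|\,\|\eta_n\|\asymp1$, and a lattice vector $\gamma_0$ whose internal part lies on that axis with $0<\|\gamma_{0,H}\|<2$. A rank-one subgroup $\mathbb Z(\xi_0,\eta_0)$ of $\Gamma^\perp$ cannot help, because $n\xi_0$ never becomes small. You need a subgroup of rank two with axial internal components, i.e.\ a one-dimensional cut-and-project factor along the axis. A general field embedding does not obviously supply one.

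The paper takes the $m$-fold product of the Fibonacci scheme:
\begin{itemize}
\item $\xi_j\eta_j=p_j^2+p_jq_j-q_j^2\in\mathbb Z$ gives $\|\eta\|\ge\|\xi\|^{-1}$, hence $\sigma_W(B_\varepsilon^*)\ll\varepsilon^{m+1}$ by \cite[Theorem~5.1]{BH24};
\item $\gamma_0=(se_1,-se_1)$ with $s=1/\sqrt5$ produces the lens;
\item Fibonacci approximants give $\xi_n=\pm\omega^{-n}e_1$ and $\eta_n=-\omega^ne_1$.
\end{itemize}
The resulting atoms of $\sigma^{(2)}_{\varphi,\varphi}=4\sigma_{W_{m,s}}$ have mass $\asymp\|\xi_n\|^4$ against $\Vol_m(B^*_{2\|\xi_n\|})\asymp\|\xi_n\|^m$. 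This ratio is the $m\ge4$ threshold.
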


For a two-ball lens the axial Fourier decay is $r^{-2}$, whereas for a
ball in $\mathbb R^m$ it is $r^{-(m+1)/2}$; the former is slower precisely
when $m\geq4$.
In internal dimension one, Theorem~\ref{thm:cap-codim-one-separation} gives an
interval-window example in physical dimension three which belongs to
$\HU_1\setminus\HU_2$.

A process is \emph{$1$-stealthy} if its first Bartlett spectrum vanishes on a
neighbourhood of the origin.  By Theorem~\ref{thm:intro-geometric-spectral},
this implies $\HU_1$.  For stealthy hyperuniform point patterns in the
physics literature, see \cite{TZS15}.  Consider the following nonperiodic
Fourier quasicrystal.  Let
$\mathbb T=\mathbb R/\mathbb Z$ and let $\Sigma\subset\mathbb T^2$ be the
torus zero set of
\[
  P(z_1,z_2)=1-\frac13z_1+\frac13z_2^2-z_1z_2^2.
\]
For irrational $\alpha>0$ and Haar-uniform $\theta\in\mathbb T^2$, set
\[
  \Lambda_\theta:=\{t\in\mathbb R:\theta+t(\alpha,1)\in\Sigma\},
  \qquad
  p_\theta:=\sum_{t\in\Lambda_\theta}\delta_t,
\]
and let $\mu_\alpha$ be the law of $p_\theta$.

\begin{maintheorem}[Kurasov--Sarnak point process]
\label{thm:intro-stealthy-pair}
For every irrational $\alpha>0$, the point process $\mu_\alpha$ is
$1$-stealthy and does not belong to $\HU_2$.
\end{maintheorem}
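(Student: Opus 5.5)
We need to prove two things about $\mu_\alpha$: that it is $1$-stealthy, and that it fails $\HU_2$. The plan is to handle the first by an exact Fourier description of $p_\theta$, and the second by exhibiting one test function $\varphi$ for which the pair statistic has non-negligible spectral mass near the origin, using Theorem~\ref{thm:intro-geometric-spectral}.

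\emph{Step 1 (first-order structure).} We realise $p_\theta$ as a dynamical factor of the Kronecker flow on $\mathbb T^2$. Write $\Sigma=\{P=0\}\cap\mathbb T^2$; the Kurasov--Sarnak polynomial is stable in the sense that $\Sigma$ is a smooth curve transversal to the direction $(\alpha,1)$. Then
\[
  p_\theta(f)=\int_{\mathbb R}f(t)\,\rho(\theta+t(\alpha,1))\,dt,
  \qquad
  \rho:=\delta_\Sigma\cdot|\langle\nabla,(\alpha,1)\rangle\text{-Jacobian}|,
\]
so $p_\theta$ is the orbit sampling of a measure $\rho$ on $\mathbb T^2$. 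The Fourier coefficients are $\widehat\rho(n)$ with $n\in\mathbb Z^2$, and they sit at frequencies $\langle n,(\alpha,1)\rangle$. By the standard computation for such factors, the first Bartlett spectrum is
\[
  \sigma^{(1)}=\sum_{n\neq0}|\widehat\rho(n)|^2\,\delta_{\langle n,(\alpha,1)\rangle}.
\]
The Fourier quasicrystal property, namely the Kurasov--Sarnak identity expressing $\widehat\rho(n)$ in terms of the coefficients of $P$, shows that $\widehat\rho(n)=0$ unless $n$ lies in a cone. In the standard normalisation this cone is $n_1,n_2\ge0$ or $n_1,n_2\le 0$, up to sign conventions. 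For $n\neq 0$ in that cone,
\[
  |\langle n,(\alpha,1)\rangle|=|n_1|\alpha+|n_2|\ge\min(\alpha,1)>0.
\]
So $\sigma^{(1)}$ vanishes on $(-\min(\alpha,1),\min(\alpha,1))$, which is $1$-stealthiness. The main technical check here is the cone support of $\widehat\rho$. I would derive it from the explicit formula $\widehat\rho(n)\propto$ coefficients of $\partial\log P$-type expansions: because $P$ has no zeros in the polydisc interior and exterior appropriately, $\log P$ expands as a one-sided power series.

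\emph{Step 2 (second order via the same torus).} The pair statistic is a torus factor as well:
\[
  T_2[f,\varphi](p_\theta)=\int f(t)\,\Phi(\theta+t(\alpha,1))\,dt,
\]
where
\[
  \Phi(x)=\sum_{s\in\Lambda_x}\ \rho\text{-density at }x\cdot\widetilde\varphi^{(2)}(s)\ \text{(suitably symmetrised at the midpoint)}.
\]
Concretely, $\Phi$ sums over pairs of points of $\Sigma$ on the same orbit line with midpoint at $x$. Its spectrum is therefore
\[
  \sum_{n\neq0}|\widehat\Phi(n)|^2\,\delta_{\langle n,(\alpha,1)\rangle}.
\]
Since $\alpha$ is irrational, the frequencies $\langle n,(\alpha,1)\rangle$ with $n$ outside the cone accumulate at $0$. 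So it suffices to find $\varphi$ and infinitely many $n$ with $n_1n_2<0$ and $\langle n,(\alpha,1)\rangle\to0$ such that
\[
  \frac{|\widehat\Phi(n)|^2}{|\langle n,(\alpha,1)\rangle|}\not\to 0.
\]
This would make $\sigma^{(2)}_{\varphi,\varphi}(B_\varepsilon^*)/\varepsilon\not\to0$, and Theorem~\ref{thm:intro-geometric-spectral} then gives $\mu_\alpha\notin\HU_2$. The plan is to first prove that $\Phi$ is a product-like object, $\Phi\approx \rho\ast_{\text{midpoint}}\rho$ restricted near a diagonal, whose Fourier coefficients are convolutions $\sum_m\widehat\rho(m)\widehat\rho(n-m)\widehat{\widetilde\varphi}(\dots)$. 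In these convolutions, mixed-sign $n$ are no longer excluded.

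\emph{Main obstacle.} The hard step is the quantitative lower bound on $\widehat\Phi(n)$ for mixed-sign $n$ along a Diophantine sequence with $\langle n,(\alpha,1)\rangle$ small. Generic nonvanishing is easy, but we need decay slower than $|\langle n,(\alpha,1)\rangle|^{1/2}$. A cleaner route, which I would try first, avoids individual atoms. We choose $\varphi$ localising to pairs of points lying on two distinct branches of $\Sigma$ at a fixed torus displacement. Then $\Phi$ becomes a smooth density times a delta on a curve $\Sigma'\subset\mathbb T^2$, the midpoint curve, which is not a Fourier-quasicrystal curve. Its Fourier coefficients then decay only like $|n|^{-1/2}$ in the normal directions of $\Sigma'$, by stationary phase at points of nonzero curvature. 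If $\Sigma'$ has curvature and normals covering mixed-sign directions, then $|\widehat\Phi(n)|^2\gtrsim|n|^{-1}$ along a cone of mixed-sign $n$. Combining this with the density of such $n$ giving $|\langle n,(\alpha,1)\rangle|<\varepsilon$ (about $\varepsilon N$ of them with $|n|\le N$, by equidistribution), summing over $|n|$ up to a cutoff yields
\[
  \sigma^{(2)}(B_\varepsilon)\gtrsim\varepsilon\sum_{|n|}|n|^{-1}\cdot\frac{1}{|n|}\cdot|n|=\infty\cdot\varepsilon,
\]
which certainly does not vanish relative to $\varepsilon$. The remaining work is to verify the curvature and normal-direction conditions for $\Sigma'$ from the explicit $P$.
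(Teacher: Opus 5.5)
Your Step~1 follows the paper: the one-sided expansion of $\log P$ (the Kurasov--Sarnak summation formula) puts the first spectrum on $\pm(k_1\alpha+k_2)$ with $k\in\mathbb Z_{\ge0}^2\setminus\{0\}$. In Step~2, your reduction to mixed-sign torus modes $n$ with $\xi_n:=n_1\alpha+n_2\to0$ is also what the paper does, and so is the description of the pair measure as a weighted midpoint curve.

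\textbf{Where the argument breaks.} The gap is the proposed lower bound. A mode with $|\xi_n|$ small points, up to an angle $O(|\xi_n|/|n|)$, in the direction $\pm(1,-\alpha)$. That direction is never normal to a midpoint curve $y\mapsto(F(y),y)+\tfrac{g(y)}{2}(\alpha,1)$. Indeed the phase derivative is
\[
  n\cdot\frac{d}{dy}\Bigl[(F(y),y)+\tfrac{g(y)}{2}(\alpha,1)\Bigr]
  =n_1\bigl(F'(y)-\alpha\bigr)+\xi_n\bigl(1+\tfrac{g'(y)}{2}\bigr),
\]
and $|F'-\alpha|\ge1+\alpha$. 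Geometrically, a midpoint curve is again a cross-section of the flow, so it is never tangent to $(\alpha,1)$. There is therefore no stationary point at all.
\begin{itemize}
\item For smooth $\varphi$ the weight $\varphi(g(y))(\alpha-F'(y))$ is smooth and periodic. The coefficients you need then decay faster than every power of $|n|$, not like $|n|^{-1/2}$.
\item For badly approximable $\alpha$ one checks that this gives $\sigma^{(2)}_{\varphi,\varphi}(B_\varepsilon^*)=o(\varepsilon)$ for every smooth $\varphi$. So no argument with smooth test functions can work.
\item Your convolution heuristic says the same thing. For mixed-sign $n$, one factor comes from each cone, which forces $\widehat{\widetilde\varphi}$ to be evaluated at frequencies $\gtrsim|n|$.
\item Your final count gives $\sigma^{(2)}(B_\varepsilon^*)=\infty$. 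That contradicts translation boundedness (Lemma~\ref{lem:bartlett-translation-bounded}) and is a further symptom.
\end{itemize}

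\textbf{The missing idea.} $\HU_2$ is tested against merely continuous $\varphi$, and the witness has to be rough. The paper proceeds as follows.
\begin{itemize}
\item It restricts to consecutive gaps in an interval $J$ just above the nondegenerate minimum $\tau_*$ of the return time. There $\tau^{-1}(J)$ has two branches $y_\pm$.
\item The atom at $\xi_{p,q}=q-p\alpha$ then has mass $|\mathcal L_{p,q}(\psi)|^2$, with $\mathcal L_{p,q}(\psi)=4\int_J\psi K_{p,q}$. Here $K_{p,q}$ is a bounded sum of the two branch oscillations.
\item It works along doubled convergents $(p_j,q_j)=(2p_j',2q_j')$, because odd $q$ gives $\mathcal L_{p,q}=0$ by the symmetry $F(y+\tfrac12)=F(y)-1$.
\item Along these, the branch cross term in $\int_J|K_{p_j,q_j}|^2$ is $O(p_j^{-1})$, because the branch phase difference satisfies $|\Delta_r'|\ge c_J$. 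Hence $\|\mathcal L_{p_j,q_j}\|_{C_0(J)^*}=4\|K_{p_j,q_j}\|_{L^1(J)}\ge c_0$.
\item The Uniform Boundedness Principle, applied to $\delta_j^{-1/2}\mathcal L_{p_j,q_j}$, yields one continuous $\psi$ with $|\mathcal L_{p_j,q_j}(\psi)|^2/\delta_j$ unbounded.
\end{itemize}
This gives atoms of mass much larger than $\delta_j$ at frequencies $\delta_j\to0$, which is exactly the failure of spectral $2$-hyperuniformity.
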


Proposition~\ref{prop:common-gap-periodic} shows that a common spectral gap at
all pattern orders forces an ergodic uniformly discrete simple point process
of positive intensity to be periodic.  Proposition~\ref{prop:periodic-full-reciprocal-spectrum}
gives, for a periodic configuration $C$ with full translation lattice
$\Gamma_C:=\{t\in\mathbb R^d:C+t=C\}$, writing
$\mathcal C_k:=C_c(\Sh_k(\mathbb R^d);\mathbb C)$,
\[
  \bigcup_{k\geq1}\ \bigcup_{\varphi\in\mathcal C_k}
  \operatorname{supp}\sigma^{(k)}_{\varphi,\varphi}
  =\Gamma_C^*\setminus\{0\},
  \qquad
  \operatorname{st}_\infty(C)=\lambda_1(\Gamma_C^*),
\]
where $\lambda_1(\Gamma_C^*):=\min_{0\neq\xi\in\Gamma_C^*}|\xi|$ and
$\operatorname{st}_\infty$ is defined in
\eqref{eq:all-order-stealth-radius}.
Corollaries~\ref{cor:periodic-all-order-hermite} and~\ref{cor:common-gap-sphere-packing} identify the unit-intensity optimization
with the Hermite problem, equivalently lattice sphere packing under reciprocal
duality.  Proposition~\ref{prop:flc-first-order-periodic} gives periodicity in
dimension one under finite local complexity and $1$-stealthiness, whereas
Proposition~\ref{prop:all-order-diffuse-stealthy} gives a nonperiodic diffuse
random measure that is $k$-stealthy for every finite $k$ with no common positive
gap.  The remaining point-process problem with order-dependent gaps is stated
in Remark~\ref{rem:all-order-open-question}.

\subsection{Related work and questions}
\label{subsec:intro-discussion}

Higher-moment expansions of the ordinary number statistic keep the statistic
fixed and vary the moment order \cite{QT97,TKK21}.  Classical polyspectra are
Fourier transforms of higher cumulants and involve several frequency variables
\cite{Bri94}.  The $k$-th Bartlett spectrum considered here is instead a
second-order covariance spectrum in the single translation-frequency variable
of the derived measures $\mathcal P_{k,\varphi}$.  Higher-order truncated
correlations enter variance and cumulant expansions for ordinary linear
statistics \cite{KY24}, with correlation or cumulant order distinct from
pattern size.

Gowers uniformity norms and their ergodic Host--Kra analogues average products
over affine cubes \cite{Gow01,HK05}; $\HU_k$ concerns second-order large-scale
fluctuations of additive counts of arbitrary local $k$-point shapes.
Multi-hyperuniformity concerns simultaneous first-order hyperuniformity of
components in a multicomponent system \cite{JLHMC14}.  Weighted hyperuniformity
attaches weights to particles or local attributes \cite{TKKCS26}; here the
weights are attached to local $k$-point configurations.  Higher-order
structural statistics can distinguish hyperuniform systems with matching pair
statistics \cite{WT23}, but do not measure the large-scale variance of local
pattern measures.

Symmetric maps applied to all distinct $k$-tuples of Poisson or binomial
processes induce point processes on the target space \cite{ST12,DST16}.
Taking barycentre and shape as the target variables gives the pairwise-distinct
part of the pattern process used here; the results of \cite{ST12,DST16} concern
Poisson approximation and scaling limits.  Variance asymptotics and limit
theory for nonlinear local scores and geometric U-statistics, including
associated weighted point measures, are developed in \cite{BYY19}.  Invariant
transports of stationary random measures, their asymptotic variance,
hyperuniformity and Bartlett spectra are studied in \cite{KLLY25}.  For
cut-and-project sets, patch frequencies and discrepancy are studied in
\cite{HJKW19}.

Frommer and Hanke study volume-normalized variances of pair statistics
over distinct points and prove positive volume-order pair variance for projection
determinantal processes under their assumptions \cite{FH26}.  Their class of admissible functions and truncation convention differ from ours; see
Proposition~\ref{prop:FH-comparison}.  
The Kurasov--Sarnak point process comes from the nonperiodic crystalline
measures of \cite{KS20}; see also \cite{AKKV25} for diffraction of Fourier
quasicrystals.  The return-time description and gap distribution for
one-dimensional Fourier quasicrystals were developed in \cite{AV24}.

Open questions include the following.  For cut-and-project processes, when do
Fourier decay of finite intersections of window translates and the arithmetic
distribution of dual lattice points force $\HU_1\Rightarrow\HU_k$ for every
finite $k$?  Do the realization results of \cite{Bjo26a,Bjo26b} extend so that
every essentially free ergodic probability-preserving $V$-action admits a
generating Delone cross-section whose return-time point process lies in
$\bigcap_{k\geq1}\HU_k$?  In dimension one, must an ergodic uniformly discrete
point process of positive intensity which is $k$-stealthy for every $k$ be
periodic?  Section~\ref{sec:all-order-stealthiness} proves the last assertion
for a gap uniform in $k$, and already under $1$-stealthiness when the
configurations have finite local complexity; it fails for diffuse random
measures.

\section{Geometric and spectral hyperuniformity}
\label{sec:general-theory}

Let $(V,\langle\cdot,\cdot\rangle)$ be a finite-dimensional real inner-product
space of dimension $d$.  We write $V^*$ for its real dual, endowed with the
dual inner product induced by the inner product on $V$.  More generally, if $E$ is a
locally convex space, $E^*$ denotes its continuous dual when $E$ is real and
its continuous anti-dual when $E$ is complex.

For a locally compact space $Y$, $C_c(Y)$ denotes the real-valued continuous
compactly supported functions on $Y$, and $C_c(Y;\mathbb C)$ their
complex-valued counterparts.  Both carry the inductive-limit topology of
the Banach spaces $C_K(Y)$, $K\subset Y$ compact, equipped with the supremum
norm.  We regard a locally finite positive measure on $V$ as a positive
continuous linear functional on $C_c(V)$ and write
$\cM(V):=C_c(V)^*_+$.  Thus, if $p\in\cM(V)$ and
$f\in C_c(V)$, its action on $f$ is denoted by $p(f)$.  By the Riesz--Markov theorem, $\cM(V)$ is canonically identified with the
space of locally finite positive Radon measures on $V$.
We equip $\cM(V)$ with the vague topology, that is, the weakest topology for
which $p\mapsto p(f)$ is continuous for every $f\in C_c(V)$.

For $a\in V$ and $f\in C_c(V)$, set $\tau_a f(v):=f(v+a)$.  The translation
action of $V$ on $\cM(V)$ is given by $(a.p)(f):=p(\tau_a f)$.  Throughout
the paper, $\mu$ denotes a $V$-invariant Borel probability measure on
$\cM(V)$.  When $\mu$ is supported on functionals of the form
$p(f)=\sum_{v\in\Lambda}f(v)$, where $\Lambda\subset V$ is locally finite and
simple, we call $\mu$ a \emph{point process}.  Invariance, ergodicity and
mixing always refer to the $V$-action.

For $r\geq1$, we say that $\mu$ is \emph{locally $L^r$-integrable} if
$p\mapsto p(f)$ belongs to $L^r(\mu)$ for every $f\in C_c(V)$.

We denote by $\Vol_m$ the Lebesgue measure on any $m$-dimensional Euclidean
space determined by its Euclidean structure and normalized so that a unit
cube spanned by an orthonormal basis has volume one.  The notation $\Vol_d$ is used for this normalized Lebesgue measure on both
$V$ and $V^*$.

We use
\begin{equation}\label{eq:covariance-convention}
  \Cov_\mu(F,G)
  :=\int_{\cM(V)}(F-\mu(F))\overline{(G-\mu(G))}\,d\mu,
  \qquad \mu(F):=\int_{\cM(V)}F\,d\mu,
\end{equation}
so covariance is linear in the first variable and conjugate-linear in the
second.  The Fourier transform is normalized by
\begin{equation}\label{eq:fourier-convention}
  \widehat f(\xi)
  :=\int_V f(v)e^{-2\pi i\xi(v)}\,d\Vol_d(v),
  \qquad f\in L^1(V),\quad \xi\in V^*.
\end{equation}
Whenever complex-valued functions are used, $S$ and $T_k$ are extended
complex linearly in their function arguments.

For the rest of this section, local $L^{2k}$-integrability is understood
whenever $\HU_k$ or the $k$-th Bartlett spectrum is defined or used.

\subsection{Pattern statistics and shape spaces}
\label{subsec:pattern-statistics}

For $f\in C_c(V)$, define $Sf:\cM(V)\to\mathbb R$ by $Sf(p):=p(f)$.
We call $Sf$ the \emph{first statistic} associated with $f$.

For $k\geq 1$, let
\begin{equation}\label{eq:centered-tuples}
  V_0^k:=\Big\{(v_1,\ldots,v_k)\in V^k:
  \sum_{j=1}^k v_j=0\Big\},
\end{equation}
and let $\mathfrak S_k$ act on $V_0^k$ by permuting coordinates.  The
quotient $\Sh_k(V):=V_0^k/\mathfrak S_k$ is the \emph{$k$-point shape
space}.  If
$v=(v_1,\ldots,v_k)\in V^k$, its barycentre and centred part are
\begin{equation}\label{eq:barycenter-centered-part}
  \bar v:=\frac1k\sum_{j=1}^k v_j,
  \qquad
  v^\circ:=(v_1-\bar v,\ldots,v_k-\bar v)\in V_0^k,
\end{equation}
and $[v^\circ]\in\Sh_k(V)$ denotes the corresponding unordered centred
shape.

Given $f\in C_c(V)$ and $\varphi\in C_c(\Sh_k(V))$, the function
\begin{equation}\label{eq:pattern-kernel}
  F_{f,\varphi}(v_1,\ldots,v_k)
  :=f(\bar v)\,\varphi([v^\circ])
\end{equation}
weights the barycentre by $f$ and the centred shape by $\varphi$.
The function $F_{f,\varphi}$ has compact support in $V^k$.  The associated
\emph{$k$-point pattern statistic} is
\begin{equation}\label{eq:pattern-statistic}
  T_k[f,\varphi](p):=p^{\otimes k}(F_{f,\varphi}),
  \qquad p\in\cM(V),
\end{equation}
where $p^{\otimes k}$ is the $k$-fold tensor product of the positive
functional $p$.  Under the Riesz--Markov identification,
\eqref{eq:pattern-statistic} also makes sense for bounded Borel functions
$f$ with compact support, in particular for indicators of bounded Borel
sets.

\begin{remark}[The case $k=2$]\label{rem:k2-expanded}
For $x,y\in V$, put
\[
  s(x,y):=\left[\left(\frac{x-y}{2},\frac{y-x}{2}\right)\right]
  \in\Sh_2(V).
\]
Then
\begin{equation}\label{eq:k2-expanded-statistic}
  T_2[f,\varphi](p)
  =\sum_{x,y\in\Lambda}
     f\!\left(\frac{x+y}{2}\right)\varphi(s(x,y))
\end{equation}
for $p=\sum_{x\in\Lambda}\delta_x$, with $x=y$ allowed.  Hence
\[
\begin{aligned}
&T_2[f,\varphi](p)\,\overline{T_2[g,\psi](p)} \\
&\quad=\sum_{x_1,x_2,x_3,x_4\in\Lambda}
  f\!\left(\frac{x_1+x_2}{2}\right)
  \overline{g\!\left(\frac{x_3+x_4}{2}\right)}
  \varphi(s(x_1,x_2))\,
  \overline{\psi(s(x_3,x_4))}.
\end{aligned}
\]
Thus the covariance may involve correlation data of the underlying point
process up to order four.  If $\mu$ is locally $L^4$-integrable, these
statistics belong to $L^2(\mu)$ and
\[
  \Cov_\mu\bigl(T_2[f,\varphi],T_2[g,\psi]\bigr)
\]
is sesquilinear in $(f,g)$ and separately in $(\varphi,\psi)$.  In
particular, $\Var_\mu(T_2[f,\varphi])$ is quadratic in $f$ for fixed
$\varphi$ and quadratic in $\varphi$ for fixed $f$.  Thus $k=2$ refers to
the number of points in the local pattern, even though its variance can
depend on fourth-order correlations of $p$.
\end{remark}

We reserve Latin letters $f,g,\ldots$ for functions of the barycentre variable in $V$ and Greek letters $\varphi,\psi,\ldots$ for functions on $\Sh_k(V)$.

\begin{lemma}\label{lem:pattern-statistics-L2}
Assume that $\mu$ is locally $L^{2k}$-integrable.  Then
$T_k[f,\varphi]\in L^2(\mu)$ for every $f\in C_c(V)$ and
$\varphi\in C_c(\Sh_k(V))$.
\end{lemma}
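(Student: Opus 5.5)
The plan is to dominate $|T_k[f,\varphi]|$ pointwise by a power of a single first statistic and then apply the integrability hypothesis. First we check that the kernel $F_{f,\varphi}$ is a bounded Borel function with compact support in $V^k$. Boundedness holds with $\|F_{f,\varphi}\|_\infty\le\|f\|_\infty\|\varphi\|_\infty$. Continuity follows because $v\mapsto\bar v$ and $v\mapsto[v^\circ]$ are continuous, the latter being the composition of a linear map with the quotient map $V_0^k\to\Sh_k(V)$.

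For the support, let $\operatorname{supp}f\subset B_{R_1}$. The shape space $\Sh_k(V)$ is the quotient of $V_0^k$ by the finite group $\mathfrak S_k$, so the quotient map is proper. Hence $\operatorname{supp}\varphi$ lifts to a compact subset of $V_0^k$, which is contained in $\{w:\max_j|w_j|\le R_2\}$ for some $R_2$. If $F_{f,\varphi}(v)\neq0$, then $|\bar v|\le R_1$ and $|v_j-\bar v|\le R_2$ for every $j$, so every $v_j$ lies in $B_{R_1+R_2}$. Put $K:=B_{R_1+R_2}$ and choose $h\in C_c(V)$ with $0\le h\le1$ and $h\equiv1$ on $K$. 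Then
\[
  |F_{f,\varphi}(v_1,\ldots,v_k)|\le\|f\|_\infty\|\varphi\|_\infty\prod_{j=1}^k h(v_j).
\]

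Since $p^{\otimes k}$ is a positive functional, this pointwise bound integrates to
\[
  |T_k[f,\varphi](p)|\le p^{\otimes k}(|F_{f,\varphi}|)\le\|f\|_\infty\|\varphi\|_\infty\,p(h)^k .
\]
This step uses that $p^{\otimes k}$ is the product Radon measure, so positivity and Fubini apply to the product function $\prod_j h(v_j)$. Squaring and integrating against $\mu$ gives
\[
  \int|T_k[f,\varphi]|^2\,d\mu\le\|f\|_\infty^2\|\varphi\|_\infty^2\int p(h)^{2k}\,d\mu(p)<\infty.
\]
The right-hand side is finite by local $L^{2k}$-integrability applied to $h\in C_c(V)$.

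Measurability of $p\mapsto T_k[f,\varphi](p)$ should be recorded as well. Approximate $F_{f,\varphi}$ uniformly on $K^k$ by finite sums of products $g_1\otimes\cdots\otimes g_k$ with $g_i\in C_c(V)$, using Stone--Weierstrass. For such a product, $p^{\otimes k}(g_1\otimes\cdots\otimes g_k)=\prod_i p(g_i)$, which is vaguely continuous in $p$. The uniform approximation error is then controlled by the same $p(h)^k$ bound, so $T_k[f,\varphi]$ is a pointwise limit of continuous functions and hence Borel.

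The same domination extends the lemma to bounded Borel $f$ with compact support, such as the indicators $\chi_{B_R}$. There is no real obstacle here. The only point needing care is the properness of the quotient map, which is what guarantees that $F_{f,\varphi}$ has compact support.
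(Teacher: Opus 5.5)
Your proposal is correct and follows the paper's proof. Both lift $\operatorname{supp}\varphi$ to a compact subset of $V_0^k$ using finiteness of $\mathfrak S_k$, confine all coordinates to a compact $K$, and dominate $|T_k[f,\varphi](p)|$ by $\|f\|_\infty\|\varphi\|_\infty\,p(h)^k$, which lies in $L^2(\mu)$ by local $L^{2k}$-integrability.

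Your Stone--Weierstrass measurability argument is an extra detail the paper leaves implicit. It needs the approximants multiplied by cutoffs so they stay supported near $K^k$ and a $p(h')^k$-type error bound applies, but this is routine.
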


\begin{proof}
Let $C\subset V_0^k$ be the inverse image of $\operatorname{supp}\varphi$
under the quotient map $V_0^k\to\Sh_k(V)$.  Since $\mathfrak S_k$ is finite,
$C$ is compact.  Choose $r>0$ such that $\|v_j\|\leq r$ for every
$(v_1,\ldots,v_k)\in C$ and every $j$.  If
$F_{f,\varphi}(v_1,\ldots,v_k)\neq0$, then
$\bar v\in\operatorname{supp}f$ and $v_j-\bar v\in B_r$ for every $j$.
Hence each $v_j$ lies in
$K:=\operatorname{supp}f+B_r$.  Choose $h\in C_c(V)$ with $h\geq1$ on $K$
and $h\geq0$.  Then
\begin{equation}\label{eq:pattern-statistic-L2-bound}
  |T_k[f,\varphi](p)|
  \leq \|f\|_\infty\,\|\varphi\|_\infty\,p(h)^k,
\end{equation}
and the right-hand side belongs to $L^2(\mu)$ by local
$L^{2k}$-integrability.
\end{proof}

For $k=1$, the space $\Sh_1(V)$ consists of one point.  Identifying its
constant function $1$ with the scalar $1$, the pattern statistic satisfies
\begin{equation}\label{eq:first-order-pattern-statistic}
  T_1[f,1]=Sf,
  \qquad f\in C_c(V).
\end{equation}

\subsection{Derived barycentre measures}
\label{subsec:derived-barycentre}

For a fixed $\varphi\in C_c(\Sh_k(V);\mathbb C)$ and
$p\in\cM(V)$, define the complex Radon measure
$\mathcal P_{k,\varphi}(p)$ on $V$ by
\begin{equation}\label{eq:derived-barycentre-measure}
  \mathcal P_{k,\varphi}(p)(f):=T_k[f,\varphi](p),
  \qquad f\in C_c(V;\mathbb C).
\end{equation}
Local finiteness follows from the same compact-support estimate as in
Lemma~\ref{lem:pattern-statistics-L2}: if the barycentre is restricted to a
compact set and the centred shape lies in $\operatorname{supp}\varphi$, then
all $k$ coordinates lie in one fixed compact subset of $V$.  If
$\varphi\geq0$, the measure is positive; general complex-valued $\varphi$ give signed or complex measures.  Translation of the underlying configuration translates
the derived measure:
\begin{equation}\label{eq:derived-barycentre-equivariance}
  \mathcal P_{k,\varphi}(a.p)=a.\mathcal P_{k,\varphi}(p),
  \qquad a\in V.
\end{equation}
Thus, under a $V$-invariant law $\mu$, each
$\mathcal P_{k,\varphi}$ is a stationary random measure in the barycentre
variable, in the signed/complex sense.  Equivalently,
\[
  T_k[\tau_a f,\varphi](p)=T_k[f,\varphi](a.p),
  \qquad a\in V,
\]
so for fixed $\varphi$ the covariance form
\[
  (f,g)\longmapsto
  \Cov_\mu\bigl(T_k[f,\varphi],T_k[g,\varphi]\bigr)
\]
is positive and invariant under simultaneous translation of $f$ and $g$.
This is the translation-invariance used below to obtain a scalar spectral
measure for each $\varphi$; the $k$-th Bartlett spectrum assembles these
measures and their cross-covariances as $\varphi$ varies.

\subsection{Geometric hyperuniformity}
\label{subsec:geometric-hu}

Let $B_R\subset V$ denote the closed Euclidean ball of radius $R$ centred at the
origin.

\begin{definition}\label{def:geometric-k-hu}
We say that $\mu$ is \emph{geometrically $k$-hyperuniform}, and write
$\mu\in\HU_k$, if, for every $\varphi\in C_c(\Sh_k(V))$,
\begin{equation}\label{eq:geometric-k-hu}
  \lim_{R\to\infty}
  \frac{\Var_\mu\big(T_k[\chi_{B_R},\varphi]\big)}{\Vol_d(B_R)}=0.
\end{equation}
\end{definition}

We also write $\HU_{\leq k}:=\bigcap_{j=1}^k\HU_j$.  No inclusion between
$\HU_j$ and $\HU_{j+1}$ is assumed.

We use the notation $\HU_2$ rather than ``pair hyperuniformity'', since
related but inequivalent pair statistics and truncation conventions occur in
the literature; see Proposition~\ref{prop:FH-comparison}.

At first order,
\begin{equation}\label{eq:first-order-ball-statistic}
  T_1[\chi_{B_R},1]=S\chi_{B_R},
\end{equation}
so geometric $1$-hyperuniformity is ordinary geometric hyperuniformity.

\subsection{Operator-valued Bartlett spectra}
\label{subsec:bartlett}

By Lemma~\ref{lem:pattern-statistics-L2} and
\eqref{eq:derived-barycentre-equivariance}, for fixed $\varphi$ the covariance
form
\[
  (f,g)\longmapsto
  \Cov_\mu\bigl(T_k[f,\varphi],T_k[g,\varphi]\bigr)
\]
is continuous, positive and invariant under simultaneous translations of
$f$ and $g$.  Proposition~\ref{prop:operator-bartlett} applies the Schwartz
kernel theorem and the Bochner--Schwartz theorem to obtain the corresponding
positive scalar measure on $V^*$.  Polarization in the shape variable gives
cross-covariance measures for $\varphi$ and $\psi$, which are assembled
into the $k$-th Bartlett spectrum.  The space $C_c(\Sh_k(V))$ is equipped
with the inductive-limit topology determined by compact supports, as described
below; Radon regularity and countable additivity are imposed coefficientwise.

\subsubsection{Operator-valued Radon measures}

For compact $K\subset\Sh_k(V)$, let $C_K(\Sh_k(V))$ be the Banach
space of complex-valued continuous functions supported in $K$, with the
supremum norm.  The complex vector space
$\mathcal C_k:=C_c(\Sh_k(V);\mathbb C)$ is equipped with the inductive-limit
topology of the spaces $C_K(\Sh_k(V))$: a linear functional is continuous
exactly when its restriction to every $C_K(\Sh_k(V))$ is continuous.  Its
\emph{continuous anti-dual}
$\mathcal C_k^*$ consists of the continuous conjugate-linear functionals on
$\mathcal C_k$ and is equipped with the weak-$*$ topology.  Thus a linear map
$A:\mathcal C_k\to\mathcal C_k^*$ is continuous precisely when, for every
$\psi\in\mathcal C_k$, the scalar map $\varphi\mapsto(A\varphi)(\psi)$ is
continuous.  We write $\mathcal L(\mathcal C_k,\mathcal C_k^*)$ for these
continuous linear maps.

An operator $A\in\mathcal L(\mathcal C_k,\mathcal C_k^*)$ is called
\emph{positive} if $(A\varphi)(\varphi)\geq0$ for every
$\varphi\in\mathcal C_k$.  We denote by
\begin{equation}\label{eq:borel-delta-ring}
  \mathscr B_c(V^*):=\{E\subset V^*: E\text{ is Borel and relatively compact}\}
\end{equation}
the \emph{Borel $\delta$-ring} of $V^*$.  It is closed under finite unions,
relative complements and countable intersections, but not in general under
countable unions.

\begin{definition}\label{def:operator-valued-radon-measure}
A \emph{locally finite positive weak operator-valued Radon measure} on $V^*$
is a map
\begin{equation}\label{eq:operator-valued-measure-map}
  \boldsymbol M:\mathscr B_c(V^*)\longrightarrow
  \mathcal L(\mathcal C_k,\mathcal C_k^*)
\end{equation}
such that $\boldsymbol M(E)$ is positive for every $E\in\mathscr B_c(V^*)$
and, for every $\varphi,\psi\in\mathcal C_k$, the \emph{coefficient measure}
\begin{equation}\label{eq:operator-coefficient-measure}
  M_{\varphi,\psi}(E):=(\boldsymbol M(E)\varphi)(\psi),
  \qquad E\in\mathscr B_c(V^*),
\end{equation}
extends to a complex Radon measure on the Borel $\sigma$-algebra of $V^*$.
In particular, $\boldsymbol M$ is \emph{weakly countably additive}: if
$E=\bigsqcup_{n\geq1}E_n$ with $E,E_n\in\mathscr B_c(V^*)$, then
\begin{equation}\label{eq:weak-countable-additivity}
  (\boldsymbol M(E)\varphi)(\psi)
  =\sum_{n\geq1}(\boldsymbol M(E_n)\varphi)(\psi),
  \qquad \varphi,\psi\in\mathcal C_k.
\end{equation}
\end{definition}

\begin{remark}
Here ``operator-valued'' refers to the weak notion in
Definition~\ref{def:operator-valued-radon-measure}: countable additivity is
coefficientwise, not in an operator norm or strong operator topology.  No joint continuity on
$\mathcal C_k\times\mathcal C_k$ is assumed.
\end{remark}

The restriction to $\mathscr B_c(V^*)$ concerns only the operator values;
the coefficient measures are defined on all Borel sets.  Compactly supported
functions can be integrated coefficientwise: for $h\in C_c(V^*)$ there is a
unique operator
$\boldsymbol M(h)\in\mathcal L(\mathcal C_k,\mathcal C_k^*)$ satisfying
\begin{equation}\label{eq:operator-valued-integration}
  (\boldsymbol M(h)\varphi)(\psi)
  =\int_{V^*}h(\xi)\,dM_{\varphi,\psi}(\xi),
  \qquad \varphi,\psi\in\mathcal C_k.
\end{equation}
For $h\geq0$ the operator $\boldsymbol M(h)$ is positive, and these
integrals determine $\boldsymbol M$ uniquely.

\subsubsection{The Bartlett spectrum}

\begin{definition}[Bartlett spectrum]\label{def:bartlett-spectrum}
A locally finite positive weak operator-valued Radon measure $\bsigmak$ on
$V^*$ is called the \emph{$k$-th Bartlett spectrum of $\mu$} if its
coefficient measures
\begin{equation}\label{eq:bartlett-coefficient-definition}
  \sigmak_{\varphi,\psi}(E)
  :=(\bsigmak(E)\varphi)(\psi),
  \qquad E\in\mathscr B_c(V^*),
\end{equation}
satisfy, for all $\varphi,\psi\in\mathcal C_k$ and
$f,g\in C_c^\infty(V;\mathbb C)$,
\begin{equation}\label{eq:bartlett-defining-identity}
  \Cov_\mu\big(T_k[f,\varphi],T_k[g,\psi]\big)
  =\int_{V^*}\widehat f(\xi)\overline{\widehat g(\xi)}\,
    d\sigmak_{\varphi,\psi}(\xi).
\end{equation}
\end{definition}

\begin{proposition}[Existence and uniqueness of the Bartlett spectrum]
\label{prop:operator-bartlett}
If $\mu$ is locally $L^{2k}$-integrable, then $\mu$ admits a unique $k$-th
Bartlett spectrum.
\end{proposition}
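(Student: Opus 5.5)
The plan is to build the spectrum one shape at a time and then polarize, following the route announced before Definition~\ref{def:operator-valued-radon-measure}.

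\textbf{Diagonal measures.} Fix $\varphi\in\mathcal C_k$ and define the sesquilinear form $B_\varphi(f,g):=\Cov_\mu(T_k[f,\varphi],T_k[g,\varphi])$ on $C_c^\infty(V;\mathbb C)$.

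Continuity comes from estimate \eqref{eq:pattern-statistic-L2-bound}. If $f,g$ are supported in a fixed compact set $L$, it gives $|B_\varphi(f,g)|\le C_{L,\varphi}\|f\|_\infty\|g\|_\infty$. Here $C_{L,\varphi}$ involves $\|\varphi\|_\infty^2$ and $\mu(p(h)^{2k})$ for a fixed bump $h\ge 1$ on $L+B_r$. So $B_\varphi$ is separately continuous, and the Schwartz kernel theorem yields a distribution $K_\varphi\in\mathcal D'(V\times V)$.

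Translation invariance follows from \eqref{eq:derived-barycentre-equivariance} together with $V$-invariance of $\mu$: $K_\varphi$ depends only on the difference variable. Thus $B_\varphi(f,g)=u_\varphi(f*\widetilde{\bar g})$ for a distribution $u_\varphi$ on $V$, with $\widetilde{\bar g}(v)=\overline{g(-v)}$. Positivity of variances makes $u_\varphi$ positive definite.

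The Bochner--Schwartz theorem then gives a unique positive tempered measure $\sigma_{\varphi,\varphi}$ on $V^*$ with $B_\varphi(f,g)=\int\widehat f\,\overline{\widehat g}\,d\sigma_{\varphi,\varphi}$. Temperedness is automatic in that theorem, and local finiteness follows.

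\textbf{Polarization and sesquilinearity.} For $\varphi,\psi$, set $\sigma_{\varphi,\psi}:=\frac14\sum_{\epsilon^4=1}\epsilon\,\sigma_{\varphi+\epsilon\psi,\varphi+\epsilon\psi}$. This is a complex Radon measure, and since $\varphi\mapsto T_k[f,\varphi]$ is linear and covariance is sesquilinear, it satisfies \eqref{eq:bartlett-defining-identity}.

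Next I need $(\varphi,\psi)\mapsto\sigma_{\varphi,\psi}(E)$ to be sesquilinear for each $E\in\mathscr B_c(V^*)$. The measures $\widehat f\,\overline{\widehat g}\,d\sigma$ with $f,g\in C_c^\infty$ determine the measure, because products $\widehat f\overline{\widehat g}$ span a dense class. Combining this uniqueness with the sesquilinearity of the covariance gives sesquilinearity of $\sigma_{\varphi,\psi}$ as measures, hence for each $E$. So $\boldsymbol\sigma^{(k)}(E)\varphi:=\sigma_{\varphi,\cdot}(E)$ defines a linear map $\mathcal C_k\to$ (conjugate-linear functionals), and it is positive because $\sigma_{\varphi,\varphi}(E)\ge0$.

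\textbf{Continuity in the shape variable (the main obstacle).} I must show that $\varphi\mapsto\sigma_{\varphi,\psi}(E)$ is continuous on each $C_K(\Sh_k(V))$. My first attempt is a quantitative bound on $\sigma_{\varphi,\varphi}(E)$.

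Choose $f\in C_c^\infty$ with $|\widehat f|\ge 1/2$ on $E$; this is possible by taking a rescaled bump whose transform is near $1$ on a large ball, then summing modulated copies if needed. Then
\[
\sigma_{\varphi,\varphi}(E)\le 4\Var_\mu(T_k[f,\varphi])\le C_{E,K}\|\varphi\|_\infty^2 \quad\text{for }\operatorname{supp}\varphi\subset K,
\]
again by \eqref{eq:pattern-statistic-L2-bound}.

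Cauchy--Schwarz for the positive sesquilinear form $(\varphi,\psi)\mapsto\sigma_{\varphi,\psi}(E)$ then gives $|\sigma_{\varphi,\psi}(E)|\le C\|\varphi\|_\infty\|\psi\|_\infty$ on $C_K$. This yields the required continuity, so $\boldsymbol\sigma^{(k)}(E)\in\mathcal L(\mathcal C_k,\mathcal C_k^*)$, and weak countable additivity holds coefficientwise by construction.

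\textbf{Uniqueness.} Any two spectra have coefficient measures with identical integrals against $\widehat f\,\overline{\widehat g}$ for all $f,g\in C_c^\infty$. By the density argument above, or by the uniqueness part of Bochner--Schwartz applied after polarization, the coefficient measures coincide, and hence so do the operator values.
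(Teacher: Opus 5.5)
Your proposal is correct and follows the paper's proof essentially step for step: you use the Schwartz kernel theorem, translation invariance and Bochner--Schwartz for fixed $\varphi$, then polarization in the shape variable, then the bound $\sigma_{\varphi,\varphi}(E)\lesssim\Var_\mu(T_k[h,\varphi])\lesssim\|\varphi\|_\infty^2$ from a test function with $|\widehat h|$ bounded below on $E$, followed by Cauchy--Schwarz to get separate continuity. The one addition is that you justify sesquilinearity of the polarized coefficients through uniqueness of the representing measures, a step the paper uses implicitly when it writes $\sum_{i,j} c_i\overline{c_j}\,\nu_{\varphi_i,\varphi_j}(E)=\nu_{\Phi,\Phi}(E)$.
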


\begin{proof}
\smallskip\noindent\emph{Scalar spectrum for fixed $\varphi$.}
Fix $\varphi\in\mathcal C_k$ and set
$Q_\varphi(f,g):=\Cov_\mu(T_k[f,\varphi],T_k[g,\varphi])$.  By
\Cref{lem:pattern-statistics-L2}, this is a continuous positive
sesquilinear form on $C_c(V;\mathbb C)$.  The $V$-invariance of $\mu$
implies
\begin{equation}\label{eq:bartlett-translation-invariance}
  Q_\varphi(\tau_a f,\tau_a g)=Q_\varphi(f,g),
  \qquad a\in V.
\end{equation}

Restrict first to $C_c^\infty(V)$.  By the Schwartz kernel theorem
\cite{Sch66}, $Q_\varphi$ is represented by a distribution on
$V\times V$.  In the
coordinates $(u,w)=(v-v',v')$, simultaneous translation of $v$ and $v'$ is
translation in the second coordinate.  To make the resulting factorization
explicit, choose $\rho\in C_c^\infty(V)$ with $\int_V\rho=1$.  If $K_\varphi$
denotes the kernel in these coordinates, set
$\gamma_\varphi(h):=K_\varphi(h\otimes\rho)$.  For
$\Phi\in C_c^\infty(V\times V)$, write
$A_\Phi(u):=\int_V\Phi(u,w)\,dw$.  Then
$\Phi-A_\Phi\otimes\rho$ has zero integral in the $w$-variable and hence is
a finite sum of $w$-derivatives of compactly supported smooth functions.
Translation invariance gives
$K_\varphi(\partial_{w_j}\Psi)=0$, so
$K_\varphi(\Phi)=\gamma_\varphi(A_\Phi)$.  Thus
$K_\varphi=\gamma_\varphi\otimes\Vol_d$, and $\gamma_\varphi$ is unique.
Equivalently, with
$\widetilde g(v):=\overline{g(-v)}$,
\begin{equation}\label{eq:covariance-distribution}
  Q_\varphi(f,g)=\gamma_\varphi(f*\widetilde g),
  \qquad f,g\in C_c^\infty(V).
\end{equation}
Since $Q_\varphi(f,f)\geq0$, the distribution $\gamma_\varphi$ is positive
definite.  The Bochner--Schwartz theorem
\cite[Chapter~VII, \S9, Theorem~XVIII]{Sch66} yields a positive tempered
Radon measure $\nu_{\varphi,\varphi}$ on $V^*$ such that
\begin{equation}\label{eq:diagonal-bochner-schwartz}
  \gamma_\varphi(h)
  =\int_{V^*}\widehat h(\xi)\,d\nu_{\varphi,\varphi}(\xi),
  \qquad h\in C_c^\infty(V).
\end{equation}
This measure is unique because the Fourier transform is injective on tempered
distributions.  Since $\widehat{f*\widetilde g}=\widehat f\,\overline{\widehat g}$,
polarization of the diagonal measures in the shape variable gives tempered
complex Radon measures $\nu_{\varphi,\psi}$ satisfying
\eqref{eq:bartlett-defining-identity}.  For every relatively compact Borel
set $E\subset V^*$ and every finite family
$\varphi_1,\ldots,\varphi_N\in\mathcal C_k$,
\begin{equation}\label{eq:bartlett-coefficient-positivity}
  \sum_{i,j=1}^N c_i\overline{c_j}\,
    \nu_{\varphi_i,\varphi_j}(E)
  =\nu_{\Phi,\Phi}(E)\geq0,
  \qquad
  \Phi:=\sum_{i=1}^N c_i\varphi_i.
\end{equation}
Thus $(\varphi,\psi)\mapsto\nu_{\varphi,\psi}(E)$ is a positive
sesquilinear form, and in particular
\begin{equation}\label{eq:bartlett-measure-cauchy-schwarz}
  |\nu_{\varphi,\psi}(E)|^2
  \leq\nu_{\varphi,\varphi}(E)\nu_{\psi,\psi}(E).
\end{equation}

\smallskip\noindent\emph{Assembly in the shape variable.}
To assemble the coefficient measures, fix a compact set
$K\subset\Sh_k(V)$ and $E\in\mathscr B_c(V^*)$.  Such an $E$ admits
$h\in C_c^\infty(V)$ with $|\widehat h|\geq1$ on $E$: take
$h_0\in C_c^\infty(V)$ with $\widehat h_0(0)=1$, choose $r>0$ such that
$|\widehat h_0|\geq1/2$ on $B_r^*$, and set
$h_\lambda(v):=2\lambda^{-d}h_0(v/\lambda)$ with $\lambda>0$ small enough
that $\lambda E\subset B_r^*$.  The bound
\eqref{eq:pattern-statistic-L2-bound} then gives a constant $C=C(E,K)$ such that
\begin{equation}\label{eq:bartlett-local-bound}
  \nu_{\varphi,\varphi}(E)
  \leq \Var_\mu\big(T_k[h,\varphi]\big)
  \leq C\|\varphi\|_\infty^2,
  \qquad \varphi\in C_K(\Sh_k(V)).
\end{equation}
Equation~\eqref{eq:bartlett-measure-cauchy-schwarz} gives
\begin{equation}\label{eq:bartlett-coefficient-bound}
  |\nu_{\varphi,\psi}(E)|
  \leq \nu_{\varphi,\varphi}(E)^{1/2}
         \nu_{\psi,\psi}(E)^{1/2}
  \leq C\|\varphi\|_\infty\|\psi\|_\infty.
\end{equation}
For fixed $\psi\in\mathcal C_k$, choose a compact set containing
$\operatorname{supp}\psi$.  The same estimate, with a common compact set for
$\varphi$ and $\psi$, shows that
$\varphi\mapsto\nu_{\varphi,\psi}(E)$ is continuous on every
$C_K(\Sh_k(V))$; hence it is continuous on $\mathcal C_k$ by the universal
property of this inductive-limit topology.  The same holds in the second variable.
Thus the coefficient form is separately continuous, which is exactly what is
needed for a linear map into the weak-$*$ anti-dual.  It determines a unique
operator $\bsigmak(E)\in\mathcal L(\mathcal C_k,\mathcal C_k^*)$ by
\begin{equation}\label{eq:operator-bartlett}
  (\bsigmak(E)\varphi)(\psi)=\nu_{\varphi,\psi}(E),
  \qquad \varphi,\psi\in\mathcal C_k.
\end{equation}
Positivity and weak countable additivity hold coefficientwise, so
$\bsigmak$ is a locally finite positive weak operator-valued Radon measure and
satisfies \eqref{eq:bartlett-defining-identity}.  Uniqueness follows from the uniqueness of the scalar measures just noted and polarization.
\end{proof}

By construction,
\begin{equation}\label{eq:bartlett-coefficients}
  (\bsigmak(E)\varphi)(\psi)=\sigmak_{\varphi,\psi}(E),
  \qquad E\in\mathscr B_c(V^*),\quad
  \varphi,\psi\in\mathcal C_k.
\end{equation}

At the first level the shape space consists of one point, and
\Cref{prop:operator-bartlett} reduces to the ordinary scalar Bartlett
spectrum, which we denote by $\sigma$.  For higher orders the
operator-valued measure $\bsigmak$ is the primary spectral object; the
measures $\sigmak_{\varphi,\psi}$ are its coefficients.

\begin{lemma}[Translation boundedness of the coefficient spectra]
\label{lem:bartlett-translation-bounded}
For every $\varphi\in\mathcal C_k$, the positive measure
$\sigmak_{\varphi,\varphi}$ is \emph{translation bounded}: for every compact
$E\subset V^*$ there is a constant $C_{E,\varphi}$ such that
\begin{equation}\label{eq:bartlett-translation-bounded}
  \sigmak_{\varphi,\varphi}(E+\xi_0)\leq C_{E,\varphi},
  \qquad \xi_0\in V^*.
\end{equation}
\end{lemma}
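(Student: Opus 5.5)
The plan is to reuse the local bound \eqref{eq:bartlett-local-bound} from the proof of \Cref{prop:operator-bartlett}, replacing the test function by a modulated one so that the same variance bound controls every translate $E+\xi_0$. Fix $\varphi\in\mathcal C_k$ with $\operatorname{supp}\varphi\subset K$ and a compact $E\subset V^*$. As in that proof, choose $h\in C_c^\infty(V)$ with $|\widehat h|\geq1$ on $E$. For $\xi_0\in V^*$ we set $h_{\xi_0}(v):=e^{2\pi i\xi_0(v)}h(v)$. Then $\widehat{h_{\xi_0}}(\xi)=\widehat h(\xi-\xi_0)$, so $|\widehat{h_{\xi_0}}|\geq1$ on $E+\xi_0$.

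First, positivity of $\sigmak_{\varphi,\varphi}$ and the defining identity \eqref{eq:bartlett-defining-identity} with $f=g=h_{\xi_0}$, extended complex-linearly as the paper allows, give
\[
  \sigmak_{\varphi,\varphi}(E+\xi_0)
  \leq\int_{V^*}|\widehat{h_{\xi_0}}|^2\,d\sigmak_{\varphi,\varphi}
  =\Var_\mu\bigl(T_k[h_{\xi_0},\varphi]\bigr).
\]

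Second, we bound the variance uniformly in $\xi_0$. We have $\Var_\mu(X)\leq\mu(|X|^2)$, and the pointwise estimate \eqref{eq:pattern-statistic-L2-bound} depends on $f$ only through $\|f\|_\infty$ and $\operatorname{supp}f$. Since $|h_{\xi_0}|=|h|$ and $\operatorname{supp}h_{\xi_0}=\operatorname{supp}h$, we get
\[
  |T_k[h_{\xi_0},\varphi](p)|
  \leq\|h\|_\infty\|\varphi\|_\infty\,p(\eta)^k,
\]
with $\eta\in C_c(V)$ a fixed nonnegative function that is $\geq1$ on $\operatorname{supp}h+B_r$, where $r$ depends on $K$. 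Squaring and integrating, using local $L^{2k}$-integrability, yields the constant
\[
  C_{E,\varphi}:=\|h\|_\infty^2\|\varphi\|_\infty^2\,\mu\bigl(p(\eta)^{2k}\bigr)<\infty,
\]
which is independent of $\xi_0$.

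The only point needing care is that \eqref{eq:pattern-statistic-L2-bound} was stated for real $f$, whereas $h_{\xi_0}$ is complex. The proof of \Cref{lem:pattern-statistics-L2} applies verbatim, because it only bounds $|F_{f,\varphi}|$ by $\|f\|_\infty\|\varphi\|_\infty$ times an indicator. One could alternatively split $h_{\xi_0}$ into real and imaginary parts, each with sup norm at most $\|h\|_\infty$, at the cost of a factor~$4$. I do not expect a genuine obstacle here: the argument is modulation plus the compact-support bound.
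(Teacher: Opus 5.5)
Your proposal is correct and follows the paper's proof essentially line for line. You modulate $h$ by $e^{2\pi i\xi_0(v)}$ so that $|\widehat{h_{\xi_0}}|=|\widehat h(\cdot-\xi_0)|\geq1$ on $E+\xi_0$, which bounds the mass by $\Var_\mu(T_k[h_{\xi_0},\varphi])$, and you observe that the pointwise bound \eqref{eq:pattern-statistic-L2-bound} depends only on $|h_{\xi_0}|=|h|$ and $\operatorname{supp}h_{\xi_0}=\operatorname{supp}h$. Your concern about complex test functions is also harmless, since the defining identity \eqref{eq:bartlett-defining-identity} is already stated for $f,g\in C_c^\infty(V;\mathbb C)$.
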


\begin{proof}
Choose $h\in C_c^\infty(V)$ with $|\widehat h|\geq1$ on $E$.  For
$\xi_0\in V^*$ set
\begin{equation}\label{eq:bartlett-modulated-test}
  h_{\xi_0}(v):=e^{2\pi i\xi_0(v)}h(v),
  \qquad v\in V.
\end{equation}
Then $\widehat h_{\xi_0}(\xi)=\widehat h(\xi-\xi_0)$, and hence
\begin{equation}\label{eq:bartlett-translation-bound-proof}
  \sigmak_{\varphi,\varphi}(E+\xi_0)
  \leq \int_{V^*}|\widehat h_{\xi_0}(\xi)|^2\,
       d\sigmak_{\varphi,\varphi}(\xi)
  =\Var_\mu\big(T_k[h_{\xi_0},\varphi]\big).
\end{equation}
The estimate in \eqref{eq:pattern-statistic-L2-bound} is uniform in
$\xi_0$, because $|h_{\xi_0}|=|h|$ and
$\operatorname{supp}h_{\xi_0}=\operatorname{supp}h$.  This proves
\eqref{eq:bartlett-translation-bounded}.
\end{proof}

\subsection{Spectral hyperuniformity}
\label{subsec:spectral-hu}

Let $B_\varepsilon^*\subset V^*$ denote the closed ball of radius $\varepsilon$
centred at the origin for the dual Euclidean structure.

\begin{definition}\label{def:spectral-k-hu}
We say that $\mu$ is \emph{spectrally $k$-hyperuniform} if, for every
$\varphi,\psi\in\mathcal C_k$,
\begin{equation}\label{eq:spectral-k-hu}
  \lim_{\varepsilon\to0}
  \frac{(\bsigmak(B_\varepsilon^*)\varphi)(\psi)}
       {\Vol_d(B_\varepsilon^*)}=0.
\end{equation}
Equivalently, by positivity and polarization, it is enough to require
\begin{equation}\label{eq:spectral-k-hu-diagonal}
  \lim_{\varepsilon\to0}
  \frac{\sigmak_{\varphi,\varphi}(B_\varepsilon^*)}
       {\Vol_d(B_\varepsilon^*)}=0,
  \qquad \varphi\in\mathcal C_k.
\end{equation}
\end{definition}

\begin{lemma}[Ball variance formula]\label{lem:ball-variance-bartlett}
For every $R>0$ and $\varphi\in\mathcal C_k$,
\begin{equation}\label{eq:ball-variance-bartlett}
  \Var_\mu\big(T_k[\chi_{B_R},\varphi]\big)
  =\int_{V^*}|\widehat{\chi}_{B_R}(\xi)|^2\,
    d\sigmak_{\varphi,\varphi}(\xi).
\end{equation}
\end{lemma}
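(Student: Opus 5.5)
The plan is to extend the defining identity \eqref{eq:bartlett-defining-identity}, which holds for $f,g\in C_c^\infty(V;\mathbb C)$, to $f=g=\chi_{B_R}$ by approximation. Fix $\varphi$ and $R$. Choose a smooth approximate identity $\rho_n(v):=n^d\rho(nv)$ with $\rho\in C_c^\infty(V)$, $\rho\geq0$, $\int\rho=1$, $\operatorname{supp}\rho\subset B_1$, and set $f_n:=\chi_{B_R}*\rho_n\in C_c^\infty(V)$. Then $0\leq f_n\leq1$, all $f_n$ are supported in the fixed compact set $B_{R+1}$, and $f_n\to\chi_{B_R}$ pointwise off the sphere $\partial B_R$, a Lebesgue-null set. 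The identity gives
\[
  \Var_\mu\bigl(T_k[f_n,\varphi]\bigr)=\int_{V^*}|\widehat f_n(\xi)|^2\,d\sigmak_{\varphi,\varphi}(\xi),
\]
and I will pass to the limit on both sides.

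\emph{Left-hand side.} By \eqref{eq:pattern-statistic-L2-bound}, with a single $h$ chosen for the compact set $B_{R+1}$, we have $|T_k[f_n,\varphi]|\leq\|\varphi\|_\infty p(h)^k$. This dominating function lies in $L^2(\mu)$. Dominated convergence in $L^2(\mu)$ then reduces the problem to showing $T_k[f_n,\varphi](p)\to T_k[\chi_{B_R},\varphi](p)$ for $\mu$-a.e.\ $p$. Write $T_k[f,\varphi](p)=\int f(\bar v)\varphi([v^\circ])\,dp^{\otimes k}(v)$. Pointwise convergence of the integrand holds off the set $\{v:\bar v\in\partial B_R\}$. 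So it suffices that $p^{\otimes k}$ gives this set zero mass for $\mu$-a.e.\ $p$, on the region where $\varphi\neq0$.

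This is the main obstacle, since $p$ may carry atoms. The natural tool is stationarity. The expected $p^{\otimes k}$-mass of $\{\bar v\in\partial B_R+a\}$ restricted to the compact support region is independent of $a$ in distribution. Integrating over $a$ in a ball and using Fubini, the expected mass of $\{\bar v-a\in\partial B_R\}$ averaged in $a$ equals the expectation of $\int p^{\otimes k}$-measure times $\Vol_d(\{a:\bar v-a\in\partial B_R\})=0$. Hence $\mathbb E_\mu\bigl[p^{\otimes k}(\{\bar v\in\partial B_R\}\cap\text{supp})\bigr]=0$, because the expectation is constant in $a$ by invariance and finite by local $L^k$-integrability. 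Therefore the exceptional set is $\mu$-null for each $R$, and the left-hand side converges.

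\emph{Right-hand side.} We have $\widehat f_n=\widehat{\chi}_{B_R}\,\widehat\rho(\xi/n)\to\widehat\chi_{B_R}$ pointwise, with $|\widehat f_n|\leq|\widehat\chi_{B_R}|$ since $|\widehat\rho|\leq1$. So dominated convergence applies once $\int|\widehat\chi_{B_R}|^2\,d\sigmak_{\varphi,\varphi}<\infty$. This follows from Fatou applied to the identity for $f_n$: the left-hand sides converge and are therefore bounded. Alternatively it follows from translation boundedness (Lemma~\ref{lem:bartlett-translation-bounded}) combined with $|\widehat\chi_{B_R}(\xi)|^2=O(|\xi|^{-(d+1)})$. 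Having finiteness, dominated convergence gives the right-hand limit, and equating limits yields \eqref{eq:ball-variance-bartlett}.
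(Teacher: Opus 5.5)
Your proof is correct. It shares the paper's overall scheme: approximate $\chi_{B_R}$ by smooth functions and pass to the limit in \eqref{eq:bartlett-defining-identity}. Your choice of approximants, however, changes how both limit steps are handled.

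\textbf{The variance side.} The paper takes $f_n\in C_c^\infty(V)$ with $f_n=1$ on $B_R$ and $\operatorname{supp}f_n\subset B_{R+1/n}$. Since $B_R$ is closed, $f_n\to\chi_{B_R}$ at every point, including $\partial B_R$. Hence $T_k[f_n,\varphi](p)\to T_k[\chi_{B_R},\varphi](p)$ for every $p$, with no use of stationarity. Your mollifications $\chi_{B_R}*\rho_n$ converge only off the sphere. That is why you need the extra Fubini/invariance step showing that the expected $p^{\otimes k}$-mass of $\{\bar v\in\partial B_R\}$, inside the compact region where $\varphi\neq0$, vanishes. That step is sound. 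It amounts to the fact that a stationary positive random measure with locally finite intensity (here, the barycentre measure of $k$-tuples with shape in $\operatorname{supp}\varphi$) almost surely does not charge a fixed Lebesgue-null set. Two small fixes: say the \emph{expectation} is independent of $a$, not that the mass is ``independent of $a$ in distribution''; and note the routine joint measurability of $(a,p)\mapsto p^{\otimes k}(\{\bar v\in\partial B_R+a\}\cap\cdots)$ that Tonelli requires.

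\textbf{The spectral side.} Here your choice pays off. Since $\widehat f_n=\widehat\chi_{B_R}\,\widehat\rho(\cdot/n)$ with $|\widehat\rho|\leq1$, you get the domination $|\widehat f_n|\leq|\widehat\chi_{B_R}|$ for free, so Fatou plus dominated convergence suffice. The paper has no such domination for its $f_n$. It instead applies the identity to $f_n-f_m$ to show $(\widehat f_n)$ is Cauchy in $L^2(\sigmak_{\varphi,\varphi})$, then identifies the limit through an a.e.\ convergent subsequence.

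\textbf{Scope.} Your argument applies verbatim to any bounded Borel window with Lebesgue-null boundary. The paper's approximation from outside applies verbatim to any compact window.
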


\begin{proof}
Choose $f_n\in C_c^\infty(V)$ with $0\leq f_n\leq1$, with $f_n=1$ on
$B_R$, and with $\operatorname{supp}f_n\subset B_{R+1/n}$.  Then
$f_n\to\chi_{B_R}$ pointwise.  A fixed function $h\in C_c(V)$ with
$h\geq1$ on $B_{R+1}$ gives, by \eqref{eq:pattern-statistic-L2-bound}, an
$L^2(\mu)$-majorant for $T_k[f_n,\varphi]$.  Hence
\begin{equation}\label{eq:ball-statistic-L2-convergence}
  T_k[f_n,\varphi]\longrightarrow
  T_k[\chi_{B_R},\varphi]
  \quad\text{in }L^2(\mu).
\end{equation}
Applying \eqref{eq:bartlett-defining-identity} to $f_n-f_m$ shows that
$(\widehat f_n)$ is Cauchy in
$L^2(\sigmak_{\varphi,\varphi})$.  On the other hand,
$\widehat f_n(\xi)\to\widehat\chi_{B_R}(\xi)$ for every $\xi\in V^*$ by
dominated convergence on $V$.  An $L^2(\sigmak_{\varphi,\varphi})$-convergent
subsequence converges $\sigmak_{\varphi,\varphi}$-almost everywhere to the
$L^2$-limit; the pointwise convergence therefore identifies that limit with
$\widehat\chi_{B_R}$.  Taking limits in the Bartlett identity gives
\eqref{eq:ball-variance-bartlett}.
\end{proof}

At first order, the geometric--spectral equivalence is a folklore
principle.  For the Euclidean-ball argument at first order, we refer to
\cite[Theorem~3.6(i),(iii)]{BH24}.

\begin{theorem}[Geometric--spectral equivalence]\label{thm:geometric-spectral}
Let $\mu$ be a $V$-invariant probability measure on $\cM(V)$ which is
locally $L^{2k}$-integrable.  Then $\mu$ is geometrically $k$-hyperuniform
if and only if it is spectrally $k$-hyperuniform.
\end{theorem}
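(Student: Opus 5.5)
The plan is to reduce everything to a single scalar positive measure and then run the classical first-order argument. By the polarization remark in Definition~\ref{def:spectral-k-hu}, spectral $k$-hyperuniformity is equivalent to the diagonal condition \eqref{eq:spectral-k-hu-diagonal}. So it suffices to fix $\varphi\in C_c(\Sh_k(V))$ and prove the following for the positive measure $\sigma:=\sigmak_{\varphi,\varphi}$:
\[
  \lim_{R\to\infty}\frac{\Var_\mu(T_k[\chi_{B_R},\varphi])}{\Vol_d(B_R)}=0
  \quad\Longleftrightarrow\quad
  \lim_{\varepsilon\to0}\frac{\sigma(B_\varepsilon^*)}{\Vol_d(B_\varepsilon^*)}=0 .
\]
Lemma~\ref{lem:ball-variance-bartlett} rewrites the left-hand quotient as
\[
  \frac{1}{\Vol_d(B_R)}\int_{V^*}|\widehat\chi_{B_R}(\xi)|^2\,d\sigma(\xi)
  =\Vol_d(B_R)\int_{V^*}|\widehat\chi_{B_1}(R\xi)|^2\,d\sigma(\xi),
\]
using the scaling $\widehat\chi_{B_R}(\xi)=R^d\widehat\chi_{B_1}(R\xi)$.

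\emph{Geometric $\Rightarrow$ spectral.} Since $\widehat\chi_{B_1}$ is continuous with $\widehat\chi_{B_1}(0)=\Vol_d(B_1)>0$, choose $\delta,c>0$ with $|\widehat\chi_{B_1}(\eta)|\geq c$ for $|\eta|\leq\delta$. Put $R:=\delta/\varepsilon$ and keep only the integral over $B_\varepsilon^*$. This gives
\[
  \Var_\mu(T_k[\chi_{B_R},\varphi])\geq c^2R^{2d}\sigma(B_\varepsilon^*).
\]
Because $\Vol_d(B_R)\Vol_d(B_\varepsilon^*)$ is a constant multiple of $\delta^d$, we get $\sigma(B_\varepsilon^*)/\Vol_d(B_\varepsilon^*)\leq C\,\Var_\mu(T_k[\chi_{B_R},\varphi])/\Vol_d(B_R)$. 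The right-hand side tends to $0$ as $\varepsilon\to0$.

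\emph{Spectral $\Rightarrow$ geometric.} This is the main step.
\begin{enumerate}[label=\textup{(\alph*)}]
\item Use the classical Bessel decay $|\widehat\chi_{B_1}(\eta)|^2\leq g(|\eta|)$, where $g(t):=C(1+t)^{-(d+1)}$ is decreasing.
\item Put $F(s):=\sigma(B_s^*)$. The hypothesis gives $F(s)=o(s^d)$ as $s\to0$. Lemma~\ref{lem:bartlett-translation-bounded} gives $F(s)\leq C's^d$ for $s\geq1$, by covering $B_s^*$ with $O(s^d)$ translates of $B_1^*$. Combining the two, $F(s)\leq C''s^d$ for all $s>0$.
\item Write the integral by the layer-cake formula, $\int g(R|\xi|)\,d\sigma=\int_0^\infty F(s)\,R\,|g'(Rs)|\,ds$. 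The boundary terms vanish because $F$ grows polynomially of degree $d$ while $g$ decays faster.
\item Substitute $t=Rs$. The quotient is then bounded by a constant times
\[
  \int_0^\infty t^d|g'(t)|\cdot\frac{F(t/R)}{(t/R)^d}\,dt .
\]
\item The second factor is bounded by $C''$ and tends to $0$ pointwise as $R\to\infty$. Also $t^d|g'(t)|\lesssim t^d(1+t)^{-(d+2)}$ is integrable on $(0,\infty)$. Dominated convergence then gives the limit $0$.
\end{enumerate}

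The only delicate point is this domination. It needs both the Fourier decay of the ball, with exponent $d+1>d$, and the global bound $F(s)\lesssim s^d$ at large $s$, which is exactly what translation boundedness supplies. Neither ingredient depends on $k$, so the first-order argument of \cite[Theorem~3.6]{BH24} transfers verbatim once the problem is reduced to the scalar measure $\sigmak_{\varphi,\varphi}$.
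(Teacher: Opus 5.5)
Your proof is correct and follows the paper's argument. For geometric $\Rightarrow$ spectral you use the same lower bound from $|\widehat\chi_{B_1}|\geq c$ near the origin. For the converse you use the same three ingredients: the $(1+\|\xi\|)^{-(d+1)}$ decay of $|\widehat\chi_{B_1}|^2$, the hypothesis $\sigma(B_s^*)=o(s^d)$, and the bound $\sigma(B_s^*)=O(s^d)$ at large $s$ from translation boundedness. The only difference is how these are combined: you use the layer-cake formula and dominated convergence, whereas the paper splits into dyadic annuli inside $B_{r_0}^*$ plus a tail region contributing $O(1/R)$.

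There is one harmless constant slip. The quantity $\frac{1}{\Vol_d(B_R)}\int|\widehat\chi_{B_R}|^2\,d\sigma$ equals $\frac{R^d}{\Vol_d(B_1)}\int|\widehat\chi_{B_1}(R\xi)|^2\,d\sigma(\xi)$, not $\Vol_d(B_R)\int|\widehat\chi_{B_1}(R\xi)|^2\,d\sigma(\xi)$.
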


\begin{proof}
Fix $\varphi\in\mathcal C_k$ and write
$\nu:=\sigmak_{\varphi,\varphi}$.  By
\Cref{lem:ball-variance-bartlett}, geometric $k$-hyperuniformity for this $\varphi$ is equivalent to
\begin{equation}\label{eq:geometric-spectral-variance-form}
  \lim_{R\to\infty}
  \frac{1}{\Vol_d(B_R)}
  \int_{V^*}|\widehat\chi_{B_R}(\xi)|^2\,d\nu(\xi)=0.
\end{equation}

Assume first that \eqref{eq:geometric-spectral-variance-form} holds.  Since
$\widehat\chi_{B_1}$ is continuous and
$\widehat\chi_{B_1}(0)=\Vol_d(B_1)$, there are $c,\delta>0$ such that
$|\widehat\chi_{B_1}(\xi)|\geq c$ for $\|\xi\|\leq\delta$.  Using
$\widehat\chi_{B_R}(\xi)=R^d\widehat\chi_{B_1}(R\xi)$ gives
\begin{equation}\label{eq:geometric-implies-spectral-estimate}
  \frac{\nu(B_{\delta/R}^*)}{\Vol_d(B_{\delta/R}^*)}
  \leq C
  \frac{\Var_\mu(T_k[\chi_{B_R},\varphi])}{\Vol_d(B_R)}.
\end{equation}
Letting $R\to\infty$ proves spectral $k$-hyperuniformity.  This is the
lower-bound argument used at first order in
\cite[Theorem~3.6(i)]{BH24}.

Conversely, assume
\begin{equation}\label{eq:spectral-smallness-fixed-coefficient}
  \lim_{r\to0}\frac{\nu(B_r^*)}{\Vol_d(B_r^*)}=0.
\end{equation}
The Fourier transform of a Euclidean ball satisfies
\begin{equation}\label{eq:ball-fourier-decay}
  |\widehat\chi_{B_1}(\xi)|^2
  \leq C_d(1+\|\xi\|)^{-(d+1)},
  \qquad \xi\in V^*;
\end{equation}
see, for example, the estimate used in the proof of
\cite[Theorem~3.6(iii)]{BH24}.  Given $\eta>0$, choose $r_0>0$ so that
\begin{equation}\label{eq:spectral-small-ball-uniform}
  \nu(B_r^*)\leq \eta\,\Vol_d(B_r^*)
  \qquad(0<r\leq r_0).
\end{equation}
For $R$ large, choose $J\geq0$ with
$2^J/R\leq r_0<2^{J+1}/R$.  Decompose $B_{2^J/R}^*$ into $B_{1/R}^*$ and
the annuli
\begin{equation}\label{eq:spectral-dyadic-annuli}
  A_j:=B_{2^{j+1}/R}^*\setminus B_{2^j/R}^*,
  \qquad 0\leq j<J.
\end{equation}
Equations \eqref{eq:ball-fourier-decay} and
\eqref{eq:spectral-small-ball-uniform} imply
\begin{equation}\label{eq:spectral-inner-region-estimate}
  \frac{1}{\Vol_d(B_R)}
  \int_{B_{2^J/R}^*}|\widehat\chi_{B_R}(\xi)|^2\,d\nu(\xi)
  \leq C_d\eta\left(1+\sum_{j=0}^{J-1}2^{-j}\right)
  \leq C_d'\eta.
\end{equation}
By \Cref{lem:bartlett-translation-bounded}, $\nu$ is translation bounded.
Covering $B_t^*$ by $O(t^d)$ translates of a fixed unit ball gives
$\nu(B_t^*)=O(t^d)$ as $t\to\infty$; a dyadic annulus decomposition then
gives
\begin{equation}\label{eq:spectral-weighted-tail-integrability}
  \int_{\{\|\xi\|\geq r_0/2\}}
       \|\xi\|^{-(d+1)}\,d\nu(\xi)<\infty.
\end{equation}
Since $2^J/R\geq r_0/2$, every
$\xi\notin B_{2^J/R}^*$ satisfies $\|\xi\|\geq r_0/2$.  Using
$\widehat\chi_{B_R}(\xi)=R^d\widehat\chi_{B_1}(R\xi)$ and
\eqref{eq:ball-fourier-decay}, we obtain on this region
\[
  \frac{|\widehat\chi_{B_R}(\xi)|^2}{\Vol_d(B_R)}
  \leq \frac{C_d}{R}\,\|\xi\|^{-(d+1)}.
\]
The integrability in \eqref{eq:spectral-weighted-tail-integrability} therefore
gives
\begin{equation}\label{eq:spectral-outer-region-estimate}
  \frac{1}{\Vol_d(B_R)}
  \int_{V^*\setminus B_{2^J/R}^*}
       |\widehat\chi_{B_R}(\xi)|^2\,d\nu(\xi)
  \leq \frac{C_{d,r_0,\nu}}{R}\longrightarrow0.
\end{equation}
Combining \eqref{eq:spectral-inner-region-estimate} and
\eqref{eq:spectral-outer-region-estimate}, taking $R\to\infty$, and then
letting $\eta\to0$ proves \eqref{eq:geometric-spectral-variance-form}.

Thus, for every $\varphi\in\mathcal C_k$,
\begin{equation}\label{eq:geometric-spectral-coefficient-equivalence}
  \lim_{R\to\infty}
  \frac{\Var_\mu(T_k[\chi_{B_R},\varphi])}{\Vol_d(B_R)}=0
  \quad\Longleftrightarrow\quad
  \lim_{\varepsilon\to0}
  \frac{\sigmak_{\varphi,\varphi}(B_\varepsilon^*)}
       {\Vol_d(B_\varepsilon^*)}=0.
\end{equation}
Positivity and polarization then give the operator-valued formulation in
\eqref{eq:spectral-k-hu}.
\end{proof}

\subsection{Pair statistics and comparison with Frommer--Hanke}
\label{subsec:full-factorial}

Our definition of $\HU_2$ differs from the second-order statistics of
Frommer--Hanke \cite{FH26} in two respects: $T_2$ includes pairs with equal
entries, and the large ball is applied to the barycentre rather than to both
endpoints.  Under $\HU_1$ the first difference disappears; under the
correlation-density hypotheses of \cite[Proposition~4.1]{FH26}, so does the
second.

For a simple point process $p=\sum_{v\in\Lambda_p}\delta_v$, define the
pairwise-distinct contribution by
\begin{equation}\label{eq:factorial-tensor}
  p_{\neq}^{\otimes k}
  :=\sum_{(v_1,\ldots,v_k)\in(\Lambda_p^k)_{\neq}}
       \delta_{(v_1,\ldots,v_k)},
\end{equation}
where $(\Lambda_p^k)_{\neq}$ consists of ordered tuples with pairwise distinct
coordinates, and set
\begin{equation}\label{eq:factorial-pattern-statistic}
  T_k^{\neq}[f,\varphi](p)
  :=p_{\neq}^{\otimes k}(F_{f,\varphi}).
\end{equation}

Every ordered tuple determines a partition of $\{1,\ldots,k\}$ by declaring
two indices equivalent when the corresponding coordinates are equal.  Let
$\Pi_k$ be the set of these partitions.  If $\pi\in\Pi_k$ has blocks
$B_1,\ldots,B_j$, ordered by their least elements, then a tuple with equality
pattern $\pi$ is obtained from $j$ pairwise distinct points
$u_1,\ldots,u_j$ by repeating $u_r$ in the coordinates indexed by $B_r$.
Write $\Delta_\pi:V^j\to V^k$ for this repetition map.  For example, if $k=3$ and $\pi=\{\{1,2\},\{3\}\}$, then
$\Delta_\pi(u,v)=(u,u,v)$, whose barycentre is $(2u+v)/3$.  Then
\begin{equation}\label{eq:full-factorial-partition}
  p^{\otimes k}
  =\sum_{\pi\in\Pi_k}(\Delta_\pi)_*
       p_{\neq}^{\otimes |\pi|}.
\end{equation}
Correspondingly, if $\pi$ has $j$ blocks, define
\begin{equation}\label{eq:equality-partition-statistic}
  T_{k,\pi}[f,\varphi](p)
  :=p_{\neq}^{\otimes j}\!\left(
       (u_1,\ldots,u_j)\mapsto
       f\!\left(\frac1k\sum_{r=1}^j|B_r|u_r\right)
       \varphi\!\left([\Delta_\pi(u)^\circ]\right)
     \right),
\end{equation}
so that
\begin{equation}\label{eq:full-as-equality-partitions}
  T_k[f,\varphi]=\sum_{\pi\in\Pi_k}T_{k,\pi}[f,\varphi].
\end{equation}
The partition of $\{1,\ldots,k\}$ into singletons gives
$T_k^{\neq}$.  Whenever a block has more than one element, the barycentre in
\eqref{eq:equality-partition-statistic} weights the corresponding point by
the size of that block.

For $k=2$ there are only the distinct-point term and the diagonal term.
Denoting the diagonal shape in $\Sh_2(V)$ by $0$, we have
\begin{equation}\label{eq:full-factorial-pair-identity}
  T_2[f,\varphi]
  =T_2^{\neq}[f,\varphi]+\varphi(0)Sf.
\end{equation}

\begin{proposition}\label{prop:full-factorial-pair-hu}
Let $\mu$ be a simple point process which is locally $L^4$-integrable and
belongs to $\HU_1$.  Then $\mu\in\HU_2$ if and only if, for every
$\varphi\in C_c(\Sh_2(V))$,
\begin{equation}\label{eq:factorial-pair-hu}
  \lim_{R\to\infty}
  \frac{\Var_\mu\big(T_2^{\neq}[\chi_{B_R},\varphi]\big)}
       {\Vol_d(B_R)}=0.
\end{equation}
\end{proposition}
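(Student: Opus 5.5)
The plan is to use the decomposition \eqref{eq:full-factorial-pair-identity}, $T_2[\chi_{B_R},\varphi]=T_2^{\neq}[\chi_{B_R},\varphi]+\varphi(0)S\chi_{B_R}$, and compare variances via the triangle inequality in $L^2(\mu)$. First I will check that this identity holds for the indicator $f=\chi_{B_R}$, not only for continuous $f$. The partition identity \eqref{eq:full-factorial-partition} is an identity of measures, and $F_{\chi_{B_R},\varphi}$ is a bounded Borel function with compact support. Applied to $p^{\otimes 2}$, it splits the sum into the off-diagonal pairs and the diagonal pairs $x=y$. A diagonal pair has barycentre $x$ and shape $0$, so it contributes $\chi_{B_R}(x)\varphi(0)$.

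Next, integrability. By the bound \eqref{eq:pattern-statistic-L2-bound}, $T_2[\chi_{B_R},\varphi]\in L^2(\mu)$. Local $L^4$-integrability gives $S\chi_{B_R}\in L^2(\mu)$, since $|S\chi_{B_R}|\le p(h)$ for a suitable $h\in C_c(V)$. Hence $T_2^{\neq}[\chi_{B_R},\varphi]$, being their difference, also lies in $L^2(\mu)$. All three variances are therefore finite.

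The core step is the elementary inequality $\Var(X+Y)^{1/2}\le \Var(X)^{1/2}+\Var(Y)^{1/2}$, i.e.\ Minkowski's inequality for centred variables. I apply it with $X=T_2^{\neq}[\chi_{B_R},\varphi]$ and $Y=\varphi(0)S\chi_{B_R}$, and then again with $X=T_2[\chi_{B_R},\varphi]$ and $Y=-\varphi(0)S\chi_{B_R}$. This gives
\[
\Bigl|\Var_\mu(T_2[\chi_{B_R},\varphi])^{1/2}-\Var_\mu(T_2^{\neq}[\chi_{B_R},\varphi])^{1/2}\Bigr|\le|\varphi(0)|\,\Var_\mu(S\chi_{B_R})^{1/2}.
\]
Dividing by $\Vol_d(B_R)^{1/2}$ and using $\mu\in\HU_1$, the right-hand side tends to $0$. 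So, for each fixed $\varphi$, the normalized variance of $T_2$ tends to $0$ if and only if that of $T_2^{\neq}$ does. Quantifying over all $\varphi\in C_c(\Sh_2(V))$ gives the equivalence of $\mu\in\HU_2$, as defined in \Cref{def:geometric-k-hu}, with \eqref{eq:factorial-pair-hu}.

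The only point needing care is the justification of the identity for indicator functions, together with the $L^2$ membership of $T_2^{\neq}$. Both follow from the measure-level identity and \Cref{lem:pattern-statistics-L2}, so I expect no genuine obstacle; the remaining steps are immediate.
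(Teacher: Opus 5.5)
Your proposal is correct and follows essentially the same route as the paper: decompose $T_2[\chi_{B_R},\varphi]=T_2^{\neq}[\chi_{B_R},\varphi]+\varphi(0)S\chi_{B_R}$ and use $\HU_1$ to make the diagonal term negligible. The only difference is cosmetic: you use Minkowski's inequality for standard deviations, whereas the paper uses the cruder bounds $\Var(A-B)\leq2\Var(A)+2\Var(B)$ and $\Var(A)\leq2\Var(A-B)+2\Var(B)$.
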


\begin{proof}
For a fixed $\varphi$, set
$D_R:=\varphi(0)S\chi_{B_R}$.  By \eqref{eq:full-factorial-pair-identity},
\begin{equation}\label{eq:pair-full-factorial-difference}
  T_2^{\neq}[\chi_{B_R},\varphi]
  =T_2[\chi_{B_R},\varphi]-D_R.
\end{equation}
Since $\mu\in\HU_1$, one has
$\Var_\mu(D_R)=o(\Vol_d(B_R))$.  The inequalities
\begin{equation}\label{eq:pair-variance-comparison}
  \Var_\mu(A-B)\leq2\Var_\mu(A)+2\Var_\mu(B),
  \qquad
  \Var_\mu(A)\leq2\Var_\mu(A-B)+2\Var_\mu(B),
\end{equation}
show that the original and distinct-point pair variances are simultaneously
$o(\Vol_d(B_R))$.
\end{proof}

\begin{proposition}[Comparison with Frommer--Hanke]\label{prop:FH-comparison}
Let $\mu$ be a stationary simple point process on $\mathbb R^d$ with
correlation densities.  Assume that $\rho^{(2)}$ and $\rho^{(3)}$ are bounded
and that, with
\[
  \chi^{(4)}(x_1,x_2,x_3,x_4)
  :=\rho^{(4)}(x_1,x_2,x_3,x_4)
    -\rho^{(2)}(x_1,x_2)\rho^{(2)}(x_3,x_4),
\]
one has
\begin{equation}\label{eq:FH-four-point-integrability}
  \sup_{x_1,x_2,x_4}
  \int_{\mathbb R^d}
  \big|\chi^{(4)}(x_1,x_2,x_3,x_3+x_4)\big|\,dx_3<\infty.
\end{equation}
For an even $u\in L^1(\mathbb R^d)\cap L^2(\mathbb R^d)$, define
\begin{align*}
  E_R(u)
  &:=\sum_{x\neq y}\chi_{B_R}(x)\chi_{B_R}(y)u(x-y),\\
  C_R(u)
  &:=\sum_{x\neq y}\chi_{B_R}\!\left(\frac{x+y}{2}\right)u(x-y).
\end{align*}
Then
\begin{equation}\label{eq:endpoint-barycentre-difference}
  \Var_\mu\big(E_R(u)-C_R(u)\big)=o(R^d).
\end{equation}
If either $\Var_\mu(E_R(u))$ or $\Var_\mu(C_R(u))$ is $O(R^d)$, then
\begin{equation}\label{eq:endpoint-barycentre-normalized-difference}
  \frac{\Var_\mu(E_R(u))-\Var_\mu(C_R(u))}{\Vol_d(B_R)}\longrightarrow0.
\end{equation}
In particular, endpoint and barycentre truncation give the same
subvolume-variance condition and the same finite volume-normalized variance
limit whenever that limit exists for either statistic.

If, in addition, $u$ is continuous and compactly supported, then
\begin{equation}\label{eq:FH-barycentre-is-T2neq}
  C_R(u)=T_2^{\neq}[\chi_{B_R},\varphi_u],
  \qquad
  \varphi_u\!\left(\left[\left(\frac z2,-\frac z2\right)\right]\right):=u(z).
\end{equation}
Consequently, if $\mu\in\HU_1$, then
\begin{equation}\label{eq:FH-HU2-equivalence}
  \mu\in\HU_2
  \quad\Longleftrightarrow\quad
  \Var_\mu(E_R(u))=o(R^d)
  \quad\text{for every even }u\in C_c(\mathbb R^d).
\end{equation}
Thus, on the common class of even compactly supported continuous pair
functions, their endpoint truncation and our barycentre truncation give the
same second-order hyperuniformity condition for $\HU_1$ point processes.
\end{proposition}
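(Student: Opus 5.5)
The plan is to write $D_R:=E_R(u)-C_R(u)$ as a sum over ordered distinct pairs with the kernel
\[
  w_R(x,y):=\bigl(\chi_{B_R}(x)\chi_{B_R}(y)-\chi_{B_R}(\tfrac{x+y}{2})\bigr)u(x-y).
\]
We then expand $\Var_\mu(D_R)$ through the factorial moment decomposition of a sum over distinct pairs. Writing $p_{\neq}^{\otimes2}(w)$ and squaring, the second factorial moment of the sum splits according to coincidences between $\{x_1,x_2\}$ and $\{x_3,x_4\}$:
\begin{itemize}[leftmargin=*]
\item a four-distinct-point term, which gives $\int w_R\otimes\overline{w_R}\,\chi^{(4)}$ after subtracting the square of the mean;
\item three-point terms (one coincidence) involving $\rho^{(3)}$;
\item two-point terms (two coincidences) involving $\rho^{(2)}$.
\end{itemize}

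The key geometric input is that $w_R(x,y)\neq0$ forces both $x$ and $y$ to lie near the sphere $\partial B_R$. Concretely, setting $m=(x+y)/2$ and $z=x-y$, membership of $m$, $x$ and $y$ in $B_R$ can disagree only when $\bigl||m|-R\bigr|\lesssim|z|$.

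We handle this first for compactly supported $u$, say $\operatorname{supp}u\subset B_L$. Then $w_R$ is supported in $\{|m|\in[R-L,R+L],\ |z|\le L\}$, a set of volume $O(R^{d-1})$, and $|w_R|\le 2|u|$. With this:
\begin{itemize}[leftmargin=*]
\item The two-point term is bounded by $\|\rho^{(2)}\|_\infty\int|w_R|^2\,dx\,dy=O(R^{d-1})$.
\item The three-point term is bounded by $\|\rho^{(3)}\|_\infty\int|w_R(x_1,x_2)||w_R(x_1,x_3)|$. Integrating over $x_2$ and $x_3$ first gives $\|u\|_1^2$, leaving an $x_1$-integral over a shell of volume $O(R^{d-1})$.
\item For the four-point term, we use \eqref{eq:FH-four-point-integrability} in the variables $(x_1,x_2,x_3,x_3+x_4)$. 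We bound $|w_R(x_3,x_3+x_4)|\le 2|u(x_4)|$ and integrate $x_3$ against $\chi^{(4)}$ using the supremum bound. The remaining $x_1,x_2,x_4$ integral is then $\le C\|w_R\|_{L^1}\|u\|_1=O(R^{d-1})$.
\end{itemize}
Hence $\Var_\mu(D_R)=O(R^{d-1})=o(R^d)$ for $u\in C_c$.

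The main obstacle is general even $u\in L^1\cap L^2$. We split $u=u\chi_{B_L}+u\chi_{B_L^c}=:u_L+u_L'$. The same three estimates, now without support localisation, give
\[
  \Var_\mu\bigl(E_R(u_L')\bigr)\ \text{and}\ \Var_\mu\bigl(C_R(u_L')\bigr)\ \le\ C R^d\bigl(\|u_L'\|_2^2+\|u_L'\|_1^2+\|u_L'\|_1\|u\|_1\bigr),
\]
which is $\varepsilon_L R^d$ with $\varepsilon_L\to0$ as $L\to\infty$. Here the two-point term is where $L^2$ is needed, and the mixed terms use $L^1$. For the four-point term, $E_R$ carries $\chi_{B_R}(x_3)\chi_{B_R}(x_3+x_4)$ and $C_R$ carries the barycentre indicator; in both cases we bound the indicator by $1$ inside the $x_3$-integral and keep $\chi_{B_{R}}$ or $\chi_{B_{R+\cdot}}$ on the first pair, which gives a factor $R^d\|u'_L\|_1$. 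The truncated part $u_L$ is a bounded, compactly supported (not necessarily continuous) kernel, and the argument above only used boundedness and support. Applying $\Var(a+b)\le2\Var a+2\Var b$, we get $\limsup R^{-d}\Var_\mu(D_R)\le C\varepsilon_L$, and letting $L\to\infty$ proves \eqref{eq:endpoint-barycentre-difference}.

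For \eqref{eq:endpoint-barycentre-normalized-difference}, suppose one of $\Var_\mu(E_R)$, $\Var_\mu(C_R)$ is $O(R^d)$. Then the other is too, by the variance triangle inequality, and
\[
  \bigl|\Var_\mu(E_R)-\Var_\mu(C_R)\bigr|\le \Var_\mu(D_R)+2\sqrt{\Var_\mu(D_R)\,\Var_\mu(C_R)}=o(R^d),
\]
using Cauchy--Schwarz on the covariance. For $u\in C_c$ even, \eqref{eq:FH-barycentre-is-T2neq} is immediate from \eqref{eq:k2-expanded-statistic}, since $[(\frac{x-y}2,\frac{y-x}2)]$ is the shape of $(x,y)$ and $\varphi_u$ is well defined precisely because $u$ is even; by Tietze extension this defines a function in $C_c(\Sh_2)$. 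Every $\varphi\in C_c(\Sh_2(V))$ has the form $\varphi_u$ with $u(z)=\varphi([(z/2,-z/2)])$ even and in $C_c$. Combining this with \eqref{eq:endpoint-barycentre-difference}, the variance comparison \eqref{eq:pair-variance-comparison}, and \Cref{prop:full-factorial-pair-hu} yields \eqref{eq:FH-HU2-equivalence}.
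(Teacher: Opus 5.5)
Your proof is correct, and its skeleton matches the paper's. You use the same coincidence expansion of $\Var_\mu(D_R)$ into $\chi^{(4)}$, $\rho^{(3)}$ and $\rho^{(2)}$ terms. You also control the four-point term the same way: keep the window factor on the first pair, bound the second pair by $|u|$, and integrate $x_3$ against $|\chi^{(4)}|$ using \eqref{eq:FH-four-point-integrability}.

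Where you differ is in how $o(R^d)$ is extracted for general $u\in L^1\cap L^2$. The paper never truncates. It writes the kernel of $D_R$ as $u(x-y)\,q_R(\frac{x+y}{2},x-y)$. It then bounds each normalized term by integrals of $|u|^2$ or $|u(z)u(w)|$ against the overlap defect $a_R(z)=\Vol_d(B_R)^{-1}\int|q_R(b,z)|\,db$. Since $0\le a_R\le 2$ and $a_R(z)\to0$ for each fixed $z$, dominated convergence finishes in one step.

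You instead prove a quantitative $O(R^{d-1})$ shell bound for compactly supported kernels. You then control the tail $u\chi_{B_L^c}$ by crude $O(R^d)$ bounds whose constants tend to $0$ as $L\to\infty$. This amounts to carrying out the dominated-convergence step by hand: on $|z|\le L$ one has $a_R(z)\lesssim L/R$, and on $|z|>L$ only $a_R\le 2$ is used. Your route buys an explicit boundary-order rate for compactly supported $u$; the paper's is shorter and needs no approximation step.

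Three small corrections:
\begin{enumerate}
\item $u\chi_{B_L}$ need not be bounded when $u\in L^1\cap L^2$. Your compact-support estimates only use $\|u_L\|_1$, $\|u_L\|_2$ and the support, so the argument still stands; you could also truncate in height.
\item No Tietze extension is needed for $\varphi_u$. Since $\Sh_2(V)\cong V/\{\pm1\}$ carries the quotient topology, an even continuous $u$ descends directly to a continuous function on $\Sh_2(V)$.
\item The extra $\|u'_L\|_1\|u\|_1$ term in your tail bound is unnecessary but harmless.
\end{enumerate}
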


\begin{proof}
Put
\[
  q_R(b,z)
  :=\chi_{B_R}(b+z/2)\chi_{B_R}(b-z/2)-\chi_{B_R}(b),
  \qquad
  a_R(z):=\frac1{\Vol_d(B_R)}\int_{\mathbb R^d}|q_R(b,z)|\,db.
\]
For every fixed $z$, $a_R(z)\to0$ as $R\to\infty$, while
$0\leq a_R(z)\leq2$.  The difference $D_R:=E_R(u)-C_R(u)$ is the pair
statistic with the symmetric kernel
\[
  h_R(x,y):=u(x-y)q_R\!\left(\frac{x+y}{2},x-y\right).
\]
The coincidence-pattern expansion underlying
\cite[Equations~(4.3)--(4.5)]{FH26} is purely combinatorial and therefore
applies to this kernel.  It gives
\begin{align}
  \Var_\mu(D_R)
  ={}&
  \int h_R(x_1,x_2)\overline{h_R(x_3,x_4)}
  \chi^{(4)}(x_1,x_2,x_3,x_4)\,dx_1dx_2dx_3dx_4
  \notag\\
  &+4\int h_R(x_1,x_2)\overline{h_R(x_1,x_3)}
  \rho^{(3)}(x_1,x_2,x_3)\,dx_1dx_2dx_3
  \notag\\
  &+2\int |h_R(x_1,x_2)|^2
  \rho^{(2)}(x_1,x_2)\,dx_1dx_2.
  \label{eq:FH-coincidence-expansion}
\end{align}

After the changes of variables $b=(x+y)/2$, $z=x-y$ in the two-point term,
and $z=x_2-x_1$, $w=x_3-x_1$ in the three-point term, their absolute values,
divided by $\Vol_d(B_R)$, are bounded respectively by
\[
  \|\rho^{(2)}\|_\infty
  \int |u(z)|^2a_R(z)\,dz
  \quad\text{and}\quad
  \|\rho^{(3)}\|_\infty
  \iint |u(z)u(w)|a_R(z)\,dz\,dw.
\]
Both tend to zero by dominated convergence.  For the four-point term, write
$x_2=t$, $x_1=t+z$, $x_3=t+y$, and $x_4=t+y+w$.  Translation invariance and
\eqref{eq:FH-four-point-integrability} bound its normalized absolute value by
\[
  C\iint |u(z)u(w)|a_R(z)\,dz\,dw,
\]
which again tends to zero.  This proves
\eqref{eq:endpoint-barycentre-difference}.

The $L^2(\mu)$ triangle inequality gives
\[
  \left|\sqrt{\Var_\mu(E_R(u))}
       -\sqrt{\Var_\mu(C_R(u))}\right|
  \leq \sqrt{\Var_\mu(D_R)}=o(R^{d/2}).
\]
If either variance is $O(R^d)$, so is the other, and therefore
\[
  |\Var_\mu(E_R(u))-\Var_\mu(C_R(u))|
  \leq
  \sqrt{\Var_\mu(D_R)}
  \left(\sqrt{\Var_\mu(E_R(u))}+\sqrt{\Var_\mu(C_R(u))}\right)
  =o(R^d),
\]
which proves \eqref{eq:endpoint-barycentre-normalized-difference}.
Equation~\eqref{eq:FH-barycentre-is-T2neq} is immediate from the definition of
$T_2^{\neq}$.  Every $\varphi\in C_c(\Sh_2(\mathbb R^d))$ is of the form
$\varphi_u$ for a unique even $u\in C_c(\mathbb R^d)$, so
\eqref{eq:FH-HU2-equivalence} follows from
Proposition~\ref{prop:full-factorial-pair-hu}.
\end{proof}

Frommer and Hanke prove an explicit volume-normalized variance limit for
$E_R(u)$ under the assumptions of Proposition~\ref{prop:FH-comparison}; see
\cite[Proposition~4.1]{FH26}.  For Fourier projection processes,
\cite[Theorem~4.2]{FH26} is therefore a direct antecedent to
Theorem~\ref{thm:projection-dpp-hu1-not-hu2}.  Our formulation in terms of
$\varphi\in C_c(\Sh_2(V))$ also applies when the pair-difference measure is
singular, since $\varphi$ is specified pointwise rather than only as an
$L^1\cap L^2$ equivalence class.

\subsection{Stealthiness}
\label{subsec:stealthiness}

A spectral gap at the origin is stronger than the decay condition in spectral
$k$-hyperuniformity.

\begin{definition}\label{def:k-stealthy}
The $k$-th Bartlett spectrum is said to have a \emph{spectral gap at the
origin} if there exists $\varepsilon_0>0$ such that
\begin{equation}\label{eq:k-spectral-gap}
  \bsigmak(B_{\varepsilon_0}^*)=0
  \quad\text{in }\mathcal L(\mathcal C_k,\mathcal C_k^*).
\end{equation}
We say that $\mu$ is \emph{$k$-stealthy} if its $k$-th Bartlett spectrum has
a spectral gap at the origin.
\end{definition}

By positivity, \eqref{eq:k-spectral-gap} is equivalent to
$\sigmak_{\varphi,\varphi}(B_{\varepsilon_0}^*)=0$ for every
$\varphi\in\mathcal C_k$.

\begin{corollary}\label{cor:stealthy-implies-hu}
Every $k$-stealthy $V$-invariant probability measure is in $\HU_k$.
\end{corollary}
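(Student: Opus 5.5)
The plan is to reduce the claim to Theorem~\ref{thm:geometric-spectral}. Throughout, the standing assumption of local $L^{2k}$-integrability, under which $\bsigmak$ and $\HU_k$ are defined, is in force. By Definition~\ref{def:k-stealthy}, there is $\varepsilon_0>0$ with $\bsigmak(B_{\varepsilon_0}^*)=0$ in $\mathcal L(\mathcal C_k,\mathcal C_k^*)$. Evaluating this operator at $(\varphi,\varphi)$ gives
\[
  \sigmak_{\varphi,\varphi}(B_{\varepsilon_0}^*)=0
  \qquad\text{for every }\varphi\in\mathcal C_k .
\]

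Next I would use monotonicity of the positive measure $\sigmak_{\varphi,\varphi}$. For every $0<\varepsilon\leq\varepsilon_0$ one has $B_\varepsilon^*\subset B_{\varepsilon_0}^*$, so $\sigmak_{\varphi,\varphi}(B_\varepsilon^*)=0$. Hence the quotient $\sigmak_{\varphi,\varphi}(B_\varepsilon^*)/\Vol_d(B_\varepsilon^*)$ vanishes identically for small $\varepsilon$, and its limit as $\varepsilon\to0$ is zero. This is condition \eqref{eq:spectral-k-hu-diagonal} for every $\varphi\in\mathcal C_k$, so $\mu$ is spectrally $k$-hyperuniform.

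Finally, I would invoke Theorem~\ref{thm:geometric-spectral} to conclude that $\mu$ is geometrically $k$-hyperuniform, that is, $\mu\in\HU_k$.

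The argument has no genuine obstacle. The only point needing care is that real-valued $\varphi\in C_c(\Sh_k(V))$, which is the class used in the definition of $\HU_k$, lies in $\mathcal C_k$, so the theorem applies to exactly the test functions required.

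One could also check the conclusion directly, without quoting the theorem. In Lemma~\ref{lem:ball-variance-bartlett} the integral then runs only over $\|\xi\|\geq\varepsilon_0$. There the bound $|\widehat\chi_{B_R}(\xi)|^2/\Vol_d(B_R)\leq C R^{-1}\|\xi\|^{-(d+1)}$, together with the translation-boundedness tail estimate \eqref{eq:spectral-weighted-tail-integrability}, gives a normalized variance of order $O(R^{-1})$.
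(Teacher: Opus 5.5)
Your proposal is correct and is essentially the paper's argument: the spectral gap gives vanishing diagonal coefficients $\sigmak_{\varphi,\varphi}(B_\varepsilon^*)$ for small $\varepsilon$, hence the limit in \eqref{eq:spectral-k-hu-diagonal}, and Theorem~\ref{thm:geometric-spectral} then yields $\mu\in\HU_k$. Your optional direct check, which bounds the normalized ball variance by $O(R^{-1})$ using the tail estimate, is also valid; it simply repeats the outer-region part of that theorem's proof.
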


\begin{proof}
A spectral gap implies the limit in \eqref{eq:spectral-k-hu-diagonal}.
The assertion follows from Theorem~\ref{thm:geometric-spectral}.
\end{proof}

\section{Lattices and perturbations}
\label{sec:lattices}

Invariant lattice processes belong to $\HU_k$ for every $k$.  In every
dimension, sufficiently small non-degenerate iid perturbations of a lattice
belong to $\HU_1\setminus\HU_2$.

\subsection{Lattices}
\label{subsec:lattices}

Let $\Gamma\subset V$ be a full-rank lattice and let $m_\Gamma$ be normalized Haar
measure on $V/\Gamma$.  For $u\in V/\Gamma$, set
\begin{equation}\label{eq:lattice-configuration}
  p_u:=\sum_{\gamma\in\Gamma}\delta_{\gamma+u},
  \qquad
  \mu_\Gamma:=(u\mapsto p_u)_*m_\Gamma.
\end{equation}
We call $\mu_\Gamma$ the \emph{invariant lattice process} associated with $\Gamma$; it
is also commonly called a randomly translated lattice.  Its \emph{dual lattice}
is
\begin{equation}\label{eq:dual-lattice}
  \Gamma^*:=\{\xi\in V^*: \xi(\gamma)\in\mathbb Z
  \text{ for every }\gamma\in\Gamma\}.
\end{equation}

\begin{theorem}\label{thm:lattice-all-order-stealthy}
For every $k\geq1$, the invariant lattice process $\mu_\Gamma$ is $k$-stealthy.
In particular, $\mu_\Gamma\in\HU_k$ for every $k\geq1$.
\end{theorem}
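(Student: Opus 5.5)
The plan is to compute the covariance of pattern statistics for $\mu_\Gamma$ explicitly and read off the Bartlett spectrum. We will show that every coefficient measure $\sigmak_{\varphi,\psi}$ is supported on $\Gamma^*\setminus\{0\}$. Since $\Gamma^*$ is discrete, there is then $\varepsilon_0>0$, namely any $\varepsilon_0<\lambda_1(\Gamma^*)$, such that $\sigmak_{\varphi,\varphi}(B_{\varepsilon_0}^*)=0$ for every $\varphi\in\mathcal C_k$. That is $k$-stealthiness. Corollary~\ref{cor:stealthy-implies-hu} then gives $\mu_\Gamma\in\HU_k$.

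\textbf{Step 1: the derived measure is $\Gamma$-periodic.} Fix $\varphi\in\mathcal C_k$. For the configuration $p_u$, the derived measure is
\[
  \mathcal P_{k,\varphi}(p_u)=\sum_{\gamma_1,\ldots,\gamma_k\in\Gamma}\varphi\bigl([(\gamma+u\mathbf 1)^\circ]\bigr)\,\delta_{\bar\gamma+u}.
\]
The shape $[(\gamma+u\mathbf 1)^\circ]=[\gamma^\circ]$ does not depend on $u$. Translating all $\gamma_j$ by a common $\eta\in\Gamma$ leaves $[\gamma^\circ]$ unchanged and shifts $\bar\gamma$ by $\eta$. Hence
\[
  \mathcal P_{k,\varphi}(p_u)=\delta_u*\Bigl(\sum_{\eta\in\Gamma}\delta_\eta\Bigr)*\nu_\varphi,
\]
where
\[
  \nu_\varphi:=\sum_{(0,\gamma_2,\ldots,\gamma_k)}\varphi([\gamma^\circ])\,\delta_{\bar\gamma}.
\]
The measure $\nu_\varphi$ is a finite sum: compact support of $\varphi$ bounds the $\gamma_j$ once $\gamma_1=0$, as in Lemma~\ref{lem:pattern-statistics-L2}. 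In particular $\nu_\varphi$ is a finite complex measure supported in $\frac1k\Gamma$.

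\textbf{Step 2: Fourier expansion of the statistic.} For $f\in C_c^\infty(V;\mathbb C)$, Poisson summation for the $\Gamma$-periodic function $u\mapsto\sum_{\eta}(f*\widetilde{\nu_\varphi})(\eta+u)$ gives
\[
  T_k[f,\varphi](p_u)=\frac{1}{\operatorname{covol}(\Gamma)}\sum_{\xi\in\Gamma^*}\widehat f(\xi)\,\overline{\widehat{\nu_\varphi}(\xi)}^{\,\ast}e^{2\pi i\xi(u)}.
\]
Here the starred factor means $\widehat{\nu_\varphi}$, possibly evaluated at $-\xi$, depending on the reflection convention. The sum converges absolutely because $\widehat f$ is Schwartz and $\widehat{\nu_\varphi}$ is bounded.

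\textbf{Step 3: identify the spectrum.} Under $m_\Gamma$ the characters $e^{2\pi i\xi(u)}$, $\xi\in\Gamma^*$, are orthonormal in $L^2(V/\Gamma)$. The $\xi=0$ term is the mean, and it drops out of the covariance. We therefore get
\[
  \Cov_{\mu_\Gamma}\bigl(T_k[f,\varphi],T_k[g,\psi]\bigr)=\sum_{0\neq\xi\in\Gamma^*}\widehat f(\xi)\overline{\widehat g(\xi)}\,c_{\varphi,\psi}(\xi),
\]
with $c_{\varphi,\psi}(\xi)=\operatorname{covol}(\Gamma)^{-2}\,\widehat{\nu_\varphi}(\xi)\overline{\widehat{\nu_\psi}(\xi)}$, up to the sign convention. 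By the uniqueness part of Proposition~\ref{prop:operator-bartlett},
\[
  \sigmak_{\varphi,\psi}=\sum_{0\neq\xi\in\Gamma^*}c_{\varphi,\psi}(\xi)\,\delta_\xi,
\]
which is supported on $\Gamma^*\setminus\{0\}$.

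\textbf{Main obstacle.} The main difficulty is bookkeeping rather than any hard estimate. One must justify Poisson summation: this works because $f*\widetilde{\nu_\varphi}$ is a finite combination of translates of a Schwartz-class compactly supported function. One must also track the sign and conjugation conventions so that the measure obtained matches \eqref{eq:bartlett-defining-identity}; only the support matters, though. Uniqueness in Proposition~\ref{prop:operator-bartlett} removes any need to build the operator-valued measure by hand. The local $L^{2k}$-integrability needed there holds trivially, since $p_u(h)$ is uniformly bounded in $u$.
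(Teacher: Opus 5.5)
Your proposal is correct and follows the paper's argument. The paper likewise writes each lattice tuple as $(\gamma,\gamma+\delta_2,\ldots,\gamma+\delta_k)$, so that $T_k[f,\varphi](p_u)$ becomes a finite combination of $\Gamma$-periodizations of translates of $f$; it then computes the Fourier coefficients $\operatorname{covol}(\Gamma)^{-1}\widehat f(\xi)A_\varphi(\xi)$ on $V/\Gamma$ and applies Parseval to obtain atoms only on $\Gamma^*\setminus\{0\}$. This also resolves your sign hedge: the multiplier is $A_\varphi(\xi)=\sum_\delta\varphi(s_\delta)e^{2\pi i\xi(c_\delta)}=\widehat{\nu_\varphi}(-\xi)$, with no complex conjugation, though as you note only the support matters.
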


\begin{proof}
The point counts of $p_u$ in a fixed compact set are bounded uniformly in
$u$, so $\mu_\Gamma$ is locally $L^r$-integrable for every $r<\infty$.

Fix $\varphi\in\mathcal C_k$.  Given
$\delta=(\delta_2,\ldots,\delta_k)\in\Gamma^{k-1}$, put
\begin{equation}\label{eq:lattice-pattern-data}
  c_\delta:=\frac1k\sum_{j=2}^k\delta_j,
  \qquad
  s_\delta:=
  [(-c_\delta,\delta_2-c_\delta,\ldots,\delta_k-c_\delta)]
  \in\Sh_k(V).
\end{equation}
Since $\varphi$ has compact support and $\Gamma$ is discrete, only finitely many
$\delta\in\Gamma^{k-1}$ satisfy $\varphi(s_\delta)\neq0$.  Writing a lattice
tuple as
$(\gamma,\gamma+\delta_2,\ldots,\gamma+\delta_k)$ gives
\begin{equation}\label{eq:lattice-pattern-periodization}
  T_k[f,\varphi](p_u)
  =\sum_{\delta\in\Gamma^{k-1}}\varphi(s_\delta)
     \sum_{\gamma\in\Gamma}f(u+\gamma+c_\delta),
\end{equation}
where the outer sum is finite.

Let $\operatorname{covol}(\Gamma)$ denote the $\Vol_d$-volume of a fundamental
domain of $\Gamma$.  The Fourier characters of $V/\Gamma$ are indexed by $\Gamma^*$.
For $\xi\in\Gamma^*$, the Fourier coefficient of the right-hand side of
\eqref{eq:lattice-pattern-periodization} is
\begin{equation}\label{eq:lattice-pattern-fourier-coefficient}
  \frac{\widehat f(\xi)}{\operatorname{covol}(\Gamma)}
  A_\varphi(\xi),
  \qquad
  A_\varphi(\xi)
  :=\sum_{\delta\in\Gamma^{k-1}}
      \varphi(s_\delta)e^{2\pi i\xi(c_\delta)}.
\end{equation}
The sum defining $A_\varphi$ is finite.  After subtracting the mean, the
zero Fourier mode disappears.  Parseval's identity on $V/\Gamma$ therefore gives
\begin{equation}\label{eq:lattice-bartlett-coefficients}
  \sigmak_{\varphi,\psi}
  =\frac{1}{\operatorname{covol}(\Gamma)^2}
    \sum_{\xi\in\Gamma^*\setminus\{0\}}
      A_\varphi(\xi)\overline{A_\psi(\xi)}\,\delta_\xi.
\end{equation}
Thus every coefficient of $\bsigmak$ is supported on
$\Gamma^*\setminus\{0\}$.  Since this set is discrete and separated from the
origin, there exists $\varepsilon_0>0$ such that
$\bsigmak(B_{\varepsilon_0}^*)=0$.  Hence $\mu_\Gamma$ is $k$-stealthy.  The
last assertion follows from Corollary~\ref{cor:stealthy-implies-hu}.
\end{proof}

\subsection{Perturbed lattices}
\label{subsec:perturbed-lattices}

Let $\Gamma\subset V$ be a full-rank lattice and let $U$ be Haar-uniform on
$V/\Gamma$.  Let $(Y_\gamma)_{\gamma\in\Gamma}$ be iid $V$-valued random
variables, independent of $U$, with common law $\nu$.  The random measure
\begin{equation}\label{eq:iid-perturbed-lattice}
  p_{U,Y}:=\sum_{\gamma\in\Gamma}
     \delta_{\gamma+U+Y_\gamma}
\end{equation}
defines a translation-invariant random measure; if the support of $\nu$ is
sufficiently small compared with the shortest nonzero vector of $\Gamma$,
it is a simple point process.  We denote its law by $\mu_{\Gamma,\nu}$ and
call it the \emph{iid-perturbed lattice} associated with $(\Gamma,\nu)$.
A deterministic translation of the support of $\nu$ can be absorbed into
$U$, so below we may assume that $\operatorname{supp}(\nu)\subset B_a$.

\begin{theorem}\label{thm:perturbed-lattice-separation}
For every full-rank lattice $\Gamma\subset V$ there exists $a_\Gamma>0$ such
that, whenever $\nu$ is non-degenerate and
$\operatorname{supp}(\nu)\subset B_a$ for some $a<a_\Gamma$, one has
\[
  \mu_{\Gamma,\nu}\in\HU_1\setminus\HU_2.
\]
\end{theorem}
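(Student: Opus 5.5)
We treat the two assertions separately and work throughout with the Bartlett spectra, via Theorem~\ref{thm:geometric-spectral}. Local $L^r$-integrability for all $r$ is immediate: the perturbations are bounded, so point counts in compact sets are uniformly bounded.

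For $\HU_1$ we compute the first Bartlett spectrum of an iid-perturbed lattice by the standard computation. Conditioning on $U$ and using independence of the $Y_\gamma$ splits the covariance into two parts. The first is a lattice part $\operatorname{covol}(\Gamma)^{-2}\sum_{\xi\in\Gamma^*\setminus\{0\}}|\widehat\nu(\xi)|^2\delta_\xi$, where $\widehat\nu(\xi)=\int e^{-2\pi i\xi(y)}d\nu(y)$. The second is an absolutely continuous part $\operatorname{covol}(\Gamma)^{-1}(1-|\widehat\nu(\xi)|^2)\,d\xi$. Since $1-|\widehat\nu(\xi)|^2=O(\|\xi\|^2)$ near $0$ (bounded support gives finite second moments), we get $\sigma(B_\varepsilon^*)=O(\varepsilon^{d+2})$. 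Hence $\mu_{\Gamma,\nu}$ is spectrally, and therefore geometrically, hyperuniform. This part needs no smallness of $a$.

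For the failure of $\HU_2$ we exhibit one $\varphi\in C_c(\Sh_2(V))$ whose coefficient $\sigma^{(2)}_{\varphi,\varphi}$ has a density that is nonzero at $\xi=0$. Fix a shortest nonzero vector $\delta_0\in\Gamma$. Choose $a_\Gamma$ so small that, for $a<a_\Gamma$, the difference vectors $x-y$ of the points $\gamma+U+Y_\gamma$ lie in disjoint balls $\delta+B_{2a}$, $\delta\in\Gamma$, and no nontrivial coincidences occur. Take $\varphi$ supported near the shapes $[(\pm\delta_0/2,\mp\delta_0/2)]$ and equal to a smooth even bump $u(r)$ of $r=x-y$ near $\pm\delta_0$. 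Then
\[
  T_2[f,\varphi](p_{U,Y})=\sum_{\gamma\in\Gamma}\sum_{\pm} f\!\left(\gamma+U\pm\tfrac{\delta_0}{2}+\tfrac{Y_\gamma+Y_{\gamma\pm\delta_0}}{2}\right)u\bigl(\pm\delta_0+Y_{\gamma\pm\delta_0}-Y_\gamma\bigr).
\]
This is a sum of $1$-dependent local functionals indexed by $\Gamma$, which is a stationary $m$-dependent field.

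We then compute $\Var(T_2[\chi_{B_R},\varphi])$ by the standard decomposition: conditioning on $U$ gives the periodic (lattice) part, plus the conditional variance given $U$. The conditional variance is $\Vol_d(B_R)/\operatorname{covol}(\Gamma)$ times the \emph{long-run variance} of the weights plus $o(R^d)$. For $f=\chi_{B_R}$, bulk terms contribute the full weight regardless of the small displacements, so only boundary terms feel the positions. For $\gamma$ deep inside $B_R$ the summand is just $w_\gamma:=u(\delta_0+Y_{\gamma+\delta_0}-Y_\gamma)+u(-\delta_0+Y_{\gamma-\delta_0}-Y_\gamma)$. Hence
\[
  \frac{\Var(T_2[\chi_{B_R},\varphi])}{\Vol_d(B_R)}\longrightarrow\frac{1}{\operatorname{covol}(\Gamma)}\sum_{\eta\in\Gamma}\Cov(w_0,w_\eta),
\]
and the lattice part contributes $o(R^d)$ by the argument of Theorem~\ref{thm:lattice-all-order-stealthy}. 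The main obstacle is to show that this long-run variance is strictly positive for some choice of $u$. The plan is to write $\sum_\gamma w_\gamma=2\sum_{\gamma}u(\delta_0+Y_{\gamma+\delta_0}-Y_\gamma)$ up to boundary terms. This is the partial sum of the stationary $1$-dependent field $Z_\gamma:=u(\delta_0+Y_{\gamma+\delta_0}-Y_\gamma)$, so its long-run variance equals $4\sum_{\eta}\Cov(Z_0,Z_\eta)$, and only $\eta\in\{0,\pm\delta_0\}$ contribute. We then choose $u$ so that $g(y,y'):=u(\delta_0+y'-y)$ is not of the additive coboundary form $h(y')-h(y)+\text{const}$, $\nu\otimes\nu$-a.e. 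If it were of this form, the sum would telescope, and the long-run variance would vanish. Otherwise the Hoeffding decomposition of $Z$ along the line $\gamma\in\delta_0\mathbb Z$ shows that the long-run variance is $\ge\Var(g-\text{projections})>0$.

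Non-degeneracy of $\nu$ means that $y'-y$ is nonconstant, so we may take $u$ with $u(\delta_0+r)=\|r\|^2$ on $B_{2a}$. A function $\|y'-y\|^2$ is additive in the coboundary sense only if the cross term $-2\langle y,y'\rangle$ is a.e.\ a sum of separate functions, which forces $\nu$ degenerate. This is the step needing care, and we expect it to be the crux. The positive limit contradicts $\HU_2$, which proves Theorem~\ref{thm:perturbed-lattice-separation}.
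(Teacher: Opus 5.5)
Your proposal is correct and follows essentially the same route as the paper. Both arguments use the same first-order spectrum with $1-|\widehat\nu|^2=O(\|\xi\|^2)$. For $\HU_2$ both use a pair test function localized at $\pm$ a fixed lattice vector with a quadratic weight in the displacement, reduce to a $1$-dependent field $Z_\gamma$ with unperturbed barycentres, compare at the boundary with an $O(R^{d-1})$ error, and finish with the law of total variance.

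The differences are cosmetic. The paper uses $\ell(r)^2$, for a functional $\ell$ with $\ell(Y_0)$ non-degenerate, and computes the long-run variance explicitly as $\Var(Z_0)+2\Cov(Z_0,Z_{\gamma_0})=4m_4$. Your Hoeffding decomposition with $\|r\|^2$ instead gives $4\Var(\|W_0\|^2)+4\operatorname{tr}(\Sigma^2)>0$, where $W_0:=Y_0-\mathbb E Y_0$ and $\Sigma$ is its covariance. Also, since the paper only needs a $\liminf$ lower bound, the $o(R^d)$ bound on $\Var(\mathbb E[T_2\mid U])$ that you invoke for an exact limit is not required.
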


\begin{proof}
Set $\rho:=\operatorname{covol}(\Gamma)^{-1}$ and
\[
  \widehat\nu(\xi):=\int_V e^{-2\pi i\xi(y)}\,d\nu(y),
  \qquad \xi\in V^*.
\]
For an iid-displaced lattice, the first Bartlett spectrum is
\begin{equation}\label{eq:perturbed-lattice-first-spectrum}
  \boldsymbol\sigma^{(1)}
  =\rho\bigl(1-|\widehat\nu|^2\bigr)\,\Vol_d
   +\rho^2\sum_{\xi\in\Gamma^*\setminus\{0\}}
      |\widehat\nu(\xi)|^2\,\delta_\xi .
\end{equation}
Indeed, expanding the covariance separates the terms with equal lattice
labels from those with distinct labels; averaging over $U$ gives the dual
lattice sum, while independence of the displacements contributes the factor
$|\widehat\nu|^2$ off the diagonal.
Since $\nu$ has bounded support,
\begin{equation}\label{eq:perturbed-lattice-characteristic-small-frequency}
  1-|\widehat\nu(\xi)|^2=O(\|\xi\|^2)
  \qquad(\xi\to0).
\end{equation}
The nonzero points of $\Gamma^*$ are separated from the origin, and hence
\[
  \boldsymbol\sigma^{(1)}(B_\varepsilon^*)
  =O(\varepsilon^{d+2})=o(\varepsilon^d).
\]
Theorem~\ref{thm:geometric-spectral} therefore gives
$\mu_{\Gamma,\nu}\in\HU_1$.

Fix $\gamma_0\in\Gamma\setminus\{0\}$.  Since $\Gamma$ is discrete, after
reducing $a_\Gamma$ if necessary, the $2a_\Gamma$-neighbourhoods of
$\gamma_0$ and $-\gamma_0$ contain no lattice vectors other than
$\gamma_0$ and $-\gamma_0$, respectively.  Since $\nu$ is non-degenerate,
there is $\ell\in V^*$ such that $\ell(Y_0)$ is non-degenerate.  Choose an
even function $u\in C_c(V)$ supported in these two neighbourhoods and such
that
\begin{equation}\label{eq:perturbed-lattice-shape-test}
  u(\gamma_0+z)=\ell(z)^2,
  \qquad
  u(-\gamma_0-z)=\ell(z)^2,
  \qquad \|z\|\leq2a.
\end{equation}
Let $\varphi\in C_c(\Sh_2(V))$ be determined by
\[
  \varphi([(-z/2,z/2)])=u(z).
\]
For $a<a_\Gamma$, this function sees precisely the pairs whose lattice
labels differ by $\pm\gamma_0$.

Write
\begin{equation}\label{eq:perturbed-lattice-Zgamma}
  Z_\gamma
  :=\ell(Y_{\gamma+\gamma_0}-Y_\gamma)^2,
  \qquad
  b_\gamma
  :=\gamma+U+\frac{\gamma_0}{2}
    +\frac{Y_\gamma+Y_{\gamma+\gamma_0}}2 .
\end{equation}
Both orientations of each pair occur in $T_2$, and therefore
\begin{equation}\label{eq:perturbed-lattice-pair-statistic}
  T_2[\chi_{B_R},\varphi]
  =2\sum_{\gamma\in\Gamma}Z_\gamma\chi_{B_R}(b_\gamma).
\end{equation}
Let $W_\gamma:=\ell(Y_\gamma)-\mathbb E[\ell(Y_\gamma)]$ and
$m_j:=\mathbb E[W_0^j]$.  Then $m_4>0$, and a direct computation gives
\begin{align}\label{eq:perturbed-lattice-covariances}
  \Var(Z_0)&=2m_4+2m_2^2,\notag\\
  \Cov(Z_0,Z_{\gamma_0})
     &=\Cov(Z_0,Z_{-\gamma_0})=m_4-m_2^2,\notag\\
  \Cov(Z_0,Z_\gamma)&=0,
  \qquad \gamma\notin\{0,\pm\gamma_0\}.
\end{align}

Condition on $U$ and introduce the statistic with unperturbed barycentres
\begin{equation}\label{eq:perturbed-lattice-reference-statistic}
  A_R:=2\sum_{\gamma\in\Gamma}Z_\gamma
       \chi_{B_R}\!\left(\gamma+U+\frac{\gamma_0}{2}\right).
\end{equation}
Uniformly in $U$, the number of lattice points in $B_R$ and the number of
pairs of such points separated by $\gamma_0$ are both
\begin{equation}\label{eq:perturbed-lattice-window-count}
  \rho\Vol_d(B_R)+O(R^{d-1}).
\end{equation}
Using \eqref{eq:perturbed-lattice-covariances}, we obtain
\begin{equation}\label{eq:perturbed-lattice-reference-variance}
  \Var(A_R\mid U)
  =16\rho m_4\Vol_d(B_R)+O(R^{d-1}).
\end{equation}
On the other hand,
$\|b_\gamma-(\gamma+U+\gamma_0/2)\|\leq a$.  Hence
$T_2[\chi_{B_R},\varphi]-A_R$ involves only those $\gamma$ whose reference
barycentre lies in a fixed-width neighbourhood of $\partial B_R$.  There
are $O(R^{d-1})$ such indices, and the summands have a dependency graph of
uniformly bounded degree.  Since the $Z_\gamma$ are bounded,
\begin{equation}\label{eq:perturbed-lattice-boundary-error}
  \Var\bigl(T_2[\chi_{B_R},\varphi]-A_R\mid U\bigr)
  =O(R^{d-1})
\end{equation}
uniformly in $U$.  Cauchy--Schwarz together with
\eqref{eq:perturbed-lattice-reference-variance} and
\eqref{eq:perturbed-lattice-boundary-error} yields
\begin{equation}\label{eq:perturbed-lattice-conditional-pair-variance}
  \Var\bigl(T_2[\chi_{B_R},\varphi]\mid U\bigr)
  =16\rho m_4\Vol_d(B_R)+o(R^d).
\end{equation}
Finally, the law of total variance gives
\[
  \liminf_{R\to\infty}
  \frac{\Var_{\mu_{\Gamma,\nu}}
     (T_2[\chi_{B_R},\varphi])}{\Vol_d(B_R)}
  \geq16\rho m_4>0.
\]
Thus $\mu_{\Gamma,\nu}\notin\HU_2$.
\end{proof}

\section{Poisson and determinantal point processes}
\label{sec:determinantal-poisson}

The homogeneous Poisson process does not belong to $\HU_1$, whereas the
projection determinantal processes considered below belong to $\HU_1$ but not
to $\HU_2$.

\subsection{Poisson processes}
\label{subsec:poisson}

Let $\mu_{\mathrm P,\rho}$ denote the $V$-invariant Poisson point process of
intensity $\rho>0$ relative to $\Vol_d$.  For our Fourier normalization, its
first Bartlett spectrum is $\rho\,\Vol_d$.

\begin{proposition}\label{prop:poisson-not-hu1}
The first Bartlett spectrum of $\mu_{\mathrm P,\rho}$ is
$\rho\,\Vol_d$ on $V^*$.  In particular,
$\mu_{\mathrm P,\rho}\notin\HU_1$.
\end{proposition}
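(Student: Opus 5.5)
The plan is to compute the covariance of first statistics directly from the Poisson structure and then match it against the defining identity \eqref{eq:bartlett-defining-identity} with $k=1$. First, the Poisson process of intensity $\rho$ has moments of all orders for $p(f)$, $f\in C_c(V)$, since Poisson counts in a bounded set have finite moments. So $\mu_{\mathrm P,\rho}$ is locally $L^r$-integrable for every $r$, and Proposition~\ref{prop:operator-bartlett} applies. By \eqref{eq:first-order-pattern-statistic}, the first Bartlett spectrum $\sigma$ is the unique positive Radon measure on $V^*$ with
\[
  \Cov(Sf,Sg)=\int_{V^*}\widehat f(\xi)\overline{\widehat g(\xi)}\,d\sigma(\xi),
  \qquad f,g\in C_c^\infty(V;\mathbb C).
\]

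Second, I will use the Campbell formula together with independence over disjoint sets. The first factorial moment measure is $\rho\Vol_d$, and the second is $\rho^2\Vol_d\otimes\Vol_d$. Consequently
\[
  \mathbb E\bigl[Sf\,\overline{Sg}\bigr]
  =\rho\int_V f\overline g\,d\Vol_d+\rho^2\int_V f\,d\Vol_d\,\overline{\int_V g\,d\Vol_d},
\]
so $\Cov(Sf,Sg)=\rho\int_V f\overline g\,d\Vol_d$. Equivalently, the characteristic functional $\mathbb E[e^{i\,Sf}]=\exp(\rho\int(e^{if}-1))$ can be differentiated twice. Plancherel, with the normalization \eqref{eq:fourier-convention}, turns this into $\rho\int_{V^*}\widehat f\,\overline{\widehat g}\,d\Vol_d$. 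By the uniqueness part of Proposition~\ref{prop:operator-bartlett}, $\sigma=\rho\Vol_d$.

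Finally, $\sigma(B_\varepsilon^*)/\Vol_d(B_\varepsilon^*)=\rho\neq0$ for every $\varepsilon$, so the spectral condition \eqref{eq:spectral-k-hu-diagonal} fails. Theorem~\ref{thm:geometric-spectral} with $k=1$ then gives $\mu_{\mathrm P,\rho}\notin\HU_1$. Alternatively, one can argue geometrically: $\Var(S\chi_{B_R})=\rho\Vol_d(B_R)$.

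The only point requiring care is the second-moment (Campbell/Mecke) computation, specifically that the diagonal contributes $\rho\int f\overline g$. This is standard, so I expect no real obstacle.
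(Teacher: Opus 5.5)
Your proposal is correct and follows essentially the same route as the paper: the Poisson (Campbell) covariance identity $\Cov(Sf,Sg)=\rho\int_V f\overline g\,d\Vol_d$ and Plancherel give $\boldsymbol\sigma^{(1)}=\rho\Vol_d$. Failure of $\HU_1$ then follows from the identically-$\rho$ normalized number variance $\Var(S\chi_{B_R})=\rho\Vol_d(B_R)$, or equivalently from the spectral criterion.
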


\begin{proof}
For $f,g\in C_c(V)$, the Poisson covariance identity gives
\begin{equation}\label{eq:poisson-covariance}
  \Cov_{\mu_{\mathrm P,\rho}}(Sf,Sg)
  =\rho\int_V f(v)\overline{g(v)}\,d\Vol_d(v)
  =\rho\int_{V^*}\widehat f(\xi)
       \overline{\widehat g(\xi)}\,d\Vol_d(\xi),
\end{equation}
where the second equality is Plancherel's identity.  Hence
\begin{equation}\label{eq:poisson-bartlett}
  \boldsymbol\sigma^{(1)}=\rho\,\Vol_d.
\end{equation}
Equivalently, for every $R>0$,
\begin{equation}\label{eq:poisson-number-variance}
  \Var_{\mu_{\mathrm P,\rho}}\big(S\chi_{B_R}\big)
  =\rho\,\Vol_d(B_R).
\end{equation}
Thus the normalized number variance is identically $\rho$.
\end{proof}

\subsection{Determinantal point processes}
\label{subsec:dpp}

A determinantal point process is $V$-invariant whenever translation of its
kernel changes the kernel only by a gauge transformation.  We therefore
assume that, for every $t\in V$, there is a measurable function
$c_t:V\to\mathbb T$, where $\mathbb T$ is the complex unit circle, such that
\begin{equation}\label{eq:dpp-phase-covariance}
  K(v+t,w+t)=c_t(v)\overline{c_t(w)}K(v,w),
  \qquad v,w\in V.
\end{equation}
Simultaneous translation of the arguments conjugates every correlation
matrix by a diagonal unitary matrix and leaves its determinant unchanged.
Thus \eqref{eq:dpp-phase-covariance} is sufficient for invariance of the
associated determinantal process.
Translation-invariant kernels correspond to the special case $c_t\equiv1$;
the gauge-covariant formulation also includes Weyl--Heisenberg kernels.

For a kernel satisfying \eqref{eq:dpp-phase-covariance}, set
\begin{equation}\label{eq:dpp-h-function}
  \rho:=K(0,0),
  \qquad
  h_K(v):=|K(0,v)|^2,
  \qquad v\in V.
\end{equation}
Taking absolute values in \eqref{eq:dpp-phase-covariance} gives
\begin{equation}\label{eq:dpp-modulus-difference}
  |K(v,w)|^2=h_K(w-v),
  \qquad v,w\in V.
\end{equation}
If $P_K$ is an orthogonal projection, the reproducing identity gives
\begin{equation}\label{eq:dpp-projection-identity-ae}
  \int_V |K(v,w)|^2\,d\Vol_d(w)=K(v,v)
\end{equation}
for almost every $v\in V$.  By \eqref{eq:dpp-modulus-difference}, the
left-hand side equals $\int_Vh_K(r)\,d\Vol_d(r)$ and is independent of
$v$, while $K(v,v)=\rho$ for every $v$.  Therefore
\begin{equation}\label{eq:dpp-projection-identity}
  h_K\in L^1(V),
  \qquad
  \int_Vh_K(r)\,d\Vol_d(r)=\rho.
\end{equation}

\begin{lemma}[Pair-variance density for projection DPPs]
\label{lem:dpp-pair-variance-density}
Assume $d\geq1$.  Let $K:V\times V\to\mathbb C$ be a continuous Hermitian locally trace-class
kernel whose integral operator is an orthogonal projection on $L^2(V)$.
Assume \eqref{eq:dpp-phase-covariance} and $\rho>0$.
Let $\mu_K$ be the associated determinantal point process.  Write $\rho^{(j)}$ for the factorial correlation densities and set
\begin{equation}\label{eq:dpp-pair-density}
  q(r):=\rho^{(2)}(0,r)=\rho^2-h_K(r),
  \qquad r\in V.
\end{equation}
For a real even $\psi\in C_c(V)$, let $\varphi_\psi\in C_c(\Sh_2(V))$ be given by
\begin{equation}\label{eq:dpp-pair-shape-even-lift}
  \varphi_\psi\big([(-r/2,r/2)]\big)=\psi(r),
  \qquad r\in V,
\end{equation}
and set
\begin{equation}\label{eq:dpp-pair-observable}
  A_R:=T_2^{\neq}[\chi_{B_R},\varphi_\psi].
\end{equation}
Then the limit
\begin{equation}\label{eq:dpp-pair-density-limit-short}
  D_K(\psi):=\lim_{R\to\infty}
  \frac{\Var_{\mu_K}(A_R)}{\Vol_d(B_R)}
\end{equation}
exists.  If
\begin{equation}\label{eq:dpp-connected-four-point}
  \kappa^{(4)}_K(r,t,s)
  :=\rho^{(4)}(0,r,t,t+s)
    -\rho^{(2)}(0,r)\rho^{(2)}(0,s),
\end{equation}
then
\begin{align}\label{eq:dpp-general-pair-variance-limit}
  D_K(\psi)
  ={}&\int_{V^3}\psi(r)\psi(s)\,
       \kappa^{(4)}_K(r,t,s)\,
       d\Vol_d(r)\,d\Vol_d(s)\,d\Vol_d(t)\notag\\
   &+4\int_{V^2}\psi(r)\psi(s)\rho^{(3)}(0,r,s)\,
       d\Vol_d(r)\,d\Vol_d(s)\notag\\
   &+2\int_V \psi(r)^2q(r)\,d\Vol_d(r).
\end{align}
Moreover,
\begin{equation}\label{eq:dpp-connected-four-point-bound}
  \sup_{r,s\in V}\int_V
       |\kappa^{(4)}_K(r,t,s)|\,d\Vol_d(t)
  \leq20\rho^3.
\end{equation}
\end{lemma}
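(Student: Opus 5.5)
The plan is to compute $\Var_{\mu_K}(A_R)$ exactly through the coincidence-pattern expansion \eqref{eq:FH-coincidence-expansion}, and then take the normalized limit term by term. First, the statistic is rewritten as $A_R=\sum_{x\neq y}\chi_{B_R}((x+y)/2)\psi(x-y)$. Expanding $A_R^2$ by equality patterns among the four points gives three contributions:
\begin{itemize}
\item a four-distinct-point term with density $\rho^{(4)}$, from which $(\mathbb E A_R)^2$ is subtracted using $\rho^{(2)}\otimes\rho^{(2)}$;
\item four three-point terms in which one index of the first pair coincides with one of the second; evenness of $\psi$ makes all four orientations equal, which yields the factor $4$;
\item two two-point terms, namely $(x_3,x_4)=(x_1,x_2)$ or its swap, which yield the factor $2$.
\end{itemize}
These are precisely the three integrals of \eqref{eq:FH-coincidence-expansion}, with kernel $h_R(x,y)=\chi_{B_R}((x+y)/2)\psi(x-y)$. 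Local $L^4$-integrability of determinantal processes with locally trace-class kernels justifies the manipulation. Correlation densities are bounded, since by Hadamard $\rho^{(j)}\leq\rho^j$.

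Next, I use gauge covariance \eqref{eq:dpp-phase-covariance}. Every $\rho^{(j)}$ is a determinant invariant under diagonal unitary conjugation, so it is translation invariant. I then change variables as in the proof of Proposition~\ref{prop:FH-comparison}: $x_1=b$ is the base point, with $x_2=b+r$, $x_3=b+t$, $x_4=b+t+s$ in the four-point term. The barycentre cutoffs then become $\chi_{B_R}(b+r/2)\chi_{B_R}(b+t+s/2)$. Integrating over $b$ gives $\Vol_d(B_R\cap(B_R-t-(s-r)/2))$.

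Dividing by $\Vol_d(B_R)$, this ratio tends to $1$ pointwise and is bounded by $1$. Dominated convergence then gives the three terms of \eqref{eq:dpp-general-pair-variance-limit}, provided $\psi(r)\psi(s)\kappa^{(4)}_K(r,t,s)$ is integrable in $(r,s,t)$. The $r,s$ integrals are over compact sets, so it suffices to have \eqref{eq:dpp-connected-four-point-bound}. The two- and three-point terms are immediate, since $\psi$ is compactly supported and the densities are bounded. This establishes existence of the limit.

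The main obstacle is the bound \eqref{eq:dpp-connected-four-point-bound}. I plan to expand $\rho^{(4)}(0,r,t,t+s)=\det[K(x_i,x_j)]$ over permutations of $\{1,2,3,4\}$. The permutations preserving the blocks $\{1,2\}$ and $\{3,4\}$ give exactly $\rho^{(2)}(0,r)\rho^{(2)}(0,s)$, using translation invariance of $\rho^{(2)}$. There remain $24-4=20$ permutations, and each has at least two cycle edges linking the blocks.

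For each remaining permutation $\sigma$, the product $\prod_i|K(x_i,x_{\sigma(i)})|$ contains at least two cross factors. I bound all but two of these factors by $\rho$, using $|K(v,w)|^2\leq K(v,v)K(w,w)=\rho^2$. The two cross factors that remain each involve $t$, and I bound them by Cauchy--Schwarz, giving $\int|K(a,t+c)||K(a',t+c')|\,dt\leq\int h_K=\rho$ by \eqref{eq:dpp-projection-identity} and \eqref{eq:dpp-modulus-difference}.

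This bookkeeping is the delicate point. I need to check that every non-block permutation has exactly two or four cross edges, and that a pair of them can always be selected, so that the remaining product is bounded by $\rho^2$. Granting this, each of the 20 terms contributes at most $\rho^3$, uniformly in $r$ and $s$, which yields the bound $20\rho^3$.
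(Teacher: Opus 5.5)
Your proposal is correct and follows essentially the same route as the paper. Both use the exact decomposition $\Var(A_R)=I_{4,R}+4I_{3,R}+2I_{2,R}$, a barycentric change of variables giving the overlap factor $\Vol_d(B_R\cap(B_R-t-(s-r)/2))/\Vol_d(B_R)$, dominated convergence, and a determinant expansion in which the four block-preserving permutations cancel and each of the remaining twenty contributes at most $\rho^3$ via Cauchy--Schwarz on two cross-block factors.

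The bookkeeping you flag as delicate is immediate. In any permutation, the number of indices sent from $\{1,2\}$ into $\{3,4\}$ equals the number sent back, so every non-block-preserving permutation has at least one cross factor in each direction. Every cross factor has modulus $h_K(\pm t+c)^{1/2}$ for a constant $c$ depending only on $r,s$, which is exactly what the paper uses.
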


\begin{proof}
\smallskip\noindent\emph{Variance decomposition.}
Recall that Hadamard's inequality for a positive semidefinite
Hermitian matrix $A=(a_{rs})$ states that
$\det A\leq\prod_r a_{rr}$.  The matrices $(K(v_r,v_s))_{r,s}$ are positive semidefinite, so applying this
to the determinantal correlation matrices and using $K(v,v)=\rho$ gives
$0\leq\rho^{(j)}\leq\rho^j$, so all local moments are finite.  For the
statistic in \eqref{eq:dpp-pair-observable},
\begin{equation}\label{eq:dpp-pair-observable-sum}
  A_R
  =\sum_{v\neq w\in\Lambda_p}
       \chi_{B_R}\!\left(\frac{v+w}{2}\right)\psi(w-v).
\end{equation}
Set
\[
  H_R(v,w):=\chi_{B_R}\!\left(\frac{v+w}{2}\right)\psi(w-v).
\]
Expanding $A_R^2$ according to whether the two ordered pairs contain four,
three, or two distinct points gives multiplicities $1$, $4$, and $2$.
After subtracting $(\mathbb EA_R)^2$, this gives the exact decomposition
\begin{equation}\label{eq:dpp-pair-variance-decomposition}
  \Var(A_R)=I_{4,R}+4I_{3,R}+2I_{2,R},
\end{equation}
where
\begin{align}
I_{4,R}
  :={}&\int_{V^4}H_R(v_1,v_2)H_R(v_3,v_4)\notag\\
     &\qquad\times
       \big(\rho^{(4)}(v_1,v_2,v_3,v_4)
       -\rho^{(2)}(v_1,v_2)\rho^{(2)}(v_3,v_4)\big)
       \,d\Vol_d^{\otimes4},\label{eq:dpp-I4R}\\
I_{3,R}
  :={}&\int_{V^3}H_R(x,y)H_R(x,z)
       \rho^{(3)}(x,y,z)\,d\Vol_d^{\otimes3},\label{eq:dpp-I3R}\\
I_{2,R}
  :={}&\int_{V^2}H_R(x,y)^2
       \rho^{(2)}(x,y)\,d\Vol_d^{\otimes2}.\label{eq:dpp-I2R}
\end{align}
The four three-point identifications give the same integral because $\psi$ is
even, and the two two-point identifications correspond to the same ordered
pair and its reversal.

\smallskip\noindent\emph{Barycentric coordinates and boundary factors.}
For $a\in V$, put
\begin{equation}\label{eq:dpp-overlap-function}
  \omega_R(a)
  :=\frac{\Vol_d(B_R\cap(B_R-a))}{\Vol_d(B_R)}.
\end{equation}
For the four-point term, use the change of variables
\begin{equation}\label{eq:dpp-four-point-coordinates}
  (v_1,v_2,v_3,v_4)
  =\big(b-r/2,b+r/2,b-r/2+t,b-r/2+t+s\big).
\end{equation}
The two barycentres are then $b$ and $b+t+(s-r)/2$, so the window factor is
\begin{equation}\label{eq:dpp-four-point-overlap}
  \frac{
    \Vol_d\big(B_R\cap(B_R-t-(s-r)/2)\big)
  }{\Vol_d(B_R)}
  \longrightarrow 1,
  \qquad R\to\infty,
\end{equation}
for every fixed $r,s,t$.  In the three-point term the corresponding factor is
\begin{equation}\label{eq:dpp-three-point-overlap}
  \frac{
    \Vol_d\big(B_R\cap(B_R-(s-r)/2)\big)
  }{\Vol_d(B_R)}
  \longrightarrow 1,
  \qquad R\to\infty,
\end{equation}
while the two-point term has no boundary factor after changing to the
barycentre variable.  Consequently,
\begin{align}
  \frac{I_{4,R}}{\Vol_d(B_R)}
  ={}&\int_{V^3}\psi(r)\psi(s)\kappa_K^{(4)}(r,t,s)
       \omega_R\!\left(t+\frac{s-r}{2}\right)
       \,d\Vol_d(r)\,d\Vol_d(s)\,d\Vol_d(t),
       \label{eq:dpp-I4R-overlap}\\
  \frac{I_{3,R}}{\Vol_d(B_R)}
  ={}&\int_{V^2}\psi(r)\psi(s)\rho^{(3)}(0,r,s)
       \omega_R\!\left(\frac{s-r}{2}\right)
       \,d\Vol_d(r)\,d\Vol_d(s),
       \label{eq:dpp-I3R-overlap}\\
  \frac{I_{2,R}}{\Vol_d(B_R)}
  ={}&\int_V\psi(r)^2q(r)\,d\Vol_d(r).
       \label{eq:dpp-I2R-overlap}
\end{align}
For every fixed $a\in V$, $0\leq\omega_R(a)\leq1$ and
$\omega_R(a)\to1$ as $R\to\infty$.  The three- and two-point terms can
therefore be passed to the limit directly.  For the four-point term we need
an integrable bound on $\kappa_K^{(4)}$ uniform in $r$ and $s$.

\smallskip\noindent\emph{The connected four-point term.}
Expand the $4\times4$ determinant defining
$\rho^{(4)}(0,r,t,t+s)$.  In the difference
\eqref{eq:dpp-connected-four-point}, the four permutation terms preserving the
two blocks $\{1,2\}$ and $\{3,4\}$ cancel.  Every one of the remaining
twenty permutations contains a kernel factor from the first block to the
second and another from the second block to the first.  Cauchy--Schwarz for
the positive kernel gives
\begin{equation}\label{eq:dpp-pointwise-kernel-bound}
  |K(v,w)|^2\leq K(v,v)K(w,w)=\rho^2,
  \qquad v,w\in V.
\end{equation}
For the two cross-block factors, \eqref{eq:dpp-projection-identity} and
Cauchy--Schwarz give, for fixed $a,b,\alpha,\beta\in V$,
\begin{align}\label{eq:dpp-cross-block-bound}
  &\int_V |K(a,t+\alpha)K(t+\beta,b)|\,d\Vol_d(t)\notag\\
  &\qquad\leq
  \left(\int_V|K(a,t+\alpha)|^2\,d\Vol_d(t)\right)^{1/2}
  \left(\int_V|K(b,t+\beta)|^2\,d\Vol_d(t)\right)^{1/2}
  =\rho.
\end{align}
The other two factors are bounded by $\rho$.  Summing the twenty
contributions gives \eqref{eq:dpp-connected-four-point-bound}.  Since $\psi$ is
compactly supported, this gives an integrable majorant for
\eqref{eq:dpp-I4R-overlap}, while the three-point integrand is bounded by
$\rho^3|\psi(r)\psi(s)|$.  Dominated convergence in
\eqref{eq:dpp-I4R-overlap}--\eqref{eq:dpp-I2R-overlap} therefore gives
\eqref{eq:dpp-general-pair-variance-limit}.

\end{proof}

\begin{theorem}[Projection determinantal processes]
\label{thm:projection-dpp-hu1-not-hu2}
Assume $d\geq1$.  Let $K:V\times V\to\mathbb C$ be a continuous Hermitian locally trace-class
kernel whose integral operator $P_K$ is an orthogonal projection on $L^2(V)$.
Assume \eqref{eq:dpp-phase-covariance} and $\rho>0$.  Then the associated
determinantal point process $\mu_K$ is $V$-invariant and
\begin{equation}\label{eq:dpp-hierarchy-separation}
  \mu_K\in\HU_1\setminus\HU_2.
\end{equation}
More precisely, the first Bartlett spectrum has the continuous density
\begin{equation}\label{eq:dpp-first-bartlett-density}
  s_K(\xi)=\rho-\widehat h_K(\xi),
  \qquad \xi\in V^*,
\end{equation}
with $s_K(0)=0$.  There is also a nonzero
$\varphi\in C_c(\Sh_2(V))$, with $\varphi(0)=0$, such that
\begin{equation}\label{eq:dpp-positive-pair-volume}
  \lim_{R\to\infty}
  \frac{\Var_{\mu_K}\big(T_2[\chi_{B_R},\varphi]\big)}
       {\Vol_d(B_R)}>0.
\end{equation}
\end{theorem}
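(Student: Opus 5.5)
The plan has three steps: identify the first Bartlett spectrum explicitly to get $\HU_1$, reduce the failure of $\HU_2$ to the pairwise-distinct statistic via Proposition~\ref{prop:full-factorial-pair-hu}, and then make the limit $D_K(\psi)$ of Lemma~\ref{lem:dpp-pair-variance-density} strictly positive by choosing $\psi$ with very small support.

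\emph{Invariance and the first spectrum.} Invariance of $\mu_K$ follows from \eqref{eq:dpp-phase-covariance}: simultaneous translation conjugates each correlation matrix by a diagonal unitary, so every $\rho^{(j)}$ is translation invariant. For $f,g\in C_c^\infty(V)$, the determinantal one- and two-point functions give
\[
  \Cov_{\mu_K}(Sf,Sg)=\rho\int_V f\overline g\,d\Vol_d-\int_{V^2}f(v)\overline{g(w)}\,h_K(w-v)\,d\Vol_d(v)\,d\Vol_d(w),
\]
using \eqref{eq:dpp-modulus-difference}. Since $h_K\in L^1(V)$ by \eqref{eq:dpp-projection-identity}, and $h_K$ is even because $K$ is Hermitian, Plancherel rewrites the right-hand side as $\int\widehat f\,\overline{\widehat g}\,(\rho-\widehat h_K)\,d\Vol_d$. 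By the uniqueness in Proposition~\ref{prop:operator-bartlett}, the first Bartlett spectrum therefore has density $s_K=\rho-\widehat h_K$, which is nonnegative because the spectrum is a positive measure. This density is continuous because $h_K\in L^1$, and $s_K(0)=\rho-\int h_K=0$ by \eqref{eq:dpp-projection-identity}. Continuity and vanishing at $0$ give $\boldsymbol\sigma^{(1)}(B_\varepsilon^*)=o(\Vol_d(B_\varepsilon^*))$, so Theorem~\ref{thm:geometric-spectral} yields $\mu_K\in\HU_1$.

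\emph{Reduction for $\HU_2$.} All local moments are finite by Hadamard's inequality, as in the proof of Lemma~\ref{lem:dpp-pair-variance-density}. If $\varphi=\varphi_\psi$ with $\psi(0)=0$, then \eqref{eq:full-factorial-pair-identity} gives $T_2[\chi_{B_R},\varphi]=T_2^{\neq}[\chi_{B_R},\varphi]=A_R$. It therefore suffices to find a real even $\psi\in C_c(V)$, vanishing near $0$, with $D_K(\psi)>0$ in \eqref{eq:dpp-general-pair-variance-limit}.

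\emph{Choosing $\psi$ (the main step).} First find $r_0\neq0$ with $q(r_0)>0$, that is, $h_K(r_0)<\rho^2$. Such a point exists: otherwise $h_K\geq\rho^2>0$ off the origin, contradicting $h_K\in L^1(V)$. By continuity, $q\geq c>0$ on $B_\delta(r_0)$, and also on $B_\delta(-r_0)$ since $q$ is even; take $\delta<\|r_0\|$ so that these balls avoid the origin. Fix a nonnegative bump $\beta\in C_c(B_1)$ and set
\[
  \psi_\varepsilon(r):=\beta\bigl((r-r_0)/\varepsilon\bigr)+\beta\bigl((r+r_0)/\varepsilon\bigr),
  \qquad \varepsilon<\delta.
\]
This function is even and vanishes near $0$. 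The three terms of \eqref{eq:dpp-general-pair-variance-limit} then scale as follows:
\begin{itemize}
  \item the two-point term is at least $2c\int\psi_\varepsilon^2=2c\,\varepsilon^d\cdot 2\|\beta\|_2^2$;
  \item the three-point term is at most $4\rho^3\bigl(\int\psi_\varepsilon\bigr)^2=O(\varepsilon^{2d})$, using $\rho^{(3)}\leq\rho^3$;
  \item the four-point term is at most $20\rho^3\bigl(\int\psi_\varepsilon\bigr)^2=O(\varepsilon^{2d})$, by \eqref{eq:dpp-connected-four-point-bound}.
\end{itemize}
Hence $D_K(\psi_\varepsilon)\geq C\varepsilon^d-O(\varepsilon^{2d})>0$ for $\varepsilon$ small. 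This gives \eqref{eq:dpp-positive-pair-volume} with $\varphi=\varphi_{\psi_\varepsilon}$, which satisfies $\varphi(0)=\psi_\varepsilon(0)=0$, and so $\mu_K\notin\HU_2$.

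The delicate point is this scaling comparison: it relies on the $\varepsilon$-uniform bound \eqref{eq:dpp-connected-four-point-bound} on the connected four-point term. If that bound were unavailable, I would instead try to exploit the sign structure of the determinant expansion, which seems much harder. With the bound in hand, the remaining steps are routine.
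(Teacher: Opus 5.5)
Your proposal is correct and follows essentially the paper's own route: the same first-spectrum computation, the same reduction via $\varphi(0)=0$, and the same mechanism in which a small support makes $\|\psi\|_1^2$ negligible against $\|\psi\|_2^2$. The paper phrases this last step as Cauchy--Schwarz with $\Vol_d(O)<c/(10\rho^3)$ and drops the nonnegative three-point term rather than rescaling a bump. One small fix: take $\beta$ even (e.g.\ radial), since otherwise $\psi_\varepsilon$ as you wrote it is not even.
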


\begin{proof}
The Macchi--Soshnikov criterion \cite[Theorem~3]{Sos00} gives a unique simple determinantal point process associated with $P_K$.  Setting $v=w$ in \eqref{eq:dpp-phase-covariance} shows that
$K(v,v)=\rho$ for every $v\in V$.  Simultaneous translation of
$v_1,\ldots,v_j$ conjugates the matrix $[K(v_r,v_s)]_{r,s}$ by a diagonal
unitary matrix.  Equivalently, the $r$-th row acquires the factor
$c_t(v_r)$ and the $s$-th column the factor $\overline{c_t(v_s)}$; their
products cancel in the determinant.  Hence all factorial correlation
functions are $V$-invariant, and therefore so is $\mu_K$.

The determinantal one- and two-point identities give, for $f,g\in C_c(V)$,
\begin{align}\label{eq:dpp-first-covariance}
  \Cov_{\mu_K}(Sf,Sg)
  ={}&\rho\int_V f(v)\overline{g(v)}\,d\Vol_d(v)\notag\\
     &-\int_{V^2}f(v)\overline{g(w)}h_K(w-v)\,
       d\Vol_d(v)\,d\Vol_d(w).
\end{align}
Thus the first Bartlett density is \eqref{eq:dpp-first-bartlett-density}.
Since $h_K\in L^1(V)$, this density is continuous, and
\begin{equation}\label{eq:dpp-zero-frequency}
  s_K(0)=\rho-\int_V h_K(v)\,d\Vol_d(v)=0.
\end{equation}
Theorem~\ref{thm:geometric-spectral} now gives $\mu_K\in\HU_1$.

For the second assertion, let $q$ be as in
\eqref{eq:dpp-pair-density}.  It is continuous, even and nonnegative.  Since $h_K\in L^1(V)$ and $h_K(0)=\rho^2$, there is an
$a\neq0$ with $q(a)>0$.

Choose $c>0$ and a symmetric relatively compact open set
$O\subset V\setminus\{0\}$, containing $a$ and $-a$, so small that
\begin{equation}\label{eq:dpp-small-pair-support}
  q(r)\geq c\quad(r\in O),
  \qquad
  \Vol_d(O)<\frac{c}{10\rho^3}.
\end{equation}
Take a nonzero even function $\psi\in C_c(V)$ with $\psi\geq0$ and
$\operatorname{supp}\psi\subset O$.  Lemma~\ref{lem:dpp-pair-variance-density} gives the limit formula
\eqref{eq:dpp-general-pair-variance-limit} and the bound
\eqref{eq:dpp-connected-four-point-bound}.  Since the three-point term is
nonnegative, Cauchy--Schwarz on the support of $\psi$ gives
\begin{align}\label{eq:dpp-pair-positive-lower-bound}
  D_K(\psi)
  &\geq2c\|\psi\|_2^2-20\rho^3\|\psi\|_1^2\notag\\
  &\geq\big(2c-20\rho^3\Vol_d(O)\big)\|\psi\|_2^2>0.
\end{align}
Because $0\notin O$, we have $\varphi_\psi(0)=\psi(0)=0$.  The original and distinct-point
pair statistics therefore agree by \eqref{eq:full-factorial-pair-identity},
and \eqref{eq:dpp-pair-positive-lower-bound} proves
\eqref{eq:dpp-positive-pair-volume}.  Hence $\mu_K\notin\HU_2$.
\end{proof}

\begin{example}[Weyl--Heisenberg ensembles]
Let $V=\mathbb R^{2m}$ and write $z=(x,\xi)\in\mathbb R^m\times\mathbb R^m$.
For a unit vector $g\in L^2(\mathbb R^m)$, let
\begin{equation}\label{eq:wh-time-frequency-shift}
  (\pi(x,\xi)g)(t):=e^{2\pi i\,\xi\cdot t}g(t-x),
  \qquad t\in\mathbb R^m,
\end{equation}
and consider the Weyl--Heisenberg kernel
\begin{equation}\label{eq:wh-kernel}
  K_g(z,w):=\langle \pi(w)g,\pi(z)g\rangle_{L^2(\mathbb R^m)}.
\end{equation}
It is the reproducing kernel of the range of the short-time Fourier transform,
so its integral operator $P_{K_g}$ is an orthogonal projection.  For every
relatively compact $D\subset V$,
\[
  \|P_{K_g}\chi_D\|_{\mathrm{HS}}^2
  =\int_D\int_V |K_g(z,w)|^2\,d\Vol_d(z)\,d\Vol_d(w)
  =\int_D K_g(w,w)\,d\Vol_d(w)
  =\Vol_d(D),
\]
so $\chi_D P_{K_g}\chi_D=(P_{K_g}\chi_D)^*(P_{K_g}\chi_D)$ is trace
class.  Thus $K_g$ is locally trace class.  The projective composition law for
the operators $\pi(z)$ gives \eqref{eq:dpp-phase-covariance}.
Theorem~\ref{thm:projection-dpp-hu1-not-hu2} therefore yields
\begin{equation}\label{eq:wh-hierarchy-separation}
  \mu_{K_g}\in\HU_1\setminus\HU_2.
\end{equation}
This includes the Ginibre process for a Gaussian window and the pure
polyanalytic, or higher Landau-level, processes for Hermite windows.  Their
first-order hyperuniformity is standard; under the decay assumption in
\cite[Definition~5.1]{APRT17}, surface-order number variance and its sharp
asymptotic are proved in \cite[Theorems~5.6 and~5.8]{APRT17}, with the pure
polyanalytic case recorded in \cite[Corollary~5.11]{APRT17}.
\end{example}

\begin{remark}[Comparison with Frommer--Hanke]
Let $E\subset V^*$ be measurable with $E=-E$ and $\Vol_d(E)=1$.  If
$\mathcal F$ denotes the Fourier transform from
\eqref{eq:fourier-convention} and $M_{\chi_E}$ multiplication by $\chi_E$,
then the Fourier projection
\begin{equation}\label{eq:fourier-projection}
  P_E:=\mathcal F^{-1}M_{\chi_E}\mathcal F
\end{equation}
has the translation-invariant kernel
\begin{equation}\label{eq:fourier-projection-kernel}
  K(v,w)
  =\int_E e^{2\pi i\xi(v-w)}\,d\Vol_d(\xi),
  \qquad v,w\in V.
\end{equation}
For these projection DPPs, Frommer and Hanke prove positive volume-order
variance for every nonzero even
$\psi\in L^1(\mathbb R^d)\cap L^2(\mathbb R^d)$ in their endpoint-truncated pair
statistic; see \cite[Theorem~4.2]{FH26}.  Theorem~\ref{thm:projection-dpp-hu1-not-hu2}
gives the corresponding failure of $\HU_2$ for the larger class satisfying
\eqref{eq:dpp-phase-covariance}, with the barycentre truncation used here.
\end{remark}

\section{Strictness of the hierarchy and weak mixing}
\label{sec:weak-mixing}

By
\cite[Theorem~4.1]{Bjo26a}, if an ergodic point process on $\mathbb R$ has
positive intensity and local second moments, then the condition
\begin{equation}\label{eq:wm-bjo-spectral-criterion}
  \int_{0<|\xi|<1}\frac{1}{\xi^2}\,
    d\boldsymbol\sigma^{(1)}(\xi)<\infty,
\end{equation}
where $\boldsymbol\sigma^{(1)}$ is its scalar first Bartlett spectrum, forces
a nonzero eigenvalue and lattice induction.  Theorem~\ref{thm:weak-mixing-strictness}
therefore shows that no finite collection $\HU_1,\ldots,\HU_k$ forces
\eqref{eq:wm-bjo-spectral-criterion}.

\begin{theorem}[Strictness under weak mixing]
\label{thm:weak-mixing-strictness}
For every integer $k\geq1$ there exists an $\mathbb R$-invariant weakly
mixing simple point process $\mu$ such that
\begin{equation}\label{eq:weak-mixing-strictness}
  \mu\in\HU_{\leq k}\setminus\HU_{k+1}.
\end{equation}
\end{theorem}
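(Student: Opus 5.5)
The plan is to build $\mu$ in two layers. First a weakly mixing \emph{skeleton} that lies in every $\HU_j$. Then a \emph{decoration}: each skeleton point is replaced by a small rigid cluster whose internal configuration is random, independent across clusters, and chosen so that the randomness is invisible to every pattern statistic of order $\leq k$ but visible at order $k+1$. Failure of $\HU_{k+1}$ should then follow as in Theorem~\ref{thm:perturbed-lattice-separation}: the independent cluster variables contribute a volume-order variance with a positive constant. Membership in $\HU_{\leq k}$ should follow from Theorem~\ref{thm:geometric-spectral} applied to the skeleton, because for $j\leq k$ the decorated statistic is a deterministic functional of the skeleton.

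\emph{Skeleton.} We take points $x_n=n+Z_n$ and randomize the origin, where $(Z_n)$ is a bounded stationary sequence built from a weakly mixing system. To destroy the eigenvalues at $\mathbb Z^*$ that a randomly translated lattice has, the increments $Z_{n+1}-Z_n$ must not be a bounded coboundary. We therefore let the gaps $g_n=1+\epsilon\,\eta_n$ have a spectral measure near $0$ of order $|\xi|^{s}$ with $0<s\leq1$. This makes the first Bartlett spectrum $o(\varepsilon)$ at the origin, hence gives $\HU_1$. It also avoids the integrability condition \eqref{eq:wm-bjo-spectral-criterion}, which is consistent with weak mixing. Weak mixing itself would be checked through the Gaussian or product structure of $(\eta_n)$. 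For a statistic that depends on finitely many consecutive gaps, we would like the same small-frequency bound. The natural way is to require $(\eta_n)$ to be a \emph{difference} process, $\eta_n=W_n-W_{n-1}$ in a suitable weak sense, for every local functional. This is the first place where a careful choice is needed. The fallback is a Gaussian skeleton whose chaos expansion controls the spectra of nonlinear local functionals.

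\emph{Decoration.} Choose two finite configurations $A,B\subset\mathbb R$ with $|A|=|B|=N$ such that their $j$-decks coincide as multisets of \emph{positioned} $j$-tuples relative to the cluster centre for every $j\leq k$, while their $(k+1)$-decks differ. Each skeleton point $x_n$ is replaced by $x_n+A$ or $x_n+B$ according to an independent fair coin, scaled so that the clusters are much smaller than the gaps. Consider any ordered $j$-tuple with $j\leq k$, possibly spread over several clusters. Expanding $p^{\otimes j}$ cluster by cluster, each cluster enters only through its deck of some order $j_i\leq k$. Hence $T_j[f,\varphi]$ is a fixed measurable function of the skeleton alone, and its variance is that of a local skeleton statistic, which is $o(R)$ by the skeleton step. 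For $j=k+1$, take $\varphi$ supported near a shape that occurs in exactly one of the two $(k+1)$-decks and only within a single cluster. Then $T_{k+1}[\chi_{B_R},\varphi]$ equals a sum of iid Bernoulli indicators over the $\asymp R$ clusters whose centres lie in $B_R$, up to boundary terms. Conditioning on the skeleton and applying the law of total variance, as in the proof of Theorem~\ref{thm:perturbed-lattice-separation}, gives a volume-order lower bound. Adding iid decorations to a weakly mixing skeleton preserves weak mixing, since it is a skew product with a Bernoulli fibre. Simplicity is automatic for small clusters.

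\emph{Main obstacle.} The hard step is the existence of $A\neq B$ with equal positioned $j$-decks for all $j\leq k$. Already the $1$-deck forces $A=B$ as sets if positions are absolute. I would therefore first try to make the randomness enter through a symmetry that the skeleton absorbs: $B=-A$, reflected about the centre, combined with a skeleton whose law is reflection-symmetric and whose cross-cluster couplings are averaged. If that fails, I would weaken the requirement to equality of decks only in the \emph{centred} sense together with a random cluster translation absorbed into $Z_n$, and bound the resulting cross-cluster fluctuations by the skeleton's hyperuniformity. The existence of centred-deck-equal pairs for $j\leq k$ but not $k+1$ comes from the classical failure of $k$-deck reconstruction, via Prouhet--Tarry--Escott-type constructions.
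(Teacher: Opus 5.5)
\textbf{Gap at order $2$.} The construction breaks down at order $2$ once $k\geq2$. The cause is the independence of the cluster types, not the positioned $1$-deck. That worry is actually harmless: away from $\partial B_R$, the statistic $T_1[\chi_{B_R},1]$ only sees $|A|=|B|$, and single-cluster $j$-tuples only see centred decks.

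Take $\varphi\in C_c(\Sh_2(\mathbb R))$ whose even profile $\widetilde\varphi$ is a narrow bump at a point $g^*$ of the support of the gap law, so that only pairs in adjacent clusters are counted. Conditionally on the skeleton, up to boundary terms, $T_2[\chi_{B_R},\varphi]=2\sum_nG_n(c_n,c_{n+1})$, where the coins $c_n$ are iid, $G_n(c,c')=\sum_{a\in C,\,b\in C'}\widetilde\varphi(g_n+b-a)$, and $C,C'\in\{A,B\}$ are the types selected by $c,c'$.

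The degenerate part of the Hoeffding decomposition of $G_n$ is $\frac14\epsilon(c_n)\epsilon(c_{n+1})(\widetilde\varphi*\rho)(g_n)$. Here $\epsilon(A)=1$, $\epsilon(B)=-1$, and $\rho$ is the autocorrelation of $\delta_A-\delta_B$, so $\widehat\rho=|\widehat{\delta_A}-\widehat{\delta_B}|^2$ and $\rho(\{0\})=|A\triangle B|$. These terms are uncorrelated in $n$ and orthogonal to the first-order Hoeffding terms. Hence $\mathbb E\,\Var(T_2[\chi_{B_R},\varphi]\mid\text{skeleton})\gtrsim R$ for every skeleton and every $A\neq B$.

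Shifting the types, i.e.\ absorbing shifts into $Z_n$, only replaces $A,B$ by translates $A+s,B+t$. The same bound applies unless $A+s=B+t$, and then no centred deck of any order distinguishes them. Since this variance is produced by the coins, no hyperuniformity of the skeleton can bound it. Independent decorations are therefore never invisible at order $2$.

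\textbf{Within-cluster requirement and skeleton.} The within-cluster requirement also fails for $k\geq3$. $T_3$ counts ordered triples with repetition, so equal centred $3$-decks mean equal triple correlations: $\widehat{\delta_A}(\xi_1)\widehat{\delta_A}(\xi_2)\overline{\widehat{\delta_A}(\xi_1+\xi_2)}$ coincides with the same expression for $B$. Setting $\xi_2=0$ gives $|\widehat{\delta_A}|=|\widehat{\delta_B}|$. The phase difference is then additive near $0$, hence linear, and analyticity makes $B$ a translate of $A$, so all higher decks agree too. Prouhet--Tarry--Escott identities equate power sums, not decks.

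The skeleton is also not actually constructed. If $Z_n$ is bounded and stationary, its increments are a bounded coboundary, contrary to your own requirement. A Gaussian driving sequence cannot give $\HU_2$ either. A consecutive-gap statistic with nonzero second Wick component has spectral measure at least a positive multiple of $\sigma_\eta*\sigma_\eta$, and this gives mass $\gtrsim\varepsilon\|\sigma_\eta\|^2$ to $(-\varepsilon,\varepsilon)$.

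\textbf{What the paper does instead.} The missing idea is that the randomness seen at order $k+1$ must be rigidly correlated across clusters and carried by a character that no pattern with at most $k$ points can detect. The paper uses a zero-entropy substitution on $\mathbb F_2^q\times\mathbb F_2$ with $q=k+1$, whose symmetry group acts on all clusters simultaneously. Occupancy is encoded by $g_n\in\mathbb F_2^q$, so every pattern with fewer than $q$ points has zero $\chi_\alpha$-component (Lemma~\ref{lem:wm-character-exclusion}). The multipliers $S_\alpha=M^3$, $S_\beta=M$, and $S_\gamma=0$ otherwise give $O(N^{1/4})$ discrepancy off $\alpha$ but growth $M^{3m}$ for $\alpha$. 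Failure of $\HU_q$ is thus a superlinear variance $\gtrsim R^{3/2}$, not iid noise. Weak mixing comes from the roof $L+\varepsilon\chi_\beta$ with $L/\varepsilon\notin\mathbb Q$ together with the two-copy prefixes.
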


Fix $q\geq2$.  It suffices to construct a weakly mixing point process
$\mu_q\in\HU_{\leq q-1}\setminus\HU_q$; then take $q=k+1$.

\subsection{The symbolic construction}
\label{subsec:wm-substitution}

\subsubsection{Substitution systems}
Let $\mathsf A$ be a finite alphabet and let $Q\geq2$.
A \emph{constant-length substitution of length $Q$} is a map
$\vartheta:\mathsf A\to\mathsf A^Q$.  It extends to finite words by
concatenation.  For $x=(x_n)_{n\in\mathbb Z}\in\mathsf A^{\mathbb Z}$,
every integer $k$ has a unique decomposition
\[
  k=Qn+j,\qquad n\in\mathbb Z,\quad 0\leq j<Q,
\]
and we define
\begin{equation}\label{eq:wm-bi-infinite-substitution}
  \bigl(\vartheta(x)\bigr)_k
  :=\bigl(\vartheta(x_n)\bigr)_j.
\end{equation}
Its \emph{substitution matrix}
$\mathbf M_\vartheta=(M_{a,b})_{a,b\in\mathsf A}$ is defined by
\begin{equation}\label{eq:wm-substitution-matrix-definition}
  M_{a,b}
  :=\#\{0\leq j<Q:(\vartheta(b))_j=a\}.
\end{equation}
The substitution is \emph{primitive} if some power of
$\mathbf M_\vartheta$ has strictly positive entries.

Let $T$ denote the left shift on $\mathsf A^{\mathbb Z}$.  The
\emph{substitution subshift} is
\begin{equation}\label{eq:wm-substitution-subshift}
  X_\vartheta
  :=\{x\in\mathsf A^{\mathbb Z}:\text{ every finite word occurring in $x$
  occurs in some $\vartheta^m(a)$}\}.
\end{equation}
A point $x\in X_\vartheta$ is \emph{periodic} if $T^p x=x$ for some
$p\geq1$, and $(X_\vartheta,T)$ is \emph{aperiodic} if it contains no
periodic point.  A constant-length substitution is \emph{recognizable} on
$X_\vartheta$ if the representation
\begin{equation}\label{eq:wm-recognizability-definition}
  x=T^j\vartheta(y),
  \qquad y\in X_\vartheta,\quad 0\leq j<Q,
\end{equation}
is unique whenever it exists.  For a primitive substitution,
$X_\vartheta=\bigcup_{j=0}^{Q-1}T^j\vartheta(X_\vartheta)$, so every
$x\in X_\vartheta$ admits a representation \eqref{eq:wm-recognizability-definition}.
If $\vartheta$ is recognizable, set $x^{(0)}:=x$ and write successively
\[
  x^{(r)}=T^{j_r}\vartheta(x^{(r+1)}),
  \qquad r\geq0,\quad 0\leq j_r<Q.
\]
Using $\vartheta(T^j y)=T^{Qj}\vartheta(y)$ gives, for every $m\geq1$,
\begin{equation}\label{eq:wm-level-m-representation}
  x=T^{j_m}\vartheta^m(y^{(m)}),
  \qquad
  j_m:=j_0+Qj_1+\cdots+Q^{m-1}j_{m-1},
  \qquad 0\leq j_m<Q^m,
\end{equation}
where $y^{(m)}:=x^{(m)}$.  The one-step uniqueness implies uniqueness of
$(y^{(m)},j_m)$.

For $a\in\mathsf A$, the word $\vartheta^m(a)$ has length $Q^m$; we call it
a \emph{level-$m$ substitution word}.  If
\eqref{eq:wm-level-m-representation} holds, then for every $n\in\mathbb Z$
and $0\leq r<Q^m$,
\begin{equation}\label{eq:wm-level-m-coordinate-block}
  x_{-j_m+nQ^m+r}
  =\bigl(\vartheta^m(y^{(m)}_n)\bigr)_r.
\end{equation}
Hence the intervals
\begin{equation}\label{eq:wm-level-m-blocks-definition}
  I_{m,n}(x)
  :=[-j_m+nQ^m,-j_m+(n+1)Q^m)\cap\mathbb Z,
  \qquad n\in\mathbb Z,
\end{equation}
partition $\mathbb Z$ into copies of level-$m$ substitution words.  We call
them the \emph{level-$m$ blocks of $x$}.  Since
$\vartheta^{m+1}=\vartheta^m\circ\vartheta$, the block partitions are nested:
each level-$(m+1)$ block is the union of $Q$ consecutive level-$m$ blocks.

Every primitive substitution subshift is uniquely ergodic; see
\cite[Theorem~5.6]{Que10}.  For primitive aperiodic injective substitutions,
recognizability follows from Moss\'e's theorem in the form
\cite[Theorem~4.34]{Bru22}; in the present constant-length setting this is
precisely uniqueness in \eqref{eq:wm-recognizability-definition}, whose
existence was noted above.

A function $F:X_\vartheta\to\mathbb C$ is a \emph{finite-coordinate
function} if there is $\ell\geq0$ such that $F(x)$ depends only on
$x_{-\ell},\ldots,x_\ell$.  Such an $\ell$ will be called a
\emph{dependency radius} for $F$.

\subsubsection{The finite-group substitution}
For the construction, set
\begin{equation}\label{eq:wm-finite-group}
  \mathsf A:=\mathbb F_2^q\times\mathbb F_2,
  \qquad M:=|\mathsf A|=2^{q+1},
  \qquad Q:=M^4.
\end{equation}
Write $z=(g,s)$ with $g=(g_1,\ldots,g_q)\in\mathbb F_2^q$ and
$s\in\mathbb F_2$.  We identify the character group
$\widehat{\mathsf A}$ with $\mathsf A$ by
\begin{equation}\label{eq:wm-character-identification}
  \chi_{(u,t)}(g,s):=(-1)^{u\cdot g+ts}.
\end{equation}
Set
\begin{equation}\label{eq:wm-distinguished-characters}
  \alpha:=((1,\ldots,1),0),
  \qquad
  \beta:=(0,1).
\end{equation}
Thus $\chi_\alpha(g,s)=(-1)^{g_1+\cdots+g_q}$ depends on all $q$
coordinates of $g$, whereas $\chi_\beta(g,s)=(-1)^s$ depends only on the
last coordinate.

We construct a substitution of the form
\begin{equation}\label{eq:wm-translation-form}
  \vartheta(z)=(z+c_0)(z+c_1)\cdots(z+c_{Q-1}),
  \qquad c_j\in\mathsf A.
\end{equation}
If
\[
  m_{\mathbf c}(w):=\#\{0\leq j<Q:c_j=w\},
\]
then
\begin{equation}\label{eq:wm-convolution-matrix}
  (\mathbf M_\vartheta)_{a,b}=m_{\mathbf c}(a-b),
\end{equation}
so the substitution matrix is convolution by $m_{\mathbf c}$ on the finite
abelian group $\mathsf A$.  Moreover, for
\begin{equation}\label{eq:wm-character-multiplier-predefinition}
  S_\gamma:=\sum_{j=0}^{Q-1}\chi_\gamma(c_j)
  =\sum_{w\in\mathsf A}m_{\mathbf c}(w)\chi_\gamma(w),
\end{equation}
one has
\begin{equation}\label{eq:wm-one-step-character-sum}
  \sum_{j=0}^{Q-1}\chi_\gamma\bigl((\vartheta(z))_j\bigr)
  =S_\gamma\chi_\gamma(z).
\end{equation}
Thus $S_\gamma$ is the finite-group Fourier coefficient of
$m_{\mathbf c}$ at $\gamma$.

We require
\begin{equation}\label{eq:wm-target-character-multipliers}
  S_0=Q=M^4,
  \qquad S_\alpha=M^3,
  \qquad S_\beta=M,
  \qquad S_\gamma=0\quad
  (\gamma\notin\{0,\alpha,\beta\}).
\end{equation}
Since $\mathsf A$ is a finite abelian group, its characters are orthogonal;
in the present $\mathbb F_2$ setting,
\begin{equation}\label{eq:wm-character-orthogonality-special}
  \sum_{w\in\mathsf A}\chi_\gamma(w)=M\mathbf 1_{\{\gamma=0\}},
  \qquad
  \sum_{w\in\mathsf A}\chi_\eta(w)\chi_\gamma(w)
  =M\mathbf 1_{\{\gamma=\eta\}}.
\end{equation}
Finite-group Fourier inversion applied to
\eqref{eq:wm-target-character-multipliers} gives
\begin{equation}\label{eq:wm-multiplicity}
  m(w)
  :=\frac1M\bigl(M^4+M^3\chi_\alpha(w)+M\chi_\beta(w)\bigr)
  =M^3+M^2\chi_\alpha(w)+\chi_\beta(w).
\end{equation}
By \eqref{eq:wm-character-orthogonality-special}, its Fourier coefficients
are exactly \eqref{eq:wm-target-character-multipliers}.  Moreover,
\[
  m(w)\geq M^3-M^2-1>0,
\]
so every $m(w)$ is a positive integer, and
\begin{equation}\label{eq:wm-substitution-length}
  \sum_{w\in\mathsf A}m(w)=Q.
\end{equation}

It remains to choose the word $\mathbf c$ in
\eqref{eq:wm-translation-form}.  Since
$m(0)=M^3+M^2+1\geq2$, choose
\begin{equation}\label{eq:wm-multiplicity-word}
  \mathbf c=c_0c_1\cdots c_{Q-1}\in\mathsf A^Q,
  \qquad
  c_0=c_1=0,
  \qquad
  \#\{j:c_j=w\}=m(w)
\end{equation}
for every $w\in\mathsf A$; apart from the first two entries, the ordering is
irrelevant.  The multiplicity condition and
\eqref{eq:wm-character-multiplier-predefinition} give the multipliers in
\eqref{eq:wm-target-character-multipliers}.  The extra
condition $c_0=c_1=0$ does not change the multipliers and gives
\[
  \vartheta^{m+1}(a)
  =\vartheta^m(a+c_0)\,\vartheta^m(a+c_1)\cdots
  =\vartheta^m(a)\,\vartheta^m(a)\cdots,
\]
hence
\begin{equation}\label{eq:wm-two-copy-prefix}
  \vartheta^{m+1}(a)_{[0,2Q^m)}
  =\vartheta^m(a)\,\vartheta^m(a),
  \qquad a\in\mathsf A,\quad m\geq0.
\end{equation}

For $h\in\mathsf A$, let $R_h$ add $h$ to every coordinate of a finite or
bi-infinite word.  The additive group structure gives, coordinatewise,
\[
  (\vartheta(z+h))_j=z+h+c_j=(\vartheta(z))_j+h,
\]
and hence
\begin{equation}\label{eq:wm-substitution-translation-covariance}
  \vartheta(z+h)=R_h\vartheta(z).
\end{equation}
By concatenation,
$\vartheta(R_hx)=R_h\vartheta(x)$ on $\mathsf A^{\mathbb Z}$, and the same
identity holds for every iterate.  Hence a word $w$ occurs in some
$\vartheta^m(a)$ if and only if $R_hw$ occurs in
$\vartheta^m(a+h)$, so $R_hX_\vartheta=X_\vartheta$.  The maps $R_h$ also
commute with the shift $T$.

\begin{lemma}[Basic properties of the substitution]
\label{lem:wm-substitution-estimates}
The substitution $\vartheta$ satisfies the following statements.

\begin{enumerate}
\item The system $(X_\vartheta,T)$ is minimal, uniquely ergodic, aperiodic,
and recognizable.  Its unique invariant probability measure, denoted by
$\nu$, is invariant under every $R_h$.

\item For the multipliers $S_\gamma$ defined in
\eqref{eq:wm-character-multiplier-predefinition}, one has
\begin{equation}\label{eq:wm-character-multipliers}
  S_0=Q,
  \qquad
  S_\alpha=M^3=Q^{3/4},
  \qquad
  S_\beta=M=Q^{1/4},
  \qquad
  S_\gamma=0
\end{equation}
for $\gamma\notin\{0,\alpha,\beta\}$.  Moreover, for every
$a\in\mathsf A$, $m\geq0$, and $\gamma\in\widehat{\mathsf A}$,
\begin{equation}\label{eq:wm-exact-character-word-sum}
  \sum_{n=0}^{Q^m-1}
  \chi_\gamma\bigl((\vartheta^m(a))_n\bigr)
  =S_\gamma^m\chi_\gamma(a).
\end{equation}
\end{enumerate}
\end{lemma}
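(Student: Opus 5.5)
The proof splits into the two parts of the statement. Part (2) is essentially a computation, so I would do it first; part (1) collects standard facts about primitive substitutions.

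\emph{Part (2).} The multiplier values \eqref{eq:wm-character-multipliers} follow from \eqref{eq:wm-multiplicity}, \eqref{eq:wm-multiplicity-word} and the orthogonality relations \eqref{eq:wm-character-orthogonality-special}. Indeed,
\[
  S_\gamma=\sum_w m(w)\chi_\gamma(w)
  =M^3\cdot M\mathbf 1_{\{\gamma=0\}}+M^2\cdot M\mathbf 1_{\{\gamma=\alpha\}}+M\mathbf 1_{\{\gamma=\beta\}}.
\]
Since $0,\alpha,\beta$ are distinct, this gives $S_0=M^4=Q$, $S_\alpha=M^3$, $S_\beta=M$, and $S_\gamma=0$ otherwise. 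For \eqref{eq:wm-exact-character-word-sum} I would induct on $m$, the case $m=0$ being trivial. Write $\vartheta^{m+1}(a)=\vartheta^m(a+c_0)\cdots\vartheta^m(a+c_{Q-1})$. Splitting the sum over these $Q$ subwords of length $Q^m$ and applying the induction hypothesis gives
\[
  \sum_j S_\gamma^m\chi_\gamma(a+c_j)=S_\gamma^m\chi_\gamma(a)S_\gamma,
\]
using multiplicativity of characters and the definition of $S_\gamma$.

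\emph{Part (1).} Primitivity is immediate from \eqref{eq:wm-convolution-matrix}, since $m(w)>0$ for every $w$, so $\mathbf M_\vartheta$ already has positive entries. Minimality and unique ergodicity of primitive substitution subshifts are standard; I would cite \cite[Theorem~5.6]{Que10}. For $R_h$-invariance of $\nu$: $R_h$ commutes with $T$ and preserves $X_\vartheta$, so $(R_h)_*\nu$ is again $T$-invariant on $X_\vartheta$, and unique ergodicity forces $(R_h)_*\nu=\nu$. Recognizability then follows from Mossé's theorem \cite[Theorem~4.34]{Bru22} once aperiodicity and injectivity are checked. Injectivity is clear, since $\vartheta(z)_0=z+c_0=z$.

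The main obstacle is aperiodicity, and I would argue it via the character sums. Suppose $x\in X_\vartheta$ is periodic. By minimality $X_\vartheta$ is then a finite orbit, so $\nu$ is supported on a single periodic orbit of some period $p$. Take the character $\chi_\beta$, for which $|S_\beta|^m=Q^{m/4}$. By \eqref{eq:wm-exact-character-word-sum}, every level-$m$ word has $\chi_\beta$-sum of modulus exactly $Q^{m/4}$. On the other hand, a periodic sequence has windowed sums over a length-$L$ window equal to $L\cdot\bar c+O(p)$, where $\bar c$ is the mean of $\chi_\beta$ over a period. Taking $L=Q^m$, this gives $Q^m|\bar c|=Q^{m/4}+O(p)$ for all $m$, which forces $\bar c=0$. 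But then every length-$Q^m$ window of $x$ has $\chi_\beta$-sum bounded by $O(p)$, contradicting the value $Q^{m/4}\to\infty$ attained on the blocks that occur in $x$ (for instance the prefix of $\vartheta^{m}(a)$, which occurs in $x$ by minimality).

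Since the level-$m$ words occur in $x$ without assuming recognizability, this argument is not circular. As a backup, I would use the two-copy prefix \eqref{eq:wm-two-copy-prefix} together with a standard Pansiot-type argument to rule out periodicity.
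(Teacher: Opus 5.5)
Your proof is correct and follows the paper's argument step for step:
\begin{itemize}
\item primitivity from $m(w)>0$;
\item minimality and unique ergodicity via Queff\'elec, and $(R_h)_*\nu=\nu$ by uniqueness of the invariant measure;
\item the multipliers by character orthogonality, and the word identity by induction on $\vartheta^{m+1}(a)=\vartheta^m(a+c_0)\cdots\vartheta^m(a+c_{Q-1})$;
\item injectivity from $c_0=0$, and recognizability from Moss\'e's theorem;
\item aperiodicity by setting unbounded character sums on level-$m$ words against bounded window sums of a periodic sequence.
\end{itemize}
The only deviation is harmless. You use $\chi_\beta$ and force the period mean to vanish through the growth mismatch $Q^{m/4}=o(Q^m)$. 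The paper instead uses $\chi_\alpha$ and gets the zero mean directly from the $R_h$-invariance of $\nu$ together with uniformity of the invariant measure on a periodic orbit.
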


\begin{proof}
\smallskip\noindent\emph{Primitivity and symmetry.}
The substitution matrix is
\begin{equation}\label{eq:wm-specific-substitution-matrix}
  (\mathbf M_\vartheta)_{a,b}=m(a-b)>0.
\end{equation}
Thus $\vartheta$ is primitive.  Hence $(X_\vartheta,T)$ is minimal, and by
\cite[Theorem~5.6]{Que10} it is uniquely ergodic.  The maps $R_h$ preserve
$X_\vartheta$ and commute with $T$ by
\eqref{eq:wm-substitution-translation-covariance}; uniqueness of the
$T$-invariant probability measure therefore gives $(R_h)_*\nu=\nu$.

\smallskip\noindent\emph{Character multipliers.}
The values in \eqref{eq:wm-character-multipliers} are
\eqref{eq:wm-target-character-multipliers}, because the word $\mathbf c$
has multiplicity function $m$.  The group law gives
$\chi_\gamma(a+c_j)=\chi_\gamma(a)\chi_\gamma(c_j)$, so
\eqref{eq:wm-one-step-character-sum} is the case $m=1$ of
\eqref{eq:wm-exact-character-word-sum}.  If the latter identity holds at
level $m$, then
\[
  \vartheta^{m+1}(a)
  =\vartheta^m(a+c_0)\cdots\vartheta^m(a+c_{Q-1})
\]
and therefore
\[
  \sum_{n=0}^{Q^{m+1}-1}
    \chi_\gamma\bigl((\vartheta^{m+1}(a))_n\bigr)
  =S_\gamma^m\chi_\gamma(a)
     \sum_{j=0}^{Q-1}\chi_\gamma(c_j)
  =S_\gamma^{m+1}\chi_\gamma(a).
\]
This proves \eqref{eq:wm-exact-character-word-sum} by induction.

\smallskip\noindent\emph{Aperiodicity and recognizability.}
Suppose that $x\in X_\vartheta$ has period $p$.  Minimality then makes
$X_\vartheta$ the finite orbit of $x$.  Since $\nu$ is invariant under
$R_h$, the nontrivial character $\chi_\alpha$ has mean zero:
\[
  \int\chi_\alpha(x_0)\,d\nu(x)=0.
\]
The invariant measure on a periodic orbit is uniform, hence
\begin{equation}\label{eq:wm-periodic-alpha-sum-zero}
  \sum_{n=0}^{p-1}\chi_\alpha(x_n)=0.
\end{equation}
For any $r\in\mathbb Z$ and $N\geq1$, write $N=ap+b$ with
$0\leq b<p$.  Using \eqref{eq:wm-periodic-alpha-sum-zero} on the $a$
complete periods gives
\begin{equation}\label{eq:wm-periodic-alpha-bound}
  \left|\sum_{n=r}^{r+N-1}\chi_\alpha(x_n)\right|\leq p.
\end{equation}
On the other hand, primitivity implies that for every $a$ and $m$ the word
$\vartheta^m(a)$ occurs in some sufficiently high substitution word and
hence in the language of $X_\vartheta$.  Thus it occurs as a block of some
point of $X_\vartheta$, and \eqref{eq:wm-exact-character-word-sum} gives on
that block
\begin{equation}\label{eq:wm-alpha-word-growth}
  \left|
  \sum_{n=0}^{Q^m-1}
  \chi_\alpha\bigl((\vartheta^m(a))_n\bigr)
  \right|=M^{3m},
\end{equation}
contradicting \eqref{eq:wm-periodic-alpha-bound} for large $m$.  Thus the
subshift is aperiodic.  Moreover $\vartheta$ is injective, since
$c_0=0$ and the first letter of $\vartheta(z)$ is $z$.  Recognizability
follows from \cite[Theorem~4.34]{Bru22}.
\end{proof}

\subsubsection{Fourier decomposition for the finite-group action}
By \eqref{eq:wm-substitution-translation-covariance}, the action of
$\mathsf A$ by the maps $R_h$ commutes with the shift.  We decompose
observables according to this action.

For $h\in\mathsf A$, let
\[
  U_hF:=F\circ R_h,
  \qquad F\in L^2(X_\vartheta,\nu).
\]
Since $(R_h)_*\nu=\nu$, each $U_h$ is unitary.  For
$\gamma\in\widehat{\mathsf A}$ define
\begin{equation}\label{eq:wm-isotypic-projection}
  P_\gamma F
  :=\frac1M\sum_{h\in\mathsf A}
       \overline{\chi_\gamma(h)}\,U_hF.
\end{equation}
Let
\begin{equation}\label{eq:wm-character-subspace}
  \mathcal H_\gamma
  :=\{F\in L^2(\nu):U_hF=\chi_\gamma(h)F
       \text{ for every }h\in\mathsf A\}.
\end{equation}

\begin{lemma}[Finite-group Fourier projections]
\label{lem:wm-fourier-projections}
For every $\gamma\in\widehat{\mathsf A}$, $P_\gamma$ is the orthogonal
projection of $L^2(\nu)$ onto $\mathcal H_\gamma$.  Moreover,
\begin{equation}\label{eq:wm-fourier-decomposition}
  F=\sum_{\gamma\in\widehat{\mathsf A}}P_\gamma F
  \qquad(F\in L^2(\nu)),
\end{equation}
and the subspaces $\mathcal H_\gamma$ are pairwise orthogonal.  If $F$
depends only on $x_{-\ell},\ldots,x_\ell$, then every $P_\gamma F$ has the
same dependency radius.  Finally, $P_\gamma$ commutes with the shift
operator $F\mapsto F\circ T$.
\end{lemma}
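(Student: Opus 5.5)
This is standard finite-group harmonic analysis applied to the unitary representation $h\mapsto U_h$ of $\mathsf A$ on $L^2(\nu)$. The plan is to verify the representation and projection identities algebraically, then check the two locality and commutation statements directly from the definitions.

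\emph{Representation and eigenvectors.} Since $R_{h}R_{h'}=R_{h+h'}$, we have $U_hU_{h'}=U_{h+h'}$. Each $U_h$ is unitary because $(R_h)_*\nu=\nu$ by Lemma~\ref{lem:wm-substitution-estimates}. Moreover $U_h^*=U_{-h}=U_h$, because every element of $\mathsf A$ has order two. For any $h'\in\mathsf A$, reindexing the sum by $h\mapsto h-h'$ and using $\chi_\gamma(h-h')=\chi_\gamma(h)\overline{\chi_\gamma(h')}$ gives
\[
U_{h'}P_\gamma F=\frac1M\sum_h\overline{\chi_\gamma(h)}U_{h+h'}F=\chi_\gamma(h')P_\gamma F .
\]
Hence $P_\gamma F\in\mathcal H_\gamma$. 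Conversely, if $F\in\mathcal H_\gamma$, then $P_\gamma F=\frac1M\sum_h|\chi_\gamma(h)|^2F=F$. So $P_\gamma$ is an idempotent with range exactly $\mathcal H_\gamma$.

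\emph{Orthogonality and decomposition.} Since the characters are real ($\pm1$-valued) and $U_h^*=U_h$, each $P_\gamma$ is self-adjoint. A self-adjoint idempotent is an orthogonal projection. For orthogonality of the subspaces, take $F\in\mathcal H_\gamma$ and $G\in\mathcal H_\eta$ with $\gamma\neq\eta$. Unitarity gives $\langle F,G\rangle=\langle U_hF,U_hG\rangle=\chi_\gamma(h)\overline{\chi_\eta(h)}\langle F,G\rangle$ for all $h$. Choosing $h$ with $\chi_\gamma(h)\neq\chi_\eta(h)$ forces $\langle F,G\rangle=0$. The decomposition \eqref{eq:wm-fourier-decomposition} comes from summing the definition over $\gamma$:
\[
\sum_\gamma P_\gamma F=\frac1M\sum_h\Bigl(\sum_\gamma\overline{\chi_\gamma(h)}\Bigr)U_hF .
\]
By \eqref{eq:wm-character-orthogonality-special}, with the roles of $h$ and $\gamma$ exchanged through the self-dual identification \eqref{eq:wm-character-identification}, the inner sum equals $M\mathbf 1_{\{h=0\}}$. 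This leaves $U_0F=F$.

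\emph{Locality and shift commutation.} $R_h$ acts coordinatewise, so if $F$ depends only on $x_{-\ell},\ldots,x_\ell$, then so does each $F\circ R_h$. A finite linear combination of such functions, in particular $P_\gamma F$, has the same dependency radius. Finally, $R_h$ commutes with $T$, which was noted after \eqref{eq:wm-substitution-translation-covariance}. Therefore $U_h(F\circ T)=(U_hF)\circ T$, and by linearity $P_\gamma$ commutes with $F\mapsto F\circ T$.

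No step is a real obstacle. The only point needing care is the character-sum identity $\sum_\gamma\chi_\gamma(h)=M\mathbf 1_{\{h=0\}}$, which holds because the pairing in \eqref{eq:wm-character-identification} is symmetric in $(u,t)$ and $(g,s)$.
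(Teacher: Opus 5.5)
Your proposal is correct and follows essentially the same route as the paper: verify $U_kP_\gamma F=\chi_\gamma(k)P_\gamma F$ by reindexing, note that $P_\gamma$ fixes $\mathcal H_\gamma$, obtain self-adjointness and the decomposition from character orthogonality (using the symmetric pairing for $\sum_\gamma\chi_\gamma(h)=M\mathbf 1_{\{h=0\}}$), and deduce locality and shift-commutation from the coordinatewise action of $R_h$ and $R_hT=TR_h$. The only cosmetic difference is that you use the exponent-two structure ($U_h^*=U_h$, real characters) and an explicit separating $h$ for orthogonality, where the paper uses $U_h^*=U_{-h}$ with the substitution $h\mapsto -h$.
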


\begin{proof}
For $k\in\mathsf A$, changing variables $h'=h+k$ in
\eqref{eq:wm-isotypic-projection} gives
\[
  U_kP_\gamma F=\chi_\gamma(k)P_\gamma F,
\]
so $P_\gamma F\in\mathcal H_\gamma$.  If $F\in\mathcal H_\eta$, then
character orthogonality gives
\[
  P_\gamma F
  =\frac1M\sum_{h\in\mathsf A}
    \overline{\chi_\gamma(h)}\chi_\eta(h)F
  =\mathbf 1_{\{\gamma=\eta\}}F.
\]
If $\gamma\neq\eta$, the spaces $\mathcal H_\gamma$ and
$\mathcal H_\eta$ are orthogonal because the $U_h$ are unitary.  Moreover,
$U_h^*=U_{-h}$, and changing $h$ to $-h$ in
\eqref{eq:wm-isotypic-projection} gives $P_\gamma^*=P_\gamma$.  Finally,
character orthogonality gives, for every $F\in L^2(\nu)$,
\[
  \sum_{\gamma\in\widehat{\mathsf A}}P_\gamma F
  =\frac1M\sum_{h\in\mathsf A}
     \left(\sum_\gamma\overline{\chi_\gamma(h)}\right)U_hF
  =F.
\]
Thus $P_\gamma$ is the orthogonal projection onto $\mathcal H_\gamma$ and
\eqref{eq:wm-fourier-decomposition} holds.  Since $R_h$ changes the value of
each coordinate but not its index, $P_\gamma$ does not enlarge the set of
coordinates on which $F$ depends.  The commutation with the shift follows
from $R_hT=TR_h$.
\end{proof}

The relation defining $\mathcal H_\gamma$ concerns the finite-group action
$R_h$, not the shift $T$; in particular, $P_\gamma F$ need not be a shift
eigenfunction.

By \eqref{eq:wm-exact-character-word-sum}, powers of $S_\gamma$ are
the exact growth factors for $\gamma$-character sums on level-$m$
substitution words.  The two nonzero nontrivial scales are therefore
\begin{equation}\label{eq:wm-scale-reminder}
  |S_\beta|^m=M^m=Q^{m/4},
  \qquad
  |S_\alpha|^m=M^{3m}=Q^{3m/4}.
\end{equation}
Thus $\alpha$ is the only nontrivial character with growth larger than
$Q^{m/4}$.  The same bound holds for finite-coordinate observables with
vanishing $\alpha$-component.

\begin{proposition}[Uniform discrepancy away from $\alpha$]
\label{prop:wm-symbolic-discrepancy}
Let $F:X_\vartheta\to\mathbb C$ be a finite-coordinate function such that
\begin{equation}\label{eq:wm-discrepancy-hypotheses}
  \int F\,d\nu=0,
  \qquad
  P_\alpha F=0.
\end{equation}
Then there is $C_F<\infty$ such that
\begin{equation}\label{eq:wm-arbitrary-interval-discrepancy}
  \left|
    \sum_{n=r}^{r+N-1}F(T^nx)
  \right|
  \leq C_FN^{1/4}
\end{equation}
for every $x\in X_\vartheta$, $r\in\mathbb Z$, and $N\geq1$.
\end{proposition}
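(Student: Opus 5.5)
We decompose $F=\sum_{\gamma}P_\gamma F$ by Lemma~\ref{lem:wm-fourier-projections}. By hypothesis $P_\alpha F=0$, and $\int P_0F\,d\nu=\int F\,d\nu=0$ because $\int P_\gamma F\,d\nu=0$ for $\gamma\neq0$ (the measure $\nu$ is $R_h$-invariant). Each $P_\gamma F$ is finite-coordinate with the same dependency radius $\ell$. It therefore suffices to prove the bound for a single $G:=P_\gamma F\in\mathcal H_\gamma$ with $\gamma\neq\alpha$, assuming in addition $\int G\,d\nu=0$ when $\gamma=0$.

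\emph{Complete blocks.} Fix $m$ with $Q^m\gg\ell$ and consider a level-$m$ block $I_{m,n}(x)$ carrying the word $\vartheta^m(a)$, where $a=y^{(m)}_n$. The sum of $G(T^jx)$ over $j$ in this block depends only on $\vartheta^m(a)$, except for $O(\ell)$ terms near the two ends, which also see the neighbouring words $\vartheta^m(y^{(m)}_{n\pm1})$. Since $G\in\mathcal H_\gamma$, applying $R_h$ multiplies each value by $\chi_\gamma(h)$. Using $\vartheta^m(a)=R_a\vartheta^m(0)$, the interior sum is therefore $\chi_\gamma(a)\,\Sigma_m$, where $\Sigma_m$ is the corresponding sum over $\vartheta^m(0)$.

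Next we derive a recursion. Since $\vartheta^{m+1}(0)=\vartheta^m(c_0)\cdots\vartheta^m(c_{Q-1})$, we get
\[
  \Sigma_{m+1}=S_\gamma\Sigma_m+E_m,\qquad |E_m|\le C\ell Q\|G\|_\infty,
\]
where $E_m$ collects the junction corrections. These corrections are uniformly bounded: each depends only on the pair of adjacent letters and on boundary pieces of length $\ell$ of level-$m$ words. Because of the two-copy prefix \eqref{eq:wm-two-copy-prefix}, these boundary pieces stabilize once $Q^m>\ell$.

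We then split into cases. For $\gamma\notin\{0,\alpha,\beta\}$ we have $S_\gamma=0$, so $\Sigma_m=O(1)$. For $\gamma=\beta$ we have $S_\beta=Q^{1/4}$, which gives $\Sigma_m=O(Q^{m/4})$. For $\gamma=0$, $G$ is $R_h$-invariant and the recursion reads $\Sigma_{m+1}=Q\Sigma_m+E_m$. Unique ergodicity together with $\int G=0$ forces $\Sigma_m/Q^m\to0$, and hence $\Sigma_m=-\sum_{j\ge m}Q^{m-1-j}E_j=O(1)$. This last step is the main technical point: the junction error $E_m$ must be exactly a bounded quantity independent of $m$ for large $m$, so that the series converges and the solution of the recursion is $O(1)$ rather than merely $o(Q^m)$. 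In every case the complete level-$m$ block sum is $O(Q^{m/4})$.

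\emph{Arbitrary intervals.} Given $[r,r+N)$, choose $m$ with $Q^m\le N<Q^{m+1}$ and decompose the interval greedily using the nested block partitions from recognizability (Lemma~\ref{lem:wm-substitution-estimates}). At each level $j\le m+1$ the decomposition uses at most $2(Q-1)$ complete level-$j$ blocks, and each block boundary adds an $O(\ell)$ correction. Summing these contributions gives
\[
  \sum_{j\le m+1}2Q\,C\,Q^{j/4}+O\bigl(\ell(m+1)Q\bigr)=O(Q^{m/4})=O(N^{1/4}),
\]
where we use $m=O(\log N)\le CN^{1/4}$. Finally, summing over the finitely many $\gamma$ yields $C_F$.
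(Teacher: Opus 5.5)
Your proposal is correct and follows essentially the same route as the paper's proof. The shared steps are the isotypic decomposition, interior sums over level-$m$ words with the recursion $\Sigma_{m+1}=S_\gamma\Sigma_m+E_m$, the backward series combined with unique ergodicity for $\gamma=0$, and the greedy nested-block decomposition of an arbitrary interval. The only quibble is that the ``stabilization'' of boundary pieces is unnecessary, and it need not hold for suffixes, since $\vartheta^{m+1}(a)$ ends with $\vartheta^m(a+c_{Q-1})$; the series argument uses only the uniform bound $|E_m|\le C\ell Q\|G\|_\infty$, which you already get by counting junction terms.
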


\begin{proof}
\smallskip\noindent\emph{Character decomposition.}
Write
\begin{equation}\label{eq:wm-character-decomposition}
  F=\sum_{\gamma\in\widehat{\mathsf A}}F_\gamma,
  \qquad
  F_\gamma:=P_\gamma F.
\end{equation}
By hypothesis $F_\alpha=0$.  If $\gamma\neq0$, choose $h\in\mathsf A$ with
$\chi_\gamma(h)\neq1$.  The $R_h$-invariance of $\nu$ gives
\[
  \int F_\gamma\,d\nu
  =\int F_\gamma\circ R_h\,d\nu
  =\chi_\gamma(h)\int F_\gamma\,d\nu,
\]
so every nontrivial component has mean zero.  Hence
$\int F_0\,d\nu=\int F\,d\nu=0$.  Since the sum in
\eqref{eq:wm-character-decomposition} is finite, fix a common dependency
radius $\ell$ for all $F_\gamma$.

\smallskip\noindent\emph{Sums over level-$m$ substitution words.}
Fix $m$ with $Q^m>2\ell$.  If
$x_{[0,Q^m)}=\vartheta^m(a)$, define
\begin{equation}\label{eq:wm-interior-sum-definition}
  A_m^\gamma(a)
  :=\sum_{n=\ell}^{Q^m-1-\ell}F_\gamma(T^nx).
\end{equation}
Every dependency window occurring in this sum is contained in
$[0,Q^m)$, so $A_m^\gamma(a)$ depends only on the word
$\vartheta^m(a)$ and not on its extension to $X_\vartheta$.  Since
$\vartheta^m(a+h)=R_h\vartheta^m(a)$,
\begin{equation}\label{eq:wm-isotypic-word-sum}
  A_m^\gamma(a+h)=\chi_\gamma(h)A_m^\gamma(a).
\end{equation}

The word $\vartheta^{m+1}(0)$ is the concatenation of the $Q$ words
$\vartheta^m(c_j)$.  Replacing its interior sum by the sum of the $Q$
interior block sums can change only those terms whose dependency window
meets one of the $Q-1$ block boundaries.  There are at most $2\ell$ such
terms per boundary.  Using \eqref{eq:wm-isotypic-word-sum},
\begin{equation}\label{eq:wm-word-sum-recurrence}
  A_{m+1}^\gamma(0)
  =\sum_{j=0}^{Q-1}A_m^\gamma(c_j)+E_m^\gamma
  =S_\gamma A_m^\gamma(0)+E_m^\gamma,
  \qquad
  |E_m^\gamma|\leq2\ell(Q-1)\|F_\gamma\|_\infty.
\end{equation}
For $\gamma=\beta$, $S_\beta=M$.  If $m_*$ is the least integer with
$Q^{m_*}>2\ell$, then \eqref{eq:wm-word-sum-recurrence} gives, for
$m\geq m_*$,
\[
  |A_m^\beta(0)|
  \leq M^{m-m_*}|A_{m_*}^\beta(0)|
     +C_F\sum_{r=m_*}^{m-1}M^{m-1-r}
  =O_F(M^m).
\]
Equation \eqref{eq:wm-isotypic-word-sum} gives the same bound for every
initial letter $a$, and hence
\begin{equation}\label{eq:wm-beta-complete-word-bound}
  |A_m^\beta(a)|=O_F(M^m)=O_F(Q^{m/4}).
\end{equation}
For every nontrivial $\gamma\notin\{\alpha,\beta\}$ one has
$S_\gamma=0$, and therefore
\begin{equation}\label{eq:wm-other-character-complete-word-bound}
  |A_m^\gamma(a)|=O_F(1).
\end{equation}
The only nontrivial component with larger block growth is $\alpha$, which
is absent by assumption.

\smallskip\noindent\emph{The trivial character.}
For $\gamma=0$, put $A_m=A_m^0(0)$ and $E_m=E_m^0$.  Then
\eqref{eq:wm-word-sum-recurrence} reads
$A_{m+1}=QA_m+E_m$, with $|E_m|\leq C_F$.  Hence, for $n>m$,
\begin{equation}\label{eq:wm-trivial-backward-series}
  \frac{A_m}{Q^m}
  =\frac{A_n}{Q^n}
   -\sum_{r=m}^{n-1}\frac{E_r}{Q^{r+1}}.
\end{equation}
Choose an extension $x^{(n)}\in X_\vartheta$ of the word
$\vartheta^n(0)$.  By \eqref{eq:wm-interior-sum-definition},
\[
  \left|
    A_n-\sum_{r=0}^{Q^n-1}F_0(T^rx^{(n)})
  \right|
  \leq2\ell\|F_0\|_\infty.
\]
Since $F_0$ has mean zero, unique ergodicity gives
\[
  \sup_{x\in X_\vartheta}
  \left|\frac1{Q^n}\sum_{r=0}^{Q^n-1}F_0(T^rx)\right|\longrightarrow0.
\]
Thus $A_n/Q^n\to0$.  Letting $n\to\infty$ in
\eqref{eq:wm-trivial-backward-series} yields
\[
  |A_m|
  \leq C_F\sum_{r=m}^\infty Q^{m-r-1}
  \leq \frac{C_F}{Q-1}.
\]
Since $F_0$ is $R_h$-invariant,
\eqref{eq:wm-isotypic-word-sum} gives the same bound for every initial
letter $a$ whenever $Q^m>2\ell$.

\smallskip\noindent\emph{Decomposition into complete blocks.}
Let $J=[r,r+N)\cap\mathbb Z$ and
$m_0:=\lfloor\log_QN\rfloor$.  Set $I_{0,n}(x):=\{n\}$.  There are
collections $\mathcal B_m=\mathcal B_m(J,x)$ of complete level-$m$ blocks,
$0\leq m\leq m_0$, such that
\begin{equation}\label{eq:wm-block-decomposition}
  J=\bigsqcup_{m=0}^{m_0}\ \bigsqcup_{B\in\mathcal B_m}B,
  \qquad
  |\mathcal B_{m_0}|\leq Q,
  \qquad
  |\mathcal B_m|\leq2(Q-1)\quad(0\leq m<m_0).
\end{equation}
Indeed, take first all complete level-$m_0$ blocks contained in $J$.  The
complement has at most two components, each contained in one level-$m_0$
block.  Inside each component, take all complete level-$(m_0-1)$ subblocks;
there are at most $Q-1$ on each side.  Repeating this operation on the at
most two remaining endpoint components gives
\eqref{eq:wm-block-decomposition}; at level zero the blocks are singletons.

For $B\in\mathcal B_m$, write $a(B)$ for the initial letter of its
level-$m$ substitution word.  If $Q^m>2\ell$, then
\begin{equation}\label{eq:wm-complete-block-vs-interior}
  \left|
    \sum_{n\in B}F_\gamma(T^nx)-A_m^\gamma(a(B))
  \right|
  \leq2\ell\|F_\gamma\|_\infty.
\end{equation}
If $Q^m\leq2\ell$, then
\[
  \left|\sum_{n\in B}F_\gamma(T^nx)\right|
  \leq2\ell\|F_\gamma\|_\infty.
\]
Therefore \eqref{eq:wm-beta-complete-word-bound} and
\eqref{eq:wm-block-decomposition} give
\begin{equation}\label{eq:wm-geometric-discrepancy-sum}
  \left|\sum_{n\in J}F_\beta(T^nx)\right|
  \leq C_F\sum_{m=0}^{m_0}(1+M^m)
  =O_F(M^{m_0})
  =O_F(N^{1/4}).
\end{equation}
For $\gamma=0$ and every nontrivial
$\gamma\notin\{\alpha,\beta\}$, the same decomposition and the complete
block bounds give
\[
  \left|\sum_{n\in J}F_\gamma(T^nx)\right|
  =O_F(m_0+1)=O_F(\log N)=O_F(N^{1/4}).
\]
Since $F_\alpha=0$, summing the finitely many components in
\eqref{eq:wm-character-decomposition} proves
\eqref{eq:wm-arbitrary-interval-discrepancy}.
\end{proof}

\subsection{From the symbolic system to a point process}
\label{subsec:wm-suspension}

For $x\in X_\vartheta$ and $n\in\mathbb Z$, write the $n$-th coordinate
uniquely as
\begin{equation}\label{eq:wm-coordinate-decomposition}
  x_n=(g_n(x),s_n(x))\in\mathbb F_2^q\times\mathbb F_2.
\end{equation}
The $s$-coordinate controls the roof through $\beta$, while the $q$
coordinates of $g$ control occupancy of the $q$ cluster positions.

\subsubsection{The roof and the special flow}

Choose $L>\varepsilon>0$ such that
\begin{equation}\label{eq:wm-roof-parameters}
  L-\varepsilon>2,
  \qquad
  \frac{L}{\varepsilon}\notin\mathbb Q,
\end{equation}
and define the \emph{roof function}
\begin{equation}\label{eq:wm-roof}
  r(x):=L+\varepsilon\chi_\beta(x_0).
\end{equation}
Since $\nu$ is $R_h$-invariant, $\int r\,d\nu=L$.  The corresponding
\emph{special flow} is the translation flow on
\begin{equation}\label{eq:wm-special-flow-space}
  X_\vartheta^r
  :=\{(x,u):x\in X_\vartheta,\ 0\leq u<r(x)\}/
    ((x,r(x))\sim(Tx,0)),
\end{equation}
with invariant probability measure
\begin{equation}\label{eq:wm-suspension-measure}
  d\nu^r(x,u):=\frac1L\,d\nu(x)\,du.
\end{equation}
The roof fluctuation is $\varepsilon\chi_\beta(x_0)$.  Since the function $x\mapsto\chi_\beta(x_0)$ belongs to
$\mathcal H_\beta$, its $\alpha$-projection is zero, and
Proposition~\ref{prop:wm-symbolic-discrepancy} gives
an $O(N^{1/4})$ bound for the discrepancy of its Birkhoff sums.  The
irrationality of $L/\varepsilon$ enters only in the weak-mixing argument.

\subsubsection{Encoding a symbol by a local cluster}

We require the full $q$-position cluster to have a shape not realized by a
$q$-tuple with a repeated position.  Choose
\begin{equation}\label{eq:wm-offsets}
  0<\delta_1<\cdots<\delta_q<1
\end{equation}
so that
\begin{equation}\label{eq:wm-position-shape-separation}
  [(\delta_{i_1},\ldots,\delta_{i_q})^\circ]
  =[(\delta_1,\ldots,\delta_q)^\circ]
  \quad\Longrightarrow\quad
  (i_1,\ldots,i_q)\text{ is a permutation of }(1,\ldots,q).
\end{equation}
Such a choice exists.  Indeed, fix a tuple
$(i_1,\ldots,i_q)$ which is not a permutation of $(1,\ldots,q)$ and fix
$\pi\in\mathfrak S_q$.  Equality of the two centered ordered tuples after
applying $\pi$ would require
\[
  \delta_{i_{\pi(\ell)}}-\frac1q\sum_{m=1}^q\delta_{i_m}
  =\delta_\ell-\frac1q\sum_{m=1}^q\delta_m,
  \qquad 1\leq\ell\leq q.
\]
These equations do not hold identically in
$(\delta_1,\ldots,\delta_q)$: subtracting two of them would otherwise give
\[
  \delta_{i_{\pi(\ell)}}-\delta_{i_{\pi(\ell')}}
  =\delta_\ell-\delta_{\ell'}
\]
identically for every $\ell,\ell'$, which forces
$i_{\pi(\ell)}=\ell$ for all $\ell$.  Thus each pair consisting of a
non-permutation tuple and a permutation $\pi$ imposes a proper linear
subspace.  There are finitely many such subspaces, so
\eqref{eq:wm-offsets} can be chosen outside their union.

For $x\in X_\vartheta$, define the \emph{return-time cocycle} by
$\tau_0(x)=0$ and
\begin{equation}\label{eq:wm-return-time-cocycle}
  \tau_n(x):=\sum_{j=0}^{n-1}r(T^jx),
  \qquad n\geq1,
\end{equation}
with
$\tau_{-n}(x):=-\sum_{j=-n}^{-1}r(T^jx)$ for $n\geq1$.  Then
\begin{equation}\label{eq:wm-return-time-shift}
  \tau_n(Tx)=\tau_{n+1}(x)-r(x),
  \qquad n\in\mathbb Z.
\end{equation}
For $g=(g_1,\ldots,g_q)\in\mathbb F_2^q$, put
\begin{equation}\label{eq:wm-occupancy-indicator}
  a_i(g):=\frac{1+(-1)^{g_i}}2=\mathbf 1_{\{g_i=0\}}.
\end{equation}
At symbolic index $n$, we place a point at $\tau_n(x)+\delta_i$ precisely
when $a_i(g_n(x))=1$:
\begin{equation}\label{eq:wm-point-configuration}
  p_x
  :=\sum_{n\in\mathbb Z}\sum_{i=1}^q
     a_i(g_n(x))\,\delta_{\tau_n(x)+\delta_i}.
\end{equation}
Since $\tau_{n+1}(x)-\tau_n(x)\geq L-\varepsilon>2$, this sum is locally
finite.  Moreover $g_n(Tx)=g_{n+1}(x)$, and
\eqref{eq:wm-return-time-shift} gives
\begin{equation}\label{eq:wm-point-configuration-shift}
  p_{Tx}
  =\sum_{n,i}a_i(g_{n+1}(x))
     \delta_{\tau_{n+1}(x)-r(x)+\delta_i}
  =(-r(x)).p_x.
\end{equation}

For every $f\in C_c(\mathbb R)$, the quantity $p_x(f)$ is a finite sum of
Borel functions of $x$; hence $x\mapsto p_x$ is Borel for the vague
topology.  Define
\begin{equation}\label{eq:wm-suspension-factor-map}
  \pi(x,u):=(-u).p_x,
  \qquad
  \mu_q:=\pi_*\nu^r.
\end{equation}
Equation \eqref{eq:wm-point-configuration-shift} gives
\[
  \pi(x,r(x))=(-r(x)).p_x=p_{Tx}=\pi(Tx,0),
\]
so $\pi$ is well defined on the suspension quotient.  Translation in the
suspension coordinate, together with the same identity whenever a roof is
crossed, gives
\begin{equation}\label{eq:wm-factor-anti-equivariance}
  \pi(\Phi_t z)=(-t).\pi(z),
  \qquad t\in\mathbb R.
\end{equation}
Thus $\pi$ is an equivariant factor map from the reversed special flow
$t\mapsto\Phi_{-t}$ to the translation action.

\subsubsection{Return-time estimates}

For $R>0$, set $I_R=[0,R]$.  Since $[-R,R]$ is a translate of
$I_{2R}$, invariance gives
\begin{equation}\label{eq:weak-mixing-interval-ball-bridge}
  \Var_{\mu_q}\bigl(T_j[\chi_{[-R,R]},\varphi]\bigr)
  =\Var_{\mu_q}\bigl(T_j[\chi_{I_{2R}},\varphi]\bigr).
\end{equation}
Thus it suffices to estimate the statistics with $I_R$.  Define
\begin{equation}\label{eq:wm-return-index-count}
  N_R(x,u)
  :=\#\{n\in\mathbb Z:\tau_n(x)-u\in I_R\}.
\end{equation}

\begin{proposition}[Return-time estimates]
\label{prop:wm-return-time-estimates}
The measure $\mu_q$ is an $\mathbb R$-invariant ergodic simple point process
and is locally $L^p$-integrable for every finite $p$.  Moreover,
\begin{equation}\label{eq:wm-return-time-discrepancy}
  \tau_{a+N}(x)-\tau_a(x)=LN+O(N^{1/4})
\end{equation}
uniformly in $x\in X_\vartheta$, $a\in\mathbb Z$, and $N\geq1$, and
\begin{equation}\label{eq:wm-return-index-count-asymptotic}
  N_R(x,u)=\frac RL+O(R^{1/4})
\end{equation}
uniformly in $(x,u)\in X_\vartheta^r$ and $R\geq1$.
\end{proposition}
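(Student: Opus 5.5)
The plan is to deduce everything from Proposition~\ref{prop:wm-symbolic-discrepancy} applied to the roof fluctuation, together with elementary properties of special flows. I start with \eqref{eq:wm-return-time-discrepancy}. For $N\geq1$ and $a\in\mathbb Z$, the definition \eqref{eq:wm-return-time-cocycle} and the cocycle identity \eqref{eq:wm-return-time-shift} give
\[
  \tau_{a+N}(x)-\tau_a(x)=\sum_{n=a}^{a+N-1}r(T^nx)
  =LN+\varepsilon\sum_{n=a}^{a+N-1}F(T^nx),
  \qquad F(x):=\chi_\beta(x_0).
\]
The function $F$ depends only on $x_0$, so it is finite-coordinate. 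It lies in $\mathcal H_\beta$, because $F\circ R_h=\chi_\beta(h)F$. Hence $P_\alpha F=0$, and $\int F\,d\nu=0$ by the mean-zero argument already used for nontrivial characters. Proposition~\ref{prop:wm-symbolic-discrepancy} then bounds the sum by $C_FN^{1/4}$, uniformly in $x$ and $a$. This proves \eqref{eq:wm-return-time-discrepancy}.

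Next I prove \eqref{eq:wm-return-index-count-asymptotic}. Since $n\mapsto\tau_n(x)$ is strictly increasing with gaps in $[L-\varepsilon,L+\varepsilon]$, the indices with $\tau_n(x)-u\in I_R$ form a consecutive block $a,\ldots,a+N-1$. Maximality of the block gives
\[
  \tau_{a+N-1}(x)-\tau_a(x)\leq R\leq \tau_{a+N}(x)-\tau_{a-1}(x).
\]
Applying \eqref{eq:wm-return-time-discrepancy} to both sides yields $|LN-R|=O(N^{1/4})+O(1)$. Because the gaps are at least $L-\varepsilon>2$, we have $N\leq R/2+1$, so $N^{1/4}=O(R^{1/4})$ for $R\geq1$. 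Dividing by $L$ gives \eqref{eq:wm-return-index-count-asymptotic}. The uniformity in $(x,u)$ is automatic, since $u$ only shifts the window.

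The structural claims come from the construction. Invariance follows from \eqref{eq:wm-factor-anti-equivariance}, because $\nu^r$ is flow-invariant. Ergodicity holds because the special flow over a uniquely ergodic, hence ergodic, base is ergodic, and a factor of an ergodic action is ergodic. For simplicity, points attached to the same index $n$ are distinct because the $\delta_i$ are distinct. Points attached to different indices are separated, since $\tau_{n+1}-\tau_n>2$ while $\delta_i\in(0,1)$. Local $L^p$-integrability holds because every configuration has at most $q(|K|/2+2)$ points in a compact interval $K$. This bound is deterministic, so all moments are finite. The only genuinely nontrivial input is the discrepancy bound, which is already available from Proposition~\ref{prop:wm-symbolic-discrepancy}. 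The remaining care is in the endpoint bookkeeping: using the block decomposition via $a$ and $N$, and treating negative indices through the convention for $\tau_{-n}$. Both are consistent with the cocycle identity.
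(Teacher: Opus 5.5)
Your proposal is correct and follows essentially the same route as the paper: the exact identity $\tau_{a+N}(x)-\tau_a(x)-LN=\varepsilon\sum_{n=a}^{a+N-1}\chi_\beta(x_n)$ is fed into Proposition~\ref{prop:wm-symbolic-discrepancy}, a consecutive-block argument then handles $N_R$, and the structural claims follow from the separation of return times.

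The only difference is in the bookkeeping for $N_R$. You sandwich $R$ between $\tau_{a+N-1}-\tau_a$ and $\tau_{a+N}-\tau_{a-1}$, which presupposes a nonempty block. The block is automatically nonempty once $R\geq L+\varepsilon$, and smaller $R$ only affect the constant. The paper instead bounds $|R-(\tau_{a+N}-\tau_a)|\leq L+\varepsilon$ directly.
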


\begin{proof}
Since $T$ is ergodic and $r>0$, the special flow $(X_\vartheta^r,\nu^r,\Phi_t)$
is ergodic.  By \eqref{eq:wm-factor-anti-equivariance}, its factor law $\mu_q$
is therefore invariant and ergodic under translations.

If $n<m$, then for any
$i,j\in\{1,\ldots,q\}$,
\begin{align*}
 (\tau_m(x)+\delta_j)-(\tau_n(x)+\delta_i)
 &\geq (m-n)(L-\varepsilon)-(\delta_q-\delta_1)\\
 &>1.
\end{align*}
For a fixed $n$, the points $\tau_n(x)+\delta_i$ are distinct.  Hence every
interval of length one contains at most $q$ points of $p_x$, and the same
holds for every translate $(-u).p_x$.  Consequently, for every bounded interval $J$ and every
$(x,u)\in X_\vartheta^r$,
\[
  \pi(x,u)(J)\leq q(\lceil |J|\rceil+1).
\]
Thus $\mu_q=\pi_*\nu^r$ is a simple point process and is locally
$L^p$-integrable for every finite $p$.

Let
\[
  A(x):=\sum_{i=1}^q a_i(g_0(x))
\]
be the number of points attached to the return at symbolic index $0$.
For the $i$-th standard basis vector $e_i\in\mathbb F_2^q$, invariance of
$\nu$ under $R_{(e_i,0)}$ and
$a_i(g+e_i)=1-a_i(g)$ give
\[
  \int a_i(g_0(x))\,d\nu(x)
  =\int \bigl(1-a_i(g_0(x))\bigr)\,d\nu(x)
  =\frac12.
\]
Since $0<\delta_i<1<L-\varepsilon\leq r(x)$, these are precisely the points
attached inside one suspension roof segment.  The suspension intensity
formula therefore gives
\begin{equation}\label{eq:wm-intensity}
  \rho_{\mu_q}
  =\frac{1}{\int r\,d\nu}\int A\,d\nu
  =\frac1L\sum_{i=1}^q\frac12
  =\frac{q}{2L}>0.
\end{equation}

For $N\geq1$, the cocycle definition and \eqref{eq:wm-roof} give the exact
identity
\begin{equation}\label{eq:wm-return-time-beta-sum}
  \tau_{a+N}(x)-\tau_a(x)-LN
  =\varepsilon\sum_{n=a}^{a+N-1}\chi_\beta(x_n).
\end{equation}
The function $x\mapsto\chi_\beta(x_0)$ belongs to $\mathcal H_\beta$, hence
has zero $\alpha$-projection.  Proposition~\ref{prop:wm-symbolic-discrepancy}
applied to this function yields
\[
  \left|\tau_{a+N}(x)-\tau_a(x)-LN\right|
  \leq C N^{1/4},
\]
uniformly in $x,a,N$.  This proves
\eqref{eq:wm-return-time-discrepancy}.

It remains to estimate $N_R(x,u)$.  Since
\[
  L-\varepsilon
  \leq \tau_{n+1}(x)-\tau_n(x)
  \leq L+\varepsilon,
\]
the sequence $n\mapsto\tau_n(x)-u$ is strictly increasing.  Hence
\[
  J_R(x,u):=\{n\in\mathbb Z:0\leq\tau_n(x)-u\leq R\}
\]
is an interval of consecutive integers and
$N_R(x,u)=|J_R(x,u)|$.  Moreover,
\[
  N_R(x,u)
  \leq 1+\frac{R}{L-\varepsilon}.
\]
Thus \eqref{eq:wm-return-index-count-asymptotic} is immediate for
$1\leq R\leq2(L+\varepsilon)$ after increasing the implicit constant.

Assume now that $R>2(L+\varepsilon)$.  Since
$0\leq u<r(x)\leq L+\varepsilon<R$,
\[
  0<\tau_1(x)-u=r(x)-u<R,
\]
so $1\in J_R(x,u)$ and this set is nonempty.  Write
\[
  J_R(x,u)=\{a,a+1,\ldots,a+N-1\},
  \qquad N=N_R(x,u).
\]
The defining inequalities for $a$ and $a+N-1$, together with maximality of
this interval of indices, are
\[
  \tau_{a-1}(x)-u<0\leq\tau_a(x)-u,
\]
and
\[
  \tau_{a+N-1}(x)-u\leq R<\tau_{a+N}(x)-u.
\]
Using the roof bound once at each endpoint gives
\[
  0\leq\tau_a(x)-u<L+\varepsilon,
  \qquad
  0<\tau_{a+N}(x)-(u+R)\leq L+\varepsilon.
\]
Subtracting the two quantities yields
\begin{equation}\label{eq:wm-return-time-endpoint-error}
  \left|R-\bigl(\tau_{a+N}(x)-\tau_a(x)\bigr)\right|
  \leq L+\varepsilon.
\end{equation}
Since
\[
  N(L-\varepsilon)
  \leq \tau_{a+N}(x)-\tau_a(x)
  \leq N(L+\varepsilon),
\]
\eqref{eq:wm-return-time-endpoint-error} and $R>2(L+\varepsilon)$ imply
\[
  \frac{R}{2(L+\varepsilon)}
  \leq N
  \leq \frac{3R}{2(L-\varepsilon)}.
\]
In particular $N\asymp R$ uniformly.  Finally,
\eqref{eq:wm-return-time-discrepancy} and
\eqref{eq:wm-return-time-endpoint-error} give
\[
  R
  =\tau_{a+N}(x)-\tau_a(x)+O(1)
  =LN+O(N^{1/4})+O(1)
  =LN+O(R^{1/4}),
\]
which is \eqref{eq:wm-return-index-count-asymptotic}.
\end{proof}

\subsection{Lower-order hyperuniformity}
\label{subsec:wm-lower-order}

Fix $1\leq j<q$ and $\varphi\in C_c(\Sh_j(\mathbb R))$.  We express
$T_j[\chi_{I_R},\varphi]$ as a finite sum of Birkhoff sums of
finite-coordinate functions on $X_\vartheta$.  Under coordinatewise addition
$g_n\mapsto g_n+u$, $u\in\mathbb F_2^q$, each such function involves at most
$j$ coordinate labels.  Lemma~\ref{lem:wm-character-exclusion} below proves
that its $\alpha$-Fourier component vanishes, after which
Proposition~\ref{prop:wm-symbolic-discrepancy} applies.

\subsubsection{Finite pattern types}

Every point of $p_x$ has a unique representation
\begin{equation}\label{eq:wm-point-index-label}
  \tau_n(x)+\delta_i,
  \qquad n\in\mathbb Z,\quad 1\leq i\leq q,
\end{equation}
with $a_i(g_n(x))=1$.  Uniqueness follows from
$\tau_{n+1}(x)-\tau_n(x)\geq L-\varepsilon>2$ and
$0<\delta_i<1$.

Consider an ordered $j$-tuple of points of $p_x$, with repetitions allowed,
and write its entries uniquely as
\[
  \tau_{n_\ell}(x)+\delta_{i_\ell},
  \qquad 1\leq\ell\leq j.
\]
Take the first symbolic index $n_1$ as reference and set
\begin{equation}\label{eq:wm-relative-indices-definition}
  n:=n_1,
  \qquad
  r_\ell:=n_\ell-n_1,
  \qquad
  \mathbf r=(0,r_2,\ldots,r_j),
  \qquad
  \mathbf i=(i_1,\ldots,i_j).
\end{equation}
Conversely, a choice of $n$, $\mathbf r$ with $r_1=0$, and $\mathbf i$
with
\[
  a_{i_\ell}(g_{n+r_\ell}(x))=1,
  \qquad 1\leq\ell\leq j,
\]
determines exactly one ordered $j$-tuple, again allowing repeated entries.
Thus the parametrization is bijective.
The cocycle identity gives
\begin{equation}\label{eq:wm-relative-point-coordinates}
  \tau_{n_\ell}(x)+\delta_{i_\ell}
  =\tau_n(x)+z_\ell^{\mathbf r,\mathbf i}(T^nx),
  \qquad
  z_\ell^{\mathbf r,\mathbf i}(y)
  :=\tau_{r_\ell}(y)+\delta_{i_\ell}.
\end{equation}

The diameter
\begin{equation}\label{eq:wm-shape-diameter}
  \operatorname{diam}([v_1,\ldots,v_j])
  :=\max_\ell v_\ell-\min_\ell v_\ell
\end{equation}
is well defined on $\Sh_j(\mathbb R)$.  Choose $D_\varphi<\infty$ with
$\operatorname{diam}(w)\leq D_\varphi$ on $\operatorname{supp}\varphi$,
and choose $A_\varphi\in\mathbb N$ so that
\begin{equation}\label{eq:wm-relative-index-bound}
  (L-\varepsilon)A_\varphi-1>D_\varphi.
\end{equation}
Since $r_1=0$ and $\tau_0=0$,
\[
  |z_\ell^{\mathbf r,\mathbf i}(y)-z_1^{\mathbf r,\mathbf i}(y)|
  \geq (L-\varepsilon)|r_\ell|-|\delta_{i_\ell}-\delta_{i_1}|
  >(L-\varepsilon)|r_\ell|-1.
\]
Thus, if $|r_\ell|\geq A_\varphi$, the first and $\ell$-th entries in
\eqref{eq:wm-relative-point-coordinates} are more than $D_\varphi$ apart.
Hence only
\begin{equation}\label{eq:wm-relative-index-set}
  \mathscr R_\varphi
  :=\{\mathbf r=(r_1,\ldots,r_j)\in\mathbb Z^j:
       r_1=0,\ |r_\ell|<A_\varphi\}
\end{equation}
can contribute.

For $\mathbf r\in\mathscr R_\varphi$ and
$\mathbf i\in\{1,\ldots,q\}^j$, define
\begin{equation}\label{eq:wm-finite-coordinate-pattern-function}
  F_{\mathbf r,\mathbf i,\varphi}(y)
  :=\varphi\bigl([(z_1^{\mathbf r,\mathbf i}(y),\ldots,
                    z_j^{\mathbf r,\mathbf i}(y))^\circ]\bigr)
    \prod_{\ell=1}^j a_{i_\ell}(g_{r_\ell}(y))
\end{equation}
and
\begin{equation}\label{eq:wm-relative-barycenter}
  b_{\mathbf r,\mathbf i}(y)
  :=\frac1j\sum_{\ell=1}^j z_\ell^{\mathbf r,\mathbf i}(y).
\end{equation}
Both are finite-coordinate functions.  Indeed,
\[
  \tau_r(y)=\sum_{n=0}^{r-1}r(T^ny)\quad(r>0),
  \qquad
  \tau_r(y)=-\sum_{n=r}^{-1}r(T^ny)\quad(r<0),
\]
and each roof value $r(T^ny)$ depends only on the symbol $y_n$.
Thus all $\tau_{r_\ell}(y)$ and all occupancy factors in
\eqref{eq:wm-finite-coordinate-pattern-function} depend on finitely many
coordinates.

Equations \eqref{eq:wm-point-index-label}--\eqref{eq:wm-relative-barycenter}
give the exact decomposition
\begin{equation}\label{eq:wm-exact-pattern-decomposition}
  \begin{aligned}
  T_j[\chi_{I_R},\varphi]((-u).p_x)
  ={}&\sum_{\mathbf r\in\mathscr R_\varphi}
      \sum_{\mathbf i\in\{1,\ldots,q\}^j}
      \sum_{n\in\mathbb Z}
      \chi_{I_R}\bigl(\tau_n(x)-u
          +b_{\mathbf r,\mathbf i}(T^nx)\bigr) \\
     &\hspace{38mm}\cdot
      F_{\mathbf r,\mathbf i,\varphi}(T^nx).
  \end{aligned}
\end{equation}
Since $\mathscr R_\varphi$ is finite and the roof is bounded, there is
$B_\varphi<\infty$ such that
$|b_{\mathbf r,\mathbf i}(y)|\leq B_\varphi$.  For fixed
$(\mathbf r,\mathbf i)$, the two indicators
\[
  \chi_{I_R}\bigl(\tau_n(x)-u+b_{\mathbf r,\mathbf i}(T^nx)\bigr),
  \qquad
  \chi_{I_R}(\tau_n(x)-u)
\]
can differ only when $\tau_n(x)-u$ lies within $B_\varphi$ of an endpoint of
$I_R$.  Since consecutive return times differ by at least
$L-\varepsilon$, the number of such $n$ is at most
\begin{equation}\label{eq:wm-boundary-index-number}
  2\left(1+\left\lceil\frac{2B_\varphi}{L-\varepsilon}\right\rceil\right).
\end{equation}

\subsubsection{Exclusion of the $\alpha$-component}

\begin{lemma}[Character exclusion]
\label{lem:wm-character-exclusion}
For $j<q$,
\begin{equation}\label{eq:wm-alpha-component-vanishes}
  P_\alpha F_{\mathbf r,\mathbf i,\varphi}=0
\end{equation}
for every $\mathbf r\in\mathscr R_\varphi$ and
$\mathbf i\in\{1,\ldots,q\}^j$.
\end{lemma}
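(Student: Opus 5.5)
We compute $P_\alpha F_{\mathbf r,\mathbf i,\varphi}$ directly from the definition \eqref{eq:wm-isotypic-projection}, averaging only over the subgroup $\mathbb F_2^q\times\{0\}\subset\mathsf A$ that acts on the $g$-coordinates. The key structural observation is that $F:=F_{\mathbf r,\mathbf i,\varphi}$ splits as a product of two factors. The shape factor
\[
  \varphi\bigl([(z_1^{\mathbf r,\mathbf i}(y),\ldots,z_j^{\mathbf r,\mathbf i}(y))^\circ]\bigr)
\]
depends on $y$ only through the roof values, hence only through the $s$-coordinates $s_n(y)$, since $r(T^ny)=L+\varepsilon(-1)^{s_n(y)}$. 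Under $R_h$ with $h=(u,0)$, $u\in\mathbb F_2^q$, these $s$-coordinates are unchanged. The second factor is the occupancy product $\prod_{\ell=1}^j a_{i_\ell}(g_{r_\ell}(y))$, which depends only on the coordinates $(g_{r_\ell})_{i_\ell}$. The set of coordinate labels $\{i_1,\ldots,i_j\}\subset\{1,\ldots,q\}$ has at most $j<q$ elements, so some label $i_*\in\{1,\ldots,q\}$ does not occur in $\mathbf i$.

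Take $h:=(e_{i_*},0)\in\mathsf A$. Then $R_h$ flips the $i_*$-th $g$-coordinate of every symbol and fixes every $s_n$ and every $(g_n)_{i}$ with $i\neq i_*$. Consequently both factors are unchanged, so $U_hF=F$. On the other hand,
\[
  \chi_\alpha(h)=(-1)^{(1,\ldots,1)\cdot e_{i_*}}=-1.
\]
Since $P_\alpha F\in\mathcal H_\alpha$ by Lemma~\ref{lem:wm-fourier-projections}, and $P_\alpha$ commutes with $U_h$ (both are built from the commuting unitaries $U_{h'}$), we obtain
\[
  -P_\alpha F=\chi_\alpha(h)P_\alpha F=U_hP_\alpha F=P_\alpha U_hF=P_\alpha F.
\]
Hence $P_\alpha F=0$. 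Alternatively, one can substitute $h'\mapsto h'+h$ in \eqref{eq:wm-isotypic-projection} to get $P_\alpha F=\overline{\chi_\alpha(h)}P_\alpha F$ directly.

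The only point requiring care is the claim that $U_hF=F$ pointwise. Here one must check that the return times $\tau_{r_\ell}$ involve only the $s$-coordinates, which follows from \eqref{eq:wm-roof} and the formula for $\tau_r$ as a sum of roof values. One must also check that $a_{i_\ell}(g)=\mathbf 1_{\{g_{i_\ell}=0\}}$ reads only the $i_\ell$-th coordinate, and no $i_\ell$ equals $i_*$. This is precisely where the assumption $j<q$ enters. For $j=q$ with $\mathbf i$ a permutation of $(1,\ldots,q)$, no such $i_*$ exists, and the argument breaks down, as it must for the construction to separate $\HU_q$.
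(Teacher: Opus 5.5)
Your proof is correct, but it takes a different route from the paper's. The paper writes $h=(u,t)$ and notes that the shape factor does not depend on $u$. It then expands the occupancy product $\prod_\ell a_{i_\ell}(g_{r_\ell}(x)+u)$ as $2^{-j}\sum_J$ of characters $u\mapsto(-1)^{\sum_{\ell\in J}u_{i_\ell}}$, each supported in $\{i_1,\ldots,i_j\}$. Orthogonality of characters on $\mathbb F_2^q$ then kills the $\alpha$-component, since none of these characters is $u\mapsto(-1)^{u_1+\cdots+u_q}$.

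You avoid the expansion. You pick one label $i_*$ not occurring in $\mathbf i$ and observe two facts: $F$ is fixed by $R_{(e_{i_*},0)}$, and $\chi_\alpha(e_{i_*},0)=-1$. Hence $P_\alpha F=-P_\alpha F$. This isolates the role of $j<q$ as a pure pigeonhole on labels. The same one-line argument also gives $P_\gamma F=0$ for every $\gamma=(v,t)$ with $v_{i_*}=1$.

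The paper's expansion is more explicit about which isotypic components of $F$ can survive. It is also the computation reused for the full $q$-point cluster to identify $P_\alpha F_q=2^{-q}\chi_\alpha(x_0)$.

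One cosmetic point: your opening sentence says you average only over $\mathbb F_2^q\times\{0\}$. That is not the definition of $P_\alpha$, which averages over all of $\mathsf A$, and your argument in fact uses the full projection and its commutation with $U_h$. Drop that remark.
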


\begin{proof}
Fix $x\in X_\vartheta$ and write $h=(u,t)\in\mathbb F_2^q\times\mathbb F_2$.
Adding $h$ changes $g_{r_\ell}(x)$ to $g_{r_\ell}(x)+u$ and changes only the
last coordinate in the roof sums defining $\tau_{r_\ell}$.  Hence the shape
factor in \eqref{eq:wm-finite-coordinate-pattern-function} is independent of
$u$.  For the occupancy factor,
\begin{equation}\label{eq:wm-occupancy-character-expansion}
\begin{aligned}
  \prod_{\ell=1}^j a_{i_\ell}(g_{r_\ell}(x)+u)
  &=2^{-j}\prod_{\ell=1}^j
    \left(1+(-1)^{(g_{r_\ell}(x))_{i_\ell}+u_{i_\ell}}\right) \\
  &=2^{-j}\sum_{J\subset\{1,\ldots,j\}}
    \left(\prod_{\ell\in J}(-1)^{(g_{r_\ell}(x))_{i_\ell}}\right)
    (-1)^{\sum_{\ell\in J}u_{i_\ell}}.
\end{aligned}
\end{equation}
For each $J$, the character of $u$ in the last line has support contained in
$\{i_1,\ldots,i_j\}$.  Since this set has cardinality at most $j<q$, none
of these characters is $u\mapsto(-1)^{u_1+\cdots+u_q}$, the restriction of
$\chi_\alpha$ to $\mathbb F_2^q\times\{0\}$.  Orthogonality of characters on
$\mathbb F_2^q$ therefore gives \eqref{eq:wm-alpha-component-vanishes}.
\end{proof}

Thus every observable arising from a $j$-point pattern with $j<q$ has zero
$\alpha$-component.  For the full $q$-point cluster this fails; see
\eqref{eq:wm-exceptional-alpha-component}.

\subsubsection{The variance estimate}

\begin{proposition}[Lower-order hyperuniformity]
\label{prop:wm-lower-order-hu}
For every $1\leq j<q$ and every
$\varphi\in C_c(\Sh_j(\mathbb R))$ there is $C_\varphi<\infty$ such that
\begin{equation}\label{eq:wm-lower-order-variance}
  \Var_{\mu_q}\bigl(T_j[\chi_{I_R},\varphi]\bigr)
  \leq C_\varphi R^{1/2},
  \qquad R\geq1.
\end{equation}
In particular, $\mu_q\in\HU_{\leq q-1}$.
\end{proposition}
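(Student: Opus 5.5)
The plan is to bound $T_j[\chi_{I_R},\varphi]$ pointwise by a deterministic quantity plus an $O(R^{1/4})$ error. Then the variance is at most the square of the error, namely $O(R^{1/2})$. Dividing by $\Vol_1(B_R)\asymp R$ gives $\HU_j$ via \eqref{eq:weak-mixing-interval-ball-bridge}.

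Start from the exact decomposition \eqref{eq:wm-exact-pattern-decomposition}. In each term, replace $\chi_{I_R}(\tau_n(x)-u+b_{\mathbf r,\mathbf i}(T^nx))$ by $\chi_{I_R}(\tau_n(x)-u)$. By \eqref{eq:wm-boundary-index-number} this changes each of the finitely many terms by at most $O_\varphi(1)$, since $F_{\mathbf r,\mathbf i,\varphi}$ is bounded. After the replacement, each term is the sum $\sum_{n\in J_R(x,u)}F_{\mathbf r,\mathbf i,\varphi}(T^nx)$ over the interval of consecutive indices $J_R(x,u)$ from the proof of Proposition~\ref{prop:wm-return-time-estimates}, which has length $N_R(x,u)$.

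Write $F=F_{\mathbf r,\mathbf i,\varphi}$ and $c_F:=\int F\,d\nu$. Then
\[
\sum_{n\in J_R}F(T^nx)=c_F N_R(x,u)+\sum_{n\in J_R}(F-c_F)(T^nx).
\]
The function $F-c_F$ is finite-coordinate and has mean zero. Its $\alpha$-projection vanishes: $P_\alpha$ kills constants because $\alpha\neq0$, and $P_\alpha F=0$ by Lemma~\ref{lem:wm-character-exclusion}, which is where $j<q$ is used. Proposition~\ref{prop:wm-symbolic-discrepancy} therefore bounds the second sum by $C_F N_R^{1/4}$, uniformly in $x$ and in the starting index. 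By \eqref{eq:wm-return-index-count-asymptotic}, $c_FN_R(x,u)=c_FR/L+O(R^{1/4})$, and also $N_R=O(R)$.

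Summing over the finitely many $(\mathbf r,\mathbf i)$ gives
\[
T_j[\chi_{I_R},\varphi](\pi(x,u))=\kappa_\varphi R+E_R(x,u),
\qquad
\sup|E_R|\le C_\varphi' R^{1/4}
\]
for $R\ge1$, where $\kappa_\varphi=L^{-1}\sum c_F$ is deterministic. Hence
\[
\Var_{\mu_q}\le\mathbb E|E_R|^2\le C_\varphi R^{1/2}.
\]
Applying this with $I_{2R}$ and \eqref{eq:weak-mixing-interval-ball-bridge} shows that $\Var(T_j[\chi_{[-R,R]},\varphi])/(2R)\to0$ for every $j<q$. This gives $\mu_q\in\HU_{\le q-1}$.

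The main obstacle is the uniformity of the discrepancy bound over the random starting index $a$ of $J_R(x,u)$ and over all $(x,u)$. It is supplied by Proposition~\ref{prop:wm-symbolic-discrepancy}, which holds for arbitrary intervals $[r,r+N)$ and every $x\in X_\vartheta$. One must also check that the $\alpha$-exclusion survives the centering, which it does because $P_\alpha$ annihilates constants.
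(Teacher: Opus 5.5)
Your proposal is correct and follows the paper's proof essentially step for step. The common steps are the boundary reduction via \eqref{eq:wm-boundary-index-number}, the centring $F-\int F\,d\nu$ with vanishing $\alpha$-projection from Lemma~\ref{lem:wm-character-exclusion}, the uniform $O(N^{1/4})$ discrepancy of Proposition~\ref{prop:wm-symbolic-discrepancy} on the integer interval $J_R(x,u)$, the count \eqref{eq:wm-return-index-count-asymptotic}, and the bound $\Var(X)\le\mathbb E|X-c|^2$ with the deterministic centre $c=c_\varphi R$, followed by \eqref{eq:weak-mixing-interval-ball-bridge} to pass to balls.
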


\begin{proof}
Let
\begin{equation}\label{eq:wm-return-index-set}
  J_R(x,u):=\{n\in\mathbb Z:\tau_n(x)-u\in I_R\}.
\end{equation}
It is an integer interval of cardinality $N_R(x,u)$.  For a fixed
$(\mathbf r,\mathbf i)$, the indicators in
\eqref{eq:wm-exact-pattern-decomposition} and
$\chi_{I_R}(\tau_n(x)-u)$ differ for at most the number of indices in
\eqref{eq:wm-boundary-index-number}.  Since
$F_{\mathbf r,\mathbf i,\varphi}$ is bounded, and since the sets of
$\mathbf r$ and $\mathbf i$ are finite, \eqref{eq:wm-exact-pattern-decomposition}
gives the uniform reduction
\begin{equation}\label{eq:wm-pattern-boundary-reduction}
  T_j[\chi_{I_R},\varphi]((-u).p_x)
  =\sum_{\mathbf r\in\mathscr R_\varphi}
    \sum_{\mathbf i\in\{1,\ldots,q\}^j}
    \sum_{n\in J_R(x,u)}
      F_{\mathbf r,\mathbf i,\varphi}(T^nx)
    +O_\varphi(1).
\end{equation}

Fix $(\mathbf r,\mathbf i)$ and write
$F=F_{\mathbf r,\mathbf i,\varphi}$ and
$F^\circ=F-\int F\,d\nu$.  Since $\alpha\neq0$, constants have zero
$\alpha$-projection, and Lemma~\ref{lem:wm-character-exclusion} gives
$P_\alpha F^\circ=0$.  Proposition~\ref{prop:wm-symbolic-discrepancy}
applied to the integer interval $J_R(x,u)$ therefore yields
\[
  \left|\sum_{n\in J_R(x,u)}F^\circ(T^nx)\right|
  \leq C_F N_R(x,u)^{1/4}.
\]
By \eqref{eq:wm-return-index-count-asymptotic}, uniformly in $(x,u)$,
\begin{align}
  \sum_{n\in J_R(x,u)}F(T^nx)
  &=N_R(x,u)\int F\,d\nu+O_F(R^{1/4}) \\
  &=\frac RL\int F\,d\nu+O_F(R^{1/4}).
  \label{eq:wm-pattern-sum-asymptotic}
\end{align}
Substituting \eqref{eq:wm-pattern-sum-asymptotic} into
\eqref{eq:wm-pattern-boundary-reduction} gives
\begin{equation}\label{eq:wm-uniform-pattern-discrepancy}
  T_j[\chi_{I_R},\varphi]((-u).p_x)
  =c_\varphi R+O_\varphi(R^{1/4}),
  \qquad
  c_\varphi:=\frac1L
  \sum_{\mathbf r\in\mathscr R_\varphi}
  \sum_{\mathbf i\in\{1,\ldots,q\}^j}
  \int F_{\mathbf r,\mathbf i,\varphi}\,d\nu,
\end{equation}
uniformly in $(x,u)\in X_\vartheta^r$.  Finally,
$\Var(X)\leq\mathbb E|X-c|^2$ for every constant $c$; applying this with
$c=c_\varphi R$ and using \eqref{eq:wm-uniform-pattern-discrepancy} proves
\eqref{eq:wm-lower-order-variance}.
\end{proof}

\subsection{Failure at the next order}
\label{subsec:wm-failure}

\subsubsection{The full $q$-point cluster}
Set
\begin{equation}\label{eq:wm-offset-barycenter}
  \overline\delta:=\frac1q\sum_{i=1}^q\delta_i,
\end{equation}
and let $w_\delta\in\Sh_q(\mathbb R)$ be the centered unordered shape of
$(\delta_1,\ldots,\delta_q)$.  The set of shapes obtained from ordered
$q$-tuples of the positions $\delta_1,\ldots,\delta_q$, with repetitions
allowed, is finite.  By \eqref{eq:wm-position-shape-separation}, none of
these shapes equals $w_\delta$ unless every position is used exactly once.
If two points come from distinct symbolic indices $n<m$, then
\[
  (\tau_m(x)+\delta_{i'})-(\tau_n(x)+\delta_i)
  \geq (L-\varepsilon)-(\delta_q-\delta_1)>1.
\]
Thus every $q$-tuple using more than one symbolic index has diameter greater
than one, while $\operatorname{diam}(w_\delta)<1$.  Let $\mathcal W_q$ be
the finite set of centered shapes obtained from ordered $q$-tuples of
$\delta_1,\ldots,\delta_q$ that do not use every position exactly once.
By \eqref{eq:wm-position-shape-separation}, $w_\delta\notin\mathcal W_q$.
Since $\{w:\operatorname{diam}(w)<1\}$ is an open neighbourhood of
$w_\delta$, there is an open set $U_q$ such that
\[
  w_\delta\in U_q\subset\{w:\operatorname{diam}(w)<1\},
  \qquad
  U_q\cap\mathcal W_q=\varnothing.
\]
Choose $\varphi_q\in C_c(\Sh_q(\mathbb R))$ nonnegative, supported in
$U_q$, and satisfying $\varphi_q(w_\delta)=1$.

Define the finite-coordinate function
\begin{equation}\label{eq:wm-exceptional-function}
  F_q(x)
  :=\prod_{i=1}^q a_i(g_0(x))
  =2^{-q}\sum_{J\subset\{1,\ldots,q\}}
       \chi_{(\sum_{i\in J}e_i,0)}(x_0).
\end{equation}
The summand with $J=\{1,\ldots,q\}$ is $\chi_\alpha(x_0)$, and the
finite-group Fourier projection from
Lemma~\ref{lem:wm-fourier-projections} therefore satisfies
\begin{equation}\label{eq:wm-exceptional-alpha-component}
  P_\alpha F_q=2^{-q}\chi_\alpha(x_0).
\end{equation}
To obtain a lower bound for the variance, we project the $q$-point statistic
onto this $\alpha$-component.  The required symmetry on the suspension is
provided by the subgroup
$H:=\mathbb F_2^q\times\{0\}\leq\mathsf A$.  Since
$\chi_\beta(h)=1$ for $h\in H$, the roof satisfies $r(R_hx)=r(x)$.
Consequently
\[
  (x,u)\longmapsto(R_hx,u),\qquad h\in H,
\]
defines a measure-preserving action on $(X_\vartheta^r,\nu^r)$.  The full
group $\mathsf A$ does not act on the suspension because the roof is not
$R_{(0,1)}$-invariant.  Character orthogonality on
$H\cong\mathbb F_2^q$ shows that
\begin{equation}\label{eq:wm-suspension-alpha-projection}
  \Pi_\alpha G(x,u)
  :=2^{-q}\sum_{h\in\mathbb F_2^q}
       \chi_\alpha(h,0)G(R_{(h,0)}x,u)
\end{equation}
is the orthogonal projection onto the subspace satisfying
$G(R_{(h,0)}x,u)=\chi_\alpha(h,0)G(x,u)$ for every $h\in\mathbb F_2^q$.
For a function independent of the last $\mathbb F_2$-coordinate, summing
first over that coordinate in the full-group formula of
Lemma~\ref{lem:wm-fourier-projections} gives exactly
\eqref{eq:wm-suspension-alpha-projection}.

Define
\begin{equation}\label{eq:wm-selected-index-set}
  \mathcal I_R(x,u)
  :=\{n\in\mathbb Z:\tau_n(x)+\overline\delta-u\in I_R\}.
\end{equation}
Because $T_q$ sums over ordered $q$-tuples with repetitions allowed, the
choice of $\varphi_q$ gives the exact identity
\begin{equation}\label{eq:wm-exact-full-cluster-statistic}
  T_q[\chi_{I_R},\varphi_q]((-u).p_x)
  =q!\sum_{n\in\mathcal I_R(x,u)}F_q(T^nx).
\end{equation}
Indeed, the support of $\varphi_q$ excludes tuples using more than one
symbolic index and excludes every repeated-position tuple at a single
index; the remaining tuple uses all $q$ positions once, in one of its $q!$
orderings.

For $h\in\mathbb F_2^q$, the roof and hence every $\tau_n$ is unchanged by
$R_{(h,0)}$.  Therefore
\begin{equation}\label{eq:wm-selected-index-set-invariance}
  \mathcal I_R(R_{(h,0)}x,u)=\mathcal I_R(x,u).
\end{equation}
By \eqref{eq:wm-selected-index-set-invariance}, the index set in
\eqref{eq:wm-exact-full-cluster-statistic} is unchanged under the
$H$-action.  Since $R_h$ commutes with $T$, applying $\Pi_\alpha$ termwise
and using \eqref{eq:wm-exceptional-alpha-component} gives
\begin{equation}\label{eq:wm-alpha-projection-pattern-statistic}
  \Pi_\alpha T_q[\chi_{I_R},\varphi_q]((-u).p_x)
  =q!\,2^{-q}
    \sum_{n\in\mathcal I_R(x,u)}\chi_\alpha(x_n).
\end{equation}

\subsubsection{Two consecutive copies of a level-$m$ word}

For $a\in\mathsf A$, $m\geq1$, and $0\leq j<Q^m$, define
\begin{equation}\label{eq:wm-address-set-definition}
  C_m(a,j)
  :=\{x\in X_\vartheta:
       x=T^j\vartheta^m(y)\text{ in its unique level-$m$ representation, }
       y_0=a\}.
\end{equation}
The $MQ^m$ sets $C_m(a,j)$ partition $X_\vartheta$.  Set
$E_{m,j}:=\bigcup_aC_m(a,j)$.  Uniqueness of the level-$m$ representation
gives
\[
  T E_{m,j}=E_{m,j+1}\quad(0\leq j<Q^m-1),
  \qquad
  T E_{m,Q^m-1}=E_{m,0},
\]
up to null sets.  Since $\nu$ is $T$-invariant and the $E_{m,j}$ partition
$X_\vartheta$,
\[
  \nu(E_{m,j})=Q^{-m}.
\]
For fixed $j$, translation equivariance of the substitution gives
$R_hC_m(a,j)=C_m(a+h,j)$.  The $R_h$ preserve $\nu$, so the $M$ sets
$C_m(a,j)$, $a\in\mathsf A$, have equal measure inside $E_{m,j}$.  Hence
\begin{equation}\label{eq:wm-address-set-measure}
  \nu(C_m(a,j))=\frac1{MQ^m}.
\end{equation}

The sets below will be used both for the variance lower bound and for weak
mixing.

Because $c_0=c_1=0$, the first two letters of $\vartheta(z)$ are both $z$.
Applying $\vartheta^m$ to this identity shows that, for every
$z\in\mathsf A$,
\begin{equation}\label{eq:wm-repeated-word-prefix}
  \vartheta^{m+1}(z)
  =\vartheta^m(z)\,\vartheta^m(z)\,W_{m,z}
\end{equation}
for a word $W_{m,z}$ of length $(Q-2)Q^m$.

Choose $z_+,z_-\in\mathsf A$ with
$\chi_\beta(z_\sigma)=\sigma$ for $\sigma\in\{+1,-1\}$, and fix
$0<\eta<1/4$.  For $m\geq1$, let
\begin{equation}\label{eq:wm-repeated-word-set}
  D_m^\sigma
  :=\bigcup_{\substack{j\in\mathbb Z\\
          \eta Q^m\leq j\leq(1-\eta)Q^m}}
     C_{m+1}(z_\sigma,j).
\end{equation}
Thus, if $x\in D_m^\sigma$, then in the unique representation
$x=T^j\vartheta^{m+1}(y)$ one has $y_0=z_\sigma$ and the coordinate $x_0$
corresponds to a position $j$ lying inside the first copy of
$\vartheta^m(z_\sigma)$ in \eqref{eq:wm-repeated-word-prefix}, at distance
at least $\eta Q^m$ from both ends of that copy.

\begin{lemma}[Two-copy estimates]
\label{lem:wm-repeated-word-estimates}
For each $\sigma\in\{+1,-1\}$,
\begin{equation}\label{eq:wm-repeated-word-measure}
  \lim_{m\to\infty}\nu(D_m^\sigma)
  =\frac{1-2\eta}{MQ}>0.
\end{equation}
If $x\in D_m^\sigma$, then
\begin{equation}\label{eq:wm-coordinate-agreement}
  x_n=x_{n+Q^m}
  \qquad\text{for every }|n|<\eta Q^m,
\end{equation}
and
\begin{align}
  \sum_{n=0}^{Q^m-1}\chi_\alpha(x_n)
  &=M^{3m}\chi_\alpha(z_\sigma),
  \label{eq:wm-repeated-alpha-sum}\\
  \sum_{n=0}^{Q^m-1}\chi_\beta(x_n)
  &=\sigma M^m.
  \label{eq:wm-repeated-beta-sum}
\end{align}
Consequently
\begin{equation}\label{eq:wm-repeated-return-time}
  \tau_{Q^m}(x)=LQ^m+\sigma\varepsilon M^m.
\end{equation}
\end{lemma}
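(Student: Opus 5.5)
The plan is to read everything off the address sets $C_{m+1}(z_\sigma,j)$ and the measure formula \eqref{eq:wm-address-set-measure}, together with the two-copy prefix \eqref{eq:wm-repeated-word-prefix} and the exact word sums \eqref{eq:wm-exact-character-word-sum}.

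\emph{Measure.} The sets $C_{m+1}(z_\sigma,j)$ are pairwise disjoint for distinct $j$. By \eqref{eq:wm-address-set-measure} at level $m+1$, each has measure $1/(MQ^{m+1})$. The number of integers $j$ with $\eta Q^m\le j\le(1-\eta)Q^m$ is $(1-2\eta)Q^m+O(1)$. Multiplying gives
\[
  \nu(D_m^\sigma)=\frac{(1-2\eta)Q^m+O(1)}{MQ^{m+1}}\longrightarrow\frac{1-2\eta}{MQ},
\]
which is \eqref{eq:wm-repeated-word-measure}. Since $\eta<1/4$, the limit is positive.

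\emph{Coordinate agreement.} Write $x=T^j\vartheta^{m+1}(y)$ with $y_0=z_\sigma$. By \eqref{eq:wm-level-m-coordinate-block}, $x_n=(\vartheta^{m+1}(z_\sigma))_{j+n}$ whenever $0\le j+n<Q^{m+1}$. For $|n|<\eta Q^m$ and $\eta Q^m\le j\le(1-\eta)Q^m$, we get $0\le j+n<Q^m$. Hence both $j+n$ and $j+n+Q^m$ lie in $[0,2Q^m)$, and they sit at the same position in the first and second copies of $\vartheta^m(z_\sigma)$ in \eqref{eq:wm-repeated-word-prefix}. This gives \eqref{eq:wm-coordinate-agreement}.

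\emph{Character sums.} The window $x_0,\dots,x_{Q^m-1}$ corresponds to positions $j,\dots,j+Q^m-1$ of $\vartheta^{m+1}(z_\sigma)$. Since $j<Q^m$, this range lies inside the two-copy prefix $\vartheta^m(z_\sigma)\vartheta^m(z_\sigma)$. It therefore consists of the tail $[j,Q^m)$ of the first copy followed by the head $[0,j)$ of the second copy. Together these cover each position of $\vartheta^m(z_\sigma)$ exactly once, which is the main point to check. So for every character $\gamma$,
\[
  \sum_{n=0}^{Q^m-1}\chi_\gamma(x_n)=\sum_{r=0}^{Q^m-1}\chi_\gamma\bigl((\vartheta^m(z_\sigma))_r\bigr)=S_\gamma^m\chi_\gamma(z_\sigma),
\]
by \eqref{eq:wm-exact-character-word-sum}. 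With $S_\alpha=M^3$ this gives \eqref{eq:wm-repeated-alpha-sum}. With $S_\beta=M$ and $\chi_\beta(z_\sigma)=\sigma$ it gives \eqref{eq:wm-repeated-beta-sum}.

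\emph{Return time.} By \eqref{eq:wm-return-time-cocycle} and \eqref{eq:wm-roof},
\[
  \tau_{Q^m}(x)=\sum_{n=0}^{Q^m-1}\bigl(L+\varepsilon\chi_\beta(x_n)\bigr)=LQ^m+\varepsilon\sigma M^m,
\]
using \eqref{eq:wm-repeated-beta-sum}. This is \eqref{eq:wm-repeated-return-time}.
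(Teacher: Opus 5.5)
Your proof is correct and follows essentially the same route as the paper. Like the paper, you count the disjoint address sets $C_{m+1}(z_\sigma,j)$ using \eqref{eq:wm-address-set-measure}, read the coordinate agreement off the two-copy prefix \eqref{eq:wm-repeated-word-prefix}, note that $x_{[0,Q^m)}$ is a cyclic rotation of $\vartheta^m(z_\sigma)$ (the paper's $VU$) so that \eqref{eq:wm-exact-character-word-sum} applies, and sum the roof; your explicit index checks, such as $0\le j+n<Q^m$ and $2Q^m\le Q^{m+1}$, are the verifications the paper leaves implicit.
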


\begin{proof}
Let
\[
  J_m:=\#\{j\in\mathbb Z:
       \eta Q^m\leq j\leq(1-\eta)Q^m\}.
\]
The sets $C_{m+1}(z_\sigma,j)$ in
\eqref{eq:wm-repeated-word-set} are disjoint, and
\eqref{eq:wm-address-set-measure} gives
\begin{equation}\label{eq:wm-repeated-word-measure-computation}
  \nu(D_m^\sigma)=\frac{J_m}{MQ^{m+1}}.
\end{equation}
Since $J_m/Q^m\to1-2\eta$, this proves
\eqref{eq:wm-repeated-word-measure}.

Take $x\in C_{m+1}(z_\sigma,j)$ with $j$ satisfying the inequalities in
\eqref{eq:wm-repeated-word-set}.  In the word
\eqref{eq:wm-repeated-word-prefix}, positions $j+n$ and
$j+n+Q^m$ lie in the first and second copies of
$\vartheta^m(z_\sigma)$, respectively, whenever
$|n|<\eta Q^m$.  The two letters are equal, which gives
\eqref{eq:wm-coordinate-agreement}.

Write
\[
  \vartheta^m(z_\sigma)=UV,
  \qquad |U|=j,
  \qquad |V|=Q^m-j.
\]
The first two copies in \eqref{eq:wm-repeated-word-prefix} are then
$UVUV$.  Since $x_0$ is at position $j$, we have the exact word identity
\[
  x_{[0,Q^m)}=VU.
\]
Consequently, for every function $b:\mathsf A\to\mathbb C$,
\[
  \sum_{n=0}^{Q^m-1}b(x_n)
  =\sum_{w\text{ a letter of }\vartheta^m(z_\sigma)}b(w),
\]
with multiplicities.  Taking $b=\chi_\alpha$ and $b=\chi_\beta$ and using
\eqref{eq:wm-exact-character-word-sum} gives
\eqref{eq:wm-repeated-alpha-sum} and
\eqref{eq:wm-repeated-beta-sum}.  Finally,
\[
  \tau_{Q^m}(x)
  =\sum_{n=0}^{Q^m-1}\bigl(L+\varepsilon\chi_\beta(x_n)\bigr),
\]
which gives \eqref{eq:wm-repeated-return-time}.
\end{proof}

\subsubsection{The variance lower bound}

Set $R_m:=LQ^m$ and define
\begin{equation}\label{eq:wm-positive-measure-suspension-set}
  E_m^\sigma
  :=\{(x,u):x\in D_m^\sigma,\ 0<u<\overline\delta/2\}.
\end{equation}
Since $\overline\delta/2<1<L-\varepsilon\leq r(x)$,
\eqref{eq:wm-suspension-measure} gives
\[
  \nu^r(E_m^\sigma)
  =\frac{\overline\delta}{2L}\,\nu(D_m^\sigma).
\]
By \eqref{eq:wm-repeated-word-measure}, these measures are bounded below by
a positive constant for all large $m$.

Fix $(x,u)\in E_m^\sigma$.  For $n=0$,
$0<\overline\delta-u<\overline\delta<R_m$, whereas
\[
  \tau_{-1}(x)+\overline\delta-u
  \leq -(L-\varepsilon)+\overline\delta<0.
\]
Since $\tau_n$ is strictly increasing, the smallest integer in
$\mathcal I_{R_m}(x,u)$ is $0$.  Thus
$\mathcal I_{R_m}(x,u)=\{0,\ldots,N_m-1\}$ for some $N_m\geq1$.
By \eqref{eq:wm-repeated-return-time},
\[
  \tau_{Q^m}(x)-R_m=\sigma\varepsilon M^m.
\]
The endpoint defining $\mathcal I_{R_m}(x,u)$ is
$R_m+u-\overline\delta$, whose distance from $R_m$ is less than one.  Hence
\[
  \left|\tau_{Q^m}(x)-\bigl(R_m+u-\overline\delta\bigr)\right|
  \leq \varepsilon M^m+1.
\]
Put
\[
  t_m:=R_m+u-\overline\delta.
\]
Since $\mathcal I_{R_m}(x,u)=\{0,\ldots,N_m-1\}$,
\begin{equation}\label{eq:wm-special-crossing-index}
  \tau_{N_m-1}(x)\leq t_m<\tau_{N_m}(x).
\end{equation}
If $N_m>Q^m$, then
\[
  (N_m-1-Q^m)(L-\varepsilon)
  \leq \tau_{N_m-1}(x)-\tau_{Q^m}(x)
  \leq t_m-\tau_{Q^m}(x).
\]
If $N_m\leq Q^m$, then
\[
  (Q^m-N_m)(L-\varepsilon)
  \leq \tau_{Q^m}(x)-\tau_{N_m}(x)
  <\tau_{Q^m}(x)-t_m.
\]
Together with
$|\tau_{Q^m}(x)-t_m|\leq1+\varepsilon M^m$, these two inequalities give
\begin{equation}\label{eq:wm-index-count-error-at-special-scale}
  |N_m-Q^m|
  \leq 1+\frac{1+\varepsilon M^m}{L-\varepsilon}.
\end{equation}
Both index sets start at zero, so their symmetric difference has exactly
$|N_m-Q^m|$ elements.  Therefore, by
\eqref{eq:wm-repeated-alpha-sum},
\[
\begin{aligned}
  \left|
    \sum_{n\in\mathcal I_{R_m}(x,u)}\chi_\alpha(x_n)
  \right|
  &\geq M^{3m}-|N_m-Q^m| \\
  &\geq M^{3m}-1-\frac{1+\varepsilon M^m}{L-\varepsilon}.
\end{aligned}
\]
Since $M\geq2$, the last expression is at least
$\frac12M^{3m}$ for all sufficiently large $m$.  Thus
\begin{equation}\label{eq:wm-alpha-lower-bound}
  \left|
    \sum_{n\in\mathcal I_{R_m}(x,u)}\chi_\alpha(x_n)
  \right|
  \geq\frac12M^{3m}.
\end{equation}
By the construction preceding
\eqref{eq:wm-suspension-alpha-projection}, $\Pi_\alpha$ is an
orthogonal projection in $L^2(\nu^r)$.  Since $\alpha$ is nontrivial,
$\Pi_\alpha 1=0$.  Hence, for every $G\in L^2(\nu^r)$,
\begin{equation}\label{eq:wm-projection-variance-inequality}
  \|G-\nu^r(G)\|_2^2
  \geq\|\Pi_\alpha(G-\nu^r(G))\|_2^2
  =\|\Pi_\alpha G\|_2^2.
\end{equation}
Apply this to
$G=T_q[\chi_{I_{R_m}},\varphi_q]$.  From
\eqref{eq:wm-alpha-projection-pattern-statistic},
\eqref{eq:wm-alpha-lower-bound}, and the lower bound on
$\nu^r(E_m^\sigma)$, there is $c>0$, independent of $m$, such that
\[
\begin{aligned}
  \|\Pi_\alpha G\|_2^2
  &\geq (q!\,2^{-q})^2
     \int_{E_m^\sigma}
       \left|\sum_{n\in\mathcal I_{R_m}(x,u)}
                    \chi_\alpha(x_n)\right|^2\,d\nu^r(x,u) \\
  &\geq cM^{6m}.
\end{aligned}
\]
Together with \eqref{eq:wm-projection-variance-inequality}, this gives
\begin{equation}\label{eq:wm-exceptional-variance-lower-bound}
  \Var_{\mu_q}\bigl(T_q[\chi_{I_{R_m}},\varphi_q]\bigr)
  \geq cM^{6m}
  =cQ^{3m/2}
  \asymp R_m^{3/2}.
\end{equation}
By \eqref{eq:weak-mixing-interval-ball-bridge}, the same lower bound holds
for the centered interval $[-R_m/2,R_m/2]=B_{R_m/2}$.  Since
$\Vol_1(B_{R_m/2})=R_m$, the normalized variance is bounded below by a
positive multiple of $R_m^{1/2}$ along this sequence.  Hence
$\mu_q\notin\HU_q$.

\subsection{Weak mixing and completion of the proof}
\label{subsec:wm-weak-mixing}

\begin{lemma}\label{lem:wm-base-n}
Let $n\geq2$ be an integer.  If $\theta\in\mathbb R$ satisfies
\begin{equation}\label{eq:wm-base-n-hypothesis}
  \lim_{m\to\infty}
  \|n^m\theta\|_{\mathbb R/\mathbb Z}=0,
\end{equation}
then $\theta\in\mathbb Z[1/n]$.
\end{lemma}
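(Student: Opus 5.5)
We argue through base-$n$ digits.  Write $\theta=a_0+\sum_{j\geq1}d_jn^{-j}$ with $a_0\in\mathbb Z$ and digits $d_j\in\{0,\ldots,n-1\}$, taking the expansion that does not end in an infinite tail of $(n-1)$'s.  Then $\{n^m\theta\}=\sum_{j\geq1}d_{m+j}n^{-j}$.  Hence $\|n^m\theta\|_{\mathbb R/\mathbb Z}$ is small exactly when the digit string starting at position $m+1$ begins with a long run of $0$'s or a long run of $(n-1)$'s.  We want to show that the hypothesis forces the digits to be eventually constant.  A tail of $0$'s gives $\theta\in\mathbb Z[1/n]$ directly.  A tail of $(n-1)$'s was excluded by our choice of expansion, and in any case such a number also lies in $\mathbb Z[1/n]$.

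\textbf{Reduction to consecutive digits.}  Choose $m_0$ such that $\|n^m\theta\|<1/(2n)$ for all $m\geq m_0$, and set $x_m:=\{n^m\theta\}\in[0,1)$.  Then $x_m\in[0,1/(2n))\cup(1-1/(2n),1)$.  If $x_m<1/(2n)$, the first digit $d_{m+1}=\lfloor nx_m\rfloor$ equals $0$.  If $x_m>1-1/(2n)$, it equals $n-1$.  Moreover $x_{m+1}=nx_m-d_{m+1}$.  In the first case this gives $x_{m+1}=nx_m<1/2$, and since $x_{m+1}$ is also in the allowed set, $x_{m+1}<1/(2n)$; so the next digit is again $0$.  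In the second case $x_{m+1}=nx_m-(n-1)>1/2$, so $x_{m+1}>1-1/(2n)$ and the next digit is again $n-1$.  By induction the digits $d_{m_0+1},d_{m_0+2},\dots$ are therefore all $0$ or all $n-1$.

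\textbf{Conclusion.}  If the tail is all $0$'s, then $n^{m_0}\theta\in\mathbb Z$, so $\theta\in\mathbb Z[1/n]$.  If the tail is all $(n-1)$'s, then $x_{m_0}=1$, which contradicts $x_{m_0}<1$; alternatively, $n^{m_0}\theta$ is again an integer.  Either way $\theta\in\mathbb Z[1/n]$.

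The step most likely to need care is the trapping step.  We must make sure that $x_{m+1}$ cannot jump from the neighbourhood of $0$ to the neighbourhood of $1$.  This holds because $nx_m<1/2$ lands strictly inside $[0,1/2)$, and the hypothesis then confines $x_{m+1}$ to $[0,1/(2n))$.  The threshold $1/(2n)$ is chosen exactly so that this works.  No earlier result of the paper is needed; the argument is elementary.
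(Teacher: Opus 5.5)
Your argument is correct and is essentially the paper's proof in different notation. The paper takes a nearest integer $a_m$ to $n^m\theta$ and notes that the integer $a_{m+1}-na_m$ eventually has absolute value less than one, hence vanishes; your trapping step (the fractional part $x_m$ stays within $1/(2n)$ of $0$, or stays within $1/(2n)$ of $1$) is exactly this relation $a_{m+1}=na_m$, with your two cases corresponding to $a_m=\lfloor n^m\theta\rfloor$ and $a_m=\lceil n^m\theta\rceil$. The base-$n$ digits are not actually needed: the recursion $x_{m+1}=\{nx_m\}$ gives $x_{m_0+j}=n^jx_{m_0}$ (respectively $1-x_{m_0+j}=n^j(1-x_{m_0})$), and keeping this below $1/(2n)$ for all $j$ already forces $n^{m_0}\theta\in\mathbb Z$.
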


\begin{proof}
Let $a_m\in\mathbb Z$ be a nearest integer to $n^m\theta$.  Then
\begin{equation}\label{eq:wm-base-n-integer-difference}
  |a_{m+1}-na_m|
  \leq
  \|n^{m+1}\theta\|_{\mathbb R/\mathbb Z}
  +n\|n^m\theta\|_{\mathbb R/\mathbb Z}.
\end{equation}
The right-hand side tends to zero and the left-hand side is an integer.
Thus $a_{m+1}=na_m$ for all sufficiently large $m$, so $a_m/n^m$ is
eventually constant.  Since
\[
  \left|\frac{a_m}{n^m}-\theta\right|
  \leq\frac{1}{2n^m},
\]
this constant is $\theta$, and therefore $\theta\in\mathbb Z[1/n]$.
\end{proof}

\begin{proposition}[Weak mixing]
\label{prop:wm-weak-mixing}
The special flow $(X_\vartheta^r,\nu^r)$ is weakly mixing.  Consequently
$\mu_q$ is weakly mixing.
\end{proposition}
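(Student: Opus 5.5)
Suppose $G\in L^2(\nu^r)$ is a measurable eigenfunction of the special flow, $G\circ\Phi_t=e^{2\pi i\lambda t}G$, with $\lambda\neq0$.  I will derive a contradiction; $\lambda=0$ is excluded by ergodicity.  The standard reduction for special flows is to take $G(x,u)=e^{2\pi i\lambda u}g(x)$, where $g\in L^2(\nu)$ has modulus one.  The eigenvalue equation then becomes the multiplicative cohomological identity
\[
  g(Tx)=e^{2\pi i\lambda r(x)}g(x),
\]
and iterating gives $g(T^Nx)=e^{2\pi i\lambda\tau_N(x)}g(x)$.  Weak mixing of $\mu_q$ follows afterwards, since $\pi$ is a factor map by \eqref{eq:wm-factor-anti-equivariance} and factors of weakly mixing flows are weakly mixing.

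The main step uses the rigidity sets $D_m^\sigma$ of Lemma~\ref{lem:wm-repeated-word-estimates}.  By \eqref{eq:wm-coordinate-agreement}, $T^{Q^m}x$ agrees with $x$ on the window $|n|<\eta Q^m$ for $x\in D_m^\sigma$.  To use this, approximate $g$ in $L^2$ by a finite-coordinate function $g_\ell$; the approximation is uniform in $m$ by $T$-invariance of $\nu$.  Then $\int_{D_m^\sigma}|g(T^{Q^m}x)-g(x)|^2\,d\nu\to0$.  By \eqref{eq:wm-repeated-return-time}, on $D_m^\sigma$ one has
\[
  e^{2\pi i\lambda\tau_{Q^m}(x)}=e^{2\pi i\lambda(LQ^m+\sigma\varepsilon M^m)},
\]
which is constant on the set.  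Since $\nu(D_m^\sigma)$ is bounded below by \eqref{eq:wm-repeated-word-measure} and $|g|=1$, this forces
\[
  e^{2\pi i\lambda(LQ^m+\sigma\varepsilon M^m)}\to1\qquad\text{for both }\sigma=\pm1.
\]

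Taking the quotient and the product of the two signs gives
\[
  \|2\lambda\varepsilon M^m\|_{\mathbb R/\mathbb Z}\to0
  \quad\text{and}\quad
  \|2\lambda LQ^m\|_{\mathbb R/\mathbb Z}\to0,
\]
where $Q=M^4$.  Lemma~\ref{lem:wm-base-n} with $n=M$ gives $2\lambda\varepsilon\in\mathbb Z[1/M]$.  Since $Q$ is a power of $M$, writing $2\lambda L=\theta$ we have $\|Q^m\theta\|\to0$, so Lemma~\ref{lem:wm-base-n} with $n=Q$ gives $2\lambda L\in\mathbb Z[1/Q]\subset\mathbb Q$.  Both quantities are then rational, and the second is nonzero because $\lambda\neq0$.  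Hence $L/\varepsilon=(2\lambda L)/(2\lambda\varepsilon)\in\mathbb Q$, contradicting \eqref{eq:wm-roof-parameters}.

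I expect the main obstacle to be making the approximation step rigorous.  Specifically, I must show that $\int_{D_m^\sigma}|g\circ T^{Q^m}-g|^2\to0$ uses only the finite-window agreement.  For $Q^m$ large, $g_\ell(T^{Q^m}x)=g_\ell(x)$ holds exactly on $D_m^\sigma$.  The bound $\|g\circ T^{Q^m}-g_\ell\circ T^{Q^m}\|_2=\|g-g_\ell\|_2$ then gives the claim.  I will also check carefully the reduction of eigenfunctions of the flow to the cocycle equation, including measurability of $g$ and its $\nu$-almost-everywhere definition.
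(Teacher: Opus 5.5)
Your proposal is correct and follows essentially the same route as the paper: you reduce to the cocycle identity $g(Tx)=e^{2\pi i\lambda r(x)}g(x)$, use the rigidity of $T^{Q^m}$ on the sets $D_m^\sigma$ to force $\|\lambda(LQ^m\pm\varepsilon M^m)\|_{\mathbb R/\mathbb Z}\to0$, and conclude via Lemma~\ref{lem:wm-base-n} with $n=Q$ and $n=M$ and the irrationality of $L/\varepsilon$. The differences are cosmetic. You approximate by finite-coordinate functions and integrate over $D_m^\sigma$, whereas the paper uses a continuous approximant, uniform continuity and Chebyshev to select a good point. For the final step, note that $\pi$ intertwines $\Phi_t$ with translation by $-t$, so $\mu_q$ is a factor of the time-reversed flow; that flow has the negated eigenvalues and is therefore also weakly mixing.
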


\begin{proof}
\smallskip\noindent\emph{Phase constraints from the sets $D_m^\sigma$.}
Suppose that $\xi\in\mathbb R$ is an eigenvalue, and let $F$ be a
corresponding eigenfunction on the suspension.  Ergodicity makes $|F|$
constant almost everywhere, so normalize $|F|=1$.  For almost every $x$ and
almost every $u,v\in[0,r(x))$, the eigenrelation along the fibre gives
\[
  F(x,v)=e^{2\pi i\xi(v-u)}F(x,u).
\]
Hence there is a measurable $\omega:X_\vartheta\to\mathbb T$ such that
\[
  F(x,u)=e^{2\pi i\xi u}\omega(x)
\]
for almost every $(x,u)$.  The identification
$(x,r(x))\sim(Tx,0)$ then yields
\begin{equation}\label{eq:wm-eigenvalue-cohomology}
  \omega(Tx)=e^{2\pi i\xi r(x)}\omega(x)
\end{equation}
for $\nu$-a.e. $x$.  Iterating, with
$r_N(x):=\sum_{j=0}^{N-1}r(T^jx)$, gives
\begin{equation}\label{eq:wm-eigenvalue-iterate}
  \omega(T^Nx)=e^{2\pi i\xi r_N(x)}\omega(x).
\end{equation}

Fix $\sigma\in\{+1,-1\}$.  By
Lemma~\ref{lem:wm-repeated-word-estimates}, the measures of
$D_m^\sigma$ are bounded below for large $m$, and
\begin{equation}\label{eq:wm-eigenvalue-return-time}
  r_{Q^m}(x)=LQ^m+\sigma\varepsilon M^m
  \qquad(x\in D_m^\sigma).
\end{equation}
Moreover, let $d$ be any metric inducing the product topology on
$X_\vartheta$.  From \eqref{eq:wm-coordinate-agreement},
\[
  \sup_{x\in D_m^\sigma}d(T^{Q^m}x,x)\longrightarrow0.
\]
Since $X_\vartheta$ is compact, every continuous
$\psi:X_\vartheta\to\mathbb C$ is uniformly continuous; therefore
\begin{equation}\label{eq:wm-continuous-approximation}
  \sup_{x\in D_m^\sigma}
  |\psi(T^{Q^m}x)-\psi(x)|\longrightarrow0.
\end{equation}

Let $c_\sigma>0$ be a lower bound for $\nu(D_m^\sigma)$ for all large
$m$, and let $X_0\subset X_\vartheta$ be a conull set on which
\eqref{eq:wm-eigenvalue-iterate} holds for every $N\geq1$.  Fix
$\kappa>0$ and choose $\zeta>0$ so that
$2\zeta^2/\kappa^2<c_\sigma/2$.  Then choose a continuous
$\psi$ with $\|\omega-\psi\|_2<\zeta$.  By Chebyshev's inequality and
$T$-invariance, the subset of $D_m^\sigma$ on which either
\begin{equation}\label{eq:wm-two-approximation-errors}
  |\omega(x)-\psi(x)|\geq\kappa
  \qquad\text{or}\qquad
  |\omega(T^{Q^m}x)-\psi(T^{Q^m}x)|\geq\kappa
\end{equation}
has measure at most $2\zeta^2/\kappa^2<c_\sigma/2$.  Thus for every large
$m$ there is $x\in D_m^\sigma\cap X_0$ for which neither inequality in
\eqref{eq:wm-two-approximation-errors} holds.  For such an $x$, \eqref{eq:wm-eigenvalue-iterate} and
\eqref{eq:wm-eigenvalue-return-time} give
\begin{align*}
  \left|e^{2\pi i\xi(LQ^m+\sigma\varepsilon M^m)}-1\right|
  &=|\omega(T^{Q^m}x)-\omega(x)| \\
  &\leq |\omega(T^{Q^m}x)-\psi(T^{Q^m}x)|
       +|\psi(T^{Q^m}x)-\psi(x)|
       +|\psi(x)-\omega(x)|.
\end{align*}
The first and third terms are less than $\kappa$, while the middle term
tends uniformly to zero on $D_m^\sigma$ by
\eqref{eq:wm-continuous-approximation}.  Therefore
\begin{equation}\label{eq:wm-eigenvalue-phase-bound}
  \limsup_{m\to\infty}
  \left|e^{2\pi i\xi(LQ^m+\sigma\varepsilon M^m)}-1\right|
  \leq2\kappa.
\end{equation}
Because $\kappa>0$ is arbitrary, the left-hand side of
\eqref{eq:wm-eigenvalue-phase-bound} tends to zero.  Since
$|e^{2\pi it}-1|\to0$ if and only if
$\|t\|_{\mathbb R/\mathbb Z}\to0$, we obtain
\begin{equation}\label{eq:wm-eigenvalue-phase-limits}
  \lim_{m\to\infty}
  \|\xi(LQ^m+\sigma\varepsilon M^m)\|_{\mathbb R/\mathbb Z}=0
  \qquad(\sigma=\pm1).
\end{equation}
\smallskip\noindent\emph{Arithmetic conclusion.}
For $a,b\in\mathbb R$,
$\|a\pm b\|_{\mathbb R/\mathbb Z}
 \leq\|a\|_{\mathbb R/\mathbb Z}+\|b\|_{\mathbb R/\mathbb Z}$.
Applying this to the two expressions in
\eqref{eq:wm-eigenvalue-phase-limits} gives
\begin{align*}
  \|2\xi LQ^m\|_{\mathbb R/\mathbb Z}
  &\leq
  \|\xi(LQ^m+\varepsilon M^m)\|_{\mathbb R/\mathbb Z}
  +\|\xi(LQ^m-\varepsilon M^m)\|_{\mathbb R/\mathbb Z},\\
  \|2\xi\varepsilon M^m\|_{\mathbb R/\mathbb Z}
  &\leq
  \|\xi(LQ^m+\varepsilon M^m)\|_{\mathbb R/\mathbb Z}
  +\|\xi(LQ^m-\varepsilon M^m)\|_{\mathbb R/\mathbb Z}.
\end{align*}
Hence
\begin{equation}\label{eq:wm-eigenvalue-separated-limits}
  \lim_{m\to\infty}\|2\xi LQ^m\|_{\mathbb R/\mathbb Z}=0,
  \qquad
  \lim_{m\to\infty}\|2\xi\varepsilon M^m\|_{\mathbb R/\mathbb Z}=0.
\end{equation}
Lemma~\ref{lem:wm-base-n}, first with $n=Q$ and then with $n=M$, gives
\begin{equation}\label{eq:wm-eigenvalue-arithmetic}
  2\xi L\in\mathbb Z[1/Q],
  \qquad
  2\xi\varepsilon\in\mathbb Z[1/M].
\end{equation}
If $\xi\neq0$, the quotient of these two nonzero rational numbers is
$L/\varepsilon$, contrary to \eqref{eq:wm-roof-parameters}.  Hence $0$ is
the only eigenvalue.  Since $(X_\vartheta,\nu,T)$ is ergodic and the roof is
positive, the special flow is ergodic; an ergodic flow with no nonzero
eigenvalues is weakly mixing.  Its time reversal is therefore weakly mixing
as well, and $\pi$ is an equivariant factor map for that reversed flow.
Thus weak mixing passes to $\mu_q$.
\end{proof}

\begin{proof}[Proof of Theorem~\ref{thm:weak-mixing-strictness}]
For the process $\mu_q$, Proposition~\ref{prop:wm-lower-order-hu} gives
$\mu_q\in\HU_{\leq q-1}$,
\eqref{eq:wm-exceptional-variance-lower-bound} gives
$\mu_q\notin\HU_q$, and Proposition~\ref{prop:wm-weak-mixing} gives weak
mixing.  Taking $q=k+1$ proves the theorem.
\end{proof}

\section{Cut-and-project point processes}
\label{sec:cut-and-project}

\subsection{Cut-and-project processes and pattern reduction}
\label{subsec:cap-pattern-windows}

Let $H$ be a locally compact second countable abelian group.  A
\emph{cut-and-project scheme} $(V,H,\Gamma)$ consists of a discrete
cocompact subgroup $\Gamma<V\times H$ such that the projection
$\pi_V|_\Gamma$ is injective and $\pi_H(\Gamma)$ is dense in $H$.
We call $V$ the \emph{physical space} and $H$ the \emph{internal space}.
For $\gamma\in\Gamma$ write
\begin{equation}\label{eq:cap-star-notation}
  \gamma=(\gamma_V,\gamma_H),
  \qquad
  \gamma_V:=\pi_V(\gamma),
  \qquad
  \gamma_H:=\pi_H(\gamma).
\end{equation}
The coordinates $\gamma_V$ and $\gamma_H$ are called the physical and
internal components of $\gamma$.  Let $m_H$ be a Haar measure on $H$.  A
compact Borel set $W'\subset H$ will be called a \emph{window}; it is
\emph{regular} if
\begin{equation}\label{eq:cap-regular-window}
  W'=\overline{\operatorname{int}W'},
  \qquad
  \operatorname{int}W'\neq\varnothing,
  \qquad
  m_H(\partial W')=0.
\end{equation}

Put $\Omega:=\Gamma\backslash(V\times H)$ and let $m_\Omega$ be Haar
probability on $\Omega$.  For $\omega=\Gamma+(v,h)\in\Omega$ and a compact
window $W'$, define
\begin{equation}\label{eq:cap-process-definition}
  (p_{W'})_\omega
  :=\sum_{\gamma\in\Gamma}
    \chi_{W'}(\gamma_H+h)\,\delta_{\gamma_V+v}.
\end{equation}
Changing $(v,h)$ by an element of $\Gamma$ only reindexes the sum.  If
$K\subset V$ is compact, only the lattice points in the compact set
$(K-v)\times(W'-h)$ can contribute to $(p_{W'})_\omega(K)$, so the measure is
locally finite.  Injectivity of $\pi_V|_\Gamma$ makes it simple.  For every
$f\in C_c(V)$, the function $\omega\mapsto(p_{W'})_\omega(f)$ is Borel: in
local coordinates it is a finite sum of Borel functions, and the sum is
$\Gamma$-invariant.  Since the Borel structure of the vague topology on
$\cM(V)$ is generated by evaluation against functions in $C_c(V)$, the map
$\omega\mapsto(p_{W'})_\omega$ is Borel.

The group $V$ acts on $\Omega$ by
$t\cdot(\Gamma+(v,h)):=\Gamma+(v+t,h)$.  With the translation action on
$\cM(V)$ from Section~\ref{sec:general-theory},
\begin{equation}\label{eq:cap-equivariance}
  (p_{W'})_{t\cdot\omega}=t.(p_{W'})_\omega.
\end{equation}
Hence
\begin{equation}\label{eq:cap-haar-pushforward}
  \mu_{W'}:=(\omega\mapsto(p_{W'})_\omega)_*m_\Omega
\end{equation}
is a $V$-invariant simple point process.  When $W'$ is regular, we call it
the regular cut-and-project point process associated with $W'$.

\smallskip\noindent\emph{Uniform local moment bound.}
Since $W'$ is compact, put
\[
  D_{W'}:=\{\gamma_V:\gamma\in\Gamma,\ \gamma_H\in W'-W'\}.
\]
This set contains $0$ and has a positive separation constant.  Indeed, if
there were distinct $x_n,y_n\in D_{W'}$ with $\|x_n-y_n\|\to0$, choose
$\delta_n,\delta_n'\in\Gamma$ representing them.  The nonzero lattice points
$\delta_n-\delta_n'$ have internal components in the compact set
$(W'-W')-(W'-W')$ and physical components tending to $0$.  They therefore
lie eventually in a fixed compact subset of $V\times H$, contradicting
discreteness of $\Gamma$; injectivity of $\pi_V|_\Gamma$ ensures that these
nonzero lattice points have nonzero physical components.  Since $0\in D_{W'}$,
every configuration $(p_{W'})_\omega$ has the same positive lower bound on
the distance between distinct points.  Consequently, for every compact
$K\subset V$ there is $N_{K,W'}<\infty$ such that
\[
  (p_{W'})_\omega(K)\leq N_{K,W'}
  \qquad(\omega\in\Omega).
\]
Thus $\mu_{W'}$ is locally $L^r$-integrable for every finite $r$, and the
moment hypotheses required below are automatic.

For $f\in C_c(V)$ set
\begin{equation}\label{eq:cap-window-statistic}
  S_{W'}f(\omega):=(p_{W'})_\omega(f).
\end{equation}
We use the same notation for bounded compactly supported Borel functions.
Whenever a pattern statistic is written as a function in $L^2(\Omega)$, it is
understood to be evaluated at $(p_W)_\omega$.

Let $\widehat H$ be the Pontryagin dual of $H$, written multiplicatively.  We
identify $\xi\in V^*$ with the character $v\mapsto e^{2\pi i\xi(v)}$ and
define the annihilator
\begin{equation}\label{eq:cap-dual-lattice-definition}
  \Gamma^\perp
  :=\bigl\{(\xi,\eta)\in V^*\times\widehat H:
      e^{2\pi i\xi(\gamma_V)}\eta(\gamma_H)=1
      \text{ for every }\gamma\in\Gamma\bigr\}.
\end{equation}

From now on in this section, $H$ is a finite-dimensional real Euclidean
space.  Write
\[
  d:=\dim V,\qquad m:=\dim H.
\]
We call $d$ the \emph{physical dimension} and $m$ the \emph{internal
dimension}; the codimension is $m$.  We take $m_H=\Vol_m$ and equip
$V\times H$ with the orthogonal product Euclidean structure, so its Haar
measure is $\Vol_{d+m}$.  We identify $H^*$ with $\widehat H$ through
$\eta\mapsto(h\mapsto e^{2\pi i\eta(h)})$.  Under this identification,
$\Gamma^\perp$ is a lattice in $V^*\times H^*$, called the \emph{dual
lattice}; for $(\xi,\eta)\in\Gamma^\perp$, $\xi$ and $\eta$ are the physical
and internal frequencies.  For $(\xi,\eta)\in V^*\times H^*$,
\begin{equation}\label{eq:cap-dual-membership-additive}
  (\xi,\eta)\in\Gamma^\perp
  \quad\Longleftrightarrow\quad
  \xi(\gamma_V)+\eta(\gamma_H)\in\mathbb Z
  \quad\text{for every }\gamma\in\Gamma.
\end{equation}

Let $\operatorname{covol}(\Gamma)$ be the $\Vol_{d+m}$-volume of a
fundamental domain.  For a regular window $W'$, write $\sigma_{W'}$ for the
centered first Bartlett spectrum of $\mu_{W'}$.  By
\cite[Theorem~2.9]{BH24},
\begin{equation}\label{eq:cap-diffraction-formula}
  \sigma_{W'}
  =\frac{1}{\operatorname{covol}(\Gamma)^2}
   \sum_{(\xi,\eta)\in\Gamma^\perp\setminus\{0\}}
     |\widehat{\chi}_{W'}(\eta)|^2\,\delta_\xi.
\end{equation}
The density of $\pi_H(\Gamma)$ implies
$\Gamma^\perp\cap(\{0\}\times H^*)=\{0\}$, so the removed atom is exactly
$(0,0)$.

Two elementary facts about windows will be used repeatedly.  First, internal
translation does not change the law:
\begin{equation}\label{eq:cap-window-translation-law}
  (p_{W'+h_0})_{\Gamma+(v,h)}
  =(p_{W'})_{\Gamma+(v,h-h_0)},
  \qquad
  \mu_{W'+h_0}=\mu_{W'}.
\end{equation}
Second, changing a compact window on an $m$-dimensional null set does not
change its law.  Indeed, the quotient integration formula for
$\Gamma\backslash(V\times H)$ shows that the expected number of points in a
fixed bounded physical set selected by the symmetric difference is zero.
Consequently, if a compact window $A$ has null boundary and nonempty
interior, then
\begin{equation}\label{eq:cap-window-regularization}
  A^{\mathrm{reg}}:=\overline{\operatorname{int}A}
  \quad\text{satisfies}\quad
  \mu_A=\mu_{A^{\mathrm{reg}}}.
\end{equation}
In particular, the Bartlett formula above applies to such a window after this
regularization.

Fix from now on one regular window \(W\subset H\).  If
\(\boldsymbol\gamma=(\gamma_1,\ldots,\gamma_j)\in\Gamma^j\) with
\(\gamma_1=0\), set
\begin{equation}\label{eq:cap-derived-window}
  W[\boldsymbol\gamma]
  :=\bigcap_{r=1}^j (W-\gamma_{r,H}).
\end{equation}
For \(\lambda\in\Gamma\), the points associated with
\(\lambda+\gamma_1,\ldots,\lambda+\gamma_j\) all occur in
\((p_W)_{\Gamma+(v,h)}\) precisely when
\begin{equation}\label{eq:cap-acceptance-condition}
  \lambda_H+h\in W[\boldsymbol\gamma].
\end{equation}
\begin{proposition}[Derived-window characterization]
\label{prop:cap-pattern-reduction}
Let $k\geq1$.
\begin{enumerate}[label=\textup{(\roman*)}]
\item For every $\varphi\in C_c(\Sh_k(V))$ there are finitely many windows
$W_1,\ldots,W_N\subset H$, each of the form
\begin{equation}\label{eq:cap-acceptance-window-form}
  W_\ell=W[\boldsymbol\gamma_\ell]
\end{equation}
for a tuple $\boldsymbol\gamma_\ell=(0,\gamma_{\ell,2},\ldots,\gamma_{\ell,j_\ell})$
of at most $k$ distinct lattice vectors, constants $c_1,\ldots,c_N\in\mathbb C$, and vectors
$b_1,\ldots,b_N\in V$ such that, for every bounded compactly supported Borel
function $f$ on $V$,
\begin{equation}\label{eq:cap-pattern-reduction}
  T_k[f,\varphi]
  =\sum_{\ell=1}^N c_\ell\,S_{W_\ell}(\tau_{b_\ell}f)
  \qquad\text{in }L^2(\Omega).
\end{equation}
Windows with empty interior may be omitted.
\item The following are equivalent:
\begin{enumerate}[label=\textup{(\alph*)}]
\item $\mu_W\in\HU_k$;
\item for every $1\leq j\leq k$ and every tuple
$\boldsymbol\gamma=(0,\gamma_2,\ldots,\gamma_j)$ of distinct lattice vectors
for which $W[\boldsymbol\gamma]$ has nonempty interior, the cut-and-project
process $\mu_{W[\boldsymbol\gamma]}$ belongs to $\HU_1$.
\end{enumerate}
\end{enumerate}
\end{proposition}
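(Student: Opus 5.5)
For part (i) I would expand $T_k[f,\varphi]((p_W)_\omega)$ directly over ordered $k$-tuples of lattice points, using the partition decomposition \eqref{eq:full-factorial-partition}. Write $\omega=\Gamma+(v,h)$. An ordered $k$-tuple of points of $(p_W)_\omega$ has the form $(\lambda+\gamma_{\iota(1)},\ldots,\lambda+\gamma_{\iota(k)})_V+v$. Here $\lambda\in\Gamma$, the $\gamma_r$ ($r\le j$) are distinct with $\gamma_1=0$, and $\iota:\{1,\dots,k\}\to\{1,\dots,j\}$ is the surjection recording the equality pattern, normalized so that the first occurrence order is fixed; this parametrization is unique. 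The tuple occurs exactly when $\lambda_H+h\in W[\boldsymbol\gamma]$ by \eqref{eq:cap-acceptance-condition}. Its centred shape is $[(\gamma_{\iota(\cdot),V})^\circ]$, which is independent of $\lambda$ and $v$. Its barycentre is $\lambda_V+v+b$ with $b:=\frac1k\sum_{i}\gamma_{\iota(i),V}$. Since $\varphi$ has compact support, all differences $\gamma_{r,V}$ are bounded. The $\gamma_{r,H}$ may also be taken bounded, since otherwise $W[\boldsymbol\gamma]=\varnothing$ (because $W$ is compact, $\gamma_{r,H}\in W-W$). Discreteness of $\Gamma$ then leaves only finitely many contributing data $(\boldsymbol\gamma,\iota)$. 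For each one the sum over $\lambda$ is $\sum_\lambda\chi_{W[\boldsymbol\gamma]}(\lambda_H+h)f(\lambda_V+v+b)=S_{W[\boldsymbol\gamma]}(\tau_bf)(\omega)$, with coefficient $\varphi([\cdot])$. This gives \eqref{eq:cap-pattern-reduction} pointwise in $\omega$, hence in $L^2$. Windows with empty interior can be dropped: such a window $W[\boldsymbol\gamma]$ is a finite intersection of translates of a regular window, so it is contained in the union of the translated boundaries, which is null. The statistic therefore vanishes a.e. by the null-set argument preceding \eqref{eq:cap-window-regularization}. Throughout I would replace each $W_\ell$ by its regularization as in \eqref{eq:cap-window-regularization}.

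For (ii), (b)$\Rightarrow$(a) follows from (i) with $f=\chi_{B_R}$. Each term $S_{W_\ell}(\chi_{B_R}(\cdot+b_\ell))$ has the same variance as $S_{W_\ell}\chi_{B_R}$, by $V$-invariance of the law. The finite sum then has variance at most $N\sum|c_\ell|^2\Var$, which is $o(\Vol_d(B_R))$.

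For (a)$\Rightarrow$(b), fix $j\le k$ and distinct $\boldsymbol\gamma$ with $W[\boldsymbol\gamma]$ having nonempty interior. I would isolate this window with a shape function. Take the $k$-tuple pattern $u:=(\gamma_{1,V},\ldots,\gamma_{j,V},\gamma_{j,V},\ldots,\gamma_{j,V})$, with the last entry repeated so that exactly $k$ entries appear. Choose $\varphi\ge0$ supported in a tiny neighbourhood of $[u^\circ]$ with $\varphi([u^\circ])=1$. The set of shapes $[(\delta_{\iota(\cdot),V})^\circ]$ coming from contributing data is discrete (only finitely many have shapes near a given compact set). So for a small enough support, only those data whose centred shape \emph{equals} $[u^\circ]$ survive.

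The main obstacle is that several data can produce the same shape. This happens through permutations, through different equality patterns $\iota$, and through distinct $\boldsymbol\gamma'$ with the same physical tuple up to translation and permutation. All surviving terms are then $S_{W[\boldsymbol\gamma']}(\tau_{b'}f)$ with $\boldsymbol\gamma'$ obtained from $\boldsymbol\gamma$ by a lattice translation $\lambda_0$ (injectivity of $\pi_V$ forces equality of lattice tuples up to translation and relabeling). Up to the translation law \eqref{eq:cap-window-translation-law}, they are therefore the same window. More precisely, $W[\boldsymbol\gamma']=W[\boldsymbol\gamma]-\lambda_{0,H}$ and the barycentre shift is compensated by $\lambda_{0,V}$, so all terms equal one positive multiple $c\,S_{W[\boldsymbol\gamma]}(\tau_b f)$ evaluated on the same $\omega$.

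I would check this bookkeeping carefully, in particular that a single $\lambda$-reindexing maps each term onto the same function of $\omega$. If it fails, I would instead argue with the Bartlett formula \eqref{eq:cap-diffraction-formula}. Each term's spectrum is $|\widehat\chi_{W_\ell}(\eta)|^2$ with the same support structure, so positive coefficients prevent cancellation near $\xi=0$. Then $\Var(T_k[\chi_{B_R},\varphi])=c^2\Var(S_{W[\boldsymbol\gamma]}\chi_{B_R})$, and (a) gives $\mu_{W[\boldsymbol\gamma]}\in\HU_1$.
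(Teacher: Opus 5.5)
Your proposal is correct and follows essentially the same route as the paper. Part (i) uses the partition/basepoint parametrization of ordered lattice $k$-tuples, (b)$\Rightarrow$(a) is a triangle-type variance bound, and (a)$\Rightarrow$(b) uses a shape function isolating one repeated shape. Your multiplicities $(1,\ldots,1,k-j+1)$ are a special case of the paper's arbitrary $m_1+\cdots+m_j=k$, and injectivity of $\pi_V|_\Gamma$ forces every admissible realization of that shape to be a lattice translate up to permutation. The bookkeeping you flag does close: rebasing at a site $\gamma_s$ turns $W[\boldsymbol\gamma]$ into $W[\boldsymbol\gamma]+\gamma_{s,H}$ and $b$ into $b-\gamma_{s,V}$, and the reindexing $\lambda'=\lambda-\gamma_s$ makes each surviving term literally $S_{W[\boldsymbol\gamma]}(\tau_bf)$, so $T_k[f,\varphi]$ is a multinomial multiple of it and the Bartlett fallback (which would in any case have to control phase-twisted cross terms) is unnecessary.
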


\begin{proof}
For \textup{(i)}, use the partition decomposition in
Equation~\eqref{eq:full-factorial-partition}.  Let $\pi\in\Pi_k$ have blocks
$B_1,\ldots,B_j$, ordered by their least elements.  An ordered $j$-tuple of
distinct points of $(p_W)_{\Gamma+(v,h)}$ is represented uniquely by distinct
lattice vectors $\lambda_1,\ldots,\lambda_j\in\Gamma$, because
$\pi_V|_\Gamma$ is injective.  Put
\begin{equation}\label{eq:cap-relative-lattice-vectors}
  \gamma_1:=0,
  \qquad
  \gamma_r:=\lambda_r-\lambda_1\quad(2\leq r\leq j).
\end{equation}
Conversely, $\lambda:=\lambda_1$ together with a tuple of distinct relative
vectors $\boldsymbol\gamma=(0,\gamma_2,\ldots,\gamma_j)$ determines the
ordered lattice tuple
\[
  (\lambda,\lambda+\gamma_2,\ldots,\lambda+\gamma_j).
\]
All these points are accepted precisely when
$\lambda_H+h\in W[\boldsymbol\gamma]$.  The barycentre of the repeated
$k$-tuple associated with $\pi$ is
\begin{equation}\label{eq:cap-weighted-partition-barycentre}
  \lambda_V+v+b_{\pi,\boldsymbol\gamma},
  \qquad
  b_{\pi,\boldsymbol\gamma}
  :=\frac1k\sum_{r=1}^j |B_r|\,\gamma_{r,V}.
\end{equation}
Let
\begin{equation}\label{eq:cap-partition-shape}
  w_{\pi,\boldsymbol\gamma}
  :=\left[
     \Delta_\pi(0,\gamma_{2,V},\ldots,\gamma_{j,V})^\circ
    \right]\in\Sh_k(V).
\end{equation}
For fixed $\pi$ and $\boldsymbol\gamma$, the corresponding contribution to
$T_k[f,\varphi]((p_W)_\omega)$ is therefore exactly
\begin{align}\label{eq:cap-one-derived-contribution}
 &\varphi(w_{\pi,\boldsymbol\gamma})
   \sum_{\lambda\in\Gamma}
     \chi_{W[\boldsymbol\gamma]}(\lambda_H+h)
     f(\lambda_V+v+b_{\pi,\boldsymbol\gamma}) \notag\\
 &\hspace{35mm}=
 \varphi(w_{\pi,\boldsymbol\gamma})
 S_{W[\boldsymbol\gamma]}
   (\tau_{b_{\pi,\boldsymbol\gamma}}f)(\omega).
\end{align}

Only finitely many $\boldsymbol\gamma$ contribute.  Choose $R_\varphi<\infty$
so that every representative of a shape in $\operatorname{supp}\varphi$ has
all pairwise differences of norm at most $R_\varphi$.  For every contributing
tuple and every $r,s$,
\[
  \|\gamma_{r,V}-\gamma_{s,V}\|\leq R_\varphi,
  \qquad
  \gamma_{r,H}-\gamma_{s,H}\in W-W.
\]
Thus every difference $\gamma_r-\gamma_s$ belongs to the finite set
\[
  \Gamma\cap\bigl(B_{R_\varphi}^V\times(W-W)\bigr).
\]
Since $\gamma_1=0$, only finitely many relative tuples can occur.  Moreover,
\begin{equation}\label{eq:cap-derived-window-boundary}
  \partial W[\boldsymbol\gamma]
  \subset \bigcup_{r=1}^j(\partial W-\gamma_{r,H}),
\end{equation}
so every derived window has null boundary.  If its interior is empty, then
$W[\boldsymbol\gamma]=\partial W[\boldsymbol\gamma]$ and hence
$\Vol_m(W[\boldsymbol\gamma])=0$.  For every compact $K\subset V$, the quotient
integration formula gives
\[
  \int_\Omega (p_{W[\boldsymbol\gamma]})_\omega(K)\,dm_\Omega(\omega)
  =\frac{\Vol_d(K)\Vol_m(W[\boldsymbol\gamma])}
         {\operatorname{covol}(\Gamma)}=0.
\]
A countable exhaustion of $V$ by compact sets therefore shows that
$(p_{W[\boldsymbol\gamma]})_\omega=0$ for almost every $\omega$.  If the
interior is nonempty, Equation~\eqref{eq:cap-window-regularization} replaces
the window, without changing its law or its $L^2(\Omega)$ statistics, by a
regular window.  Summing \eqref{eq:cap-one-derived-contribution} over the
finitely many partitions and relative tuples, and grouping identical terms,
gives \eqref{eq:cap-pattern-reduction}.  Before grouping, every coefficient is
$\varphi(w_{\pi,\boldsymbol\gamma})$.

For \textup{(ii)(b)} $\Rightarrow$ \textup{(ii)(a)}, fix $b\in V$.
The $V$-invariance of $\mu_{W_\ell}$ gives
\[
  \Var\bigl(S_{W_\ell}(\tau_b\chi_{B_R})\bigr)
  =\Var\bigl(S_{W_\ell}(\chi_{B_R})\bigr).
\]
Minkowski's inequality in $L^2$ therefore yields
\begin{equation}\label{eq:cap-finite-sum-variance}
  \Var\!\left(\sum_{\ell=1}^N c_\ell
      S_{W_\ell}(\tau_{b_\ell}\chi_{B_R})\right)
  \leq
  \left(\sum_{\ell=1}^N |c_\ell|
    \sqrt{\Var\bigl(S_{W_\ell}(\chi_{B_R})\bigr)}\right)^2
  =o(\Vol_d(B_R)).
\end{equation}
Part \textup{(i)} now gives $\mu_W\in\HU_k$.

For the converse, fix $1\leq j\leq k$ and
$\boldsymbol\gamma=(0,\gamma_2,\ldots,\gamma_j)$ as in
\textup{(ii)(b)}.  Choose positive integers $m_1,\ldots,m_j$ with
$m_1+\cdots+m_j=k$ and let $w_{\mathbf m,\boldsymbol\gamma}$ be the centred
unordered shape obtained by repeating
$0,\gamma_{2,V},\ldots,\gamma_{j,V}$ with these multiplicities.  The set
\begin{equation}\label{eq:cap-admissible-differences-uniform-discrete}
  D_W:=\{\delta_V:\delta\in\Gamma,\ \delta_H\in W-W\}
\end{equation}
has a positive separation constant by the argument above.  If $C\subset
\Sh_k(V)$ is compact, there is $R_C<\infty$ such that every shape in $C$ has a
representative whose pairwise differences have norm at most $R_C$.  Since
$D_W\cap B_{R_C}$ is finite, only finitely many admissible repeated shapes lie
in $C$.  The admissible repeated shapes are therefore locally finite in
$\Sh_k(V)$.  Hence there is $\varphi\in C_c(\Sh_k(V))$ which equals one at
$w_{\mathbf m,\boldsymbol\gamma}$ and vanishes on every other admissible
repeated shape.

Suppose two admissible lattice $k$-tuples have the same centred unordered
shape.  After a permutation, their physical coordinates differ by one common
translation $t\in V$.  Matching one occupied site gives a lattice difference
$\delta\in\Gamma$ with $\delta_V=t$.  Every other matched difference has the
same physical component, so injectivity of $\pi_V|_\Gamma$ forces all of them
to equal $\delta$.  Thus the two lattice realizations differ by a common
lattice translate, up to permutation.

The number of orderings of the fixed $j$ sites with multiplicities
$\mathbf m=(m_1,\ldots,m_j)$ is
\begin{equation}\label{eq:cap-multinomial-constant}
  c_{\mathbf m}:=\frac{k!}{m_1!\cdots m_j!}>0.
\end{equation}
Choosing a different site $\gamma_s$ as basepoint changes the derived window
and weighted barycentre by
\[
  W[\boldsymbol\gamma']
    =W[\boldsymbol\gamma]+\gamma_{s,H},
  \qquad
  b_{\mathbf m,\boldsymbol\gamma'}
    =b_{\mathbf m,\boldsymbol\gamma}-\gamma_{s,V},
\]
and the reindexing $\lambda'=\lambda-\gamma_s$ makes the resulting first
statistic identical.  Hence, as functions in $L^2(\Omega)$,
\begin{equation}\label{eq:cap-isolated-derived-window-statistic}
  T_k[f,\varphi]
  =c_{\mathbf m}\,
    S_{W[\boldsymbol\gamma]}
      (\tau_{b_{\mathbf m,\boldsymbol\gamma}}f),
  \qquad
  b_{\mathbf m,\boldsymbol\gamma}
  :=\frac1k\sum_{r=1}^j m_r\gamma_{r,V}.
\end{equation}
If $\mu_W\in\HU_k$, the defining variance condition applied to this fixed
$\varphi$, together with $V$-invariance of
$\mu_{W[\boldsymbol\gamma]}$, gives
$\mu_{W[\boldsymbol\gamma]}\in\HU_1$.  This proves
\textup{(ii)(a)} $\Rightarrow$ \textup{(ii)(b)}.
\end{proof}

\begin{corollary}[Monotonicity for cut-and-project processes]
\label{cor:cap-monotonicity}
For the process associated with a regular window in a Euclidean
cut-and-project scheme,
\begin{equation}\label{eq:cap-monotonicity}
  \mu_W\in\HU_k\quad\Longrightarrow\quad
  \mu_W\in\HU_j\qquad(1\leq j\leq k).
\end{equation}
In particular, $\mu_W\in\HU_k$ implies $\mu_W\in\HU_1$.
\end{corollary}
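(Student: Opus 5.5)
The plan is to derive the corollary directly from the characterization in Proposition~\ref{prop:cap-pattern-reduction}\textup{(ii)}. The key observation is that condition \textup{(ii)(b)} is monotone in $k$. For each $k$, \textup{(ii)(b)} quantifies over all $1\leq j\leq k$ and all tuples $\boldsymbol\gamma=(0,\gamma_2,\ldots,\gamma_j)$ of distinct lattice vectors whose derived window $W[\boldsymbol\gamma]$ has nonempty interior. The family of tuples indexed by $j\leq j_0$ is therefore a subfamily of the one indexed by $j\leq k$ whenever $j_0\leq k$. Note also that the derived window $W[\boldsymbol\gamma]$ and the condition on it depend only on $\boldsymbol\gamma$ and $W$, not on the ambient pattern order.

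The argument will run as follows. Assume $\mu_W\in\HU_k$ and fix $1\leq j_0\leq k$. Apply the direction \textup{(ii)(a)}$\Rightarrow$\textup{(ii)(b)} at order $k$; this yields $\mu_{W[\boldsymbol\gamma]}\in\HU_1$ for every admissible tuple of length at most $k$. In particular, it holds for every admissible tuple of length at most $j_0$. This is exactly condition \textup{(ii)(b)} at order $j_0$, so the direction \textup{(ii)(b)}$\Rightarrow$\textup{(ii)(a)} at order $j_0$ gives $\mu_W\in\HU_{j_0}$. The special case $j_0=1$ gives the final assertion. Alternatively, one can note that at $j=1$ the only tuple is $(0)$, with $W[(0)]=W$, so $\mu_W\in\HU_1$ is read off immediately.

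The one point to verify is that both directions of Proposition~\ref{prop:cap-pattern-reduction}\textup{(ii)} hold for each order separately, with the same fixed regular window $W$. They do: the proposition is stated for arbitrary $k\geq1$. Its converse direction constructs, for each $j\leq k$ and each multiplicity vector with $m_1+\cdots+m_j=k$, a test function $\varphi\in C_c(\Sh_k(V))$ isolating the derived window. This construction is available because $j\leq k$ always permits positive integers $m_r$ summing to $k$.

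No real obstacle remains. The substantive work, namely reducing $T_k$ to finite combinations of first statistics of derived windows and isolating individual derived windows by repeated shapes, was already done in Proposition~\ref{prop:cap-pattern-reduction}. Local $L^{2k}$-integrability is automatic from the uniform local moment bound, so no moment hypotheses need checking.
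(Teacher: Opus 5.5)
Your proposal is correct and is the paper's own argument. The tuples of at most $j$ distinct lattice sites form a subfamily of those with at most $k$ sites, so applying Proposition~\ref{prop:cap-pattern-reduction}\textup{(ii)} in direction (a)$\Rightarrow$(b) at order $k$ and then (b)$\Rightarrow$(a) at order $j$ gives the claim.
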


\begin{proof}
The derived windows involving at most $j$ distinct lattice sites form a
subset of those involving at most $k$ sites.  Apply
Proposition~\ref{prop:cap-pattern-reduction}(ii) at orders $k$ and $j$.
\end{proof}

\subsection{Codimension-one separation in physical dimension three}
\label{subsec:cap-codim-one-separation}

\begin{theorem}[Codimension-one separation]
\label{thm:cap-codim-one-separation}
There exists a Euclidean cut-and-project scheme
\[
  (\mathbb R^3,\mathbb R,\Gamma)
\]
and an interval window $W\subset\mathbb R$ such that
\begin{equation}\label{eq:cap-codim-one-separation}
  \mu_W\in\HU_1\setminus\HU_2.
\end{equation}
\end{theorem}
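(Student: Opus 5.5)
The plan is to reduce both assertions, via Proposition~\ref{prop:cap-pattern-reduction}(ii), to first-order statements about interval windows. These will then be checked with the diffraction formula \eqref{eq:cap-diffraction-formula} together with Theorem~\ref{thm:geometric-spectral}. For an interval window $W=[0,\ell]\subset\mathbb R$ and $\gamma\in\Gamma$ with $|\gamma_H|<\ell$, the derived window $W\cap(W-\gamma_H)$ is an interval of length $\ell-|\gamma_H|$. By \eqref{eq:cap-window-translation-law}, only interval lengths matter. Hence $\mu_W\in\HU_1\setminus\HU_2$ follows once we find a scheme in which:
\begin{itemize}
\item the window of length $\ell$ is $\HU_1$;
\item some admissible length $\ell'=\ell-|\gamma_H|>0$ is not $\HU_1$.
\end{itemize}

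\emph{The scheme.} I would take a product of a periodic part with a Fibonacci-type scheme. Concretely, $V=\mathbb R^2\times\mathbb R$, $H=\mathbb R$, and $\Gamma=\mathbb Z^2\times\Gamma'$, where $\Gamma'\subset\mathbb R\times\mathbb R$ is the image of $\mathbb Z^2$ under a rotation by an angle with quadratic irrational slope. The injectivity of the physical projection and the density of the internal projection are inherited from $\Gamma'$. The dual lattice is $\Gamma^\perp=\mathbb Z^2\times\Gamma'^\perp$.

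With $\operatorname{covol}(\Gamma)$ absorbed into constants, \eqref{eq:cap-diffraction-formula} gives
\[
  \sigma_{[0,\ell]}(B_\varepsilon^*)
  \asymp
  \sum_{\substack{(\xi',\eta)\in\Gamma'^\perp\setminus\{0\}\\|\xi'|\le\varepsilon}}
  \frac{\sin^2(\pi\eta\ell)}{\pi^2\eta^2},
\]
since for small $\varepsilon$ only the $n=0$ points of the $\mathbb Z^2$ factor contribute. The volume normalization is $\Vol_3(B_\varepsilon^*)\asymp\varepsilon^3$. Because the slope is badly approximable, points of $\Gamma'^\perp$ with $|\xi'|\le\varepsilon$ have $|\eta|\gtrsim\varepsilon^{-1}$. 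There are $O(\varepsilon T)$ of them with $|\eta|\le T$.

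\emph{Choice of $\ell$ and the two estimates.}
\begin{itemize}
\item For $\HU_1$, take $\ell$ to be the canonical window length, i.e.\ the internal projection of the unit square. The dual lattice points are $(\xi',\eta)=(\langle n,e_V\rangle,\langle n,e_H\rangle)$ with $n\in\mathbb Z^2$. For the canonical length, $\eta\ell=(n_1+n_2)$-type integer $+\,c\,\xi'$. Hence $\sin^2(\pi\eta\ell)\lesssim|\xi'|^2\le\varepsilon^2$. A dyadic summation over $|\eta|\in[T,2T]$ with $T\ge\varepsilon^{-1}$ then gives
\[
  \sigma_{[0,\ell]}(B_\varepsilon^*)\lesssim\varepsilon^2\cdot\varepsilon\sum_{T}T^{-1}\lesssim\varepsilon^4=o(\varepsilon^3),
\]
so $\mu_W\in\HU_1$ by Theorem~\ref{thm:geometric-spectral}.
\item For the failure of $\HU_2$, take $\gamma$ with $0<|\gamma_H|<\ell$, and set $\ell'=\ell-|\gamma_H|$, which lies in $\mathbb Z[\tau]\ell$ but is not of canonical form. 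I need to show that $\sin^2(\pi\eta\ell')$ stays bounded below on a positive proportion of the dual points with $|\xi'|\le\varepsilon$ and $|\eta|\asymp\varepsilon^{-1}$. That gives $\sigma_{[0,\ell']}(B_\varepsilon^*)\gtrsim\varepsilon\cdot\varepsilon\cdot\varepsilon^{-1}\cdot\varepsilon^{2}/\varepsilon^{2}\cdot\varepsilon=\varepsilon^2$, which is not $o(\varepsilon^3)$. Then Proposition~\ref{prop:cap-pattern-reduction}(ii) gives $\mu_W\notin\HU_2$.
\end{itemize}

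\emph{Main obstacle.} I expect the lower bound in the last step to be the main obstacle. By the same algebra as in the first bullet, $\eta\ell'\equiv c'\eta\gamma$-type quantity $\pmod 1$ up to $O(\xi')$. One must show that this phase equidistributes, or at least avoids $\mathbb Z$, along the sequence of best approximations of the Fibonacci dual module. I would handle it concretely: use the recursive Fibonacci structure $n_{k+1}=An_k$ of the small-$\xi'$ dual points, compute $\eta_k\ell'$ modulo $1$ explicitly via the Galois conjugate, and pick $\gamma$, e.g.\ the shortest nonzero $\gamma_H$, so that this residue converges to a nonzero limit. The periodic $\mathbb Z^2$ factor only enters to raise the physical dimension to three, which makes the normalization $\varepsilon^3$ rather than $\varepsilon$; that is exactly what lets the canonical window's gain $\varepsilon^2$ decide the first condition while the non-canonical $\varepsilon^2$ rate fails the second.
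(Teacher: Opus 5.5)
\textbf{Verdict: the proposal has a fatal gap.} Your $\HU_1$ estimate for the canonical interval is correct. The lower bound you flag as the main obstacle, however, is false, and your scheme in fact gives $\mu_W\in\HU_2$.

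\textbf{Why the derived window inherits the cancellation.} By definition of $\Gamma^\perp$, every dual point satisfies $\xi(\gamma_V)+\eta\gamma_H\in\mathbb Z$ for all $\gamma\in\Gamma$, so $\eta\gamma_H\equiv-\xi(\gamma_V)\pmod 1$. The canonical length is $\ell=\delta_H$ for some $\delta\in\Gamma$ (the diagonal of the unit square). Your identity $\eta\ell\in\mathbb Z+c\,\xi'$ is exactly $\eta\ell\equiv-\xi(\delta_V)\pmod 1$. Hence every derived length $\ell'=\ell-|\gamma_H|=(\delta\mp\gamma)_H$ satisfies $\eta\ell'\equiv-\xi((\delta\mp\gamma)_V)\pmod 1$. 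It follows that $|\widehat\chi_{W'}(\eta)|\le\|(\delta\mp\gamma)_V\|\,\|\xi\|/|\eta|$.

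So the residue $\eta_k\ell'$ modulo $1$ tends to $0$, not to a nonzero limit. The same dyadic count you used for $W$ gives $\sigma_{W'}(B_\varepsilon^*)=O(\varepsilon^4)=o(\varepsilon^3)$. A $k$-fold intersection of translated intervals is an interval of length $\ell-(\gamma_s-\gamma_r)_H$, so the same holds at every order. Proposition~\ref{prop:cap-pattern-reduction}(ii) then gives $\mu_W\in\HU_k$ for all $k$.

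\textbf{No other choice rescues the product scheme.} In general $\|\eta\ell'\|_{\mathbb R/\mathbb Z}\le\|\eta\ell\|_{\mathbb R/\mathbb Z}+\|\gamma_V\|\,\|\xi\|$. With badly approximable data ($|\xi'||\eta|\gtrsim1$), the extra term contributes at most $\sum_{|\xi'|\le\varepsilon}\xi'^2/\eta^2=O(\varepsilon^4)$. So $\HU_1$ of any interval window propagates to all its derived windows. The underlying point is that failure of $\HU_1$ for a window with an $O(\|\xi\|/|\eta|)$ bound requires small dual points with $\eta^2\|\xi\|\to0$. Badly approximable data force $\eta^2\|\xi\|\gtrsim\|\xi\|^{-1}$, so this can never happen.

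\textbf{What the paper does instead.} Two ingredients are missing, and neither is available in a product with $\mathbb Z^2$.
\begin{enumerate}
\item \emph{Anisotropy.} In your scheme all small-$\xi$ dual points lie on the $e_3$-axis. The linear phases $\xi(\delta_V)$ of $W$ and $\xi((\delta-\gamma)_V)$ of $W'$ are then comparable, so the derived window can never lose relative to $W$. The paper uses a genuinely three-dimensional dual lattice $\Lambda=G^{-T}\mathbb Z^4$. There $W=[-1/2,1/2]$ has phase $\xi_3$ (from $\eta+\xi_3\in\mathbb Z$), while $W\cap(W-\alpha_1)$ has phase $\xi_1+(1-\alpha_1)\xi_3$.
\item \emph{Liouville-type arithmetic.} With $\alpha_j=\sum_nM_n^{-j}$, along $q_n=M_n^3$ one has $\xi_{3,n}=o(\xi_{1,n})$ and $q_n^2\xi_{1,n}\to0$. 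This yields $\sigma_{W'}(B_{\varepsilon_n}^*)/\varepsilon_n^3\gg M_{n+1}/M_n^9\to\infty$.
\end{enumerate}
Proving $\HU_1$ for $W$ itself then requires the two-regime count of the returns $q$ with small $\operatorname{dist}_\infty(q\alpha,\mathbb Z^3)$.
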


\begin{proof}
\smallskip\noindent\emph{The lattice and the window cancellation.}
Choose integers $M_n\geq4$ such that
\begin{equation}\label{eq:cap-codim-one-growth}
  M_n\mid M_{n+1},
  \qquad
  M_{n+1}/M_n\in M_n\mathbb Z,
  \qquad
  M_{n+1}\geq M_n^{12}.
\end{equation}
For example, one may take $M_1=4$ and $M_{n+1}=M_n^{12}$.  Set
\begin{equation}\label{eq:cap-codim-one-alpha}
  \alpha_j:=\sum_{n\geq1}M_n^{-j},
  \qquad j=1,2,3,
  \qquad
  \alpha:=(\alpha_1,\alpha_2,\alpha_3)^T.
\end{equation}
Then $0<\alpha_1<1$.  Each $\alpha_j$ is irrational.  Indeed, with
\begin{equation}\label{eq:cap-codim-one-prefixes-intro}
  \alpha_{j,n}:=\sum_{r\leq n}M_r^{-j}
  =\frac{P_{j,n}}{M_n^j},
\end{equation}
one has
\(
  0<\alpha_j-\alpha_{j,n}\leq2M_{n+1}^{-j}=o(M_n^{-j}).
\)
If $\alpha_j=a/b$ were rational, then the distinct rationals $a/b$ and
$P_{j,n}/M_n^j$ would differ by at least $(bM_n^j)^{-1}$, a contradiction for
large $n$.

Let $e_3=(0,0,1)^T$ and put
\begin{equation}\label{eq:cap-codim-one-G}
  G:=
  \begin{pmatrix}
    -I_3+e_3\alpha^T & e_3\\
    \alpha^T & 1
  \end{pmatrix}.
\end{equation}
A direct multiplication gives
\begin{equation}\label{eq:cap-codim-one-Gdual}
  G^{-T}
  =
  \begin{pmatrix}
    -I_3 & \alpha\\
    e_3^T & 1-\alpha_3
  \end{pmatrix}.
\end{equation}
The displayed matrix has determinant $-1$, hence $\det G=-1$ and
$\operatorname{covol}(\Gamma)=1$.  Define
\[
  \Gamma:=G\mathbb Z^4<\mathbb R^3\times\mathbb R,
  \qquad
  \Lambda:=\Gamma^\perp=G^{-T}\mathbb Z^4.
\]
For $m\in\mathbb Z^3$ and $r\in\mathbb Z$,
\[
  G\binom mr
  =\bigl(-m+e_3(\alpha\cdot m+r),\,\alpha\cdot m+r\bigr).
\]
If its physical component vanishes, then $m_1=m_2=0$ and
$r-(1-\alpha_3)m_3=0$.  Since $\alpha_3$ is irrational, $m_3=r=0$.
Thus the physical projection of $\Gamma$ is injective.  Its internal
projection contains $\mathbb Z+\alpha_3\mathbb Z$, hence is dense.  Therefore
$(\mathbb R^3,\mathbb R,\Gamma)$ is a Euclidean cut-and-project scheme.

Writing an element of $\Lambda$ as $(\xi,\eta)$ and the corresponding integer
vector as $(p_1,p_2,p_3,q)$, Equation~\eqref{eq:cap-codim-one-Gdual} gives
\begin{equation}\label{eq:cap-codim-one-dual-coordinates}
  \xi_j=q\alpha_j-p_j,
  \qquad
  \eta=p_3+(1-\alpha_3)q=q-\xi_3.
\end{equation}
The physical projection of $\Lambda$ is injective: if $\xi=0$, then
$q\alpha_1=p_1$, hence $q=0$ and then $p=0$.

Use the lattice vector
\begin{equation}\label{eq:cap-codim-one-primal-vectors}
  \gamma_1=(-e_1+\alpha_1e_3,\alpha_1).
\end{equation}
The remaining dual coordinate satisfies $\eta+\xi_3\in\mathbb Z$, which
produces the Fourier cancellation for the original interval.  Take
\begin{equation}\label{eq:cap-codim-one-window}
  W=[-1/2,1/2].
\end{equation}
Since $\eta+\xi_3=q\in\mathbb Z$, every dual point with $\eta\neq0$
satisfies
\begin{equation}\label{eq:cap-codim-one-window-cancellation}
  |\widehat{\chi}_W(\eta)|
  =\frac{|\sin(\pi\xi_3)|}{\pi|\eta|}
  \leq\frac{|\xi_3|}{|\eta|}.
\end{equation}
If $\eta=0$, irrationality of $\alpha_3$ forces $q=p_3=0$, and every
nonzero such dual point has $\xi\in\mathbb Z^3\setminus\{0\}$.  Hence these
points never occur in $B_\varepsilon^*$ once $\varepsilon<1$.

\smallskip\noindent\emph{Separation estimate for small physical frequencies.}
Put
\begin{equation}\label{eq:cap-codim-one-tails}
  t_{j,n}:=\alpha_j-\alpha_{j,n}.
\end{equation}
The growth assumptions imply
\begin{equation}\label{eq:cap-codim-one-tail-bounds}
  M_{n+1}^{-j}\leq t_{j,n}\leq2M_{n+1}^{-j}.
\end{equation}
Moreover $P_{j,n}$ is coprime to $M_n$.  Indeed, $P_{j,1}=1$, and if
$R_n:=M_{n+1}/M_n$, then
\begin{equation}\label{eq:cap-codim-one-prefix-recurrence}
  P_{j,n+1}=R_n^jP_{j,n}+1.
\end{equation}
Every prime divisor of $M_{n+1}=M_nR_n$ divides $R_n$, because
$M_n\mid R_n$.  Hence $P_{j,n+1}\equiv1$ modulo every prime divisor of
$M_{n+1}$.

Set
\begin{equation}\label{eq:cap-codim-one-qA}
  q_n:=M_n^3,
  \qquad
  A_n:=\frac{M_{n+1}}{8M_n}.
\end{equation}
Write $\operatorname{dist}_\infty(\cdot,\mathbb Z^3)$ for distance in the
sup norm.  If $0<|q|\leq A_n$ and $q$ is not divisible by $q_n=M_n^3$, choose
$r\in\{0,1,2\}$ such that
\[
  M_n^r\mid q,
  \qquad
  M_n^{r+1}\nmid q.
\]
Write $q=M_n^r u$.  Since
$\alpha_{r+1,n}=P_{r+1,n}/M_n^{r+1}$ and
$\gcd(P_{r+1,n},M_n)=1$,
\[
  q\alpha_{r+1,n}=\frac{uP_{r+1,n}}{M_n}.
\]
Because $M_n\nmid u$, this is not an integer.  After cancellation its
denominator is at most $M_n$, so its distance from $\mathbb Z$ is at least
$1/M_n$.  Therefore
\begin{equation}\label{eq:cap-codim-one-prefix-separation}
  \operatorname{dist}_\infty
  \bigl(q(\alpha_{1,n},\alpha_{2,n},\alpha_{3,n}),\mathbb Z^3\bigr)
  \geq\frac1{M_n}.
\end{equation}
For $|q|\leq A_n$, the tail bound gives
\begin{equation}\label{eq:cap-codim-one-tail-perturbation}
  \|q(t_{1,n},t_{2,n},t_{3,n})\|_\infty
  =|q|t_{1,n}
  \leq\frac1{4M_n}.
\end{equation}
For $q\in\mathbb Z$ with
$\operatorname{dist}_\infty(q\alpha,\mathbb Z^3)<1/2$, let $p_j(q)$ be the
unique nearest integer to $q\alpha_j$ and put
\begin{equation}\label{eq:cap-codim-one-return-coordinate-definition}
  \xi_j(q):=q\alpha_j-p_j(q),\qquad j=1,2,3.
\end{equation}
If the hypotheses on the left-hand side of
\eqref{eq:cap-codim-one-return-isolation} hold and $q\notin q_n\mathbb Z$,
then \eqref{eq:cap-codim-one-prefix-separation} and
\eqref{eq:cap-codim-one-tail-perturbation} give
\[
  \operatorname{dist}_\infty(q\alpha,\mathbb Z^3)
  \geq \frac1{M_n}-\frac1{4M_n}
  =\frac3{4M_n},
\]
a contradiction.  Hence
\begin{equation}\label{eq:cap-codim-one-return-isolation}
  0<|q|\leq A_n,
  \qquad
  \operatorname{dist}_\infty(q\alpha,\mathbb Z^3)<\frac1{2M_n}
  \quad\Longrightarrow\quad
  q\in q_n\mathbb Z.
\end{equation}
If $q=kq_n$ and $|q|\leq A_n$, then $q\alpha_{j,n}\in\mathbb Z$ for
$j=1,2,3$, while
\[
  |qt_{j,n}|\leq |q|t_{1,n}<\frac12.
\]
Thus $p_j(q)=q\alpha_{j,n}$ and
\begin{equation}\label{eq:cap-codim-one-special-return}
  \xi_j(q)=qt_{j,n}.
\end{equation}
Put
\begin{equation}\label{eq:cap-codim-one-rn}
  r_n:=\|q_n(t_{1,n},t_{2,n},t_{3,n})\|_\infty
  =q_nt_{1,n}
  \asymp\frac{M_n^3}{M_{n+1}}.
\end{equation}

\smallskip\noindent\emph{Hyperuniformity of the original interval.}
For
$0<\varepsilon<1/4$, define the sup-norm return set
\begin{equation}\label{eq:cap-codim-one-return-set}
  \mathcal R_\varepsilon
  :=\{q\in\mathbb Z\setminus\{0\}:
       \operatorname{dist}_\infty(q\alpha,\mathbb Z^3)\leq\varepsilon\}.
\end{equation}
Let $(\xi,\eta)\in\Lambda\setminus\{0\}$ satisfy
$\|\xi\|_2<\varepsilon$.  Then $q\neq0$: if $q=0$, the identities
$\xi_j=-p_j$ and $\varepsilon<1/4$ force $p=0$.  Hence
$q\in\mathcal R_\varepsilon$.  Moreover
\[
  |\eta|=|q-\xi_3|
  \geq |q|-|\xi_3|
  \geq |q|-\frac14
  \geq\frac{|q|}{2}.
\]
Thus \eqref{eq:cap-diffraction-formula} and
\eqref{eq:cap-codim-one-window-cancellation} give
\begin{equation}\label{eq:cap-codim-one-hu1-sum}
  \sigma_W(B_\varepsilon^*)
  \ll
  \sum_{q\in\mathcal R_\varepsilon}
    \frac{|\xi_3(q)|^2}{q^2}.
\end{equation}
Fix $n$ and assume
\begin{equation}\label{eq:cap-codim-one-epsilon-scale}
  \frac1{4M_{n+1}}\leq\varepsilon<\frac1{4M_n}.
\end{equation}

\smallskip\noindent\emph{Regime I: $\varepsilon\geq r_n/2$.}
Suppose first that $\varepsilon\geq r_n/2$.  For returns with $|q|\leq A_n$,
Equations~\eqref{eq:cap-codim-one-return-isolation} and
\eqref{eq:cap-codim-one-special-return} give $q=kq_n$ and
$\xi_j(q)=kq_nt_{j,n}$.  The condition $|\xi_1(q)|\leq\varepsilon$ implies
\[
  |k|\leq \frac{\varepsilon}{q_nt_{1,n}}.
\]
For every nonzero such $k$,
\[
  \frac{|\xi_3(kq_n)|^2}{(kq_n)^2}=t_{3,n}^2.
\]
Hence the contribution of $|q|\leq A_n$ to
\eqref{eq:cap-codim-one-hu1-sum} is
\begin{equation}\label{eq:cap-codim-one-small-return-contribution}
  \ll
  \left(1+\frac{\varepsilon}{q_nt_{1,n}}\right)t_{3,n}^2.
\end{equation}
Since $\varepsilon\geq r_n/2=q_nt_{1,n}/2$, division by $\varepsilon^3$
and the tail bounds give
\begin{equation}\label{eq:cap-codim-one-small-return-normalized}
  \frac{1}{\varepsilon^3}
  \left(1+\frac{\varepsilon}{q_nt_{1,n}}\right)t_{3,n}^2
  \ll
  \frac{t_{3,n}^2}{q_n^3t_{1,n}^3}
  \ll\frac1{M_n^9M_{n+1}^3}
  \longrightarrow0.
\end{equation}
If $q,q'\in\mathcal R_\varepsilon$, then
\[
  \operatorname{dist}_\infty((q-q')\alpha,\mathbb Z^3)
  \leq 2\varepsilon<\frac1{2M_n}.
\]
If in addition $0<|q-q'|\leq A_n$, then
\eqref{eq:cap-codim-one-return-isolation} gives
$q-q'\in q_n\mathbb Z$, and hence $|q-q'|\geq q_n$.  Thus distinct elements
of $\mathcal R_\varepsilon\cap\{|q|>A_n\}$ are $q_n$-separated.  Splitting the
positive and negative integers into intervals of length $q_n$ gives
\[
  \sum_{\substack{q\in\mathcal R_\varepsilon\\|q|>A_n}}q^{-2}
  \ll \sum_{r\geq0}(A_n+rq_n)^{-2}
  \ll\frac1{A_nq_n}.
\]
Using $|\xi_3(q)|<\varepsilon$, their contribution to
\eqref{eq:cap-codim-one-hu1-sum}, divided by $\varepsilon^3$, is
\begin{equation}\label{eq:cap-codim-one-large-return-normalized}
  \ll\frac1{\varepsilon A_nq_n}
  \ll\frac1{M_n^5},
\end{equation}
where the last estimate uses
$\varepsilon\geq r_n/2\asymp q_n/M_{n+1}$.

\smallskip\noindent\emph{Regime II: $\varepsilon<r_n/2$.}
Suppose next that $\varepsilon<r_n/2$.  Again $2\varepsilon<1/(2M_n)$.  If
$0<|q|\leq A_n$ is a $2\varepsilon$-return, then
\eqref{eq:cap-codim-one-return-isolation} gives $q=kq_n$, and
\eqref{eq:cap-codim-one-special-return} gives
\[
  \operatorname{dist}_\infty(q\alpha,\mathbb Z^3)
  =|k|r_n\geq r_n>2\varepsilon,
\]
a contradiction.  Hence every $\varepsilon$-return has $|q|>A_n$.
If two distinct $\varepsilon$-returns satisfied $|q-q'|\leq A_n$, their
difference would be a nonzero $2\varepsilon$-return in this forbidden range.
Thus $|q-q'|>A_n$, and
\[
  \sum_{q\in\mathcal R_\varepsilon}q^{-2}
  \ll\sum_{r\geq1}(rA_n)^{-2}
  \ll A_n^{-2}.
\]
and \eqref{eq:cap-codim-one-hu1-sum} gives
\begin{equation}\label{eq:cap-codim-one-second-regime}
  \frac{\sigma_W(B_\varepsilon^*)}{\varepsilon^3}
  \ll\frac1{\varepsilon A_n^2}
  \leq C\frac{M_n^2}{M_{n+1}}
  \longrightarrow0,
\end{equation}
because $\varepsilon\geq1/(4M_{n+1})$.  The two regimes cover every
sufficiently small $\varepsilon$.  Hence
\(
  \sigma_W(B_\varepsilon^*)=o(\varepsilon^3),
\)
and Theorem~\ref{thm:geometric-spectral} gives $\mu_W\in\HU_1$.

\smallskip\noindent\emph{Failure for a derived interval.}
The vector $\gamma_1$ from \eqref{eq:cap-codim-one-primal-vectors} has internal
component $\alpha_1\in(0,1)$, so
\begin{equation}\label{eq:cap-codim-one-derived-window}
  W':=W\cap(W-\alpha_1)
\end{equation}
is an interval of length $1-\alpha_1$.  For every $(\xi,\eta)\in\Lambda$,
\[
  (1-\alpha_1)\eta
  \equiv-\xi_1-(1-\alpha_1)\xi_3\pmod{\mathbb Z},
\]
and therefore
\begin{equation}\label{eq:cap-codim-one-derived-fourier}
  |\widehat{\chi}_{W'}(\eta)|
  =\frac{|\sin\pi(\xi_1+(1-\alpha_1)\xi_3)|}{\pi|\eta|}.
\end{equation}
At $q=q_n=M_n^3$, choose
\begin{equation}\label{eq:cap-codim-one-special-dual-p}
  p_{j,n}:=q_n\alpha_{j,n}\in\mathbb Z.
\end{equation}
Then
\[
  \xi_{j,n}=q_nt_{j,n}>0,
  \qquad
  \eta_n=q_n-\xi_{3,n},
\]
with $0<\xi_{3,n}<1$ for all sufficiently large $n$ and
$\|\xi_n\|_2\to0$.  The first coordinate dominates:
\begin{equation}\label{eq:cap-codim-one-first-coordinate-dominates}
  \|\xi_n\|_2\asymp\xi_{1,n}
  \asymp\frac{M_n^3}{M_{n+1}},
  \qquad
  \xi_{3,n}=o(\xi_{1,n}).
\end{equation}
Put
\[
  u_n:=\xi_{1,n}+(1-\alpha_1)\xi_{3,n}.
\]
By \eqref{eq:cap-codim-one-first-coordinate-dominates},
$u_n/\xi_{1,n}\to1$ and $u_n\to0$, while
$\eta_n/q_n\to1$.  Hence, for all large $n$,
$|\sin(\pi u_n)|\geq 2|u_n|$ and $|\eta_n|\leq2q_n$.  Equation
\eqref{eq:cap-codim-one-derived-fourier} therefore gives
\begin{equation}\label{eq:cap-codim-one-derived-atom-size}
  |\widehat{\chi}_{W'}(\eta_n)|^2
  \gg\frac{\xi_{1,n}^2}{q_n^2}.
\end{equation}
Taking $\varepsilon_n:=2\|\xi_n\|_2$ in
\eqref{eq:cap-diffraction-formula} gives
\begin{equation}\label{eq:cap-codim-one-derived-nonhu}
  \frac{\sigma_{W'}(B_{\varepsilon_n}^*)}{\varepsilon_n^3}
  \gg\frac1{q_n^2\xi_{1,n}}
  \asymp\frac{M_{n+1}}{M_n^9}
  \longrightarrow\infty.
\end{equation}
Hence $\mu_{W'}\notin\HU_1$ by
Theorem~\ref{thm:geometric-spectral}.

\smallskip\noindent\emph{From the derived window to failure of $\HU_2$.}
Finally, let
\begin{equation}\label{eq:cap-codim-one-D1}
  D_1:=-e_1+\alpha_1e_3
\end{equation}
be the physical component of $\gamma_1$.  The set
\(
  \{\gamma_V:\gamma\in\Gamma,\ \gamma_H\in W-W\}
\)
has a positive separation constant.  Under the identification
$\Sh_2(\mathbb R^3)\cong\mathbb R^3/\{D\sim-D\}$,
$D\mapsto[(-D/2,D/2)]$, its image is therefore discrete away from the
diagonal shape.  Choose
$\varphi\in C_c(\Sh_2(\mathbb R^3))$ equal to one at the shape of
$(-D_1/2,D_1/2)$ and supported so narrowly that no other nonzero displacement
class contributes; in particular, $\varphi(0)=0$.

For $\omega=\Gamma+(v,h)$, an accepted pair with displacement $D_1$ is
generated by $\gamma$ and $\gamma+\gamma_1$.  Both points occur exactly when
$\gamma_H+h\in W'$, and their barycentre is
$\gamma_V+v+D_1/2$.  The reversed ordering contributes a second ordered pair
with the same barycentre and the same unordered shape.  Therefore, exactly,
\begin{equation}\label{eq:cap-codim-one-pair-reduction}
  T_2[f,\varphi]
  =2S_{W'}(\tau_{D_1/2}f).
\end{equation}
This is the case $k=j=2$ and multiplicities $(1,1)$ of
\eqref{eq:cap-isolated-derived-window-statistic}.
If $\mu_W\in\HU_2$, applying the defining variance condition to this fixed
$\varphi$ and $f=\chi_{B_R}$ would imply
\[
  \Var\bigl(S_{W'}(\tau_{D_1/2}\chi_{B_R})\bigr)
  =o(\Vol_3(B_R)).
\]
Translation invariance of $\mu_{W'}$ removes the fixed translate of the ball,
so $\mu_{W'}\in\HU_1$, contradicting
\eqref{eq:cap-codim-one-derived-nonhu}.  Thus
$\mu_W\notin\HU_2$.
\end{proof}

\subsection{Ball windows in internal dimensions two and three}
\label{subsec:cap-balls-low-dim}

This subsection proves the positive ball-window result.  The physical
dimension $d=\dim V$ is arbitrary, while the internal space is
$H=\mathbb R^m$ with $m\in\{2,3\}$.  If the regular window
$W\subset H$ is a Euclidean ball, then ordinary hyperuniformity implies
$\HU_k$ for every $k$.

\begin{lemma}[Fourier bounds for intersections of balls]
\label{lem:cap-ball-intersections}
Let \(m\in\{2,3\}\) and let \(W\subset\mathbb R^m\) be a Euclidean ball.

\begin{enumerate}
\item If \(W'\) is a finite intersection of translates of \(W\), then
\begin{equation}\label{eq:cap-ball-intersection-decay}
  |\widehat{\chi}_{W'}(\eta)|
  \leq C_{W'}(1+\|\eta\|)^{-(m+1)/2}
  \qquad(\eta\in\mathbb R^m).
\end{equation}

\item There are \(c_W>0\) and \(R_W>0\) such that
\begin{equation}\label{eq:cap-ball-two-harmonic-lower-bound}
  |\widehat{\chi}_W(\eta)|^2
  +|\widehat{\chi}_W(2\eta)|^2
  \geq c_W\|\eta\|^{-(m+1)}
\end{equation}
whenever \(\|\eta\|\geq R_W\).
\end{enumerate}
\end{lemma}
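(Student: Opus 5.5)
The plan is to handle the two parts separately: part~(2) by the explicit Bessel formula for balls, and part~(1) by a divergence-theorem reduction to oscillatory integrals over the piecewise-smooth boundary of $W'$, followed by stationary phase.

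\emph{Part (2).} Write $W=B_\rho(c)$. Up to a unimodular phase,
\[
  |\widehat\chi_W(\eta)|=\rho^{m/2}\|\eta\|^{-m/2}\,\bigl|J_{m/2}(2\pi\rho\|\eta\|)\bigr| .
\]
The standard asymptotic
\[
  J_\nu(x)=\sqrt{2/(\pi x)}\,\cos\bigl(x-\nu\pi/2-\pi/4\bigr)+O(x^{-3/2})
\]
then gives
\[
  |\widehat\chi_W(\eta)|^2=c\,\|\eta\|^{-(m+1)}\cos^2\theta+O(\|\eta\|^{-(m+2)}),
  \qquad \theta:=2\pi\rho\|\eta\|-\tfrac{(m+1)\pi}{4}.
\]
Replacing $\eta$ by $2\eta$ changes the phase to $2\theta+\tfrac{(m+1)\pi}{4}$. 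It therefore suffices to show that
\[
  g(\theta):=\cos^2\theta+2^{-(m+1)}\cos^2\bigl(2\theta+\tfrac{(m+1)\pi}{4}\bigr)
\]
has a positive minimum. The function $g$ is continuous and $\pi$-periodic, so it is enough to check that it has no zero. If $\cos\theta=0$, then $2\theta\equiv\pi\pmod{2\pi}$, and the second cosine equals $-\cos\tfrac{(m+1)\pi}{4}$. This is $\neq0$ for $m=2$ (value $-\cos\tfrac{3\pi}4$) and for $m=3$ (value $-\cos\pi$). Hence $\min g>0$. The error terms are then absorbed once $\|\eta\|\ge R_W$.

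\emph{Part (1).} $W'$ is a compact convex set. We may assume it has nonempty interior; otherwise it is a null set, or, after regularization as in \eqref{eq:cap-window-regularization}, trivially handled. Since $\widehat\chi_{W'}$ is bounded, only large $\|\eta\|$ matters. By the divergence theorem,
\[
  \widehat\chi_{W'}(\eta)=\frac{1}{-2\pi i\|\eta\|^2}\int_{\partial W'}e^{-2\pi i\eta\cdot x}\,(\eta\cdot n(x))\,dS(x).
\]
The boundary $\partial W'$ is a finite union of spherical caps (pieces of the spheres $\partial(W-\gamma_{r,H})$), each with all principal curvatures equal to $1/\rho$.
\begin{itemize}[label=--]
\item For $m=2$ the caps are circular arcs meeting at finitely many corners. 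Stationary phase on each arc gives $\|\eta\|^{-1/2}$ for the arc integral, hence $\|\eta\|^{-3/2}$ for $\widehat\chi_{W'}$. The endpoint contributions are $O(\|\eta\|^{-1})$ for the arc integral, hence $O(\|\eta\|^{-2})$ for $\widehat\chi_{W'}$, which is better.
\item For $m=3$ one uses a partition of unity on each cap. Interior nondegenerate stationary points give $\|\eta\|^{-1}$ for the surface integral, hence $\|\eta\|^{-2}=\|\eta\|^{-(m+1)/2}$ for $\widehat\chi_{W'}$. Away from stationary points, one integration by parts on the cap produces an edge term: $\|\eta\|^{-1}$ times an oscillatory integral along a circular arc, where two spheres meet. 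That arc integral is $O(1)$ trivially, and this trivial bound already gives $\|\eta\|^{-1}\cdot\|\eta\|^{-1}=\|\eta\|^{-2}$ overall. The vertices, where three spheres meet, contribute even less.
\end{itemize}

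The main obstacle is this $m=3$ edge analysis. One must check that integrating by parts on a cap near its boundary arc can be done uniformly in the direction of $\eta$. The delicate case is when $\eta$ is nearly normal to the cap at the arc, so that a stationary point of the phase sits on, or near, the edge.

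To handle it I would first try a smooth cutoff at scale $\|\eta\|^{-1/2}$ around each stationary point. This isolates a stationary-phase piece of size $\|\eta\|^{-1}$ for the surface integral. On the remainder, the phase gradient is at least $\|\eta\|\operatorname{dist}$ to the stationary point, and repeated integration by parts there yields at most $\|\eta\|^{-1}$ for the surface integral, including the edge contributions.

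This is exactly where the restriction $m\le3$ enters. The edge terms cost only $\|\eta\|^{-2}$ in total, which matches $\|\eta\|^{-(m+1)/2}$ for $m\le 3$ but would be too slow for $m\ge4$. This is consistent with the lens remark after Theorem~\ref{thm:intro-cut-project}.
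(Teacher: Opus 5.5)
Your part (2) is the paper's argument: Bessel asymptotics, the phase $2\theta+(m+1)\pi/4$ at $2\eta$, and the fact that the two cosines have no common zero when $m\in\{2,3\}$. The gap is in part (1) for $m=3$, in exactly the configuration you single out, and the mechanism you propose does not close it.

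Put $\lambda=2\pi\|\eta\|$, $u=\eta/\|\eta\|$ and $\phi(x)=\langle u,x\rangle$ on a face $F\subset\partial B(c_i,R_0)$. Let the stationary point $x_0$ (where $n(x_0)=\pm u$) lie on an edge of $F$, or within $\lambda^{-1/2}$ of one. After your $\lambda^{-1/2}$-cutoff, one integration by parts along $\nabla_T\phi/|\nabla_T\phi|^2$ produces an edge term bounded by $\lambda^{-1}\int_{\partial F}|b|\,|\nabla_T\phi\cdot\nu|\,|\nabla_T\phi|^{-2}\,d\ell$. Your sketch uses only $|\nabla_T\phi|\gtrsim\operatorname{dist}(x,x_0)$ and $|\nabla_T\phi\cdot\nu|\leq|\nabla_T\phi|$. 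With those inputs the term is of order $\lambda^{-1}\int_{\lambda^{-1/2}}^{1}t^{-1}\,dt\asymp\lambda^{-1}\log\lambda$, that is, $\|\eta\|^{-2}\log\|\eta\|$ for $\widehat\chi_{W'}$. Further integrations by parts on the cap act on the interior term and never revisit this first-generation edge term.

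The logarithm is not cosmetic. Theorem~\ref{thm:cap-low-dimensional-balls} compares $|\widehat\chi_{W'}(\eta)|^2$ pointwise with the lower bound \eqref{eq:cap-ball-two-harmonic-lower-bound}, so the exponent $(m+1)/2$ must be attained exactly. Your $m=2$ claim that endpoint contributions are $O(\|\eta\|^{-1})$ has the same non-uniformity when a stationary point approaches a corner. There, however, van der Corput's second-derivative lemma, which the paper uses, fixes it immediately.

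What is missing is a cancellation in the conormal component, not a better lower bound on the gradient. Let $\vartheta$ be the polar angle from $u$ and $\zeta$ the azimuth on $\partial B(c_i,R_0)$. Then $\nabla_T\phi=-\sin\vartheta\,e_\vartheta$, so $\nabla_T\phi/|\nabla_T\phi|^2=-e_\vartheta/\sin\vartheta$ is divergence-free away from the poles. Moreover, along any curve, $|\nabla_T\phi\cdot\nu|\,|\nabla_T\phi|^{-2}\,d\ell=R_0\,|d\zeta|$. The azimuth has total variation at most $2\pi$ along any circle on the sphere, and $\partial F$ lies on at most $N-1$ circles. Hence the edge term is $O(\lambda^{-1})$ uniformly in $u$ and in the position of $x_0$. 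The interior term then involves only derivatives of $\langle u,n\rangle$ and of the cutoff, and is also $O(\lambda^{-1})$.

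The paper encodes exactly this without any localization. It writes each hemisphere as a graph over $u^\perp$, so that $\langle u,n_i\rangle\,dS=R_0^2\,dz$. It then passes to polar coordinates $z=s\theta$ in $u^\perp$, where the angle $\theta$ is your azimuth. Each ray meets the admissible set in at most $2N+1$ intervals, and the radial integral $R_0^2\int y\,e^{-2\pi i rR_0y}\,dy=O(r^{-1})$ is uniform in the endpoints. No stationary-phase analysis is needed. Uniformity in the direction of $\eta$, and in the relative position of stationary points and edges, comes for free.
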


\begin{proof}
\smallskip\noindent\emph{Boundary decomposition.}
Write
\begin{equation}\label{eq:cap-equal-ball-intersection}
  W'=\bigcap_{j=1}^N B(c_j,R_0),
\end{equation}
where $R_0$ is the radius of $W$.  After discarding repetitions, we may and
shall assume that the centers $c_1,\ldots,c_N$ are distinct.  If
$\Vol_m(W')=0$, the first assertion is immediate.  Otherwise $W'$ is a
compact convex body with Lipschitz boundary.  Since distinct spheres of radius
$R_0$ intersect in sets of $(m-1)$-dimensional surface measure zero, up to such
a null set the boundary of $W'$ is the disjoint union of the spherical faces
\begin{equation}\label{eq:cap-spherical-faces}
  F_i:=\partial B(c_i,R_0)\cap\bigcap_{j\neq i}B(c_j,R_0),
  \qquad 1\leq i\leq N.
\end{equation}
On $\partial B(c_i,R_0)$, every remaining ball condition is a linear
half-space condition, since
\begin{equation}\label{eq:cap-ball-halfspace-reduction}
  \|v-c_j\|^2\leq\|v-c_i\|^2
  \quad\Longleftrightarrow\quad
  2\langle c_i-c_j,v\rangle
  \leq \|c_i\|^2-\|c_j\|^2.
\end{equation}

Let $\eta=ru$ with $r>0$ and $\|u\|=1$.  The divergence theorem gives
\begin{equation}\label{eq:cap-divergence-fourier}
  \widehat{\chi}_{W'}(ru)
  =-\frac{1}{2\pi i r}
    \sum_{i=1}^N\int_{F_i}
      \langle u,n_i(v)\rangle
      e^{-2\pi i r\langle u,v\rangle}\,dS(v),
\end{equation}
where $n_i(v)=(v-c_i)/R_0$ on $F_i$.

\smallskip\noindent\emph{Dimension two.}
Suppose first that $m=2$.  Choose a unit vector $e\perp u$ and split each
circle into the two semicircles on which
$\langle u,n_i\rangle$ has fixed sign.  On the positive semicircle write
\begin{equation}\label{eq:cap-circle-direction-coordinates}
  v=c_i+R_0\bigl(se+\sqrt{1-s^2}\,u\bigr),
  \qquad -1<s<1.
\end{equation}
Then
\begin{equation}\label{eq:cap-circle-amplitude-cancellation}
  dS=R_0(1-s^2)^{-1/2}\,ds,
  \qquad
  \langle u,n_i(v)\rangle=\sqrt{1-s^2},
\end{equation}
so their product is $R_0\,ds$.  By
\eqref{eq:cap-ball-halfspace-reduction}, after writing $s=\sin t$ each
boundary equation has the form $A\sin t+B\cos t=C$.  Unless this
equation is identically satisfied (in which case the constraint creates no
endpoint), it has at most two solutions on the semicircle.  The admissible
values of $s$ therefore form a union of at most $2N+1$ intervals.  On every such interval the remaining
oscillatory integral is, up to a constant phase,
\begin{equation}\label{eq:cap-circle-oscillatory-integral}
  R_0\int_a^b e^{-2\pi i rR_0\sqrt{1-s^2}}\,ds.
\end{equation}
The phase $\phi(s)=\sqrt{1-s^2}$ satisfies
$|\phi''(s)|=(1-s^2)^{-3/2}\geq1$.  Van der Corput's second-derivative
estimate therefore bounds \eqref{eq:cap-circle-oscillatory-integral} by
$C_{R_0}r^{-1/2}$, uniformly in $a,b$ and $u$.  If an endpoint is
$\pm1$, apply the estimate first on truncated subintervals and pass to the
limit; the same constant is retained.  The negative semicircle is
identical.  Hence
\begin{equation}\label{eq:cap-two-dimensional-face-bound}
  |\widehat\chi_{W'}(ru)|\leq C_{W'}r^{-3/2}
  \qquad(m=2,\ r\geq1).
\end{equation}

\smallskip\noindent\emph{Dimension three.}
Suppose now that $m=3$.  Parametrize the positive hemisphere of
$\partial B(c_i,R_0)$ by
\begin{equation}\label{eq:cap-sphere-direction-coordinates}
  v=c_i+R_0\bigl(z+\sqrt{1-\|z\|^2}\,u\bigr),
  \qquad z\in u^\perp,\quad \|z\|<1.
\end{equation}
Here
\begin{equation}\label{eq:cap-sphere-amplitude-cancellation}
  dS=R_0^2(1-\|z\|^2)^{-1/2}\,dz,
  \qquad
  \langle u,n_i(v)\rangle=\sqrt{1-\|z\|^2},
\end{equation}
so their product is $R_0^2\,dz$.  Write $z=s\theta$ with
$\theta\in S^1\subset u^\perp$ and $0<s<1$.  For fixed $\theta$, writing $s=\sin t$ again turns every boundary equation
from \eqref{eq:cap-ball-halfspace-reduction} into
$A\sin t+B\cos t=C$.  An identically satisfied equation creates no radial
endpoint; every other one has at most two solutions.  Hence the admissible
radial set is a union of at most $2N+1$ intervals.  On each such interval the radial
integral is, up to a constant phase,
\begin{equation}\label{eq:cap-sphere-radial-integral}
  R_0^2\int_a^b e^{-2\pi i rR_0\sqrt{1-s^2}}s\,ds
  =R_0^2\int_{\sqrt{1-b^2}}^{\sqrt{1-a^2}}
      y e^{-2\pi i rR_0y}\,dy.
\end{equation}
For $c=2\pi rR_0$,
\[
  \int_A^B y e^{-icy}\,dy
  =\left[-\frac{y e^{-icy}}{ic}\right]_A^B
   +\frac1{ic}\int_A^B e^{-icy}\,dy,
\]
so the radial integral is $O_{R_0}(r^{-1})$, uniformly in $a,b$, $\theta$ and
$u$.  Integrating over $\theta$ and treating the negative hemisphere in the
same way gives
\begin{equation}\label{eq:cap-three-dimensional-face-bound}
  |\widehat\chi_{W'}(ru)|\leq C_{W'}r^{-2}
  \qquad(m=3,\ r\geq1).
\end{equation}
Together with the trivial bound
$|\widehat\chi_{W'}|\leq\Vol_m(W')$, this proves
\eqref{eq:cap-ball-intersection-decay}.  The preceding estimates are uniform
in the positions of the centres: sphere intersections have surface measure
zero, and the one-dimensional endpoint count depends only on $N$, while the
oscillatory-integral bounds depend only on $R_0$.  Thus one may take
$C_{W'}=C(N,R_0)$.

\smallskip\noindent\emph{Two-frequency lower bound for the original ball.}
For the second assertion, let $R_0$ again denote the radius of $W$.
Translation of $W$ changes its Fourier transform only by a unimodular phase,
so we may assume that $W$ is centred at the origin.  The Bessel formula and
its large-argument asymptotic then give, with $r=\|\eta\|$,
\begin{equation}\label{eq:cap-ball-bessel-asymptotic}
  \widehat{\chi}_W(\eta)
  =c_m r^{-(m+1)/2}
   \left(
     \cos\left(2\pi R_0r-\frac{\pi(m+1)}4\right)+O(r^{-1})
   \right),
\end{equation}
where $c_m\neq0$.  If
$t=2\pi R_0r-\pi(m+1)/4$, the two leading oscillatory factors at
$\eta$ and $2\eta$ are
\begin{equation}\label{eq:cap-ball-two-phases}
  \cos t,
  \qquad
  \cos\left(2t+\frac{\pi(m+1)}4\right).
\end{equation}
For $m=2,3$ they have no common zero.  Indeed, if
$\cos t=0$, then $t\equiv\pi/2\pmod\pi$; the second factor equals
$\cos(7\pi/4)$ for $m=2$ and $\cos(2\pi)$ for $m=3$, up to its period, and is
nonzero.  After multiplying by $r^{m+1}$, the squared sum in
\eqref{eq:cap-ball-two-harmonic-lower-bound} has leading term
\[
  |c_m|^2\left(\cos^2 t
    +2^{-(m+1)}\cos^2\left(2t+\frac{\pi(m+1)}4\right)\right),
\]
whose minimum on $\mathbb R/\pi\mathbb Z$ is positive.  The $O(r^{-1})$
remainder in \eqref{eq:cap-ball-bessel-asymptotic} is uniform in direction,
so \eqref{eq:cap-ball-two-harmonic-lower-bound} follows for sufficiently
large $r$.  The same argument fails for $m=1$: when $\cos t=0$, the second
factor in \eqref{eq:cap-ball-two-phases} also vanishes because the phase shift
is $\pi/2$.  This is consistent with the interval-window separation in
Theorem~\ref{thm:cap-codim-one-separation}.
\end{proof}

\begin{theorem}[Ball windows in internal dimensions two and three]
\label{thm:cap-low-dimensional-balls}
Let \((V,\mathbb R^m,\Gamma)\) be a Euclidean cut-and-project scheme with
\(m\in\{2,3\}\), and let \(W\subset\mathbb R^m\) be a Euclidean ball.
Then, for every \(k\geq1\),
\begin{equation}\label{eq:cap-low-dimensional-collapse}
  \mu_W\in\HU_k
  \quad\Longleftrightarrow\quad
  \mu_W\in\HU_1.
\end{equation}
\end{theorem}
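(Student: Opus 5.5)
The direction $\HU_k\Rightarrow\HU_1$ is Corollary~\ref{cor:cap-monotonicity}. For the converse I would use Proposition~\ref{prop:cap-pattern-reduction}(ii): it suffices to show that if $\mu_W\in\HU_1$, then $\mu_{W'}\in\HU_1$ for every derived window $W'=W[\boldsymbol\gamma]$ with nonempty interior. Such a $W'$ is a finite intersection of translates of the ball $W$. By \eqref{eq:cap-window-regularization} and \eqref{eq:cap-diffraction-formula}, the first Bartlett spectrum of $\mu_{W'}$ is
\[
  \sigma_{W'}=\operatorname{covol}(\Gamma)^{-2}\sum_{(\xi,\eta)\in\Gamma^\perp\setminus\{0\}}|\widehat\chi_{W'}(\eta)|^2\delta_\xi .
\]
By Theorem~\ref{thm:geometric-spectral}, the task is therefore to show $\sigma_{W'}(B_\varepsilon^*)=o(\varepsilon^d)$ from $\sigma_W(B_\varepsilon^*)=o(\varepsilon^d)$.

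The key step is a pointwise domination of the atoms. Lemma~\ref{lem:cap-ball-intersections}(i) gives
\[
  |\widehat\chi_{W'}(\eta)|^2\leq C\,(1+\|\eta\|)^{-(m+1)},
\]
and Lemma~\ref{lem:cap-ball-intersections}(ii) gives, for $\|\eta\|\geq R_W$,
\[
  \|\eta\|^{-(m+1)}\leq c_W^{-1}\bigl(|\widehat\chi_W(\eta)|^2+|\widehat\chi_W(2\eta)|^2\bigr).
\]
Since $\Gamma^\perp$ is a group, $(2\xi,2\eta)\in\Gamma^\perp$, and the atom of $\sigma_W$ at $2\xi$ has weight at least $|\widehat\chi_W(2\eta)|^2$. (The injectivity of the physical projection of $\Gamma^\perp$ on the relevant points, which follows from density of $\pi_H(\Gamma)$, lets me read atoms at $2\xi$ as individual terms; overlaps only help, since all weights are nonnegative.) Summing over dual points with $\|\xi\|\leq\varepsilon$ and $\|\eta\|\geq R_W$, I get
\[
  \sigma_{W'}(B_\varepsilon^*)\ll\sigma_W(B_\varepsilon^*)+\sigma_W(B_{2\varepsilon}^*)+(\text{low-}\eta\text{ part}),
\]
and the first two terms are $o(\varepsilon^d)$.

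The remaining issue, which I expect to be the main obstacle, is the finitely-many-$\eta$ region $\|\eta\|<R_W$. For $(\xi,\eta)\in\Gamma^\perp\setminus\{0\}$, the points with $\|\eta\|<R_W$ and $\|\xi\|\leq1$ form a finite set, because $\Gamma^\perp$ is discrete. None of them has $\xi=0$ and $\eta\neq0$ (density of $\pi_H(\Gamma)$), and none is the origin. So for $\varepsilon$ small enough this region contains no atoms in $B_\varepsilon^*$ and contributes nothing. One should double-check that the uniform constant in part (i) is harmless; it is, since only one fixed $W'$ is involved at a time.

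Combining these, $\sigma_{W'}(B_\varepsilon^*)=o(\varepsilon^d)$ for every derived window with nonempty interior, so $\mu_{W'}\in\HU_1$ by Theorem~\ref{thm:geometric-spectral}. Proposition~\ref{prop:cap-pattern-reduction}(ii) then gives $\mu_W\in\HU_k$ for every $k$.
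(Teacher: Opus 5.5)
Your proposal is correct and follows the paper's proof essentially step for step. The shared steps are: the pointwise domination $|\widehat\chi_{W'}(\eta)|^2\leq C_{W'}\bigl(|\widehat\chi_W(\eta)|^2+|\widehat\chi_W(2\eta)|^2\bigr)$ for large $\|\eta\|$ from Lemma~\ref{lem:cap-ball-intersections}; the exclusion, for small $\varepsilon$, of dual points with bounded internal frequency via discreteness of $\Gamma^\perp$ and $\Gamma^\perp\cap(\{0\}\times H^*)=\{0\}$; the injective doubling map to get $\sigma_{W'}(B_\varepsilon^*)\ll\sigma_W(B_\varepsilon^*)+\sigma_W(B_{2\varepsilon}^*)$; and finally Proposition~\ref{prop:cap-pattern-reduction}(ii), with Corollary~\ref{cor:cap-monotonicity} for the converse direction.
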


\begin{proof}
Assume \(\mu_W\in\HU_1\), and let \(W'\) be a finite intersection of translates of \(W\) with nonempty interior arising in
Proposition~\ref{prop:cap-pattern-reduction}.  By
Lemma~\ref{lem:cap-ball-intersections}, for all sufficiently large
\(\|\eta\|\),
\begin{equation}\label{eq:cap-derived-ball-fourier-comparison}
  |\widehat{\chi}_{W'}(\eta)|^2
  \leq C_{W'}\left(
    |\widehat{\chi}_W(\eta)|^2
    +|\widehat{\chi}_W(2\eta)|^2
  \right).
\end{equation}
There is \(\varepsilon_0>0\) such that every nonzero
\((\xi,\eta)\in\Gamma^\perp\) with \(\|\xi\|<\varepsilon_0\) satisfies
\(\|\eta\|\geq R_W\).  Otherwise a sequence of nonzero lattice points
would have \(\xi\to0\) and bounded \(\eta\), contradicting discreteness of
\(\Gamma^\perp\) and
\(\Gamma^\perp\cap(\{0\}\times H^*)=\{0\}\).

The projection \((\xi,\eta)\mapsto\xi\) is injective on \(\Gamma^\perp\):
if two dual points have the same physical component, their difference lies in
\(\Gamma^\perp\cap(\{0\}\times H^*)=\{0\}\).  Thus, for
\(0<\varepsilon<\varepsilon_0\),
\begin{align*}
 \sigma_{W'}(B_\varepsilon^*)
 &\leq \frac{C_{W'}}{\operatorname{covol}(\Gamma)^2}
   \sum_{\substack{(\xi,\eta)\in\Gamma^\perp\setminus\{0\}\\
                    \|\xi\|<\varepsilon}}
   \left(|\widehat\chi_W(\eta)|^2
        +|\widehat\chi_W(2\eta)|^2\right)\\
 &\leq C_{W'}\left(
      \sigma_W(B_\varepsilon^*)+\sigma_W(B_{2\varepsilon}^*)
    \right).
\end{align*}
Indeed, the first sum is exactly the contribution to
\(\sigma_W(B_\varepsilon^*)\), while the map
\((\xi,\eta)\mapsto(2\xi,2\eta)\) sends the second sum injectively into the
terms defining \(\sigma_W(B_{2\varepsilon}^*)\).  Hence
\begin{equation}\label{eq:cap-derived-ball-spectrum-comparison}
  \sigma_{W'}(B_\varepsilon^*)
  \leq C_{W'}\bigl(
    \sigma_W(B_\varepsilon^*)+\sigma_W(B_{2\varepsilon}^*)
  \bigr).
\end{equation}
Spectral hyperuniformity of \(\mu_W\) gives
\(\sigma_W(B_\varepsilon^*)=o(\varepsilon^d)\), and hence the same estimate
for \(\sigma_{W'}\).  Proposition~\ref{prop:cap-pattern-reduction}(ii),
\textup{(b)} $\Rightarrow$ \textup{(a)}, gives $\mu_W\in\HU_k$.  The converse is
Corollary~\ref{cor:cap-monotonicity}.
\end{proof}

\subsection{Ball windows in internal dimensions four and higher}
\label{subsec:cap-critical-dimension}

This subsection proves the complementary negative result.  Here both the
physical and internal spaces are $\mathbb R^m$.  For every $m\geq4$ we
construct a process with the unit ball $W=B_1^m$ as internal window that
belongs to $\HU_1\setminus\HU_2$.  The failure of $\HU_2$ is detected by
an intersection of two translated copies of $W$.  For $0<s<2$ set
\begin{equation}\label{eq:cap-lens-definition}
  W_{m,s}:=W\cap(W-se_1).
\end{equation}
Along the $e_1$-axis, the lens has decay $r^{-2}$, whereas the ball has
decay $r^{-(m+1)/2}$; the lens is therefore slower exactly when $m\geq4$.

\begin{lemma}[Intersection of two translated balls]
\label{lem:cap-lens-fourier}
For \(m\geq4\) and \(0<s<2\), there is \(c_{m,s}>0\) such that
\begin{equation}\label{eq:cap-lens-axial-asymptotic}
  \lim_{r\to\infty} r^2
  |\widehat{\chi}_{W_{m,s}}(re_1)|=c_{m,s}.
\end{equation}
In particular,
\begin{equation}\label{eq:cap-lens-intensity-asymptotic}
  |\widehat{\chi}_{W_{m,s}}(re_1)|^2\asymp r^{-4}.
\end{equation}
\end{lemma}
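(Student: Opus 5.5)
Since $\eta=re_1$ is parallel to the displacement $se_1$, I would compute the axial Fourier transform by slicing. Write
\[
  \widehat\chi_{W_{m,s}}(re_1)=\int_{\mathbb R}A(x)e^{-2\pi i r x}\,dx,
\]
where $A(x):=\Vol_{m-1}\{y\in\mathbb R^{m-1}:(x,y)\in W_{m,s}\}$ is the area of the slice at height $x_1=x$. The lens consists of the points with $\|v\|\leq1$ and $\|v+se_1\|\leq1$. Its slice is therefore the $(m-1)$-ball of radius $\rho(x)=\sqrt{1-\max(x^2,(x+s)^2)}$, for $x\in[-1,1-s]$. Hence
\[
  A(x)=\omega_{m-1}\rho(x)^{m-1},
\]
which is piecewise of the form $\omega_{m-1}(1-(x+s)^2)^{(m-1)/2}$ on $[-1,-s/2]$ and $\omega_{m-1}(1-x^2)^{(m-1)/2}$ on $[-s/2,1-s]$. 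The whole problem thus reduces to the decay of the one-dimensional Fourier transform of a compactly supported profile, and this is governed by the profile's singularities.

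\textbf{Step 1: locate and classify the singularities of $A$.}
\begin{itemize}
\item At the endpoints $x=-1$ and $x=1-s$, $A$ vanishes like $|x-x_0|^{(m-1)/2}$. This is the tip of a spherical cap, the same behaviour as for the full ball.
\item At the ridge $x_*=-s/2$, where the two spheres meet, $A$ is continuous with $A(x_*)=\omega_{m-1}(1-s^2/4)^{(m-1)/2}>0$. It is smooth on each side but has a jump in its first derivative. The one-sided derivatives are $-\omega_{m-1}(m-1)(1-s^2/4)^{(m-3)/2}\cdot(\pm s/2)$, so the jump is
\[
  J=A'(x_*^+)-A'(x_*^-)=-\omega_{m-1}(m-1)\,s\,(1-s^2/4)^{(m-3)/2}\neq0,
\]
since $0<s<2$.
\end{itemize}

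\textbf{Step 2: localize each singularity.} Use a smooth partition of unity $\chi_{-1}+\chi_*+\chi_{1-s}+\chi_{\mathrm{int}}=1$ on the support, isolating the three singular points. Each piece is handled separately.
\begin{itemize}
\item The interior piece is $C^\infty_c$, so its contribution is $O(r^{-N})$.
\item For an endpoint piece, $A\chi_{x_0}=|x-x_0|_+^{(m-1)/2}g(x)$ with $g$ smooth and $g(x_0)\neq0$. The standard asymptotics for Fourier transforms of $x_+^{\lambda}g$ (Erdélyi's lemma, or one integration by parts combined with the homogeneous-distribution computation) give size $\asymp r^{-(m+1)/2}$.
\item For the ridge piece, integrate by parts twice. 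The boundary terms vanish because the piece is compactly supported and continuous. One obtains
\[
  \int A\chi_* e^{-2\pi irx}\,dx=\frac{J\,e^{-2\pi i r x_*}}{(2\pi i r)^2}+\frac{1}{(2\pi i r)^2}\int (A\chi_*)''_{\mathrm{pw}}\,e^{-2\pi irx}\,dx .
\]
Here $(A\chi_*)''_{\mathrm{pw}}$ is the piecewise second derivative. It is piecewise smooth with a single jump at $x_*$, so by Riemann--Lebesgue (or one more integration by parts) the last integral is $O(r^{-1})$.
\end{itemize}

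\textbf{Step 3: compare the contributions.} Combining the three pieces,
\[
  \widehat\chi_{W_{m,s}}(re_1)=\frac{J}{(2\pi i)^2r^2}e^{-2\pi irx_*}+O(r^{-3})+O(r^{-(m+1)/2}).
\]
For $m\geq4$ we have $(m+1)/2\geq5/2>2$, so the ridge term dominates. Therefore $r^2|\widehat\chi_{W_{m,s}}(re_1)|\to c_{m,s}:=|J|/(4\pi^2)>0$, which gives \eqref{eq:cap-lens-axial-asymptotic}. Squaring gives \eqref{eq:cap-lens-intensity-asymptotic}, with the implied constants valid for $r$ large; for bounded $r\geq1$ it holds after adjusting the constants, or one simply reads the statement as asymptotic.

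\textbf{Main obstacle.} The step needing the most care is the endpoint estimate: we must check that it is genuinely $O(r^{-(m+1)/2})$ and not a slower rate. Since $(m-1)/2$ may be a half-integer, I would use the explicit formula
\[
  \widehat{x_+^\lambda}(r)\sim\Gamma(\lambda+1)(2\pi i r)^{-\lambda-1}
\]
together with a Taylor expansion of $g$, rather than repeated integration by parts. For a cruder but sufficient route, note that $m\geq4$ gives $(m-1)/2\geq3/2$, so the endpoint piece has a first derivative that is continuous and vanishes at the endpoint. Its second derivative is integrable, behaving like $|x-x_0|^{(m-5)/2}$ with exponent at least $-1/2$, and its third derivative behaves like $|x-x_0|^{(m-7)/2}$, still integrable for $m\geq6$. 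For $m=4,5$, the van der Corput-type bound for $\int|x|^{\lambda-2}\cdots$ again yields $o(r^{-2})$. In every case the endpoint contribution is $o(r^{-2})$, which is all the argument needs.
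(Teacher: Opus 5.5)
Your proof is correct and has the same skeleton as the paper's. You slice along $e_1$, reduce to the one-dimensional profile $A$, and read off $c_{m,s}=|J|/(4\pi^2)$ from the first-derivative jump at the ridge; your $J$ is exactly the paper's $J_{m,s}$.

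Where you differ is in how the endpoint singularities are handled.
\begin{itemize}
\item \textbf{Your route.} You localize with a partition of unity and use Erd\'elyi-type asymptotics for $x_+^{(m-1)/2}g(x)$. This costs more, but it gives a quantitative expansion, including the phase:
$\widehat{\chi}_{W_{m,s}}(re_1)=-\frac{J}{4\pi^2r^2}e^{-2\pi i r x_*}+O\bigl(r^{-\min\{3,(m+1)/2\}}\bigr)$.
\item \textbf{The paper's route.} It works globally. For $m\geq4$, $A'$ vanishes at both endpoints and $A''$ is integrable near them. Hence, in the sense of distributions, $D^2A=g+J\delta_{x_*}$ with $g\in L^1(\mathbb R)$. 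A single Fourier transform plus Riemann--Lebesgue then gives $r^2\widehat A(r)\to-J/(4\pi^2)$, with no localization and no endpoint asymptotics.
\end{itemize}

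Your ``cruder route'' is essentially the paper's argument, and it already finishes the job on its own. Once the first derivative of the endpoint piece vanishes at the endpoint and its second derivative is integrable, Riemann--Lebesgue gives $o(r^{-2})$ directly. So the third-derivative and van der Corput remarks for $m=4,5$ are unnecessary.

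One slip to fix: your two piecewise formulas are swapped. On $[-1,-s/2]$ the binding constraint is $\|v\|\leq1$, so $A(x)=\omega_{m-1}(1-x^2)^{(m-1)/2}$ there. On $[-s/2,1-s]$ it is $A(x)=\omega_{m-1}(1-(x+s)^2)^{(m-1)/2}$. As you wrote it, $A(-1)\neq0$. However, your claimed endpoint behaviour and your value of $J$ are those of the corrected formula, so nothing downstream changes.
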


\begin{proof}
Set \(\widetilde W_{m,s}:=W_{m,s}+se_1/2\).  Translation does not change
\(|\widehat{\chi}_{W_{m,s}}|\).  For \(x\in\mathbb R\), define
\begin{equation}\label{eq:cap-lens-section-function}
  a_{m,s}(x)
  :=\Vol_{m-1}\bigl(
      \{y\in e_1^\perp:xe_1+y\in\widetilde W_{m,s}\}
    \bigr)
  =\kappa_{m-1}
    \bigl[1-(|x|+s/2)^2\bigr]_+^{(m-1)/2},
\end{equation}
where \(\kappa_{m-1}:=\Vol_{m-1}(B_1^{m-1})\).  Fubini's theorem gives
\begin{equation}\label{eq:cap-lens-axial-slice-fourier}
  \widehat{\chi}_{\widetilde W_{m,s}}(re_1)=\widehat a_{m,s}(r).
\end{equation}
The one-sided derivatives at the origin satisfy
\begin{equation}\label{eq:cap-lens-derivative-jump}
  a_{m,s}'(0+)-a_{m,s}'(0-)
  =-\kappa_{m-1}(m-1)s
    \left(1-\frac{s^2}{4}\right)^{(m-3)/2}
  =:J_{m,s},
\end{equation}
where \(J_{m,s}\neq0\).  Let $x_0:=1-s/2$ be the positive endpoint of the support.  As
$x\uparrow x_0$,
\[
  a_{m,s}(x)\asymp(x_0-x)^{(m-1)/2},
  \qquad
  a_{m,s}''(x)\asymp(x_0-x)^{(m-5)/2}.
\]
Thus $a_{m,s}''$ is integrable at the endpoints and $a_{m,s}'$ vanishes there
exactly for $m\geq4$.  For $m=3$ the first derivative has nonzero endpoint
limits, so the distributional second derivative has additional endpoint
atoms.  For \(m\geq4\), in the sense of distributions,
\begin{equation}\label{eq:cap-lens-second-derivative}
  D^2a_{m,s}=g_{m,s}+J_{m,s}\delta_0
  \qquad\text{with }g_{m,s}\in L^1(\mathbb R).
\end{equation}
Taking Fourier transforms gives
\begin{equation}\label{eq:cap-lens-fourier-second-derivative}
  -4\pi^2r^2\widehat a_{m,s}(r)
  =\widehat g_{m,s}(r)+J_{m,s}.
\end{equation}
The Riemann--Lebesgue lemma gives
$\widehat g_{m,s}(r)\to0$, and therefore
\[
  r^2\widehat a_{m,s}(r)
  \longrightarrow -\frac{J_{m,s}}{4\pi^2}.
\]
Since $J_{m,s}\neq0$ and translation does not change the modulus of the
Fourier transform, \eqref{eq:cap-lens-axial-asymptotic} follows with
$c_{m,s}=|J_{m,s}|/(4\pi^2)$.
\end{proof}

Let
\begin{equation}\label{eq:cap-golden-ratios}
  \omega:=\frac{1+\sqrt5}{2},
  \qquad
  \omega':=\frac{1-\sqrt5}{2}.
\end{equation}
For \(m\geq1\), define
\(\Gamma_m^\perp<\mathbb R^m\times\mathbb R^m\) by
\begin{equation}\label{eq:cap-fibonacci-dual-lattice}
  \Gamma_m^\perp
  :=\left\{(\xi,\eta):
    \begin{array}{l}
      \xi_j=p_j+\omega q_j,\\
      \eta_j=p_j+\omega' q_j
    \end{array}
    \text{ for }p,q\in\mathbb Z^m
  \right\}.
\end{equation}
Define
\begin{equation}\label{eq:cap-fibonacci-primal-lattice}
  \Gamma_m
  =\left\{(v,h):
    \begin{array}{l}
      v_j=(-\omega' r_j+s_j)/\sqrt5,\\
      h_j=(\omega r_j-s_j)/\sqrt5
    \end{array}
    \text{ for }r,s\in\mathbb Z^m
  \right\}.
\end{equation}
For one coordinate, the pairing of
$(p+\omega q,p+\omega' q)$ with
$((-\omega' r+s)/\sqrt5,(\omega r-s)/\sqrt5)$ is
$p r+q s\in\mathbb Z$.  Hence the lattice in
\eqref{eq:cap-fibonacci-dual-lattice} is contained in the annihilator of
\eqref{eq:cap-fibonacci-primal-lattice}.  Their one-dimensional covolumes are
$\sqrt5$ and $1/\sqrt5$, respectively; in $m$ coordinates the product of the
two covolumes is one.  Since the annihilator of a lattice has reciprocal
covolume, the inclusion has index one.  Thus
\eqref{eq:cap-fibonacci-dual-lattice} is exactly the annihilator of
\eqref{eq:cap-fibonacci-primal-lattice}.  The physical projection of
$\Gamma_m$ is injective because $\omega'$ is irrational, and the internal
projection is dense because $\mathbb Z+\omega\mathbb Z$ is dense in
$\mathbb R$.  Thus
$(\mathbb R^m,\mathbb R^m,\Gamma_m)$ is a Euclidean cut-and-project scheme.

\begin{theorem}[Ball windows in internal dimensions four and higher]
\label{thm:cap-four-dimensional-obstruction}
For every \(m\geq4\), let \(V=H=\mathbb R^m\), let \(\Gamma_m\) be the
lattice in \eqref{eq:cap-fibonacci-primal-lattice}, and let \(W=B_1^m\).
Then
\begin{equation}\label{eq:cap-four-dimensional-separation}
  \mu_W\in\HU_1\setminus\HU_2.
\end{equation}
\end{theorem}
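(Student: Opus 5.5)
The plan is to treat the two assertions separately, working one coordinate at a time in the product structure of $\Gamma_m^\perp$. By \eqref{eq:cap-fibonacci-dual-lattice}, a dual point $(\xi,\eta)$ with integer data $p_j,q_j$ has coordinate pairs
\[
(\xi_j,\eta_j)=(p_j+\omega q_j,\;p_j+\omega' q_j),
\]
and these are exactly the Galois-conjugate pairs of elements of $\mathbb Z[\omega]$. Hence $\xi_j\eta_j=N(p_j+\omega q_j)\in\mathbb Z$. Consequently each coordinate either vanishes ($p_j=q_j=0$) or satisfies
\[
|\xi_j|\,|\eta_j|\geq1 .
\]
I will use this norm inequality throughout. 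It replaces the $M_n$-dependent Diophantine analysis of Theorem~\ref{thm:cap-codim-one-separation}, because the golden ratio is badly approximable.

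\emph{$\HU_1$.} By \eqref{eq:cap-diffraction-formula}, Lemma-type decay for a ball, namely $|\widehat\chi_W(\eta)|^2\ll(1+\|\eta\|)^{-(m+1)}$ as in \eqref{eq:ball-fourier-decay}, and Theorem~\ref{thm:geometric-spectral}, it suffices to show
\[
\sum_{\substack{(\xi,\eta)\in\Gamma_m^\perp\setminus\{0\}\\ \|\xi\|<\varepsilon}}\|\eta\|^{-(m+1)}=o(\varepsilon^m).
\]
First, any nonzero dual point with $\|\xi\|<\varepsilon$ has a nonzero coordinate, so $\|\eta\|\geq 1/\varepsilon$. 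Next I count dual points with $\|\xi\|<\varepsilon$ and $T\leq\|\eta\|<2T$, where $T\geq1/\varepsilon$. Each coordinate lies in the box $[-\varepsilon,\varepsilon]\times[-2T,2T]$ of the one-dimensional Fibonacci lattice in $\mathbb R^2$, which has covolume $\sqrt5$. The norm inequality prevents short vectors inside such thin boxes, so the count per coordinate is $\ll 1+\varepsilon T\ll\varepsilon T$. Any surplus logarithmic factor coming from units would also be harmless. The dyadic shell therefore contributes $\ll(\varepsilon T)^mT^{-(m+1)}=\varepsilon^mT^{-1}$. Summing over $T=2^i/\varepsilon$ gives $O(\varepsilon^{m+1})=o(\varepsilon^m)$, so $\mu_W\in\HU_1$. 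The step I expect to be the main obstacle is making this lattice-point count uniform, including the logarithmic factors. I would handle it by grouping the points via multiplication by units $\omega^n$, which rescales $(\xi_j,\eta_j)$ hyperbolically.

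\emph{Failure of $\HU_2$.} Take $r=e_1$ and $s=e_1$ in \eqref{eq:cap-fibonacci-primal-lattice}. This gives a lattice vector $\gamma$ with internal component $\gamma_H=s_0e_1$, where $s_0=(\omega-1)/\sqrt5\in(0,2)$. The derived window $W[(0,\gamma)]=W\cap(W-s_0e_1)=W_{m,s_0}$ is a lens with nonempty interior. By Proposition~\ref{prop:cap-pattern-reduction}(ii), applied with $k=2$ as in \eqref{eq:cap-codim-one-pair-reduction}, it suffices to show $\mu_{W_{m,s_0}}\notin\HU_1$.

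To do this, take the axial unit points with $p_1+\omega q_1=\omega^{-n}$ and all other coordinates zero. Then $\xi^{(n)}=(\omega^{-n},0,\ldots,0)$ and $\eta^{(n)}=(\pm\omega^{n},0,\ldots,0)$, both along $e_1$. By Lemma~\ref{lem:cap-lens-fourier}, the atom at $\xi^{(n)}$ in \eqref{eq:cap-diffraction-formula} has mass
\[
\asymp\omega^{-4n}=\varepsilon_n^4,\qquad \varepsilon_n:=\omega^{-n}.
\]
Hence
\[
\sigma_{W_{m,s_0}}(B^*_{2\varepsilon_n})/\varepsilon_n^m\gg\varepsilon_n^{4-m}.
\]
For $m=4$ this is bounded below by a positive constant, and for $m>4$ it tends to infinity. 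In either case the ratio does not tend to zero. Theorem~\ref{thm:geometric-spectral} then gives $\mu_{W_{m,s_0}}\notin\HU_1$, and therefore $\mu_W\notin\HU_2$.
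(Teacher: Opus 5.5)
Your proposal is correct. The failure of $\HU_2$ is essentially the paper's argument. The paper takes $\gamma_0=(se_1,-se_1)$ with $s=1/\sqrt5$ and writes down the explicit identity $T_2[f,\varphi]=2S_{W_{m,s}+se_1}(\tau_{se_1/2}f)$, which gives $\sigma^{(2)}_{\varphi,\varphi}=4\sigma_{W_{m,s}}$. You instead choose $r=s=e_1$, so the derived window is exactly $W_{m,s_0}$ with no translate, and invoke Proposition~\ref{prop:cap-pattern-reduction}(ii), which encodes the same identity. Your unit points $\omega^{-n}$ are the paper's Fibonacci points up to sign.

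The genuine difference is in the $\HU_1$ half. The paper checks that the ball is Fourier smooth with exponent $1$ and that $\Gamma_m^\perp$ is $1$-repellent in the sup norm, and then cites \cite[Theorem~5.1]{BH24} for $\sigma_W(B_\varepsilon^*)\ll\varepsilon^{m+1}$. You prove the same bound directly by dyadic shells, using the coordinatewise product structure of $\Gamma_m^\perp$.

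Your per-coordinate count needs no grouping by units and produces no logarithm. Every nonzero point of the planar golden lattice $\{(p+\omega q,\,p+\omega' q)\}$ has $|\xi_j\eta_j|\geq1$. Hence any axis-parallel rectangle of area less than one contains at most one lattice point. The box $[-\varepsilon,\varepsilon]\times[-2T,2T]$ is covered by $O(1+\varepsilon T)$ such rectangles, which gives the count.

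Your route is self-contained and makes the extra power of $\varepsilon$ transparent, since each shell gains a factor $T^{-1}$. It does depend on the product structure. The paper's citation is shorter and places the example inside the general repellence framework, which does not need that structure.
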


\begin{proof}
For every coordinate in \eqref{eq:cap-fibonacci-dual-lattice},
\begin{equation}\label{eq:cap-quadratic-norm-identity}
  \xi_j\eta_j
  =p_j^2+p_jq_j-q_j^2\in\mathbb Z.
\end{equation}
If \((\xi,\eta)\neq0\) and \(\|\xi\|<\varepsilon<1\), then some coordinate
pair \((\xi_j,\eta_j)\) is nonzero.  Irrationality of \(\omega\) shows that
\(\xi_j=0\) or \(\eta_j=0\) can occur only when \(p_j=q_j=0\); hence this
nonzero coordinate pair has \(\xi_j\eta_j\in\mathbb Z\setminus\{0\}\).
Equation~\eqref{eq:cap-quadratic-norm-identity} therefore gives
\(|\eta_j|\geq|\xi_j|^{-1}\geq\varepsilon^{-1}\), and thus
\(\|\eta\|\geq\varepsilon^{-1}\).  Equivalently, every nonzero
\((\xi,\eta)\in\Gamma_m^\perp\) with \(\|\xi\|<1\) satisfies
\(\|\eta\|\geq\|\xi\|^{-1}\).  By \cite[Remark~3.5]{BH24}, the Euclidean ball is Fourier smooth with
exponent $\vartheta=1$.  The same coordinate argument gives the repellence
condition in the sup norm used in \cite[Definition~1.8]{BH24}: if
$\|\xi\|_\infty<\varepsilon<1$, choose a nonzero coordinate pair
$(\xi_j,\eta_j)$.  Then $|\xi_j|<\varepsilon$ and
\eqref{eq:cap-quadratic-norm-identity} gives
$|\eta_j|\geq|\xi_j|^{-1}>\varepsilon^{-1}$, so
$\|\eta\|_\infty>\varepsilon^{-1}$.  Thus $\Gamma_m^\perp$ is
$1$-repellent on the right.  Therefore \cite[Theorem~5.1]{BH24}, with
physical and internal dimensions both equal to $m$, gives for all sufficiently
small $\varepsilon$
\begin{equation}\label{eq:cap-higher-ball-hu1-bound}
  \sigma_W(B_\varepsilon^*)\ll\varepsilon^{m+1}.
\end{equation}
Because \(\Vol_m(B_\varepsilon^*)\asymp\varepsilon^m\), this proves
\(\mu_W\in\HU_1\).

Set \(s:=1/\sqrt5\).  Taking \(r_1=0\), \(s_1=1\), and all remaining
coordinates zero in \eqref{eq:cap-fibonacci-primal-lattice} gives
\begin{equation}\label{eq:cap-lens-primal-displacement}
  \gamma_0=(se_1,-se_1)\in\Gamma_m.
\end{equation}
The set
\begin{equation}\label{eq:cap-relevant-difference-set}
  \{\gamma_V:\gamma\in\Gamma_m,\ \gamma_H\in W-W\}
\end{equation}
has a positive separation constant by the estimate at the beginning of
Section~\ref{subsec:cap-pattern-windows}.  Choose
$\varphi\in C_c(\Sh_2(\mathbb R^m))$ equal to one at the shape of
$(0,se_1)$, with $\varphi(0)=0$, and with support containing no other nonzero
admissible displacement class.  A pair with displacement $se_1$ is generated
by $\gamma$ and $\gamma+\gamma_0$.  Since $(\gamma_0)_H=-se_1$, both points
are accepted exactly when
\[
  \gamma_H+h\in W\cap(W+se_1)=W_{m,s}+se_1.
\]
Their barycentre is $\gamma_V+v+se_1/2$, and the reversed ordering gives the
same term.  Therefore
\begin{equation}\label{eq:cap-lens-pair-statistic}
  T_2[f,\varphi]
  =2S_{W_{m,s}+se_1}(\tau_{se_1/2}f).
\end{equation}
By \eqref{eq:cap-window-translation-law},
$\mu_{W_{m,s}+se_1}=\mu_{W_{m,s}}$.  Moreover
$\widehat{\tau_{se_1/2}f}(\xi)
=e^{\pi i\langle\xi,se_1\rangle}\widehat f(\xi)$, so
\eqref{eq:cap-lens-pair-statistic} gives
\[
  \Var(T_2[f,\varphi])
  =4\int_{(\mathbb R^m)^*}|\widehat f(\xi)|^2\,d\sigma_{W_{m,s}}(\xi).
\]
Hence
\begin{equation}\label{eq:cap-lens-pair-spectrum}
  \sigma^{(2)}_{\varphi,\varphi}=4\sigma_{W_{m,s}}.
\end{equation}

Let \((F_n)_{n\geq0}\) be the Fibonacci sequence, \(F_0=0\), \(F_1=1\),
and in the first coordinate of
\eqref{eq:cap-fibonacci-dual-lattice} take
\begin{equation}\label{eq:cap-fibonacci-approximants}
  p_n:=-F_{n+1},
  \qquad
  q_n:=F_n.
\end{equation}
The Fibonacci identities give
\begin{equation}\label{eq:cap-fibonacci-dual-sequence}
  \xi_n=(-1)^{n+1}\omega^{-n}e_1,
  \qquad
  \eta_n=-\omega^n e_1,
  \qquad
  \|\xi_n\|\,\|\eta_n\|=1.
\end{equation}
Thus this sequence attains the reciprocal-scale relation used above and is
aligned with the axial direction in Lemma~\ref{lem:cap-lens-fourier}.
Lemma~\ref{lem:cap-lens-fourier} gives
\begin{equation}\label{eq:cap-lens-atom-asymptotic}
  |\widehat{\chi}_{W_{m,s}}(\eta_n)|^2
  \asymp\omega^{-4n}
  =\|\xi_n\|^4.
\end{equation}
Put \(\varepsilon_n:=2\|\xi_n\|\).  From
\eqref{eq:cap-diffraction-formula},
\begin{equation}\label{eq:cap-higher-normalized-divergence}
  \frac{\sigma^{(2)}_{\varphi,\varphi}(B_{\varepsilon_n}^*)}
       {\Vol_m(B_{\varepsilon_n}^*)}
  \gg \|\xi_n\|^{4-m}.
\end{equation}
For \(m=4\) the right-hand side is bounded below by a positive constant;
for \(m>4\) it tends to infinity.  Hence \(\mu_W\notin\HU_2\) by
Theorem~\ref{thm:geometric-spectral}.
\end{proof}

\section{First-order stealthiness without \texorpdfstring{$\HU_2$}{HU2}}
\label{sec:stealthy-nonhu2}

\subsection{A diffuse model}
\label{subsec:simple-stealthy-example}

\begin{proposition}
\label{prop:beat-frequency-example}
There exists an ergodic $\mathbb R$-invariant probability measure $\mu$ on
$\cM(\mathbb R)$ such that every $p$ in its support is absolutely continuous
with a bounded density bounded away from zero and
\begin{equation}\label{eq:beat-main-conclusion}
  \mu\text{ is $1$-stealthy},
  \qquad
  \mu\notin\HU_2.
\end{equation}
\end{proposition}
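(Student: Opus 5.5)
The plan is a Kronecker-flow ("beat frequency") construction. We choose the density to be a sum of finitely or summably many cosines whose frequencies are bounded away from $0$; this makes the first Bartlett spectrum purely atomic away from the origin. Products of two such cosines at nearby frequencies then produce slow beats in the barycentre variable, and these put pair-spectrum mass arbitrarily close to $0$.

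\emph{Construction.} Choose rationally independent frequencies $\alpha_n,\alpha_n'\in[1,2]$ with $\alpha_n'-\alpha_n=\varepsilon_n>0$ and $\varepsilon_n\downarrow0$ very fast, say $\varepsilon_n\leq n^{-8}$. Choose amplitudes $a_n=n^{-2}$, so that $\sum_n 2a_n<1$. Let $\theta=(\theta_n,\theta_n')_n$ be Haar-uniform on $\mathbb T^{\mathbb N}\times\mathbb T^{\mathbb N}$, and set
\[
  p_\theta(dt)=\Bigl(1+\sum_n a_n\bigl(\cos2\pi(\alpha_nt+\theta_n)+\cos2\pi(\alpha_n't+\theta_n')\bigr)\Bigr)dt .
\]
The density lies in $[1-\sum2a_n,\,1+\sum2a_n]$, so it is bounded and bounded away from zero. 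The map $t\mapsto\theta+t(\alpha_n,\alpha_n')_n$ is a Kronecker flow. It is ergodic for Haar measure by rational independence, via Weyl's criterion on finite subtori. It also makes $\theta\mapsto p_\theta$ equivariant, so $\mu$ is ergodic and $\mathbb R$-invariant.

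\emph{First order.} Here $Sf(p_\theta)-\mathbb E=\sum_n a_n\operatorname{Re}\bigl(\widehat f(-\alpha_n)e^{2\pi i\theta_n}\bigr)+(\text{same with primes})$. The characters $e^{2\pi i\theta_n}$ are orthonormal in $L^2(\text{Haar})$, so the covariance is a sum of atoms. Concretely, $\boldsymbol\sigma^{(1)}=\tfrac14\sum_n a_n^2(\delta_{\pm\alpha_n}+\delta_{\pm\alpha_n'})$, which is supported in $[1,2]\cup[-2,-1]$. This gives $1$-stealthiness.

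\emph{Second order.} Take $\varphi$ with $\widetilde\varphi^{(2)}=u$, where $u\in C_c(\mathbb R)$ is even and $\widehat u(\alpha)\neq0$ for $\alpha\in[1,2]$; for instance take $u$ with very small support and $\widehat u(0)=1$. Write $x=b+r/2$ and $y=b-r/2$, and expand the product of the two densities in $T_2[f,\varphi]=\iint f(b)u(r)p(x)p(y)$ into Fourier monomials in $\theta$. The cross term between the $\alpha_n$- and $\alpha_n'$-cosines contributes
\[
  \tfrac{a_n^2}{4}\Bigl(\widehat u\bigl(-\tfrac{\alpha_n+\alpha_n'}{2}\bigr)\,\widehat f(\varepsilon_n)\,e^{2\pi i(\theta_n'-\theta_n)}+\text{conj.\ and symmetric terms}\Bigr).
\]
Each distinct character of $\theta$ gives an orthogonal component, so $\sigma^{(2)}_{\varphi,\varphi}$ contains an atom at $\pm\varepsilon_n$ of mass at least $c\,a_n^4$, with $c>0$ uniform in $n$. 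All other monomials carry characters different from $e^{2\pi i(\theta_n'-\theta_n)}$; rational independence guarantees this. They therefore only add nonnegative mass and cannot cancel these atoms.

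\emph{Conclusion.} We get $\sigma^{(2)}_{\varphi,\varphi}(B^*_{\varepsilon_n})\geq c\,n^{-8}\geq c\,\varepsilon_n$. Thus the ratio $\sigma^{(2)}_{\varphi,\varphi}(B^*_{\varepsilon_n})/\Vol_1(B^*_{\varepsilon_n})$ does not tend to zero, and Theorem~\ref{thm:geometric-spectral} gives $\mu\notin\HU_2$. Local $L^{4}$-integrability is automatic from the bounded density.

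The main obstacle is the bookkeeping of the orthogonal decomposition. We must make sure the relevant character $e^{2\pi i(\theta_n'-\theta_n)}$ arises only from the intended pair of cosines, so that its coefficient is exactly the displayed one. We also need $\widehat u$ to be bounded below uniformly at the frequencies $(\alpha_n+\alpha_n')/2$. Rational independence (including independence from sums and differences) and the choice of $u$ with small support handle both points.
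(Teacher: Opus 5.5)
Your construction is essentially the paper's. You use a Kronecker-flow density whose frequencies come in pairs inside $[1,2]$, with differences $\varepsilon_n\to0$ fast. The first Bartlett spectrum is then atomic at $\pm\alpha_n,\pm\alpha_n'$. The cross term of each pair puts an atom of mass $\asymp a_n^4|\widehat u(\tfrac{\alpha_n+\alpha_n'}2)|^2$ at the beat frequency $\varepsilon_n$, and this atom cannot be cancelled because the variance is a sum of squares over orthogonal torus characters.

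There is one concrete error. $\sum_{n\geq1}2n^{-2}=\pi^2/3>1$, so with $a_n=n^{-2}$ the density is not bounded away from zero. Choose $\theta$ so that every cosine equals $-1$ at $t=0$: the density there is $1-\pi^2/3<0$. By uniform convergence this persists on an open cylinder of phases, which has positive Haar measure, and for those $\theta$ the measure $p_\theta$ is not even in $\cM(\mathbb R)$. Rescale, e.g.\ $a_n=n^{-2}/4$; the paper inserts a factor $\kappa$ and uses amplitudes with $\sum_j a_j=1/4$. This only changes the constant in $\sigma^{(2)}_{\varphi,\varphi}(B^*_{\varepsilon_n})\geq c\,\varepsilon_n$. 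A normalized ratio bounded below is already enough to violate spectral $2$-hyperuniformity. The paper's choice $\delta_n<b_n^8$ with $b_n=2^{-n-3}$ makes the ratio diverge, but that is not needed.

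Three smaller points:
\begin{itemize}
\item To get $\widehat u$ bounded below on $[1,2]$ from ``small support and $\widehat u(0)=1$'' you also need $u\geq0$. Then $\operatorname{supp}u\subset[-\delta,\delta]$ with $\delta<1/8$ gives $\widehat u(\xi)\geq\cos(4\pi\delta)>0$ for $|\xi|\leq2$.
\item That $e^{2\pi i(\theta_n'-\theta_n)}$ arises only from the intended pair of cosines follows from the independence of the coordinates of $\theta$, not from rational independence of the frequencies. Rational independence is what you need for ergodicity.
\item The statement concerns \emph{every} $p$ in the support of $\mu$. As the paper notes, $\theta\mapsto p_\theta$ is vaguely continuous on the compact torus, so the support of $\mu$ is the image of this map and the density bounds apply to all of it.
\end{itemize}
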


\begin{proof}
Choose a real even $\psi\in C_c^\infty(\mathbb R)$ with
$\widehat\psi(3/2)\neq0$.  There are an open interval $I$ with
$\overline I\subset(1,2)$ and $\eta>0$ such that
\begin{equation}\label{eq:beat-test-lower-bound}
  |\widehat\psi(c)|\geq\eta
  \qquad(c\in I).
\end{equation}
For $n\geq1$ put $b_n:=2^{-n-3}$.  Choose $c_n\in I$ and
$0<\delta_n<b_n^8$ inductively so that
\begin{equation}\label{eq:beat-frequency-pairs}
  \lambda_{2n-1}:=c_n+\frac{\delta_n}{2},
  \qquad
  \lambda_{2n}:=c_n-\frac{\delta_n}{2}
\end{equation}
lie in $(1,2)$ and the family $(\lambda_j)_{j\geq1}$ is rationally
independent.  At each stage the forbidden choices $(c,\delta)$ lie in a
countable union of affine lines.  Set
\begin{equation}\label{eq:beat-amplitudes}
  a_{2n-1}=a_{2n}:=b_n.
\end{equation}
Then $\sum_j a_j=1/4$.

Fix $0<\kappa\leq1$, let $\theta=(\theta_j)$ be Haar-uniform on
$\Theta:=(\mathbb R/2\pi\mathbb Z)^{\mathbb N}$, and set
\begin{equation}\label{eq:beat-random-density-ansatz}
  \rho_\theta(x)
  :=1+\kappa\sum_{j\geq1}a_j
       \cos(2\pi\lambda_jx+\theta_j).
\end{equation}
The series converges uniformly, $\rho_\theta$ is bounded above and away
from zero, and
\begin{equation}\label{eq:beat-random-measure}
  p_\theta(f):=\int_{\mathbb R}f(x)\rho_\theta(x)\,d\Vol_1(x)
\end{equation}
defines a diffuse random measure.

Let $\mathbb R$ act on $\Theta$ by
\begin{equation}\label{eq:beat-torus-action}
  (t.\theta)_j:=\theta_j+2\pi\lambda_jt\pmod{2\pi}.
\end{equation}
From \eqref{eq:beat-random-density-ansatz},
\[
  \rho_{t.\theta}(x)=\rho_\theta(x+t),
  \qquad
  p_{t.\theta}=(-t).p_\theta.
\]
Hence $\mu:=(\theta\mapsto p_\theta)_*m_\Theta$ is $\mathbb R$-invariant.  A
character of $\Theta$ has the form
$\theta\mapsto e^{i\sum_j n_j\theta_j}$ with finitely many nonzero
$n_j\in\mathbb Z$; invariance under \eqref{eq:beat-torus-action} requires
$\sum_jn_j\lambda_j=0$.  Rational independence therefore leaves only the
constant invariant character, so the action and its factor $\mu$ are
ergodic.
Moreover $3/4\leq\rho_\theta\leq5/4$ for every $\theta$.  The map
$\theta\mapsto p_\theta$ is continuous for the vague topology: a finite
number of modes is continuous in $\theta$, while the remaining tail is
uniformly small.  Since Haar measure has full support on $\Theta$, the
support of $\mu$ is the compact image $\{p_\theta:\theta\in\Theta\}$.
Thus every measure in the support has a density between $3/4$ and $5/4$.
In particular, $\mu$ is locally $L^r$-integrable for every finite $r$.

For $f\in C_c^\infty(\mathbb R)$,
\begin{align*}
  S_f(p_\theta)-\int f
  =\frac{\kappa}{2}\sum_{j\geq1}a_j
   \left(
     e^{i\theta_j}\widehat f(-\lambda_j)
     +e^{-i\theta_j}\widehat f(\lambda_j)
   \right).
\end{align*}
The characters $e^{\pm i\theta_j}$ are orthogonal in $L^2(\Theta)$, so
\[
  \Var_\mu(S_f)
  =\frac{\kappa^2}{4}\sum_{j\geq1}a_j^2
    \left(|\widehat f(\lambda_j)|^2+|\widehat f(-\lambda_j)|^2\right).
\]
Comparison with the Bartlett covariance identity yields
\begin{equation}\label{eq:beat-first-bartlett}
  \boldsymbol\sigma^{(1)}
  =\frac{\kappa^2}{4}
   \sum_{j\geq1}a_j^2
      (\delta_{\lambda_j}+\delta_{-\lambda_j}).
\end{equation}
All $\lambda_j$ lie in $(1,2)$, so this measure vanishes on $(-1,1)$.
Thus $\mu$ is $1$-stealthy.

For the second-order statistic, $\psi$ is real and even, so
$\widehat\psi$ is real and even, and
$\varphi_\psi\in C_c(\Sh_2(\mathbb R))$ is well defined by
\begin{equation}\label{eq:beat-pair-shape-function}
  \varphi_\psi\bigl([(-r/2,r/2)]\bigr):=\psi(r).
\end{equation}
Since $p_\theta$ is diffuse, the derived pair measure has, in
barycentre--difference coordinates, density
\begin{equation}\label{eq:beat-pair-density}
  H_\psi(\theta,c)
  :=\int_{\mathbb R}\psi(r)
     \rho_\theta(c-r/2)\rho_\theta(c+r/2)\,d\Vol_1(r).
\end{equation}
For distinct $j,l$, the two ordered cross terms in
$\rho_\theta(c-r/2)\rho_\theta(c+r/2)$ are
\begin{align*}
 &\kappa^2a_ja_l
 \cos\bigl(2\pi\lambda_j(c-r/2)+\theta_j\bigr)
 \cos\bigl(2\pi\lambda_l(c+r/2)+\theta_l\bigr),\\
 &\kappa^2a_ja_l
 \cos\bigl(2\pi\lambda_l(c-r/2)+\theta_l\bigr)
 \cos\bigl(2\pi\lambda_j(c+r/2)+\theta_j\bigr).
\end{align*}
Using $\cos A\cos B=\frac12(\cos(A-B)+\cos(A+B))$, adding these terms and
integrating against the even function $\psi(r)$ gives the difference-frequency
term
\begin{equation}\label{eq:beat-difference-frequency}
  \kappa^2a_ja_l\,
  \widehat\psi\!\left(\frac{\lambda_j+\lambda_l}{2}\right)
  \cos\bigl(2\pi(\lambda_j-\lambda_l)c+\theta_j-\theta_l\bigr)
\end{equation}
in $H_\psi$.  Thus the pair $j=2n-1$, $l=2n$ produces the translation
frequency
$\lambda_{2n-1}-\lambda_{2n}=\delta_n$.  For $j=2n-1$ and $l=2n$, the coefficient of the torus character
$e^{i(\theta_j-\theta_l)}$ at translation frequency $\delta_n$ is
\[
  \frac{\kappa^2}{2}b_n^2\widehat\psi(c_n).
\]
Orthogonality of the torus characters therefore contributes an atom at
$\delta_n$ of mass
\begin{equation}\label{eq:beat-pair-atom}
  \frac{\kappa^4}{4}b_n^4|\widehat\psi(c_n)|^2
  \geq \frac{\kappa^4\eta^2}{4}b_n^4.
\end{equation}
With $\varepsilon_n:=2\delta_n$ we therefore obtain
\begin{equation}\label{eq:beat-normalized-pair-mass}
  \frac{\sigma^{(2)}_{\varphi_\psi,\varphi_\psi}
    ((-\varepsilon_n,\varepsilon_n))}
       {\Vol_1((-\varepsilon_n,\varepsilon_n))}
  \geq
  \frac{\kappa^4\eta^2}{16}\frac{b_n^4}{\delta_n}
  >\frac{\kappa^4\eta^2}{16}b_n^{-4}
  \longrightarrow\infty.
\end{equation}
Hence spectral $2$-hyperuniformity fails, and
Theorem~\ref{thm:geometric-spectral} gives $\mu\notin\HU_2$.
\end{proof}

\subsection{The Kurasov--Sarnak return-time process}
\label{subsec:kurasov-sarnak}

Fix an irrational $\alpha>0$ and consider the return-time process for a
cross-section of the Kronecker flow
\begin{equation}\label{eq:ks-kronecker-flow}
  s.\theta:=\theta+s(\alpha,1)
  \qquad(\theta\in\mathbb T^2,\ s\in\mathbb R),
\end{equation}
where $\mathbb T:=\mathbb R/\mathbb Z$.  The cross-section is the torus zero
set of the Kurasov--Sarnak polynomial
\begin{equation}\label{eq:ks-polynomial}
  P(z_1,z_2)
  :=1-\frac13z_1+\frac13z_2^2-z_1z_2^2,
\end{equation}
namely
\begin{equation}\label{eq:ks-zero-curve}
  \Sigma
  :=\bigl\{(x,y)\in\mathbb T^2:
      P(e^{2\pi i x},e^{2\pi i y})=0\bigr\}.
\end{equation}
On the universal cover, $\Sigma$ is represented by a smooth lift
$F:\mathbb R\to\mathbb R$, normalized by $F(0)=0$, and satisfying
\begin{equation}\label{eq:ks-curve-lift}
  \tan(\pi F(y))=-\frac12\tan(2\pi y),
  \qquad
  F\!\left(y+\frac12\right)=F(y)-1,
\end{equation}
and
\begin{equation}\label{eq:ks-curve-derivative}
  F'(y)=-\frac{4}{1+3\cos^2(2\pi y)},
  \qquad -4\leq F'(y)\leq-1.
\end{equation}
In particular the flow is transverse to $\Sigma$.

For $\theta\in\mathbb T^2$ define
\begin{equation}\label{eq:ks-point-process-factor}
  \Lambda_\theta
  :=\bigl\{t\in\mathbb R:
      \theta+t(\alpha,1)\in\Sigma\bigr\},
  \qquad
  p_\theta:=\sum_{t\in\Lambda_\theta}\delta_t,
\end{equation}
and let
\begin{equation}\label{eq:ks-invariant-law}
  \mu_\alpha:=(\theta\mapsto p_\theta)_*m_{\mathbb T^2}.
\end{equation}
Since
$p_{\theta+s(\alpha,1)}=(-s).p_\theta$, this law is $\mathbb R$-invariant;
it is ergodic because the Kronecker flow has no nonconstant invariant torus
character.  Choose lifts of the coordinates of $\theta$ and put
\[
  h_\theta(t):=\theta_1+\alpha t-F(\theta_2+t).
\]
Then $\Lambda_\theta=h_\theta^{-1}(\mathbb Z)$; changing the chosen lifts
only adds an integer to $h_\theta$.  Moreover
\[
  1+\alpha\leq h_\theta'(t)=\alpha-F'(\theta_2+t)\leq4+\alpha.
\]
Since $h_\theta$ is strictly increasing from $-\infty$ to $+\infty$, every
integer level is crossed exactly once.  If $t_n<t_{n+1}$ are consecutive
points of $\Lambda_\theta$, then
\[
  1=h_\theta(t_{n+1})-h_\theta(t_n)
    =h_\theta'(\zeta_n)(t_{n+1}-t_n)
\]
for some $\zeta_n\in(t_n,t_{n+1})$.  Hence
\[
  \frac1{4+\alpha}\leq t_{n+1}-t_n\leq\frac1{1+\alpha}.
\]
Thus $\Lambda_\theta$ is simple and uniformly discrete, and $\mu_\alpha$ has
local moments of every finite order.

\begin{theorem}[Kurasov--Sarnak point process]
\label{thm:ks-stealthy-not-hu2}
For every irrational $\alpha>0$,
\begin{equation}\label{eq:ks-main-conclusion}
  \mu_\alpha\text{ is $1$-stealthy},
  \qquad
  \mu_\alpha\notin\HU_2.
\end{equation}
\end{theorem}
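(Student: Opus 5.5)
The plan is to make everything explicit through the level-set description $\Lambda_\theta=h_\theta^{-1}(\mathbb Z)$ and Poisson summation. Since $h_\theta'>0$, we have $p_\theta=h_\theta'(t)\sum_{n}\delta(h_\theta(t)-n)$. Formally this gives
\[
  p_\theta(f)=\sum_{k\in\mathbb Z}\int_{\mathbb R}f(t)\,h_\theta'(t)\,e^{2\pi i k h_\theta(t)}\,dt .
\]
I will justify this for $f\in C_c^\infty(\mathbb R)$ by the change of variables $u=h_\theta(t)$. By \eqref{eq:ks-curve-lift}, $e^{-2\pi ikF(y)}$ and $F'(y)$ are $\tfrac12$-periodic. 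I therefore expand
\[
  (\alpha-F'(y))\,e^{-2\pi ikF(y)}=\sum_{l\in\mathbb Z}c_{k,l}\,e^{4\pi i l y},
\]
which turns $p_\theta(f)$ into $\sum_{k,l}c_{k,l}\,e^{2\pi i(k\theta_1+2l\theta_2)}\,\widehat f(-(k\alpha+2l))$. The torus characters are orthonormal under $m_{\mathbb T^2}$, and $k\alpha+2l$ determines $(k,l)$ because $\alpha$ is irrational. Comparing with \eqref{eq:bartlett-defining-identity}, the first Bartlett spectrum is therefore the pure point measure $\sum_{(k,l)\neq0}|c_{k,l}|^2\delta_{-(k\alpha+2l)}$.

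For $1$-stealthiness I will show that $c_{k,l}=0$ whenever $k$ and $l$ have strictly opposite signs; this is the Lee--Yang (stable polynomial) mechanism behind the Kurasov--Sarnak construction. Put $w=e^{4\pi iy}$. From $\tan(\pi F)=-\tfrac12\tan(2\pi y)$ one computes that $e^{2\pi iF(y)}$ equals a degree-one Blaschke factor $B(w)$, possibly conjugated depending on orientation. For $k\ge0$, $B^k$ is analytic in the unit disc, so only $w^l$ with $l\ge0$ occur. For $k<0$, $B^{k}=\overline B^{|k|}$ contains only $l\le0$. The amplitude $\alpha-F'(y)$ is $\alpha$ plus a multiple of $wB'(w)/B(w)$, which preserves this one-sided structure; I expect this to be routine after fixing signs. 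Every surviving nonzero frequency $k\alpha+2l$ with $kl\ge0$ then satisfies $|k\alpha+2l|\ge\min(\alpha,2)$. Hence $\boldsymbol\sigma^{(1)}$ vanishes on $B^*_{\min(\alpha,2)}$, which gives the first assertion.

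For the failure of $\HU_2$ I will use Theorem~\ref{thm:geometric-spectral}. Take $\varphi=\varphi_\psi$ with $\psi$ even, $\psi(0)=0$, supported near a gap value $g$ realized by consecutive points. Since $\Lambda_\theta$ is uniformly discrete, this pair statistic reduces to a weighted sum over consecutive pairs $(t_n,t_{n+1})$. It has the form $\sum_n G(\theta+t_n(\alpha,1))\,\delta_{t_n+\mathrm{gap}_n/2}$, where $G=\psi(\mathrm{gap})$ is a continuous function on $\Sigma$. Applying the same Poisson expansion, the derived measure again has atoms at frequencies $-(K\alpha+2L)$. Its coefficients are Fourier coefficients of a product of two one-sided series, and a product of such series is no longer one-sided: mixed-sign pairs $(K,L)$ appear. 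I will choose $(K_n,L_n)$ from continued-fraction approximants of $\alpha/2$, so that $|K_n\alpha+2L_n|\asymp 1/K_n$. It then suffices to show that the atom masses $|C_{K_n,L_n}|^2$ are not $o(1/K_n)$, which forces $\sigma^{(2)}_{\varphi,\varphi}(B^*_\varepsilon)/\varepsilon\not\to0$ along $\varepsilon_n=2|K_n\alpha+2L_n|$.

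The main obstacle is this lower bound on the mixed-sign coefficients. For smooth $\psi$ they may decay too fast, so I would first try placing the support of $\psi$ at the extreme gap value $g_{\max}$, where the gap function on $\Sigma$ has a nondegenerate critical point. The weight $G$ then has a square-root type edge, and stationary phase should give $|C_{K,L}|\gg K^{-1/2}$ along the chosen direction. If this fails, the fallback is a direct variance lower bound. I would write $T_2[\chi_{[0,R]},\varphi]$ as a Birkhoff-type sum over the Kronecker flow and exhibit, on a positive-measure set of $\theta$, discrepancy of order $\sqrt R$ coming from the mixed-sign characters. This mirrors the strategy of Proposition~\ref{prop:beat-frequency-example}, where beat frequencies $\lambda_j-\lambda_l$ produced small translation frequencies with large mass.
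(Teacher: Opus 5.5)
Your proof of $1$-stealthiness is correct and more elementary than the paper's, which quotes the Kurasov--Sarnak summation formula through the coefficients of $\log P$. With $w=e^{4\pi iy}$ and $B(w)=(w+\tfrac{1}{3})/(1+\tfrac{1}{3}w)$ one has $e^{-2\pi iF(y)}=B(w)$ and $F'(y)=-2wB'(w)/B(w)$. Hence $(\alpha-F'(y))e^{-2\pi ikF(y)}=\alpha B^k+2wB'B^{k-1}$ is analytic for $k\geq1$. The amplitude $\alpha-F'$ by itself is two-sided; only this product is one-sided. The case $k\leq-1$ follows by conjugation, and absolute convergence and the gap $\min\{\alpha,2\}$ follow.

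The genuine gap is in the failure of $\HU_2$. In the curve parameter your atom coefficient is, up to a constant,
$\int_0^{1/2}\psi(\tau(y))\,e^{\pi i\xi\tau(y)}\,(\alpha-F'(y))\,e^{-2\pi i(KF(y)+2Ly)}\,dy$ with $\xi=-(K\alpha+2L)$. This is the paper's $\mathcal L_{p,q}(\psi)$ with $p=K$, $q=-2L$.

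Along any sequence with $2L/K\to-\alpha$, the phase has derivative $K(F'(y)+2L/K)$, and $|F'-\alpha|\geq1+\alpha$. So there is no stationary point; this is just transversality of the flow to $\Sigma$. The nondegenerate critical point of $\tau$ at $g_{\max}$ does not create one. The function $\psi\circ\tau$ is exactly as regular in $y$ as $\psi$ is in the gap variable. The square-root behaviour you have in mind appears only in the Jacobian $1/|\tau'|$ after substituting $t=\tau(y)$.

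Consequently the coefficients are $O_N(K^{-N})$ for smooth $\psi$, and $O(K^{-1-\beta})$ for $\psi$ with finitely many H\"older-$\beta$ singularities. For badly approximable $\alpha$, such a $\varphi_\psi$ even satisfies $\sigma^{(2)}_{\varphi_\psi,\varphi_\psi}(B_\varepsilon^*)=o(\varepsilon)$. So no explicit regular test function gives $|C_{K,L}|\gg K^{-1/2}$. Your fallback (a $\sqrt R$ discrepancy) is equivalent by Theorem~\ref{thm:geometric-spectral} and meets the same obstruction.

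Note also that uniform discreteness alone does not reduce $T_2$ to consecutive pairs. You need $\operatorname{supp}\psi$ to lie below $2\tau_*$, which is why the paper works near the minimal gap $\tau_*$ rather than near $g_{\max}$.

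The missing idea is that the witness must be rough and can be produced non-constructively. This is legitimate because $\HU_2$ quantifies over all of $C_c(\Sh_2(\mathbb R))$, not only smooth test functions.

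Near $\tau_*$ the set $\tau^{-1}(J)$ has two branches $y_\pm$, and $\mathcal L_{p,q}(\psi)=4\int_J\psi K_{p,q}$ with $|K_{p,q}|^2=A_-^2+A_+^2+2A_-A_+\cos(2\pi p\Delta_{q/p})$. The bound $|\Delta_r'|\geq c_J$ makes the cross term integrate to $O(1/p)$. Hence $\|\mathcal L_{p_j,q_j}\|_{C_0(J)^*}=4\|K_{p_j,q_j}\|_{L^1(J)}\geq c_0>0$.

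Since $\delta_j\to0$, the functionals $\delta_j^{-1/2}\mathcal L_{p_j,q_j}$ have unbounded norms. The uniform boundedness principle then yields one continuous $\psi$ (real after splitting into real and imaginary parts) with $\limsup_j|\mathcal L_{p_j,q_j}(\psi)|^2/\delta_j=\infty$. This is exactly the violation of spectral $2$-hyperuniformity you need.
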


\subsubsection{The first-order gap}

\begin{lemma}[The first Bartlett spectrum]
\label{lem:ks-first-bartlett}
There are coefficients $a_k\in\mathbb C$, indexed by
$k=(k_1,k_2)\in\mathbb Z_{\geq0}^2\setminus\{0\}$, such that
\begin{equation}\label{eq:ks-first-bartlett-explicit}
  \boldsymbol\sigma^{(1)}_{\mu_\alpha}
  =\sum_{k\in\mathbb Z_{\geq0}^2\setminus\{0\}}
     |a_k|^2
     \bigl(\delta_{k_1\alpha+k_2}
            +\delta_{-(k_1\alpha+k_2)}\bigr).
\end{equation}
In particular,
\begin{equation}\label{eq:ks-first-spectral-gap}
  \boldsymbol\sigma^{(1)}_{\mu_\alpha}
  \bigl((-\varepsilon_0,\varepsilon_0)\bigr)=0,
  \qquad
  \varepsilon_0:=\min\{\alpha,1\}.
\end{equation}
\end{lemma}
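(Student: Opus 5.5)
The plan is to write $p_\theta$ as the pullback of a function on the torus and expand that function in torus characters. For $f\in C_c^\infty(\mathbb R)$, the level-set description $\Lambda_\theta=h_\theta^{-1}(\mathbb Z)$ together with the Poisson summation formula $\sum_{n}\delta_n=\sum_{\ell\in\mathbb Z}e^{2\pi i\ell x}$ gives
\[
  Sf(p_\theta)=\sum_{t\in\Lambda_\theta}f(t)
  =\int_{\mathbb R}f(t)\,h_\theta'(t)\sum_{\ell\in\mathbb Z}e^{2\pi i\ell h_\theta(t)}\,dt .
\]
This identity holds distributionally, and one has to check that it is legitimate for a smooth monotone $h_\theta$ with $h_\theta'\geq1+\alpha$; this follows by the change of variables $s=h_\theta(t)$. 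The integrand depends on $t$ only through the point $\theta+t(\alpha,1)$ on $\mathbb T^2$. Indeed, $h_\theta'(t)=\alpha-F'(\theta_2+t)$, and $e^{2\pi i\ell h_\theta(t)}=e^{2\pi i\ell(\theta_1+\alpha t)}e^{-2\pi i\ell F(\theta_2+t)}$. By \eqref{eq:ks-curve-lift}, $F(y+1)=F(y)-2$, so the last factor is a $1$-periodic function of $\theta_2+t$.

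The next step is to expand $G(x,y):=(\alpha-F'(y))\sum_\ell e^{2\pi i\ell(x-F(y))}$ as a Fourier series $\sum_{k\in\mathbb Z^2}g_ke^{2\pi i(k_1x+k_2y)}$. The $x$-frequency of the $\ell$-th term is exactly $\ell$, and its $y$-coefficients are the Fourier coefficients of $(\alpha-F')e^{-2\pi i\ell F}$. This formal expansion is exactly the Fourier series of the crystalline measure $\delta_\Sigma$ weighted by the transversal density, i.e.\ the Kurasov--Sarnak measure. Substituting $(x,y)=\theta+t(\alpha,1)$ and integrating against $f$ gives
\[
  Sf(p_\theta)=\sum_k g_k\,e^{2\pi i k\cdot\theta}\,\widehat f\bigl(-(k_1\alpha+k_2)\bigr).
\]
The mean is the $k=0$ term. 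Orthogonality of the characters $e^{2\pi ik\cdot\theta}$ in $L^2(\mathbb T^2)$ then yields $\Var(Sf)=\sum_{k\neq0}|g_k|^2|\widehat f(k_1\alpha+k_2)|^2$ after reindexing $k\mapsto -k$. Comparing with \eqref{eq:bartlett-defining-identity} identifies $\boldsymbol\sigma^{(1)}=\sum_{k\neq0}|g_k|^2\delta_{k_1\alpha+k_2}$, provided the sums converge. Convergence will be justified because $f$ is Schwartz-type and the $g_k$ grow at most polynomially, which follows from the smoothness of $F$.

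The key input, and the step I expect to be the main obstacle, is the Kurasov--Sarnak positivity phenomenon: $g_k=0$ unless $k_1k_2\geq0$. This is exactly what makes $\Sigma$ give a Fourier quasicrystal. I would prove it directly from the structure of $P$. The map $z_2\mapsto z_1$ defined by $P=0$ is a Blaschke-type inner function: solving gives $z_1=z_2^2\cdot\frac{1/3+\overline{z_2}^{\,2}}{1+\tfrac13 z_2^2}$ up to a unimodular constant, so $e^{-2\pi i F(y)}$ equals $B(e^{2\pi iy})$ with $B$ a finite Blaschke product of degree $2$. Hence $e^{-2\pi i\ell F}$ has only nonnegative (resp.\ nonpositive) frequencies for $\ell\geq0$ (resp.\ $\ell\leq0$). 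The weight $\alpha-F'$ equals $\alpha$ plus the angular derivative $\tfrac{1}{2\pi i}B'/B\cdot z$, which is a positive Poisson-kernel sum. Combining the weight with the exponential, the terms with $-\ell$ and opposite sign pair up, as in Kurasov--Sarnak. I would check this by writing $(\alpha-F')e^{-2\pi i\ell F}$ as $\alpha B^\ell+\frac{1}{2\pi i\ell}\frac{d}{dy}B^\ell$ for $\ell\neq0$. Both pieces are analytic when $\ell>0$ and anti-analytic when $\ell<0$, so $g_{(\ell,k_2)}\neq0$ forces $\ell k_2\geq0$. The $\ell=0$ row has nonzero $y$-frequencies only from $F'$; these vanish except at $k_2=0$ because of $F'$'s Poisson structure (to be verified), or otherwise contribute at $|k_2|\geq1$, which is harmless.

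Finally, group $k$ and $-k$ together and use that the spectrum of a real measure is symmetric. This gives the form \eqref{eq:ks-first-bartlett-explicit} with $k\in\mathbb Z_{\geq0}^2\setminus\{0\}$ and $|a_k|^2=|g_k|^2$, where $|g_k|=|g_{-k}|$ by reality. For such $k$ one has $k_1\alpha+k_2\geq\min\{\alpha,1\}$, which gives \eqref{eq:ks-first-spectral-gap}.
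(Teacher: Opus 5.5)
Your proof is correct, and it takes a genuinely different route from the paper. The paper does not derive the expansion of $\theta\mapsto Sf(p_\theta)$ itself. It observes that $(P_\theta,P_{-\theta})$ is a stable pair of multidegree $(1,2)$ and invokes the Kurasov--Sarnak summation formula, which gives $a_k=-(k_1\alpha+k_2)c_P(k)$ with $c_P(k)$ the Taylor coefficients of $\log P$ on the bidisc. It then uses their uniform bound for absolute convergence and applies Parseval.

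You instead compute the torus Fourier coefficients directly. This is in effect the first-order case of the transversal integration formula of Lemma~\ref{lem:ks-flow-box}, which the paper only uses later for the pair statistic. You obtain $g_k=\int_0^1(\alpha-F'(y))e^{-2\pi i(k_1F(y)+k_2y)}\,dy$, and you get the one-sided support from the fact that $\Sigma$ is the graph of an inner function. The two computations produce the same numbers. Indeed $P(z_1,z_2)=(1+z_2^2/3)\bigl(1-z_1B(z_2)\bigr)$ with $B(z)=(z^2+1/3)/(1+z^2/3)$, so $c_P(\ell,k_2)=-\widehat{B^\ell}(k_2)/\ell$ for $\ell\geq1$. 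Your identity $(\alpha-F')e^{-2\pi i\ell F}=\alpha B^\ell+(2\pi i\ell)^{-1}(B^\ell)'$ gives $g_{(\ell,k_2)}=\frac{\ell\alpha+k_2}{\ell}\widehat{B^\ell}(k_2)$, which matches. Your route is self-contained and shows exactly where $\alpha>0$ and $F'<0$ enter; the paper's route is shorter and applies verbatim to any stable pair.

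Three small repairs are needed.

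\emph{The formula for $z_1$.} Your displayed formula is wrong: on $|z_2|=1$ one has $z_2^2(1/3+\overline{z_2}^{\,2})=1+z_2^2/3$, so your expression is identically $1$. Solving $P=0$ gives $z_1=(3+z_2^2)/(1+3z_2^2)$, i.e.\ $e^{-2\pi iF(y)}=B(e^{2\pi iy})$ with $B$ as above. There is no extra constant, since $F(0)=0$ and $B(1)=1$. This $B$ is analytic on a neighbourhood of the closed disc. That is the orientation you need, because it forces $k_2\geq0$ when $\ell>0$. Also, $-F'(y)=zB'(z)/B(z)$ at $z=e^{2\pi iy}$, without the factor $1/(2\pi i)$.

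\emph{The $\ell=0$ row.} This row does not collapse: $-F'=4/(1+3\cos^2(2\pi y))$ is nonconstant, so $g_{(0,k_2)}\neq0$ for some even $k_2\neq0$. Your fallback is the correct statement, since these are atoms at $\pm|k_2|$, indexed by $(0,|k_2|)\in\mathbb Z_{\geq0}^2$.

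\emph{The convergence remark.} It is in the wrong logical order. Before one-sidedness is known, bounds on $g_k$ (in fact $|g_k|\leq\alpha+2$) do not make $\sum_k|g_k|\,|\widehat f(k_1\alpha+k_2)|$ converge, because $\widehat f(k_1\alpha+k_2)$ does not decay along $k_2\approx-\alpha k_1$. Instead, compute the Fourier coefficients of the bounded function $\theta\mapsto Sf(p_\theta)$ directly. You can do this by Lemma~\ref{lem:ks-flow-box}, or termwise in $\ell$, using that the $\ell$-th term equals $\widehat{f\circ h_\theta^{-1}}(-\ell)$, which decays rapidly in $\ell$ uniformly in $\theta$. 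Parseval then needs no convergence hypothesis. One-sidedness afterwards shows that $\sum_{k\neq0}|g_k|^2\delta_{k_1\alpha+k_2}$ is locally finite, so uniqueness of the Bartlett spectrum identifies it.
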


\begin{proof}
For $\theta\in\mathbb T^2$ put
\begin{equation}\label{eq:ks-phased-polynomial}
  P_\theta(z_1,z_2)
  :=P(e^{2\pi i\theta_1}z_1,e^{2\pi i\theta_2}z_2).
\end{equation}
The polynomial $P$ has no zero in the open bidisc.  If $P(z,w)=0$ and
$1+3w^2\neq0$, then
\[
  z=\frac{3+w^2}{1+3w^2},
\]
and for $|w|<1$,
\begin{equation}\label{eq:ks-bidisc-zero-free}
  |3+w^2|^2-|1+3w^2|^2=8(1-|w|^4)>0,
\end{equation}
so $|z|>1$; the exceptional case $1+3w^2=0$ does not give a zero either.
Since $P(0,0)=1$, $\log P$ is therefore holomorphic in the bidisc.  Write
\begin{equation}\label{eq:ks-log-coefficients}
  \log P(z_1,z_2)
  =\sum_{k\in\mathbb Z_{\geq0}^2\setminus\{0\}}
     c_P(k)z_1^{k_1}z_2^{k_2}.
\end{equation}
Replacing $P$ by $P_\theta$ multiplies $c_P(k)$ by
$e^{2\pi i(k_1\theta_1+k_2\theta_2)}$.  The D-stability and self-duality
verified in \cite[Section~3]{KS20} pass to the phased polynomials: $P_\theta$
and $P_{-\theta}$ are D-stable and
\[
  P_{-\theta}(z)
  =-e^{-2\pi i(\theta_1+2\theta_2)}z_1z_2^2
    P_\theta(z_1^{-1},z_2^{-1}),
\]
so they form a stable pair with multi-degree $(1,2)$.  In the
Kurasov--Sarnak summation formula, the corresponding positive-frequency
coefficient is, with our Fourier normalization,
\begin{equation}\label{eq:ks-coefficient-link}
  a_k:=-(k_1\alpha+k_2)c_P(k).
\end{equation}
Their general summation formula \cite[Equation~(29) and Theorem~1]{KS20}
therefore applies and gives
\begin{equation}\label{eq:ks-deterministic-fourier-expansion}
  \widehat{p_\theta}
  =\rho\,\delta_0
   +\sum_{k\in\mathbb Z_{\geq0}^2\setminus\{0\}}
      a_k e^{2\pi i(k_1\theta_1+k_2\theta_2)}
      \delta_{k_1\alpha+k_2}
   +\sum_{k\in\mathbb Z_{\geq0}^2\setminus\{0\}}
      \overline{a_k}e^{-2\pi i(k_1\theta_1+k_2\theta_2)}
      \delta_{-(k_1\alpha+k_2)}.
\end{equation}
For $f\in C_c^\infty(\mathbb R)$, pairing the preceding formula with
$\check f$ is absolutely convergent: the coefficients $c_P(k)$ are uniformly
bounded by \cite[Equation~(35)]{KS20}, while $\widehat f$ is rapidly
decreasing.  It gives
\begin{align*}
 S_f(p_\theta)-\mathbb E_{\mu_\alpha}S_f
 &=\sum_{k\neq0}
   a_ke^{2\pi i(k_1\theta_1+k_2\theta_2)}
     \widehat f(k_1\alpha+k_2)\\
 &\quad+
   \sum_{k\neq0}
   \overline{a_k}e^{-2\pi i(k_1\theta_1+k_2\theta_2)}
     \widehat f(-(k_1\alpha+k_2)).
\end{align*}
The nonconstant torus characters in the two sums are pairwise orthogonal.
Parseval therefore gives
\begin{equation}\label{eq:ks-first-variance-parseval}
  \Var_{\mu_\alpha}(S_f)
  =\sum_{k\neq0}|a_k|^2
    \left(
      |\widehat f(k_1\alpha+k_2)|^2
      +|\widehat f(-(k_1\alpha+k_2))|^2
    \right),
\end{equation}
which is exactly \eqref{eq:ks-first-bartlett-explicit}.  Since
$k_1\alpha+k_2\geq\min\{\alpha,1\}$ for every nonzero
$k\in\mathbb Z_{\geq0}^2$, the spectral gap
\eqref{eq:ks-first-spectral-gap} follows.
\end{proof}

\subsubsection{A pair statistic on consecutive returns}

Let $\tau(y)$ be the gap from the return at $(F(y),y)$ to the next return.
For the lift with initial point $(F(y),y)$, crossing the next integer level is
equivalent to
\[
  \alpha t-F(y+t)+F(y)=1.
\]
Thus $\tau(y)$ is characterized by
\begin{equation}\label{eq:ks-return-equation}
  F(y+\tau(y))-F(y)-\alpha\tau(y)=-1.
\end{equation}
This is the return-time description used in \cite[Lemma~6.6]{AV24}.  The
partial derivative of the left-hand side with respect to $t$ is
$F'(y+t)-\alpha\leq-1-\alpha<0$, so the solution is unique and the
implicit-function theorem makes $\tau$ smooth.  Using
$F(y+1/2)=F(y)-1$ gives
\begin{equation}\label{eq:ks-return-range}
  0<\tau(y)<\frac12,
  \qquad
  \tau\!\left(y+\frac12\right)=\tau(y).
\end{equation}
\begin{lemma}[The return time near its minimum]
\label{lem:ks-return-geometry}
The function $\tau$ has a non-degenerate minimum $\tau_*>0$.  There is a
bounded open interval $J$ with
\begin{equation}\label{eq:ks-gap-test-interval}
  \overline J\subset(\tau_*,2\tau_*)
\end{equation}
such that $\tau^{-1}(J)\cap(0,1/2)$ is the union of two intervals and $\tau$
is a diffeomorphism from each of them onto $J$.  Write
$y_-,y_+:J\to(0,1/2)$ for the inverse branches.  If
\begin{equation}\label{eq:ks-phase-functions}
  \Phi_r(y):=F(y)-ry,
  \qquad
  \Delta_r(t):=\Phi_r(y_+(t))-\Phi_r(y_-(t)),
\end{equation}
then there is $c_J>0$ such that
\begin{equation}\label{eq:ks-branch-phase-derivative}
  |\Delta_r'(t)|\geq c_J
  \qquad(t\in J,\ r>0).
\end{equation}
\end{lemma}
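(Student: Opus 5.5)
The plan has three parts: locate and classify the critical points of $\tau$ by implicit differentiation, choose $J$ just above the minimum value $\tau_*$, and then get \eqref{eq:ks-branch-phase-derivative} from a sign argument that does not depend on $r$.

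\emph{Critical points of $\tau$.} Differentiating \eqref{eq:ks-return-equation} gives
\[
  \tau'(y)=\frac{F'(y+\tau(y))-F'(y)}{\alpha-F'(y+\tau(y))},
\]
and the denominator is at least $1+\alpha$. By \eqref{eq:ks-curve-derivative}, $\tau'(y)=0$ exactly when $\cos^2(2\pi(y+\tau))=\cos^2(2\pi y)$. This means $y+\tau\equiv\pm y \pmod{1/2}$. The sign $+$ is excluded by \eqref{eq:ks-return-range}. So every critical point satisfies $\tau(y)\equiv-2y\pmod{1/2}$.

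At such a point, differentiate once more and use $\tau'=0$:
\[
  \tau''(y)=\frac{F''(y+\tau)-F''(y)}{\alpha-F'(y+\tau)}.
\]
Here $F''(y)$ is a nonzero multiple of $\sin(4\pi y)/(1+3\cos^2 2\pi y)^2$, and $4\pi(y+\tau)\equiv-4\pi y \pmod{2\pi}$. Hence $\tau''(y)=2F''(y+\tau)/(\alpha-F')$, which vanishes only if $\sin(4\pi y)=0$, that is, $y\in\{0,1/4\}$ modulo $1/2$. At those points the critical relation would force $\tau\in\tfrac12\mathbb Z$, which contradicts $0<\tau<1/2$. So every critical point is non-degenerate.

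Since $F$ is real-analytic, so is $\tau$. It is $1/2$-periodic and has finitely many critical points per period, each non-degenerate. It is nonconstant, since otherwise every point would be critical. Its global minimum $\tau_*$ is therefore positive and non-degenerate.

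\emph{Choice of $J$.} I would take $J=(\tau_*+\eta_1,\tau_*+\eta_2)$ with $0<\eta_1<\eta_2$ smaller than $\tau_*$. I also need $\eta_2$ smaller than the gap between $\tau_*$ and every other critical value. Near a non-degenerate minimum $y_0$, the set $\tau^{-1}(J)$ is then one interval on each side of $y_0$. On these intervals $\tau'$ is bounded away from zero, so the inverse branches $y_\pm$ are diffeomorphisms onto $J$. This yields exactly two intervals in $(0,1/2)$ provided the minimum is attained at a single point modulo $1/2$.

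I expect this uniqueness to be the main obstacle. I would first try to use the symmetry $F(-y)=-F(y)$, which follows from \eqref{eq:ks-curve-lift}, together with the critical relation $\tau=-2y+j/2$. Substituting the relation into \eqref{eq:ks-return-equation} gives one scalar equation in $y$ for each $j$. I would count its solutions, and hence count the critical points, via monotonicity. If that fails, I would fall back on explicit numerics for the few candidate critical points.

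\emph{The derivative bound.} Label the branches so that $y_+$ is increasing and $y_-$ is decreasing; then $y_+'>0>y_-'$. Differentiating $\Delta_r$ gives
\[
  \Delta_r'(t)=F'(y_+)y_+'-F'(y_-)y_-'-r\,(y_+'-y_-').
\]
Since $F'\leq-1$, all three terms are negative for $r>0$. Hence
\[
  |\Delta_r'(t)|\geq |y_+'(t)|+|y_-'(t)|\geq \frac{2}{\sup_{\overline J\text{-branches}}|\tau'|},
\]
and this is a positive constant independent of $r$. Taking it as $c_J$ proves \eqref{eq:ks-branch-phase-derivative}.
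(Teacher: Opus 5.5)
Your structure is the paper's: implicit differentiation, the critical relation $y+\tau\equiv-y \pmod{1/2}$, $J$ chosen just above $\tau_*$, and the $r$-independent sign argument giving $|\Delta_r'|\geq|y_+'|+|y_-'|\geq 2/\sup|\tau'|$. Your non-degeneracy argument is correct and handles all critical points at once: at a critical point $F''(y+\tau)=-F''(y)$, so $\tau''=0$ would force $\sin 4\pi y=0$ and hence $\tau\in\frac12\mathbb Z$. The paper instead computes the sign of $\tau''$ at the two critical points it has located.

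The one step that is not yet a proof is the one you flag yourself. Without uniqueness of the minimizer modulo $1/2$, the claim that $\tau^{-1}(J)\cap(0,1/2)$ consists of exactly two intervals is unjustified. Your numerical fallback would not suffice, because $\alpha>0$ is an arbitrary irrational parameter.

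Your first idea does close the gap, and it is exactly the paper's argument. Write the critical relation as $\tau(y)=j/2-2y$ with $j\in\mathbb Z$. From $F(-y)=-F(y)$ and $F(y+\frac12)=F(y)-1$ one gets $F(j/2-y)=-F(y)-j$. Substituting into \eqref{eq:ks-return-equation} gives
\[
  \Phi_\alpha(y)=\frac{1-j}{2}-\frac{j\alpha}{4}.
\]
For $y\in(0,1/2)$, the constraint $0<\tau<1/2$ leaves only two cases:
\begin{enumerate}
\item $j=1$, with $y\in(0,1/4)$ and $\Phi_\alpha(y)=-\alpha/4$;
\item $j=2$, with $y\in(1/4,1/2)$ and $\Phi_\alpha(y)=-(1+\alpha)/2$.
\end{enumerate}
Since $\Phi_\alpha'=F'-\alpha\leq-1-\alpha<0$, each equation has at most one solution. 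So $\tau$ has at most two critical points per period.

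A nonconstant $1/2$-periodic function has distinct global maximum and minimum points, so there are exactly two critical points and the minimizer is unique modulo $1/2$. Since $F''<0$ on $(0,1/4)$, the sign of $\tau''$ shows the minimizer is the one in $(0,1/4)$. With this inserted and $\eta_2$ small, your choice of $J$ and your derivative bound go through as written.
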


\begin{proof}
Differentiating \eqref{eq:ks-return-equation} gives
\begin{equation}\label{eq:ks-return-derivative}
  \tau'(y)
  =\frac{F'(y)-F'(y+\tau(y))}
         {F'(y+\tau(y))-\alpha}.
\end{equation}
At a critical point,
$F'(y)=F'(y+\tau(y))$.  From \eqref{eq:ks-curve-derivative}, this is equivalent
to
\[
  y+\tau(y)\equiv \pm y\pmod{1/2}.
\]
The plus sign would give $\tau(y)\in\frac12\mathbb Z$, contrary to
$0<\tau(y)<1/2$.  If $y\equiv0\pmod{1/2}$, the minus sign gives the same
contradiction.  Thus we may take $0<y<1/2$, and the minus sign gives exactly
\begin{equation}\label{eq:ks-critical-point-relations}
  2y+\tau(y)=\frac12
  \qquad\text{or}\qquad
  2y+\tau(y)=1.
\end{equation}
The normalization $F(0)=0$ and the tangent equation give $F(-y)=-F(y)$.
If the first relation holds, then
$F(y+\tau)=F(1/2-y)=-F(y)-1$, and
\eqref{eq:ks-return-equation} becomes
\begin{equation}\label{eq:ks-minimum-critical-equation}
  \Phi_\alpha(y)=-\frac{\alpha}{4},
  \qquad 0<y<\frac14.
\end{equation}
Since $\Phi_\alpha'=F'-\alpha<0$, there is at most one solution.  Moreover
$\Phi_\alpha(0)=0$, while $F'\leq-1$ gives
$\Phi_\alpha(1/4)\leq-(1+\alpha)/4<-\alpha/4$, so there is exactly one;
denote it by $y_0$.

For the second relation, $y+\tau=1-y$ and
$F(1-y)=F(-y+1)=-F(y)-2$.  Equation
\eqref{eq:ks-return-equation} then becomes
\begin{equation}\label{eq:ks-maximum-critical-equation}
  \Phi_\alpha(y)=-\frac{1+\alpha}{2},
  \qquad \frac14<y<\frac12.
\end{equation}
Again there is at most one solution.  The tangent relation and continuity of
our lift give $F(1/4)=-1/2$ and $F(1/2)=-1$, so
\[
  \Phi_\alpha(1/4)=-\frac12-\frac\alpha4
  >-\frac{1+\alpha}{2}
  >-1-\frac\alpha2=\Phi_\alpha(1/2).
\]
Hence \eqref{eq:ks-maximum-critical-equation} has exactly one solution.

One has
\begin{equation}\label{eq:ks-second-derivative-F}
  F''(y)
  =-\frac{48\pi\cos(2\pi y)\sin(2\pi y)}
          {(1+3\cos^2(2\pi y))^2}.
\end{equation}
At $y_0\in(0,1/4)$, $F''(y_0)<0$ and
$F''(y_0+\tau(y_0))=-F''(y_0)$.  Differentiating
\eqref{eq:ks-return-equation} once more at $y_0$ gives
\begin{equation}\label{eq:ks-return-second-derivative}
  \tau''(y_0)
  =-\frac{2F''(y_0)}{\alpha-F'(y_0+\tau(y_0))}>0.
\end{equation}
At the unique critical point in $(1/4,1/2)$, the signs of the two second
derivatives are reversed and the same calculation gives $\tau''<0$.  Thus
$\tau$ has exactly one non-degenerate minimum and one non-degenerate maximum
on $\mathbb R/(\frac12\mathbb Z)$.  Put
\[
  \tau_*:=\tau(y_0)>0,
  \qquad
  \tau^*:=\max\tau.
\]
The function $\tau$ is therefore strictly monotone on the two arcs between
these critical points.  We may choose $J$ so close to $\tau_*$ that
\[
  \overline J\subset(\tau_*,\min\{2\tau_*,\tau^*\}).
\]
Then $\tau^{-1}(J)\cap(0,1/2)$ consists of exactly two intervals, giving the
inverse branches $y_-$ and $y_+$, with
$\tau'(y_-(t))<0<\tau'(y_+(t))$.

Differentiating \eqref{eq:ks-phase-functions} along these branches gives
\begin{equation}\label{eq:ks-phase-derivative-direct}
  \Delta_r'(t)
  =\frac{F'(y_+(t))-r}{\tau'(y_+(t))}
   -\frac{F'(y_-(t))-r}{\tau'(y_-(t))}.
\end{equation}
For $r>0$ the first quotient is negative and the second is positive, so the
two terms on the right-hand side of
\eqref{eq:ks-phase-derivative-direct} have the same negative sign.  Put
\[
  M_J:=\max_{\sigma\in\{-,+\}}
       \sup_{t\in J}|\tau'(y_\sigma(t))|<\infty.
\]
Since $r-F'(y_\sigma(t))\geq1$, each quotient has absolute value at least
$M_J^{-1}$.  Therefore
\[
  |\Delta_r'(t)|\geq \frac2{M_J}
  \qquad(t\in J,r>0),
\]
which proves \eqref{eq:ks-branch-phase-derivative}.
\end{proof}

Let $\psi\in C_0(J)$, extend it by zero outside $J$ and evenly to a compactly
supported continuous function $\widetilde\psi$ on $\mathbb R$, and define
$\varphi_\psi\in C_c(\Sh_2(\mathbb R))$ by
\begin{equation}\label{eq:ks-pair-shape-test}
  \varphi_\psi\bigl([(-t/2,t/2)]\bigr)=\widetilde\psi(t).
\end{equation}
Every gap is at least $\tau_*$.  If $\lambda_m-\lambda_n$ with $m\geq n+2$,
then
\[
  \lambda_m-\lambda_n
  =\sum_{r=n}^{m-1}(\lambda_{r+1}-\lambda_r)
  \geq2\tau_*.
\]
Since $J\subset(\tau_*,2\tau_*)$, a pair whose distance lies in $J$ is
therefore consecutive.  If
\begin{equation}\label{eq:ks-ordered-points-gaps}
  \cdots<\lambda_{-1}<\lambda_0<\lambda_1<\cdots,
  \qquad
  g_n:=\lambda_{n+1}-\lambda_n,
\end{equation}
then
\begin{equation}\label{eq:ks-nearest-neighbour-pair-statistic}
  T_2[f,\varphi_\psi](p_\theta)
  =2\sum_{n\in\mathbb Z}
       \psi(g_n)f\!\left(\lambda_n+\frac{g_n}{2}\right).
\end{equation}

\begin{lemma}[Transversal integration formula]
\label{lem:ks-flow-box}
For every integrable function $G$ on $\mathbb T^2\times\mathbb R$ whose
second variable has compact support,
\begin{equation}\label{eq:ks-flow-box-formula}
  \int_{\mathbb T^2}
    \sum_{t:\,\theta+t(\alpha,1)\in\Sigma}
      G(\theta,t)\,d\theta
  =\int_0^1\int_{\mathbb R}
      G\bigl((F(y),y)-t(\alpha,1),t\bigr)
      \bigl(\alpha-F'(y)\bigr)
      \,d\Vol_1(t)\,d\Vol_1(y).
\end{equation}
\end{lemma}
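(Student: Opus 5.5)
The plan is to prove \eqref{eq:ks-flow-box-formula} by a change of variables from $\mathbb T^2\times\mathbb R$ to the flow-box coordinates adapted to the cross-section $\Sigma$. Consider the map
\[
  \Psi:(\mathbb R/\mathbb Z)\times\mathbb R\longrightarrow\mathbb T^2\times\mathbb R,
  \qquad
  \Psi(y,t):=\bigl((F(y),y)-t(\alpha,1),\,t\bigr).
\]
Here $y\mapsto(F(y),y)$ is well defined modulo $1$ on $\mathbb T^2$, because $F(y+1)=F(y)-2$ by \eqref{eq:ks-curve-lift}. This curve parametrizes $\Sigma$ bijectively, since $\Sigma$ is the graph $x\equiv F(y)$ over $y\in\mathbb T$. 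The left-hand side of \eqref{eq:ks-flow-box-formula} equals the integral of $G$ against the counting measure on
\[
  \mathcal S:=\{(\theta,t):\theta+t(\alpha,1)\in\Sigma\}.
\]
This set is exactly the image of $\Psi$ in the second factor, since $\theta+t(\alpha,1)=(F(y),y)$ determines $\theta$. I would therefore first show that $\Psi$ is a bijection onto $\mathcal S$. Given $(\theta,t)\in\mathcal S$, the point $\theta+t(\alpha,1)$ lies on $\Sigma$, so it equals $(F(y),y)$ for a unique $y\in\mathbb T$.

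Next I would disintegrate in the $t$-variable. Fix $t$. The fibre $\mathcal S_t=\{\theta:\theta+t(\alpha,1)\in\Sigma\}=\Sigma-t(\alpha,1)$ is a translate of $\Sigma$, which has $\Vol_2$-measure zero. The left side, however, involves a sum over $t$ for fixed $\theta$, not an integral, so the clean route is to compute the pushforward directly. Use the coordinates $(\theta,s)$ with $\theta=(F(y),y)-s(\alpha,1)$. The map $\chi:(y,s)\mapsto(F(y),y)-s(\alpha,1)$ from $\mathbb T\times\mathbb R$ to $\mathbb T^2$ has Jacobian
\[
  \det\begin{pmatrix}F'(y)&-\alpha\\ 1&-1\end{pmatrix}=\alpha-F'(y)\geq1+\alpha>0,
\]
using \eqref{eq:ks-curve-derivative}. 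It is a local diffeomorphism, and the preimages of a given $\theta$ are exactly the pairs $(y,s)$ with $s\in\Lambda_\theta$, a discrete set.

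Apply the area formula for a countable-to-one smooth map: for nonnegative measurable $K$ on $\mathbb T\times\mathbb R$,
\[
  \int_{\mathbb T\times\mathbb R}K(y,s)\,|\det D\chi|\,dy\,ds
  =\int_{\mathbb T^2}\sum_{(y,s)\in\chi^{-1}(\theta)}K(y,s)\,d\theta .
\]
Choose $K(y,s):=G(\chi(y,s),s)$. The preimage sum then becomes $\sum_{t\in\Lambda_\theta}G(\theta,t)$, because $s$ ranges over $\Lambda_\theta$ and $y$ is determined by $s$. This gives \eqref{eq:ks-flow-box-formula} for $G\geq0$. The integrable case follows by splitting into positive and negative parts, where the compact $t$-support together with the uniform discreteness of $\Lambda_\theta$ keeps the sums finite.

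The main obstacle is justifying the area formula globally on the torus. I would do this by covering $\mathbb T\times[-T,T]$ by finitely many small boxes on which $\chi$ is injective, and summing the resulting local changes of variables with a partition of unity. Injectivity on small boxes holds because $\chi$ is a local diffeomorphism with Jacobian bounded below, and the preimage count stays locally finite.
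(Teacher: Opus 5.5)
Your proposal is correct and is essentially the paper's proof. Your map $\chi(y,s)=(F(y),y)-s(\alpha,1)$ is exactly the paper's $\Psi$, with the same Jacobian $\alpha-F'(y)$, the same identification of $\chi^{-1}(\theta)$ with $\Lambda_\theta$ via the bijective parametrization $y\mapsto(F(y),y)$ of $\Sigma$, and the same appeal to the area formula; your localization step only spells out what the paper leaves implicit, and it can be simplified by noting that $\chi$ is already injective on $\mathbb T\times I$ for any interval $I$ shorter than the minimal gap $1/(4+\alpha)$.
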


\begin{proof}
Consider
\[
  \Psi:[0,1)\times\mathbb R\longrightarrow\mathbb T^2,
  \qquad
  \Psi(y,t):=(F(y)-\alpha t,\,y-t)\pmod{\mathbb Z^2}.
\]
The derivative matrix of $\Psi$ is
\[
  \begin{pmatrix}F'(y)&-\alpha\\1&-1\end{pmatrix},
  \qquad
  |\det D\Psi|=\alpha-F'(y)>0.
\]
Since $F(y+1)=F(y)-2$, the map
$y\mapsto(F(y),y)\pmod{\mathbb Z^2}$ is a bijection from $[0,1)$ onto
$\Sigma$.  Thus
\[
  \Psi(y,t)=\theta
  \quad\Longleftrightarrow\quad
  (F(y),y)=\theta+t(\alpha,1)\in\Sigma,
\]
so the fibre $\Psi^{-1}(\theta)$ is indexed by the return times in the
left-hand side of \eqref{eq:ks-flow-box-formula}.  Applying the area formula
to $\Psi$ and summing over these fibres gives
\eqref{eq:ks-flow-box-formula}.
\end{proof}

For $(p,q)\in\mathbb Z^2$ put
\begin{equation}\label{eq:ks-physical-character-frequency}
  \xi_{p,q}:=q-p\alpha.
\end{equation}
For the torus character
$\chi_{p,q}(x,y):=e^{2\pi i(px-qy)}$ one has
\[
  \chi_{p,q}(\theta+s(\alpha,1))
  =e^{-2\pi i\xi_{p,q}s}\chi_{p,q}(\theta).
\]

\begin{lemma}[Pair coefficients]
\label{lem:ks-pair-coefficients}
For $\psi\in C_0(J)$ define
\begin{equation}\label{eq:ks-pair-functional}
  \mathcal L_{p,q}(\psi)
  :=2\int_0^1
       \psi(\tau(y))e^{\pi i\xi_{p,q}\tau(y)}
       e^{-2\pi i(pF(y)-qy)}
       \bigl(\alpha-F'(y)\bigr)\,d\Vol_1(y).
\end{equation}
Then
\begin{equation}\label{eq:ks-pair-bartlett-spectrum}
  \sigma^{(2)}_{\varphi_\psi,\varphi_\psi}
  =\sum_{(p,q)\in\mathbb Z^2\setminus\{(0,0)\}}
      |\mathcal L_{p,q}(\psi)|^2\,\delta_{\xi_{p,q}}.
\end{equation}
\end{lemma}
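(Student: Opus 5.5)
Write $\mathcal T_f(\theta):=T_2[f,\varphi_\psi](p_\theta)$ for $f\in C_c^\infty(\mathbb R)$ and $\theta\in\mathbb T^2$.  The plan is to compute the torus Fourier coefficients of $\mathcal T_f$ with Lemma~\ref{lem:ks-flow-box}, identify each one as $\widehat f(\xi_{p,q})$ times a factor independent of $f$, and then read off the spectrum from Parseval, exactly as in the proof of Lemma~\ref{lem:ks-first-bartlett}.  By \eqref{eq:ks-nearest-neighbour-pair-statistic},
\[
  \mathcal T_f(\theta)=2\sum_{t\in\Lambda_\theta}\psi(g(t))\,f\bigl(t+g(t)/2\bigr),
\]
where $g(t)$ is the gap from the return at time $t$ to the next one.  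If $\theta+t(\alpha,1)=(F(y),y)$ modulo $\mathbb Z^2$, then $g(t)=\tau(y)$ by \eqref{eq:ks-return-equation}.  So $\mathcal T_f(\theta)=\sum_{t}G(\theta,t)$ with
\[
  G(\theta,t)=2\psi(\tau(y(\theta,t)))\,f\bigl(t+\tau(y(\theta,t))/2\bigr)\,\overline{\chi_{p,q}(\theta)}.
\]
This $G$ has compact support in $t$, so Lemma~\ref{lem:ks-flow-box} applies.

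Under the substitution $\theta=(F(y),y)-t(\alpha,1)$ we have
\[
  \overline{\chi_{p,q}(\theta)}=e^{-2\pi i(pF(y)-qy)}\,e^{-2\pi i\xi_{p,q}t}.
\]
The $t$-integral is
\[
  \int f(t+\tau(y)/2)\,e^{-2\pi i\xi_{p,q}t}\,dt=\widehat f(\xi_{p,q})\,e^{\pi i\xi_{p,q}\tau(y)},
\]
after shifting $t$ by $\tau(y)/2$.  Hence
\[
  \langle \mathcal T_f,\chi_{p,q}\rangle_{L^2(\mathbb T^2)}=\widehat f(\xi_{p,q})\,\mathcal L_{p,q}(\psi).
\]
Two routine points need care.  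First, $y\mapsto (F(y),y)$ parametrizes $\Sigma$ over $[0,1)$.  Second, $\tau$ and the phase are $\tfrac12$-periodic in the appropriate sense, so the integral is well defined.  I would verify both, and also check the sign convention $\chi_{p,q}(x,y)=e^{2\pi i(px-qy)}$, which governs whether the atom sits at $\xi_{p,q}$ or at $-\xi_{p,q}$.

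For Parseval, $\mathcal T_f$ is bounded because the process is uniformly discrete, so $\mathcal T_f\in L^2(\mathbb T^2)$.  The character $(p,q)=(0,0)$ gives the mean, which cancels in the covariance.  Therefore
\[
  \Var(\mathcal T_f)=\sum_{(p,q)\neq 0}|\widehat f(\xi_{p,q})|^2\,|\mathcal L_{p,q}(\psi)|^2.
\]
Because $\alpha$ is irrational, $(p,q)\mapsto\xi_{p,q}$ is injective.  It follows that the positive measure $\sum_{(p,q)\neq 0}|\mathcal L_{p,q}(\psi)|^2\,\delta_{\xi_{p,q}}$ satisfies the defining identity \eqref{eq:bartlett-defining-identity} on the diagonal.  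Uniqueness in Proposition~\ref{prop:operator-bartlett} then identifies it with $\sigma^{(2)}_{\varphi_\psi,\varphi_\psi}$.

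The main obstacle is showing that this measure is locally finite, i.e.\ translation bounded.  Parseval does give $\sum_{(p,q)}|\widehat f(\xi_{p,q})\mathcal L_{p,q}|^2<\infty$ for every $f$.  Choosing $f$ with $|\widehat f|\ge 1$ on a given compact set then bounds the total mass of the atoms there, as in Lemma~\ref{lem:bartlett-translation-bounded}.  Hence finiteness on compacts follows directly, and no decay estimate on $\mathcal L_{p,q}$ is needed for this lemma.
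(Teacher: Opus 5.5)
Your proposal is correct and follows the paper's proof essentially step for step: apply the transversal integration formula to the nearest-neighbour pair statistic, compute the $\chi_{p,q}$-coefficient as $\widehat f(\xi_{p,q})\mathcal L_{p,q}(\psi)$ via the shift $s=t+\tau(y)/2$, then use Parseval on $\mathbb T^2$, distinctness of the frequencies $\xi_{p,q}$, and uniqueness of the Bartlett spectrum. Your extra remark spells out a step the paper compresses into ``comparison with the Bartlett covariance identity'': Parseval itself, combined with the modulation trick of Lemma~\ref{lem:bartlett-translation-bounded}, makes the candidate atomic measure translation bounded and hence tempered, so uniqueness applies.
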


\begin{proof}
Apply Lemma~\ref{lem:ks-flow-box} to
\eqref{eq:ks-nearest-neighbour-pair-statistic} and take its
$\chi_{p,q}$-Fourier coefficient.  Since
\begin{equation}\label{eq:ks-character-along-flow}
  \overline{\chi_{p,q}((F(y),y)-t(\alpha,1))}
  =e^{-2\pi i(pF(y)-qy)}e^{-2\pi i\xi_{p,q}t},
\end{equation}
the coefficient equals
\begin{align*}
 2\int_0^1\int_{\mathbb R}
 &\psi(\tau(y))f\!\left(t+\frac{\tau(y)}2\right)
 e^{-2\pi i(pF(y)-qy)}e^{-2\pi i\xi_{p,q}t}\\
 &\hspace{35mm}\times(\alpha-F'(y))\,dt\,dy.
\end{align*}
With $s=t+\tau(y)/2$,
\[
  \int_{\mathbb R}f\!\left(t+\frac{\tau(y)}2\right)
       e^{-2\pi i\xi_{p,q}t}\,dt
  =e^{\pi i\xi_{p,q}\tau(y)}\widehat f(\xi_{p,q}).
\]
Substitution gives
\begin{equation}\label{eq:ks-pair-fourier-coefficient}
  \int_{\mathbb T^2}
     T_2[f,\varphi_\psi](p_\theta)
     \overline{\chi_{p,q}(\theta)}\,d\theta
  =\widehat f(\xi_{p,q})\,\mathcal L_{p,q}(\psi).
\end{equation}
Parseval on $\mathbb T^2$, after removing the constant character, gives
\begin{equation}\label{eq:ks-pair-covariance-parseval}
  \Var_{\mu_\alpha}\bigl(T_2[f,\varphi_\psi]\bigr)
  =\sum_{(p,q)\neq(0,0)}
    |\widehat f(\xi_{p,q})|^2
    |\mathcal L_{p,q}(\psi)|^2.
\end{equation}
Since $\alpha$ is irrational, the frequencies $\xi_{p,q}$ are distinct.
Comparison with the Bartlett covariance identity proves
\eqref{eq:ks-pair-bartlett-spectrum}.
\end{proof}

\begin{proof}[Proof of Theorem~\ref{thm:ks-stealthy-not-hu2}]
Lemma~\ref{lem:ks-first-bartlett} proves $1$-stealthiness.  For the second
assertion, Lemma~\ref{lem:ks-pair-coefficients} reduces the problem to
finding one $\psi\in C_0(J)$ and a sequence $(p_j,q_j)$ for which
\begin{equation}\label{eq:ks-target-small-frequency-atoms}
  \delta_j:=|q_j-p_j\alpha|\longrightarrow0,
  \qquad
  \limsup_{j\to\infty}
  \frac{|\mathcal L_{p_j,q_j}(\psi)|^2}{\delta_j}=\infty.
\end{equation}
Such a sequence gives atoms approaching the origin whose normalized mass
violates spectral $2$-hyperuniformity.

Split the integral in \eqref{eq:ks-pair-functional} at $1/2$ and substitute
$y\mapsto y+1/2$ in the second half.  The functions $\tau$ and $F'$ are
$1/2$-periodic, while
\[
 e^{-2\pi i(pF(y+1/2)-q(y+1/2))}
 =(-1)^q e^{-2\pi i(pF(y)-qy)}.
\]
Therefore
\[
  \mathcal L_{p,q}(\psi)
  =2(1+(-1)^q)\int_0^{1/2}
       \psi(\tau(y))e^{\pi i\xi_{p,q}\tau(y)}
       e^{-2\pi i(pF(y)-qy)}
       (\alpha-F'(y))\,dy.
\]
In particular,
\begin{equation}\label{eq:ks-parity}
  \mathcal L_{p,q}=0\quad\text{if $q$ is odd}.
\end{equation}
For even $q$ and $p\neq0$, this becomes
\begin{equation}\label{eq:ks-half-period-functional}
  \mathcal L_{p,q}(\psi)
  =4\int_0^{1/2}
       \psi(\tau(y))e^{\pi i\xi_{p,q}\tau(y)}
       e^{-2\pi ip\Phi_{q/p}(y)}
       \bigl(\alpha-F'(y)\bigr)\,d\Vol_1(y).
\end{equation}
Because $\psi$ is supported in $J$, only the two branches $y_\pm$ contribute.
Changing variables $t=\tau(y)$ gives
\begin{equation}\label{eq:ks-functional-kernel}
  \mathcal L_{p,q}(\psi)
  =4\int_J \psi(t)K_{p,q}(t)\,d\Vol_1(t),
\end{equation}
where
\begin{align}
  K_{p,q}(t)
  &:=e^{\pi i\xi_{p,q}t}
      \sum_{\sigma\in\{-,+\}}
        A_\sigma(t)
        e^{-2\pi ip\Phi_{q/p}(y_\sigma(t))},
  \label{eq:ks-kernel-definition}\\
  A_\sigma(t)
  &:=\frac{\alpha-F'(y_\sigma(t))}
             {|\tau'(y_\sigma(t))|}.
  \label{eq:ks-kernel-amplitudes}
\end{align}
Since $\overline J\subset(\tau_*,\tau^*)$, the branches $y_\pm$ extend
smoothly to $\overline J$ and
\[
  \inf_{t\in\overline J}|\tau'(y_\pm(t))|>0.
\]
Hence $A_\pm$ and their first derivatives are bounded on $\overline J$.
Consequently
\begin{equation}\label{eq:ks-kernel-square}
  |K_{p,q}(t)|^2
  =A_-(t)^2+A_+(t)^2
   +2A_-(t)A_+(t)
      \cos\bigl(2\pi p\Delta_{q/p}(t)\bigr).
\end{equation}

Let $q_j'/p_j'$ be continued-fraction convergents to $\alpha$ and set
\begin{equation}\label{eq:ks-even-convergents}
  p_j:=2p_j',
  \qquad
  q_j:=2q_j'.
\end{equation}
Then $q_j$ is even, $p_j\to\infty$, $q_j/p_j\to\alpha$, and
\begin{equation}\label{eq:ks-linear-form-convergence}
  \delta_j=|q_j-p_j\alpha|
  =2|q_j'-p_j'\alpha|\longrightarrow0.
\end{equation}
For all large $j$, Lemma~\ref{lem:ks-return-geometry} gives
$|\Delta_{q_j/p_j}'|\geq c_J$ on $J$.  Put $B=A_-A_+$.  On
$\overline J$, $B$ is $C^1$, while $\Delta_r'$ and $\Delta_r''$ are uniformly
bounded for $r$ near $\alpha$.  Integration by parts gives
\begin{align*}
 \int_J B(t)e^{2\pi ip_j\Delta_{q_j/p_j}(t)}\,dt
 &=\left[
   \frac{B(t)e^{2\pi ip_j\Delta_{q_j/p_j}(t)}}
        {2\pi ip_j\Delta_{q_j/p_j}'(t)}
   \right]_{\partial J}\\
 &\quad-\frac1{2\pi ip_j}
   \int_J
   \left(\frac{B}{\Delta_{q_j/p_j}'}\right)'(t)
   e^{2\pi ip_j\Delta_{q_j/p_j}(t)}\,dt.
\end{align*}
Both terms are $O(p_j^{-1})$, uniformly in $j$.  Hence
\begin{equation}\label{eq:ks-oscillatory-cross-term}
  \int_J
    A_-(t)A_+(t)
      e^{2\pi ip_j\Delta_{q_j/p_j}(t)}\,d\Vol_1(t)
  =O(p_j^{-1})\longrightarrow0.
\end{equation}
It follows from \eqref{eq:ks-kernel-square} that
\begin{equation}\label{eq:ks-kernel-l2-lower-bound}
  \liminf_{j\to\infty}
  \int_J|K_{p_j,q_j}(t)|^2\,d\Vol_1(t)
  \geq
  \int_J(A_-^2+A_+^2)\,d\Vol_1>0.
\end{equation}
The amplitudes $A_\pm$ are bounded on $\overline J$, hence
$\|K_{p_j,q_j}\|_\infty\leq C_J$ uniformly in $j$.  Choose $c_1>0$ so that
\[
  \int_J|K_{p_j,q_j}|^2\geq c_1
\]
for all large $j$, as permitted by
\eqref{eq:ks-kernel-l2-lower-bound}.  Then
\[
  c_1
  \leq \int_J|K_{p_j,q_j}|^2
  \leq C_J\int_J|K_{p_j,q_j}|.
\]
By \eqref{eq:ks-functional-kernel},
\begin{equation}\label{eq:ks-functional-norm-lower-bound}
  \|\mathcal L_{p_j,q_j}\|_{C_0(J)^*}
  =4\int_J|K_{p_j,q_j}(t)|\,d\Vol_1(t)
  \geq c_0>0,
  \qquad c_0:=\frac{4c_1}{C_J},
\end{equation}
for all large $j$.

Define
\begin{equation}\label{eq:ks-rescaled-functionals}
  \mathcal T_j
  :=\delta_j^{-1/2}\mathcal L_{p_j,q_j}
  \in C_0(J)^*.
\end{equation}
Then $\|\mathcal T_j\|\to\infty$.  By the Uniform Boundedness Principle
there is a single $\psi\in C_0(J)$ such that
\begin{equation}\label{eq:ks-uniform-boundedness-output}
  \limsup_{j\to\infty}
  \frac{|\mathcal L_{p_j,q_j}(\psi)|^2}{\delta_j}=\infty.
\end{equation}
Write $\psi=\psi_1+i\psi_2$ with real $\psi_1,\psi_2$.  Since
\[
  |\mathcal T_j(\psi)|
  \leq |\mathcal T_j(\psi_1)|+|\mathcal T_j(\psi_2)|,
\]
at least one of the two real functions has unbounded values along a
subsequence.  Replacing $\psi$ by that function and passing to the subsequence,
we may take $\psi$ real.

For this fixed $\psi$, Lemma~\ref{lem:ks-pair-coefficients} gives an atom of
mass $|\mathcal L_{p_j,q_j}(\psi)|^2$ at the frequency
$\xi_{p_j,q_j}$, whose absolute value is $\delta_j$.  With
$\varepsilon_j:=2\delta_j$,
\begin{equation}\label{eq:ks-normalized-pair-spectrum}
  \frac{\sigma^{(2)}_{\varphi_\psi,\varphi_\psi}
       ((-\varepsilon_j,\varepsilon_j))}
       {\Vol_1((-\varepsilon_j,\varepsilon_j))}
  \geq
  \frac{|\mathcal L_{p_j,q_j}(\psi)|^2}{4\delta_j},
\end{equation}
and the right-hand side is unbounded by
\eqref{eq:ks-uniform-boundedness-output}.  Hence spectral
$2$-hyperuniformity fails.  Theorem~\ref{thm:geometric-spectral} therefore
gives $\mu_\alpha\notin\HU_2$.
\end{proof}

\section{All-order stealthiness and lattice sphere packing}
\label{sec:all-order-stealthiness}

A locally finite set $C\subset\mathbb R^d$ is $r$-\emph{uniformly discrete}
if $|x-y|\geq r$ for all distinct $x,y\in C$.  It is \emph{periodic} if its
full translation group
\[
  \Gamma_C:=\{t\in\mathbb R^d:C+t=C\}
\]
is a full-rank lattice in $\mathbb R^d$.  A Radon measure $p$ on
$\mathbb R^d$ is \emph{nonperiodic} if
$\{t\in\mathbb R^d:t.p=p\}=\{0\}$.

Whenever the Bartlett spectra are defined at every finite order, set
\begin{equation}\label{eq:all-order-stealth-radius}
  \operatorname{st}_\infty(\mu)
  :=\sup\bigl\{\varepsilon>0:
  \boldsymbol\sigma^{(k)}(B_\varepsilon^*)=0
  \text{ for every }k\geq1\bigr\}.
\end{equation}
Set $\operatorname{st}_\infty(\mu)=0$ if the set in
\eqref{eq:all-order-stealth-radius} is empty.  The measure $\mu$ has a
\emph{common all-order stealth gap} if there exists
$\varepsilon_0>0$ such that
\begin{equation}\label{eq:common-all-order-gap}
  \boldsymbol\sigma^{(k)}(B_{\varepsilon_0}^*)=0
  \qquad\text{for every }k\geq1.
\end{equation}
For a periodic configuration $C$, let $\mu_C$ denote Haar probability
measure on the translation orbit of $\delta_C$, and write
$\operatorname{st}_\infty(C):=\operatorname{st}_\infty(\mu_C)$.

\begin{proposition}[A common all-order gap forces periodicity]
\label{prop:common-gap-periodic}
Let $\mu$ be an ergodic $\mathbb R^d$-invariant simple point process of positive
intensity.  Suppose that there is $r>0$ such that $\mu$ is supported on
$r$-uniformly discrete configurations, and that \eqref{eq:common-all-order-gap}
holds.  Then there is a periodic locally finite set $C\subset\mathbb R^d$ such
that $\mu$ is supported on the translation orbit of $\delta_C$ and coincides
there with Haar probability measure.
\end{proposition}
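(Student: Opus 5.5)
The plan is to turn the common gap into an almost-sure \emph{rigidity} statement for smoothed pattern counts at a fixed scale $L\asymp\varepsilon_0^{-1}$ independent of $k$. Then I would use this rigidity with $k$ large to produce a relatively dense group of exact periods.

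\emph{Step 1: rigidity of smoothed pattern measures.} Fix $g$ in the Schwartz class with $\widehat g$ supported in $B_{\varepsilon_0}^*$, $g\geq0$, and $g\geq1$ on a ball $B_L$. Uniform $r$-discreteness bounds $p(B_R)\leq C R^d$ uniformly in $p$. Hence $T_k[f,\varphi]$ is defined for rapidly decaying $f$, and the Bartlett identity \eqref{eq:bartlett-defining-identity} extends to such $f$ by approximation, using Lemma~\ref{lem:ball-variance-bartlett}-type arguments and translation boundedness from Lemma~\ref{lem:bartlett-translation-bounded}. The gap \eqref{eq:common-all-order-gap} then gives $\Var_\mu(T_k[\tau_a g,\varphi])=0$ for every $k$, $\varphi$ and $a$. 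Consequently $T_k[\tau_a g,\varphi](p)=c_{k,\varphi}$ is constant. Restricting to countable dense families of $a$, $\varphi$ and $k$, and using continuity in $a$ and $\varphi$ (again by uniform discreteness), I obtain a single conull set $\mathcal X$ of configurations on which all these identities hold simultaneously. By ergodicity, $c_{k,\varphi}$ equals the corresponding pattern frequency.

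\emph{Step 2: uniform repetitivity.} Uniform discreteness means an $R$-patch has at most $N(R)=O(R^d)$ points, so it is encoded by a shape in $\Sh_k(V)$ with $k\leq N(R)$; equal-point repetitions are harmless by \eqref{eq:full-factorial-partition}. Take $p\in\mathcal X$ and its $R$-patch $P$ around a point $x$. Choose $\varphi\geq0$ concentrated within $\eta$ of the shape of $P$, so that $\varphi$ detects $\eta$-approximate copies of $P$. The value $c_{k,\varphi}>0$ because $P$ occurs in $p$ and $g>0$ everywhere. Constancy in $a$ then forces an $\eta$-approximate copy of $P$ to occur, for every $p'\in\mathcal X$, within distance $L'$ of every point. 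Here $L'$ depends on $g$ and on $c_{k,\varphi}$, and the delicate point is making it independent of $R$ and $\eta$. I would get this by comparing $\tau_a g$ with its values on $B_L$ and using the tail decay of $g$ against the $O(R^d)$ point bound, after normalising $\varphi$ by the frequency. I expect this uniformity to be the main obstacle. If the direct estimate fails, I would instead exploit that $c_{k,\varphi}$ is simultaneously the spatial average and the local $g$-average; since these are equal exactly, a window with no occurrence would give a strict inequality after summing over a fine partition of unity built from translates of $g$.

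\emph{Step 3: exact periods and conclusion.} For fixed $p\in\mathcal X$, let $E_{R,\eta}$ be the set of $t$ such that $p$ and $t.p$ agree within $\eta$ on $B_R$. Step 2 shows $E_{R,\eta}$ is $L'$-relatively dense. Letting $R\to\infty$ and $\eta\to0$, a compactness (Hausdorff-limit) argument inside $B_{L'}$-neighbourhoods yields a set $\Gamma$ of exact periods, $t.p=p$, that is still $L'$-relatively dense. The full period group of $p$ is closed, and it is discrete because $p$ is $r$-uniformly discrete with positive intensity. Being relatively dense, it is a full-rank lattice, so $p=\delta_C$ with $C$ periodic. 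Finally, $\mathcal X$ is translation invariant, and by Step 1 any two elements of $\mathcal X$ have identical pattern frequencies; comparing large patches, every element of $\mathcal X$ is a translate of $C$. Thus $\mu$ is supported on the compact orbit of $\delta_C$, identified with $\mathbb R^d/\Gamma_C$. On this orbit the unique translation-invariant probability measure is Haar measure, which identifies $\mu$ with $\mu_C$.
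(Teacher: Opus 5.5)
The gap is in Step~2, exactly where you flag it, and it is the entire content of the proposition rather than a technicality. Step~1 is fine. For a single pattern $P$, however, it only lets you compare the constant $c_{k,\varphi}$ with the tails of $g$.

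Suppose $p'$ has no $\eta$-copy of $P$ with barycentre in $B_{L'}(-a)$. Then $T_k[\tau_a g,\varphi](p')$ is at most the contribution of occurrences outside that ball. By $r$-discreteness this is bounded by a tail quantity of $g$ beyond radius about $L'$, with a constant that does not depend on the frequency of $P$. On the other side, $c_{k,\varphi}$ is proportional to that frequency. Since $g$ is band-limited it cannot have compact support, so the $L'$ you obtain necessarily grows as the frequency of $P$ tends to zero, i.e.\ as $R\to\infty$ or $\eta\to0$.

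Neither of your proposed fixes changes this. Normalising $\varphi$ by the frequency rescales both sides by the same factor. The partition-of-unity variant gives no strict inequality: summing the identities $T_k[\tau_{a_j}g,\varphi]=c_{k,\varphi}$ just reproduces the global average, and a hole is compensated by the non-compactly supported tails.

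Unless you already know that patch frequencies stay bounded below as $R\to\infty$ (essentially the conclusion), what you get is ordinary repetitivity with an $(R,\eta)$-dependent radius. This holds for many aperiodic Delone sets, e.g.\ regular model sets, so the limiting argument in Step~3 produces no period. Separately, a nonnegative $\varphi$ only detects that $p'$ \emph{contains} an approximate copy of the points of $P$. Extra points are invisible, so Step~2 would not produce elements of $E_{R,\eta}$ as you defined them.

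The missing idea is to use all orders at once, so that the gap becomes a statement about every continuous function on the hull $X=\operatorname{supp}\mu$ rather than about one pattern at a time. The paper does this in three moves:
\begin{enumerate}
\item It writes a local polynomial $p(f_1)\cdots p(f_k)$ as $T_k$ of a symmetric kernel in barycentre--shape coordinates.
\item It approximates that kernel uniformly by finite sums $\sum_j g_j\otimes\varphi_j$, and uses $r$-discreteness to make the approximation uniform on $X$.
\item Stone--Weierstrass then gives $L^2_0(X,\mu)\subset\mathcal H_{\varepsilon_0}$, a spectral gap for the whole Koopman representation.
\end{enumerate}
Periodicity then comes not from patch recurrence but from the fact that a properly ergodic action of the amenable group $\mathbb R^d$ has no spectral gap. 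Hence the action is essentially transitive. Finally, $r$-discreteness makes the stabilizer $\Gamma$ discrete, and the invariant probability on $\mathbb R^d/\Gamma$ forces it to be cocompact.

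If you prefer to stay with your averaging viewpoint, the same spectral gap gives an identity you can use directly. Fix an even $\beta\geq0$ with $\widehat\beta$ supported in $B^*_{\varepsilon_0}$ and $\beta(0)>0$. Then for $\mu$-a.e.\ $p$ and every $F\in C(X)$,
\[
\int F(t.p)\beta(t)\,dt=\mu(F)\int\beta .
\]
Apply this to an Urysohn bump $F$ equal to $1$ on a compact orbit piece $K.p$, with $K$ a small closed ball. Outer regularity then gives $\mu(K.p)\geq\int_K\beta/\int\beta>0$. By ergodicity the orbit of $p$ is conull, which is essential transitivity without any repetitivity estimate.
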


\begin{proof}
Let $\mathscr X_r$ be the space of $r$-uniformly discrete counting measures on
$\mathbb R^d$.  The balls of radius $r/2$ centred at the points of a
configuration in $B_R$ are disjoint and contained in $B_{R+r/2}$, so
\begin{equation}\label{eq:common-gap-packing-bound}
  p(B_R)\leq
  \frac{\Vol_d(B_{R+r/2})}{\Vol_d(B_{r/2})}
  =\left(1+\frac{2R}{r}\right)^d
  \qquad(p\in\mathscr X_r).
\end{equation}
The family $\mathscr X_r$ is vaguely closed.  Suppose that
$p_j\to p$ vaguely.  If the support of $p$ contained distinct points $x,y$
with $|x-y|<r$, one could choose disjoint neighbourhoods of $x$ and $y$ such
that every pair of points, one in each neighbourhood, has distance less than
$r$; both neighbourhoods would contain points of $p_j$ for all large $j$,
a contradiction.  To verify that $p$ is a simple counting measure, fix an
open ball $B$ of radius
$\rho<r/2$ with compact closure.  Each restriction $p_j|_B$ is either zero
or a single Dirac mass.  If $p(B)>0$, choose $0\leq f\in C_c(B)$ with
$p(f)>0$.  Then, for all large $j$, the unique point $x_j$ of $p_j$ meeting
$\operatorname{supp}f$ exists; after passing to a subsequence,
$x_j\to x\in\operatorname{supp}f\subset B$.  For every $g\in C_c(B)$ we then have
$p(g)=g(x)$, so $p|_B$ is a single Dirac mass (and if $p(B)=0$ it is zero).
A countable cover by such balls shows that $p$ is a simple counting measure,
and the preceding separation argument shows that its support is
$r$-separated.  Thus $p\in\mathscr X_r$.

Equation~\eqref{eq:common-gap-packing-bound} gives
$\sup_{p\in\mathscr X_r}p(K)<\infty$ for every compact
$K\subset\mathbb R^d$.  By the standard vague compactness criterion for
Radon measures, $\mathscr X_r$ is therefore relatively compact; since it is
vaguely closed, it is compact.  The action
$\mathbb R^d\times\mathscr X_r\to\mathscr X_r$, $(t,p)\mapsto t.p$, is
jointly continuous: if $t_j\to t$ and $p_j\to p$ vaguely, then for
$f\in C_c(\mathbb R^d)$ all the translates involved have support in one
fixed compact set $K$, and
\[
  |p_j(\tau_{t_j}f)-p_j(\tau_t f)|
  \leq \|\tau_{t_j}f-\tau_t f\|_\infty\,p_j(K)\longrightarrow0,
\]
while $p_j(\tau_t f)\to p(\tau_t f)$.
Hence $X:=\operatorname{supp}\mu\subset\mathscr X_r$ is compact metrizable,
and the same bound gives local moments of every order.

Let $U_t$ denote the Koopman representation on $L^2(X,\mu)$, let $E$ be its
spectral resolution on $(\mathbb R^d)^*$, and put
\[
  \mathcal H_{\varepsilon_0}
  :=E\bigl((\mathbb R^d)^*\setminus B_{\varepsilon_0}^*\bigr)L^2(X,\mu).
\]
For $f\in C_c^\infty(\mathbb R^d;\mathbb C)$ and
$\varphi\in\mathcal C_k$, translation covariance and
\eqref{eq:bartlett-defining-identity} give
\begin{equation}\label{eq:common-gap-koopman-spectrum}
  \operatorname{Cov}_\mu\!\left(
    U_tT_k[f,\varphi],T_k[f,\varphi]
  \right)
  =\int_{(\mathbb R^d)^*}e^{2\pi i\xi(t)}|\widehat f(\xi)|^2
    \,d\sigma^{(k)}_{\varphi,\varphi}(\xi).
\end{equation}
Thus the Koopman spectral measure of the centred statistic
$T_k[f,\varphi]-\int_XT_k[f,\varphi] \,d\mu$ is
$|\widehat f|^2\sigma^{(k)}_{\varphi,\varphi}$.  Therefore
\begin{equation}\label{eq:common-gap-pattern-subspace}
  T_k[f,\varphi]-\int_XT_k[f,\varphi] \,d\mu
  \in\mathcal H_{\varepsilon_0}
  \qquad(k\geq1).
\end{equation}

Let $f_1,\ldots,f_k\in C_c^\infty(\mathbb R^d)$ and set
\[
  F(p):=p(f_1)\cdots p(f_k).
\]
Put
\[
  \widetilde H(x_1,\ldots,x_k)
  :=\frac1{k!}\sum_{\pi\in S_k}\prod_{i=1}^k f_i(x_{\pi(i)}),
  \qquad
  \bar x:=\frac{x_1+\cdots+x_k}{k}.
\]
The function $\widetilde H$ is symmetric, continuous and compactly supported,
so there is a unique
$H\in C_c(\mathbb R^d\times\Sh_k(\mathbb R^d))$ such that
\[
  \widetilde H(x_1,\ldots,x_k)
  =H\!\left(\bar x,
    \left[(x_1-\bar x,\ldots,x_k-\bar x)\right]\right).
\]
Since $p^{\otimes k}$ is permutation invariant,
$\int\widetilde H\,dp^{\otimes k}=F(p)$.  Hence
\begin{equation}\label{eq:common-gap-polynomial-as-pattern}
  F(p)=\int_{(\mathbb R^d)^k}
  H\!\left(\bar x,
    \left[(x_1-\bar x,\ldots,x_k-\bar x)\right]\right)
  \,dp(x_1)\cdots dp(x_k).
\end{equation}
Choose compact sets $K_c,K_w$ whose interiors contain the projections of
$\operatorname{supp}H$.  Stone--Weierstrass on $K_c\times K_w$, followed by
fixed cutoffs and uniform approximation of the $K_c$-factors by functions in
$C_c^\infty(\mathbb R^d)$, gives
\[
  H_N(c,w)=\sum_{j=1}^{m_N}g_{N,j}(c)\varphi_{N,j}(w),
  \qquad
  g_{N,j}\in C_c^\infty(\mathbb R^d),\quad
  \varphi_{N,j}\in\mathcal C_k,
\]
with all supports contained in one fixed compact set $K$ and
$\|H_N-H\|_\infty\to0$.

For a centred representative $w=[(y_1,\ldots,y_k)]$, the function
$w\mapsto\max_i|y_i|$ is well defined and continuous on shape space.  Hence
there is $R_K<\infty$ such that every tuple whose barycentre--shape coordinate
lies in $K$ is contained in $B_{R_K}^k$.  Put
\[
  N_K:=\left(1+\frac{2R_K}{r}\right)^d.
\]
By \eqref{eq:common-gap-packing-bound}, every $p\in X$ has at most $N_K$
points in $B_{R_K}$ and therefore at most $N_K^k$ ordered $k$-tuples
contributing to \eqref{eq:common-gap-polynomial-as-pattern}.  Consequently
\begin{equation}\label{eq:common-gap-polynomial-approximation}
  \sup_{p\in X}
  \left|
    F(p)-\sum_{j=1}^{m_N}T_k[g_{N,j},\varphi_{N,j}](p)
  \right|
  \leq N_K^k\|H-H_N\|_\infty
  \longrightarrow0.
\end{equation}
Let
$F_N:=\sum_{j=1}^{m_N}T_k[g_{N,j},\varphi_{N,j}]$.  Then
\[
  \left\|(F_N-\textstyle\int_XF_N\,d\mu)
       -(F-\textstyle\int_XF\,d\mu)\right\|_2
  \leq2\|F_N-F\|_\infty\longrightarrow0.
\]
By \eqref{eq:common-gap-pattern-subspace} and closedness of
$\mathcal H_{\varepsilon_0}$,
\begin{equation}\label{eq:common-gap-polynomial-subspace}
  F-\int_XF\,d\mu\in\mathcal H_{\varepsilon_0}.
\end{equation}

The unital self-adjoint algebra generated by the functions
$p\mapsto p(f)$, $f\in C_c^\infty(\mathbb R^d)$, separates points of $X$,
because $C_c^\infty(\mathbb R^d)$ determines a locally finite measure.
Stone--Weierstrass therefore makes this algebra uniformly dense in $C(X)$,
and hence dense in $L^2(X,\mu)$.  Equation
\eqref{eq:common-gap-polynomial-subspace} gives
\begin{equation}\label{eq:common-gap-full-koopman-gap}
  L_0^2(X,\mu)
  \subset
  \mathcal H_{\varepsilon_0}.
\end{equation}

Let $\beta_s$ be the Gaussian probability measure on $\mathbb R^d$ with
density
\[
  (4\pi s)^{-d/2}e^{-|t|^2/(4s)},
\]
so that
$\widehat\beta_s(\xi)=e^{-4\pi^2s|\xi|^2}$.  The averaging operator
\[
  P_s:=\int_{\mathbb R^d}U_t\,d\beta_s(t)
\]
has spectral multiplier $\widehat\beta_s$.  Hence
\begin{equation}\label{eq:common-gap-gaussian-contraction}
  \|P_s|_{L_0^2(X,\mu)}\|
  \leq e^{-4\pi^2s\varepsilon_0^2}<1
\end{equation}
by \eqref{eq:common-gap-full-koopman-gap}.  Thus the action has a spectral gap
at the trivial representation.  If the action were properly ergodic, meaning
ergodic but not essentially transitive, this would contradict
\cite[Remark~11.2(2)]{Nev06}, since $\mathbb R^d$ is amenable.  The action is
therefore essentially transitive: there is a configuration $p$ whose
translation orbit has full $\mu$-measure.

Let
\[
  \Gamma:=\{t\in\mathbb R^d:t.p=p\}
\]
be its stabilizer.  Positive intensity excludes the empty configuration, so
$p$ is nonempty.  Uniform discreteness gives
$\Gamma\cap\{t:|t|<r\}=\{0\}$: if $0\neq t\in\Gamma$ and $|t|<r$, then
for any $x\in\operatorname{supp}p$ the distinct points $x$ and $x+t$ both
belong to $\operatorname{supp}p$.  Hence $\Gamma$ is discrete.  Since the translation action on $X$ is
continuous, $\Gamma$ is also closed, and the orbit map induces a continuous
injection
\[
  \iota:\mathbb R^d/\Gamma\longrightarrow X.
\]
The quotient $\mathbb R^d/\Gamma$ is Polish and $X$ is compact metrizable,
so the Lusin--Souslin theorem implies that
$\iota(\mathbb R^d/\Gamma)=\mathbb R^d.p$ is Borel in $X$ and that
$\iota$ is a Borel isomorphism onto its image.  Transporting $\mu$ through
this isomorphism gives a translation-invariant Borel probability measure on
the locally compact group $\mathbb R^d/\Gamma$.  This measure is Radon and,
by uniqueness of Haar measure, is its normalized Haar measure; in particular
$\mathbb R^d/\Gamma$ is compact.  Thus $\Gamma$ is cocompact, and a
discrete cocompact subgroup of $\mathbb R^d$ is a full-rank lattice.  Hence
$p=\delta_C$ for a periodic set $C$, and $\mu$ is Haar probability measure
on its translation orbit.
\end{proof}

\begin{proposition}[Higher-order spectra recover the period lattice]
\label{prop:periodic-full-reciprocal-spectrum}
Let $C\subset\mathbb R^d$ be a nonempty uniformly discrete periodic set, let
\[
  \Gamma_C:=\{t\in\mathbb R^d:C+t=C\}
\]
be its full translation lattice, and let $\mu_C$ be Haar probability measure
on its translation orbit.  Then
\begin{equation}\label{eq:periodic-all-order-support}
  \bigcup_{k\geq1}\ \bigcup_{\varphi\in\mathcal C_k}
  \operatorname{supp}\sigma^{(k)}_{\varphi,\varphi}
  =\Gamma_C^*\setminus\{0\}.
\end{equation}
Consequently,
\begin{equation}\label{eq:all-order-stealth-period-lattice}
  \operatorname{st}_\infty(C)=\lambda_1(\Gamma_C^*)
  :=\min_{0\neq\xi\in\Gamma_C^*}|\xi|.
\end{equation}
\end{proposition}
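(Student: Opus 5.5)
The proof splits into two inclusions in \eqref{eq:periodic-all-order-support}; the identity \eqref{eq:all-order-stealth-period-lattice} then follows at once. Since $\Gamma_C^*\setminus\{0\}$ is discrete, closed and separated from the origin, all spectra vanish on $B_\varepsilon^*$ exactly when $\varepsilon\le\lambda_1(\Gamma_C^*)$. Moreover, some atom lies at distance exactly $\lambda_1$, so the supremum in \eqref{eq:all-order-stealth-radius} equals $\lambda_1(\Gamma_C^*)$.

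\emph{Upper inclusion.} Write $C=\bigcup_{i=1}^N(\Gamma_C+x_i)$ with finitely many cosets, which is possible by periodicity and uniform discreteness. The law $\mu_C$ is the pushforward of Haar measure on $\mathbb R^d/\Gamma_C$ under $u\mapsto\delta_{C+u}$. As in the proof of Theorem~\ref{thm:lattice-all-order-stealthy}, a tuple of points of $C+u$ is written as $(\gamma+x_{i_1}+u,\ \gamma+\delta_2+x_{i_2}+u,\dots)$. Only finitely many relative data $(\mathbf i,\delta)$ meet $\operatorname{supp}\varphi$, so $T_k[f,\varphi](\delta_{C+u})$ is a finite combination of periodizations $\sum_\gamma f(u+\gamma+c)$. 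Its Fourier coefficient at $\xi\in\Gamma_C^*$ is $\widehat f(\xi)A_\varphi(\xi)/\operatorname{covol}(\Gamma_C)$, with $A_\varphi$ a finite exponential sum. Parseval then gives
\[
\sigma^{(k)}_{\varphi,\varphi}=\operatorname{covol}(\Gamma_C)^{-2}\sum_{0\neq\xi\in\Gamma_C^*}|A_\varphi(\xi)|^2\delta_\xi .
\]
Hence every support is contained in $\Gamma_C^*\setminus\{0\}$.

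\emph{Lower inclusion.} This is the main step. Fix $0\neq\xi\in\Gamma_C^*$; I need some $k$ and $\varphi$ with $A_\varphi(\xi)\neq0$. I will use the Koopman argument from the proof of Proposition~\ref{prop:common-gap-periodic}. Because the orbit map $\mathbb R^d/\Gamma_C\to X$ is injective ($\Gamma_C$ is the full stabilizer), $L^2(\mu_C)\cong L^2(\mathbb R^d/\Gamma_C)$. The character $e_\xi(u)=e^{2\pi i\xi(u)}$ is then a nonzero element of $L^2_0(\mu_C)$ carrying Koopman spectral measure $\delta_\xi$.

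That proof shows that centred products $F=p(f_1)\cdots p(f_k)$ are uniform limits of finite sums of pattern statistics $T_k[g_j,\varphi_j]$, and that such products are dense in $L^2$. So some $T_k[g,\varphi]$ has nonzero inner product with $e_\xi$. That inner product is exactly the $\xi$-th Fourier coefficient $\widehat g(\xi)A_\varphi(\xi)/\operatorname{covol}$, so $A_\varphi(\xi)\neq0$ and $\xi\in\operatorname{supp}\sigma^{(k)}_{\varphi,\varphi}$.

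The obstacle is extracting a \emph{single} pair $(k,\varphi)$ from the density statement. If every $T_k[g,\varphi]$ were orthogonal to $e_\xi$, then every finite sum of them would be as well, and so would every uniform limit, hence every product $F$. This contradicts density, since $e_\xi\neq0$ lies in $L^2_0$. The only care needed is that the approximation in \eqref{eq:common-gap-polynomial-approximation} is uniform on the compact orbit, which holds because $C$ is uniformly discrete.
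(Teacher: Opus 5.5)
Your proof is correct and is essentially the paper's. The lower inclusion is the same Stone--Weierstrass density argument on the compact orbit $\mathbb R^d/\Gamma_C$: if every pattern statistic had zero $\xi$-coefficient, so would every local polynomial, contradicting $e_\xi\neq0$. The only difference is the upper inclusion. There you compute $\sigma^{(k)}_{\varphi,\varphi}$ explicitly through the coset decomposition, as in Theorem~\ref{thm:lattice-all-order-stealthy}, whereas the paper simply notes that the Koopman spectrum of the orbit lies in $\Gamma_C^*$. Your route costs a little bookkeeping but yields the atomic formula $\operatorname{covol}(\Gamma_C)^{-2}\sum_{\xi\neq0}|A_\varphi(\xi)|^2\delta_\xi$, which also makes the final step of your lower inclusion immediate.

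One small slip: $B_\varepsilon^*$ is closed, and some finite-order spectrum has an atom at a shortest vector of $\Gamma_C^*$. So the spectra vanish on $B_\varepsilon^*$ exactly for $\varepsilon<\lambda_1(\Gamma_C^*)$, not for $\varepsilon\le\lambda_1(\Gamma_C^*)$; the supremum is unaffected.
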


\begin{proof}
The orbit map identifies the translation orbit with
$\mathbb R^d/\Gamma_C$; write $X_C:=\operatorname{supp}\mu_C$ for this compact orbit.  Hence every centred pattern statistic has Koopman
spectrum in $\Gamma_C^*\setminus\{0\}$.  Since $C$ is uniformly discrete and its translation orbit is compact, the
packing and local-polynomial approximation estimates from the proof of
Proposition~\ref{prop:common-gap-periodic} apply on this orbit.  By
\eqref{eq:common-gap-koopman-spectrum}, varying $f$ gives
\[
  \operatorname{supp}\sigma^{(k)}_{\varphi,\varphi}
  \subset\Gamma_C^*\setminus\{0\}
  \qquad(k\geq1,\ \varphi\in\mathcal C_k).
\]

Fix $0\neq\xi\in\Gamma_C^*$ and suppose that
$\sigma^{(k)}_{\varphi,\varphi}(\{\xi\})=0$ for every $k$ and every
$\varphi\in\mathcal C_k$.  Equation
\eqref{eq:common-gap-koopman-spectrum} then shows that every pattern statistic
$T_k[f,\varphi]$ has zero $\xi$-Fourier coefficient on
$\mathbb R^d/\Gamma_C$.  The approximation
\eqref{eq:common-gap-polynomial-approximation}, applied to this compact orbit,
shows the same for every local polynomial
\[
  p\longmapsto p(f_1)\cdots p(f_k).
\]
Since $\Gamma_C$ is the full stabilizer, the orbit map
$\mathbb R^d/\Gamma_C\to X_C$ is injective.  Local linear statistics separate
configurations, so the algebra $\mathcal A$ of local polynomials is uniformly
dense in $C(X_C)$ by Stone--Weierstrass.  Under the assumption above,
\[
  \langle F,\chi_\xi\rangle_{L^2(\mu_C)}=0
  \qquad(F\in\mathcal A),
  \qquad
  \chi_\xi(u):=e^{2\pi i\xi(u)}.
\]
Density then gives
$\|\chi_\xi\|_2^2=\langle\chi_\xi,\chi_\xi\rangle=0$, a contradiction.
Thus for some $k$ and $\varphi$,
$\sigma^{(k)}_{\varphi,\varphi}(\{\xi\})>0$.  Since the first inclusion has
already placed every coefficient measure on the discrete set
$\Gamma_C^*\setminus\{0\}$, its support there is precisely the set of its
positive atoms, and \eqref{eq:periodic-all-order-support} follows.

For every $\varepsilon<\lambda_1(\Gamma_C^*)$, the left-hand side of
\eqref{eq:periodic-all-order-support} misses $B_\varepsilon^*$, whereas the
closed ball $B_{\lambda_1(\Gamma_C^*)}^*$ contains a shortest nonzero vector
of $\Gamma_C^*$ at which some finite-order spectrum has a positive atom.
Thus the admissible radii in \eqref{eq:all-order-stealth-radius} are precisely
those below $\lambda_1(\Gamma_C^*)$, and taking the supremum gives
\eqref{eq:all-order-stealth-period-lattice}.
\end{proof}

\begin{corollary}[All-order stealth and lattice sphere packing]
\label{cor:periodic-all-order-hermite}
Let $C\subset\mathbb R^d$ be a periodic configuration of intensity one, and
let $m$ be the number of points of $C$ in a fundamental domain of its full
translation lattice $\Gamma_C$.  Let
\[
  \gamma_d:=\sup_{L}
  \frac{\lambda_1(L)^2}{\operatorname{covol}(L)^{2/d}}
\]
be the Hermite constant, where the supremum is over full-rank lattices
$L\subset\mathbb R^d$.  Then
\begin{equation}\label{eq:all-order-hermite-bound}
  \operatorname{st}_\infty(C)
  \leq \sqrt{\gamma_d}\,m^{-1/d}
  \leq \sqrt{\gamma_d}.
\end{equation}
Equality in the overall bound
$\operatorname{st}_\infty(C)\leq\sqrt{\gamma_d}$ holds if and only if $m=1$
and $\Gamma_C^*$ is Hermite-extremal; in particular, $C$ is then a translate
of a lattice.
\end{corollary}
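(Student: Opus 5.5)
By Proposition~\ref{prop:periodic-full-reciprocal-spectrum}, $\operatorname{st}_\infty(C)=\lambda_1(\Gamma_C^*)$. The plan is therefore to reduce the corollary to a statement about the dual lattice $\Gamma_C^*$ and then apply the definition of the Hermite constant to it.

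\emph{Covolume of the dual lattice.} Since $C$ has intensity one and contains $m$ points per fundamental domain of $\Gamma_C$, we have
\[
  m=\operatorname{covol}(\Gamma_C),
  \qquad
  \operatorname{covol}(\Gamma_C^*)=\operatorname{covol}(\Gamma_C)^{-1}=m^{-1}.
\]

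\emph{The Hermite bound.} Applying the definition of $\gamma_d$ to $L=\Gamma_C^*$ gives
\[
  \lambda_1(\Gamma_C^*)^2\leq\gamma_d\,\operatorname{covol}(\Gamma_C^*)^{2/d}=\gamma_d\,m^{-2/d}.
\]
Taking square roots yields the first inequality in \eqref{eq:all-order-hermite-bound}. The second inequality follows from $m\geq1$, which holds because $C$ is nonempty (it has intensity one).

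\emph{The equality case.} Suppose $\operatorname{st}_\infty(C)=\sqrt{\gamma_d}$. Then both inequalities in the chain are equalities.
\begin{enumerate}
\item Equality in $\sqrt{\gamma_d}\,m^{-1/d}\leq\sqrt{\gamma_d}$ forces $m=1$, since $\gamma_d>0$.
\item With $m=1$, equality in the Hermite step says that $\Gamma_C^*$ attains the supremum defining $\gamma_d$, that is, $\Gamma_C^*$ is Hermite-extremal.
\end{enumerate}
Conversely, if $m=1$ and $\Gamma_C^*$ is extremal, then
\[
  \lambda_1(\Gamma_C^*)^2=\gamma_d\operatorname{covol}(\Gamma_C^*)^{2/d}=\gamma_d,
\]
so equality holds.

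Finally, $m=1$ means that $C$ meets a fundamental domain of $\Gamma_C$ in exactly one point $x$. Then $C=x+\Gamma_C$, so $C$ is a translate of a lattice.

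This argument contains no genuine obstacle. The only points needing care are routine:
\begin{enumerate}
\item the count of points per fundamental domain does not depend on which fundamental domain is chosen;
\item ``intensity one'' is read as $m/\operatorname{covol}(\Gamma_C)=1$;
\item the supremum defining $\gamma_d$ is attained (a classical fact), which is what makes ``Hermite-extremal'' meaningful.
\end{enumerate}
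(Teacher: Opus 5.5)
Your proposal is correct and follows the paper's proof: you combine $\operatorname{st}_\infty(C)=\lambda_1(\Gamma_C^*)$ with $\operatorname{covol}(\Gamma_C^*)=m^{-1}$, apply the definition of $\gamma_d$ to $\Gamma_C^*$, and read off the equality case. The paper additionally identifies maximizing the Hermite quotient with lattice sphere-packing density. That identification is commentary and is not needed for the stated inequalities or the equality characterization.
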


\begin{proof}
Intensity one gives
$\operatorname{covol}(\Gamma_C)=m$ and therefore
$\operatorname{covol}(\Gamma_C^*)=m^{-1}$.  Proposition~\ref{prop:periodic-full-reciprocal-spectrum} and the definition of $\gamma_d$
give
\[
  \operatorname{st}_\infty(C)
  =\lambda_1(\Gamma_C^*)
  \leq\sqrt{\gamma_d}\,
      \operatorname{covol}(\Gamma_C^*)^{1/d}
  =\sqrt{\gamma_d}\,m^{-1/d}.
\]
Since $m\geq1$, \eqref{eq:all-order-hermite-bound} follows.  If
$\operatorname{st}_\infty(C)=\sqrt{\gamma_d}$, equality holds in both
inequalities in \eqref{eq:all-order-hermite-bound}; hence $m=1$ and
$\Gamma_C^*$ is Hermite-extremal.  Conversely, these two conditions make both
inequalities equalities.

For a full-rank lattice $L$, its lattice sphere-packing density is
\begin{equation}\label{eq:lattice-packing-hermite}
  \Delta(L)
  =\frac{\Vol_d(B_1)}{2^d}
   \left(
     \frac{\lambda_1(L)^2}
          {\operatorname{covol}(L)^{2/d}}
   \right)^{d/2}.
\end{equation}
Equation~\eqref{eq:lattice-packing-hermite} identifies maximization of the
Hermite quotient with lattice sphere-packing density.  When $m=1$,
$\Gamma_C^*$ has covolume one, and every covolume-one lattice $L$ occurs as
$\Gamma_C^*$ by taking $C$ to be a translate of $L^*$.  When $m>1$, the factor
$m^{-1/d}<1$ is strict.  Hence the maximizers are precisely the translates of
$L^*$ with $L$ Hermite-extremal.
\end{proof}

\begin{corollary}[Sphere-packing bound for a common all-order gap]
\label{cor:common-gap-sphere-packing}
Let $\mu$ be an ergodic $\mathbb R^d$-invariant simple point process of
intensity one, supported on $r$-uniformly discrete configurations for some
$r>0$, and suppose that $\mu$ has a common all-order stealth gap.  Then
$\mu=\mu_C$ for a periodic configuration $C$, and if $m$ is the number of
points of $C$ in a fundamental domain of its full translation lattice, then
\[
  \operatorname{st}_\infty(\mu)
  =\operatorname{st}_\infty(C)
  \leq\sqrt{\gamma_d}\,m^{-1/d}
  \leq\sqrt{\gamma_d}.
\]
Equality in the overall bound holds exactly when $\mu$ is the
random-translate process of a lattice whose dual lattice is Hermite-extremal.
\end{corollary}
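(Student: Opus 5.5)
The corollary should follow by chaining Proposition~\ref{prop:common-gap-periodic} with Corollary~\ref{cor:periodic-all-order-hermite}. No new analysis is needed; the only care required is in identifying objects and checking that the hypotheses of each result are met.

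First, I will apply Proposition~\ref{prop:common-gap-periodic}. Intensity one is positive, and the process is ergodic and supported on $r$-uniformly discrete configurations. The local moment hypotheses needed to define $\boldsymbol\sigma^{(k)}$ at every order hold because of the packing bound \eqref{eq:common-gap-packing-bound}. The proposition then produces a periodic $C$ such that $\mu$ is Haar probability on the translation orbit of $\delta_C$, that is, $\mu=\mu_C$.

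I should check that $\Gamma_C$ here is the full translation lattice. In the proof of Proposition~\ref{prop:common-gap-periodic}, $\Gamma$ is defined as the full stabilizer of $p=\delta_C$, and that is exactly $\Gamma_C$. The set $C$ is uniformly discrete and nonempty, and its intensity equals that of $\mu$, namely one. Since $\mu=\mu_C$, the definition gives $\operatorname{st}_\infty(\mu)=\operatorname{st}_\infty(\mu_C)=\operatorname{st}_\infty(C)$ tautologically.

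Next, I will apply Corollary~\ref{cor:periodic-all-order-hermite} to $C$, whose intensity is one. This gives $\operatorname{st}_\infty(C)\le\sqrt{\gamma_d}\,m^{-1/d}\le\sqrt{\gamma_d}$.

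For the equality case, Corollary~\ref{cor:periodic-all-order-hermite} says equality holds exactly when $m=1$ and $\Gamma_C^*$ is Hermite-extremal. When $m=1$, $C$ is a translate of the lattice $\Gamma_C$, so $\mu_C=\mu_{\Gamma_C}$ is the random-translate process of a lattice whose dual is Hermite-extremal.

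For the converse, if $\mu=\mu_L$ with $L$ a lattice of covolume one and $L^*$ Hermite-extremal, then $L$ is uniformly discrete and $\mu_L$ is ergodic. Theorem~\ref{thm:lattice-all-order-stealthy} gives the common gap, so the corollary applies with $C=L$ and $m=1$, and equality follows.

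The only potential obstacle is bookkeeping. I need to make sure that the random-translate process of a lattice coincides with $\mu_C$ for $C=L$, and that intensity one forces covolume one in this equivalence. Both are immediate from the definitions.
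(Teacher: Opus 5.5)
Your proposal is correct and follows the paper's proof: Proposition~\ref{prop:common-gap-periodic} gives $\mu=\mu_C$, Corollary~\ref{cor:periodic-all-order-hermite} gives the bound and the equality characterization, and you then check that the extremal lattice processes satisfy the common-gap hypothesis. One citation is imprecise in that last check: Theorem~\ref{thm:lattice-all-order-stealthy} as stated only gives a gap depending on $k$, so take the uniform gap either from its proof (by \eqref{eq:lattice-bartlett-coefficients} every coefficient is supported on $\Gamma^*\setminus\{0\}$, so any $\varepsilon<\lambda_1(\Gamma^*)$ works for all $k$) or, as the paper does, from Proposition~\ref{prop:periodic-full-reciprocal-spectrum}.
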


\begin{proof}
Proposition~\ref{prop:common-gap-periodic} gives $\mu=\mu_C$ for a periodic
$C$, and Corollary~\ref{cor:periodic-all-order-hermite} gives the asserted
bound and equality statement.  Conversely, every unit-intensity periodic
process $\mu_C$ has a common all-order gap: by
Proposition~\ref{prop:periodic-full-reciprocal-spectrum}, any
$\varepsilon<\lambda_1(\Gamma_C^*)$ is admissible.  Hence the class in the corollary coincides with the unit-intensity periodic
class.
\end{proof}

Cohen, Cohn and Viazovska proved that the isometry-invariant $E_8$ and
Leech processes uniquely maximize the first-order stealth radius among
isometry-invariant locally square-integrable point processes of intensity one
in dimensions $8$ and $24$ \cite[Theorem~1.1]{CCV26}.  A nontrivial periodic
basis may cancel the shortest reciprocal-lattice Bragg peaks.  By
Proposition~\ref{prop:periodic-full-reciprocal-spectrum}, such cancellations do
not persist through all finite orders.  In dimension $20$, the formal dual of
Vardy's packing in \cite[Section~5]{CCV26} is a union of sixteen translates of
$\Lambda_{20}$ whose unit-density first-order stealth radius exceeds that of
any known twenty-dimensional lattice.

A locally finite set $\Lambda\subset\mathbb R$ has \emph{finite local
complexity} in the sense of \cite{KL16} if it admits an increasing
bi-infinite enumeration $(\lambda_n)_{n\in\mathbb Z}$ for which the set of
successive gaps $\{\lambda_{n+1}-\lambda_n:n\in\mathbb Z\}$ is finite.
Equivalently, $\Lambda$ is Delone and
$(\Lambda-\Lambda)\cap[-R,R]$ is finite for every $R>0$.

\begin{proposition}[Finite-local-complexity rigidity]
\label{prop:flc-first-order-periodic}
Let $\mu$ be an ergodic $1$-stealthy point process of positive
intensity on $\mathbb R$, supported on $r$-uniformly discrete
configurations for some $r>0$.  If $\mu$-almost every configuration has
finite local complexity, then $\mu$ is supported on the translation orbit
of a periodic set.
\end{proposition}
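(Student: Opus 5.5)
The plan is to convert $1$-stealthiness into a bounded-discrepancy statement for the point counts, and then use finite local complexity to upgrade bounded discrepancy to periodicity. Since only the first Bartlett spectrum is assumed to have a gap, we cannot invoke Proposition~\ref{prop:common-gap-periodic} directly; the higher-order information has to come from finite local complexity.

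\emph{Step 1: bounded number variance.} Let $\boldsymbol\sigma^{(1)}(B_{\varepsilon_0}^*)=0$. By Lemma~\ref{lem:ball-variance-bartlett}, for $I_R=[0,R]$,
\[
  \Var_\mu(S\chi_{I_R})=\int_{|\xi|\geq\varepsilon_0}|\widehat\chi_{I_R}(\xi)|^2\,d\boldsymbol\sigma^{(1)}(\xi)
  \leq\int_{|\xi|\geq\varepsilon_0}\frac{d\boldsymbol\sigma^{(1)}(\xi)}{\pi^2\xi^2}.
\]
This is finite because $\boldsymbol\sigma^{(1)}$ is translation bounded (Lemma~\ref{lem:bartlett-translation-bounded}), and the bound is uniform in $R$. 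Equivalently, the spectral criterion \eqref{eq:wm-bjo-spectral-criterion} holds trivially, so by \cite[Theorem~4.1]{Bjo26a} the process has the nonzero eigenvalue given by the intensity $\rho$ and is induced from a lattice.

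I would make this concrete through the cocycle $c(t,p):=p([0,t))-\rho t$. It is uniformly bounded in $L^2(\mu)$, so the standard Hilbert-space argument (weak limits of ergodic averages) makes it an $L^2$-coboundary: $c(t,p)=g(t.p)-g(p)$. Enumerating the points of a typical $p$ as $(\lambda_n)$ with $\lambda_0\leq0<\lambda_1$, we get
\[
  \lambda_n=\lambda_0+\frac n\rho+b_n,\qquad \sup_n|b_n|<\infty\quad\text{for $\mu$-a.e. }p,
\]
after using uniform discreteness to pass from $L^2$ bounds to pointwise bounds along the orbit.

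\emph{Step 2: finite local complexity.} Write $\lambda_{n+1}-\lambda_n\in\{\ell_1,\dots,\ell_s\}$. Then $b_{n+1}-b_n\in\{\ell_j-1/\rho\}$ is a finite set, and $(b_n)$ is a bounded walk in the finitely generated group $G$ generated by these steps. The process is now a suspension over the subshift $Y$ of gap words, with locally constant roof, and $b$ is a bounded cocycle over $Y$. First I would show that the steps are rationally dependent in the appropriate sense, so that $G\cap[-C,C]$ is finite and $b_n$ takes finitely many values. If $G$ has rank $\geq2$ with a dense image, the bounded walk instead gives a factor onto a nontrivial Kronecker rotation whose frequencies accumulate at $0$ mod the eigenvalue lattice. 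I would then compute the first Bartlett spectrum through this factor: it carries atoms at frequencies $\xi\to0$, with masses governed by the Fourier coefficients of the (Riemann integrable, FLC-induced) coding window. I expect a Fibonacci-type argument, as in \eqref{eq:cap-diffraction-formula}, to produce atoms inside $B_{\varepsilon_0}^*$ and contradict stealth.

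\emph{Step 3, the main obstacle: finite-valued $b$ implies periodicity.} Once $b_n$ takes finitely many values, the configuration is $\frac1\rho\mathbb Z$ with a stationary finite-valued decoration $b$. Expanding $S f$ with $f$ smooth gives
\[
  \boldsymbol\sigma^{(1)}\ \text{near }\xi\;=\;|1-e^{2\pi i\xi b}|^2\text{-weighted spectral measure of the $\{0,1\}$-valued processes }\mathbf 1_{\{b_n=\beta\}},\ \text{taken near }\xi\bmod\rho\mathbb Z.
\]
Stealthiness forces these symbolic spectral measures to vanish on a neighbourhood of $0$ in $\mathbb R/\rho\mathbb Z$, except where the trigonometric weights vanish. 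Distinct values of $b$ give weights $e^{2\pi i\xi\beta}$ that cannot all vanish identically near $0$. Hence every centred function of a single coordinate $b_0$ has a spectral gap at the trivial character of the $\mathbb Z$-action.

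The delicate point is to propagate this to all cylinder functions; this is where I would use finite local complexity a second time. Products $\mathbf 1_{\{b_n=\beta\}}\mathbf 1_{\{b_{n+r}=\beta'\}}$ correspond to specific gap words, and after enlarging the alphabet to level-$r$ windows, each such word is again a first-order linear statistic of the configuration with a shifted decoration. Once all cylinder functions are handled, the $\mathbb Z$-action on the finite-alphabet subshift has a spectral gap on $L^2_0$, which by \cite[Remark~11.2(2)]{Nev06} forces essential transitivity. An essentially transitive subshift measure is supported on a periodic orbit, which gives a periodic $C$ as at the end of the proof of Proposition~\ref{prop:common-gap-periodic}. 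If the enlargement step fails, I would fall back on showing directly that bounded, finite-valued, spectrally gapped sequences over a finite alphabet are periodic, using a Cobham/Fatou-type rationality argument for the generating function of each indicator sequence.
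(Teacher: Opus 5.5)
There is a genuine gap. Steps~2 and~3 are a program rather than a proof, and several of their key claims fail.

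\emph{Step 2.} The dense case is not marginal. Sturmian subsets of $\mathbb Z$ have finite local complexity and bounded discrepancy, yet $b_n$ takes infinitely many values. Excluding this case by stealth is therefore the whole content of the proposition. You only say you expect atoms of $\boldsymbol\sigma^{(1)}$ near $0$, and you never show that any of them is nonzero.

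\emph{Step 3, single coordinate.} Let $\Sigma$ be the spectral matrix of the centred indicators $\mathbf 1_{\{b_0=\beta\}}$ and $v(\xi)=(e^{2\pi i\xi\beta})_\beta$. Stealth gives $v(\xi)^*\Sigma(\xi)v(\xi)=0$, hence $\Sigma(\xi)v(\xi)=0$. Together with $\Sigma(\xi)(1,\ldots,1)^{T}=0$, this forces $\Sigma(\xi)=0$ only when $b$ takes two values.

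\emph{Step 3, cylinder functions.} The propagation is wrong outright. $\mathbf 1_{\{b_n=\beta\}}\mathbf 1_{\{b_{n+r}=\beta'\}}$ is a pair statistic, not a linear statistic $p(f)$. Relabelling the alphabet produces a new decorated process with no reason to inherit the first-order gap. The first Bartlett spectrum does not control such statistics: Theorem~\ref{thm:ks-stealthy-not-hu2} gives a $1$-stealthy process whose second spectrum charges arbitrarily small frequencies.

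\emph{Step 1.} This is also incomplete. If the transfer function $g$ of your $L^2$-coboundary is not essentially bounded, ergodicity makes $g(t.p)$ unbounded along a.e.\ orbit. Uniform discreteness alone does not give $g\in L^\infty$.

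The missing idea is to turn the spectral gap into a deterministic statement about each configuration, so that finite local complexity is used once, through \cite[Theorem~5.1]{KL16}.
\begin{itemize}
\item Take $f\in\mathcal S(\mathbb R)$ with $\operatorname{supp}\widehat f\subset(-\varepsilon_0,\varepsilon_0)\setminus\{0\}$ and $\widehat f(-\xi)\neq0$ on an open interval $I$. The Bartlett identity extends to $\mathcal S(\mathbb R)$ by uniform discreteness and translation boundedness.
\item Then $\int p(\tau_tf)\,d\mu=\rho\widehat f(0)=0$ and $\Var_\mu(p(\tau_tf))=\int|\widehat f|^2\,d\boldsymbol\sigma^{(1)}=0$.
\item Hence for $\mu$-a.e.\ $p=\delta_\Lambda$ one has $p(\tau_tf)=0$ for all rational, and then all real, $t$. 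That is, $\delta_\Lambda*\check f\equiv0$, so $\widehat{\delta_\Lambda}|_I=0$.
\item By \cite[Theorem~5.1]{KL16}, a set of finite local complexity whose Fourier transform vanishes on an open interval is periodic.
\item Ergodicity, applied to the Borel translation-invariant least-period function on the compact support, places $\mu$ on a single periodic orbit.
\end{itemize}
This replaces your Steps~2--3 entirely. The same vanishing of $\widehat{\delta_\Lambda}$ near $0$ also yields the pointwise bounded discrepancy you wanted in Step~1.
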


\begin{proof}
Let $(-\varepsilon_0,\varepsilon_0)$ be a first-order spectral gap.  Uniform
discreteness makes $f\mapsto p(f)$ continuous on $\mathcal S(\mathbb R)$,
uniformly over the support of $\mu$, while
Lemma~\ref{lem:bartlett-translation-bounded} makes the first Bartlett measure
tempered.  Hence both sides of \eqref{eq:bartlett-defining-identity} extend
continuously from $C_c^\infty(\mathbb R)$ to $\mathcal S(\mathbb R)$.
Choose a nonempty open
interval $I\Subset(-\varepsilon_0,\varepsilon_0)\setminus\{0\}$ and
$f\in\mathcal S(\mathbb R)$ such that
\[
  \operatorname{supp}\widehat f
  \subset(-\varepsilon_0,\varepsilon_0)\setminus\{0\},
  \qquad \widehat f(-\xi)\neq0\quad(\xi\in I).
\]
Since $0\notin\operatorname{supp}\widehat f$, we have
$\widehat f(0)=0$; if $\rho$ denotes the intensity, stationarity therefore gives
$\int p(\tau_t f)\,d\mu(p)=\rho\widehat f(0)=0$.  The spectral gap then gives,
for every $t\in\mathbb R$,
\[
  \int |p(\tau_t f)|^2\,d\mu(p)=0.
\]
Taking first $t\in\mathbb Q$ and then
using continuity in $t$, we obtain for $\mu$-almost every
$p=\delta_\Lambda$ that $p(\tau_t f)=0$ for every $t\in\mathbb R$.
Since, with $\check f(x):=f(-x)$,
\[
  p(\tau_t f)=(\delta_\Lambda*\check f)(-t),
\]
this is equivalent to
\[
  (\delta_\Lambda*\check f)(t)=0\qquad(t\in\mathbb R).
\]
Uniform discreteness makes $\delta_\Lambda$ translation bounded and hence a
tempered distribution.  Fourier transformation in $\mathcal S'(\mathbb R)$
yields
$\widehat{\delta_\Lambda}(\xi)\widehat f(-\xi)=0$.  Since
$\widehat f(-\cdot)$ is smooth and nonvanishing on the open interval $I$,
division by this multiplier gives $\widehat{\delta_\Lambda}|_I=0$.  By
\cite[Theorem~5.1]{KL16}, finite local complexity then forces $\Lambda$ to be
periodic.

Thus almost every configuration is periodic.  Let
$X:=\operatorname{supp}\mu\subset\mathscr X_r$, which is compact by the
proof of Proposition~\ref{prop:common-gap-periodic}.  For $p\in X$, put
$\Gamma_p:=\{t\in\mathbb R:t.p=p\}$.  If $p\neq0$, uniform discreteness gives
$\Gamma_p\cap(-r,r)=\{0\}$, so $\Gamma_p$ is either trivial or of the form
$a\mathbb Z$ with $a\ge r$.  Define $\ell(p)$ to be this least positive period
when it exists, set $\ell(p)=\infty$ for nonzero aperiodic $p$, and set
$\ell(0)=0$.  For $0<a<r$ one has $\{\ell\le a\}=X\cap\{0\}$, while for
$a\ge r$ the set $\{\ell\le a\}$ is the projection of the compact set
$\{(s,p)\in[r,a]\times X:s.p=p\}$.  Thus $\ell$ is Borel and translation
invariant.  Ergodicity makes $\ell$ almost surely constant; positive intensity
excludes the value $0$, and the periodicity just proved excludes $\infty$.
Hence $\ell=a\in[r,\infty)$ almost surely.  The action then factors on the
full-measure closed set $X_a:=\{p:a.p=p\}$ through the compact group
$\mathbb R/a\mathbb Z$.  The push-forward of $\mu$ to the compact metrizable
orbit space is $0$--$1$ valued by ergodicity and hence is a point mass.  Thus
$\mu$ is supported on one periodic translation orbit.
\end{proof}

Remark~7.8 of \cite{Bjo26a} applies the same finite-local-complexity
obstruction to the specific nonperiodic stealthy realizations constructed
there.

\begin{proposition}[All-order stealthiness without periodicity]
\label{prop:all-order-diffuse-stealthy}
There exists an ergodic $\mathbb R$-invariant probability measure
$\mu$ on $\cM(\mathbb R)$, supported on nonperiodic absolutely continuous
measures with bounded densities bounded away from zero, such that $\mu$ is
$k$-stealthy for every $k\geq1$.  No positive spectral gap works simultaneously
for all $k$.
\end{proposition}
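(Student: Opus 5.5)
The construction adapts the diffuse model of Proposition~\ref{prop:beat-frequency-example}. Let $\rho_\theta(x)=1+\kappa\sum_j a_j\cos(2\pi\lambda_jx+\theta_j)$ with $\theta$ Haar-uniform on $\Theta=(\mathbb R/2\pi\mathbb Z)^{\mathbb N}$, summable amplitudes $a_j>0$ with $\kappa\sum_ja_j\le1/4$, and rationally independent frequencies $\lambda_j$. Invariance, ergodicity and the density bounds $3/4\le\rho_\theta\le5/4$ are proved exactly as in that proposition. Nonperiodicity of each $p_\theta$ holds because a period $t$ would require $\lambda_jt\in\mathbb Z$ for all $j$ with $a_j\neq0$. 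That is impossible for $t\neq0$ once two rationally independent frequencies are present.

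The frequencies must now be placed so that all difference frequencies stay away from $0$ at each fixed order. I will take $\lambda_j\to\infty$ rapidly, say $\lambda_j\in(4^j,4^j+1)$, chosen generically to keep rational independence. Any torus character $e^{i\sum n_j\theta_j}$ has physical frequency $\sum_j n_j\lambda_j$.

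\emph{Key step: a frequency bound at order $k$.} I claim that $T_k[f,\varphi](p_\theta)$ has a Fourier expansion in $\theta$ involving only characters with $\sum_j|n_j|\le k$. This follows by expanding $\rho_\theta(x_1)\cdots\rho_\theta(x_k)$ and integrating against $F_{f,\varphi}$. Convergence is absolute, since the number of modes in each product is at most $k$ and the amplitudes are summable. The Parseval computation of Proposition~\ref{prop:beat-frequency-example} then shows that $\sigma^{(k)}_{\varphi,\varphi}$ is atomic. Its atoms lie in the set
\[
F_k:=\Bigl\{\textstyle\sum_jn_j\lambda_j:\ 0<\textstyle\sum_j|n_j|\le k\Bigr\}.
\]

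Set $\lambda_j=4^j+\epsilon_j$ with $\epsilon_j\in(0,1)$. For a nonzero $n$ with $\sum_j|n_j|\le k$, let $J$ be the largest index with $n_J\neq0$. Then $|\sum_j n_j\lambda_j|\ge 4^J-k(4^{J-1}+1)$, which is $\ge 4^{J-1}$ once $4^{J-1}>k$. So only finitely many indices $J\le J_k$ can give small values. These give finitely many nonzero numbers, nonzero by rational independence, so their minimum is positive. Hence $F_k$ stays at distance $\epsilon_k>0$ from $0$, and $\mu$ is $k$-stealthy for every $k$.

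\emph{Second step: no common gap.} I need $\inf F_k=0$ as $k\to\infty$ with nonzero atoms actually present. Since $\{\lambda_1,\lambda_2\}$ are rationally independent, $n_1\lambda_1+n_2\lambda_2$ takes arbitrarily small nonzero values. It remains to show that the corresponding character appears with nonzero coefficient in some $T_k$, where $k=|n_1|+|n_2|$. I will choose $f\ge0$ with $\widehat f(0)\neq0$ and $\varphi$ concentrated near the shape formed by $|n_1|$ and $|n_2|$ points. The coefficient of $e^{i(n_1\theta_1+n_2\theta_2)}$ is then $\kappa^k a_1^{|n_1|}a_2^{|n_2|}$ times an oscillatory integral $\int\varphi(\cdot)\prod_l e^{\pm2\pi i\lambda_{j_l}x_l}$ in the shape variables, summed over the multinomial orderings. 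Showing this is nonzero for some $\varphi$ is the main obstacle. My plan is to take $\varphi$ as a bump of width $\ll 1/\lambda_2$ around the diagonal shape $0$. The integral is then $\approx\varphi$'s mass times the multinomial count, which is positive. The phase at the barycentre, $e^{2\pi i(\sum n_j\lambda_j)c}$, combined with $\widehat f$ evaluated near $0$, is nonzero.

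Contributions from other characters with the same physical frequency cannot interfere, because rational independence makes the frequencies $\sum_jn_j\lambda_j$ distinct for distinct $n$. Therefore $\sigma^{(k)}$ has atoms of positive mass at points of $F_k$ tending to $0$, so $\operatorname{st}_\infty(\mu)=0$.
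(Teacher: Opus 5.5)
The gap is in your key step, and it invalidates the construction as specified, not just the estimate. The lower bound you display is
\[
  4^J-k(4^{J-1}+1)=(4-k)\,4^{J-1}-k,
\]
which is negative for every $k\geq4$, so it never yields $\geq4^{J-1}$. Ratio-$4$ geometric growth cannot dominate $k-1$ lower-order terms once $k-1\geq4$.

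The failure is genuine. For $k=5$ the index vector $n=e_J-4e_{J-1}$ has $\sum_j|n_j|=5$ and frequency
\[
  \lambda_J-4\lambda_{J-1}=\epsilon_J-4\epsilon_{J-1}\in(-4,1)
\]
for every $J$. Your genericity condition only enforces rational independence and does nothing to keep these numbers away from $0$. For a typical choice of the $\epsilon_j$ they accumulate at $0$. The corresponding coefficient is a multiple of $a_Ja_{J-1}^4$ times the Fourier transform, at $\lambda_J+\lambda_{J-1}$, of a one-variable marginal of $\varphi$. For a nonnegative bump $\varphi$ this coefficient is typically nonzero. Hence $\sigma^{(5)}_{\varphi,\varphi}$ then has atoms accumulating at the origin, and the process is not $5$-stealthy.

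There are two easy repairs.
\begin{enumerate}
\item Require $\lambda_{j+1}/\lambda_j\to\infty$. Then for fixed $k$ one has $\lambda_J-(k-1)\lambda_{J-1}\geq1$ for all $J\geq J_k$. Only the finitely many $n$ supported in $\{1,\ldots,J_k\}$ remain, and rational independence handles those.
\item Follow the paper and drop the infinite family altogether, keeping just two rationally independent frequencies. Take $p_\theta(dx)=h(\theta_1+x,\theta_2+\alpha x)\,dx$ with $h=1+\kappa(\cos2\pi\theta_1+\cos2\pi\theta_2)$ on $\mathbb T^2$. The order-$k$ frequency set $\{n_1+\alpha n_2:0<|n_1|+|n_2|\leq k\}$ is then finite and misses $0$, so the order-$k$ gap is immediate.
\end{enumerate}
Infinitely many modes were needed in Proposition~\ref{prop:beat-frequency-example} only to produce infinitely many beat pairs; here they only create trouble.

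Your second step is sound and is essentially the paper's argument via $C_{k,n}(0)>0$. Exactly $k$ factors must each contribute a unit mode in the required direction. A bump concentrated near the zero shape then gives a coefficient with positive real part; note the width should be $\ll1/(k\lambda_2)$, not just $\ll1/\lambda_2$, but $k$ is fixed there.
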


\begin{proof}
Fix an irrational $\alpha>0$ and put
\[
  h(\theta_1,\theta_2)
  :=1+\kappa\bigl(\cos(2\pi\theta_1)+\cos(2\pi\theta_2)\bigr),
  \qquad 0<\kappa<\frac12.
\]
For $\theta=(\theta_1,\theta_2)\in\mathbb T^2$, define
\begin{equation}\label{eq:all-order-diffuse-density}
  p_\theta(dx)
  :=h(\theta_1+x,\theta_2+\alpha x)\,d\Vol_1(x),
\end{equation}
and let
\[
  \mu:=(\theta\mapsto p_\theta)_*m_{\mathbb T^2},
\]
where $m_{\mathbb T^2}$ is Haar probability measure.  For
$t\in\mathbb R$,
\[
  p_{\theta+t(1,\alpha)}=(-t).p_\theta.
\]
Since $n_1+\alpha n_2\neq0$ for every
$n\in\mathbb Z^2\setminus\{0\}$, the Kronecker flow
$\theta\mapsto\theta+t(1,\alpha)$ is ergodic.  The map
$\theta\mapsto p_\theta$ intertwines this flow with translations, so $\mu$ is
ergodic.  Moreover
\[
  1-2\kappa\leq h\leq1+2\kappa,
\]
so all local moments are finite.

If $t\neq0$ were a period of $p_\theta$, then
\[
  h(\theta_1+x+t,\theta_2+\alpha(x+t))
  =h(\theta_1+x,\theta_2+\alpha x)
  \qquad(x\in\mathbb R).
\]
Taking the Bohr coefficients at the distinct frequencies $1$ and $\alpha$
gives
\[
  \frac{\kappa}{2}e^{2\pi i\theta_1}(e^{2\pi i t}-1)=0,
  \qquad
  \frac{\kappa}{2}e^{2\pi i\theta_2}(e^{2\pi i\alpha t}-1)=0.
\]
Thus $t,\alpha t\in\mathbb Z$, which is impossible for irrational $\alpha$.
Hence every $p_\theta$ is nonperiodic.

Fix $k\geq1$ and $\varphi\in\mathcal C_k$.  Write
$y_k=-\sum_{i=1}^{k-1}y_i$.  The Jacobian matrix of
\[
  (c,y_1,\ldots,y_{k-1})\longmapsto(x_1,\ldots,x_k),
  \qquad x_i=c+y_i,
\]
has first column $(1,\ldots,1)^\mathsf T$ and remaining columns
$e_i-e_k$; its determinant is $(-1)^{k-1}k$.  Hence
\[
  dx_1\cdots dx_k=k\,dc\,dy_1\cdots dy_{k-1}.
\]
Since the permutation group is finite, the quotient map from the centred
hyperplane $\{\sum_i y_i=0\}$ to $\Sh_k(\mathbb R)$ is proper; hence the
inverse image of $\operatorname{supp}\varphi$ is compact.  Therefore
\begin{equation}\label{eq:all-order-derived-density}
  \mathcal P_{k,\varphi}(p_\theta)(dc)
  =G_{k,\varphi}(\theta_1+c,\theta_2+\alpha c)\,d\Vol_1(c),
\end{equation}
where
\begin{align}\label{eq:all-order-torus-density}
  G_{k,\varphi}(\eta_1,\eta_2)
  :=k\int_{\mathbb R^{k-1}}
   &\varphi([y_1,\ldots,y_k])\nonumber\\
   &\times\prod_{i=1}^k
      h(\eta_1+y_i,\eta_2+\alpha y_i)
      \,dy_1\cdots dy_{k-1}.
\end{align}
The torus Fourier convention is
\[
  \widehat G(n):=\int_{\mathbb T^2}G(\eta)e^{2\pi i n\cdot\eta}
  \,dm_{\mathbb T^2}(\eta),
  \qquad
  G(\eta)=\sum_n\widehat G(n)e^{-2\pi i n\cdot\eta}.
\]
The Fourier support of $h$ is $\{0,\pm e_1,\pm e_2\}$.  Hence
$G_{k,\varphi}$ is a trigonometric polynomial and
\begin{equation}\label{eq:all-order-fourier-support}
  \operatorname{supp}\widehat G_{k,\varphi}
  \subset
  \{n=(n_1,n_2)\in\mathbb Z^2:\ |n_1|+|n_2|\leq k\}.
\end{equation}
The product in \eqref{eq:all-order-torus-density} has Fourier support in the
$k$-fold sumset of $\{0,\pm e_1,\pm e_2\}$, and integration in the shape
variables introduces no new torus modes.  Thus, for
$f\in C_c^\infty(\mathbb R;\mathbb C)$,
\[
  T_k[f,\varphi](p_\theta)-\int_{\mathbb T^2}T_k[f,\varphi](p_\eta)
  \,dm_{\mathbb T^2}(\eta)
  =\sum_{n\neq0}\widehat G_{k,\varphi}(n)
    e^{-2\pi i n\cdot\theta}\,
    \widehat f(n_1+\alpha n_2).
\]
Orthogonality of the torus characters gives the covariance identity in
\eqref{eq:bartlett-defining-identity}; uniqueness in
Proposition~\ref{prop:operator-bartlett} therefore gives
\begin{equation}\label{eq:all-order-diffuse-bartlett}
  \sigma^{(k)}_{\varphi,\varphi}
  =\sum_{n\in\mathbb Z^2\setminus\{0\}}
      |\widehat G_{k,\varphi}(n)|^2
      \delta_{n_1+\alpha n_2}.
\end{equation}
By \eqref{eq:all-order-fourier-support}, every nonzero frequency in
\eqref{eq:all-order-diffuse-bartlett} belongs to the finite set
\begin{equation}\label{eq:all-order-frequency-set}
  \{n_1+\alpha n_2:
    0<|n_1|+|n_2|\leq k\}.
\end{equation}
Since $\alpha$ is irrational, this set does not contain zero.  Hence
\begin{equation}\label{eq:all-order-gap-k}
  \varepsilon_k
  :=\frac12\min_{0<|n_1|+|n_2|\leq k}|n_1+\alpha n_2|>0
\end{equation}
is a spectral gap for every diagonal coefficient at order $k$.
Positivity and the Cauchy--Schwarz inequality for the coefficient measures
\eqref{eq:bartlett-measure-cauchy-schwarz} then give
$\boldsymbol\sigma^{(k)}(B_{\varepsilon_k}^*)=0$, so $\mu$ is
$k$-stealthy.

To exclude a gap uniform in $k$, fix $n=(n_1,n_2)\neq0$ and take
$k\geq|n_1|+|n_2|$.  For
$y=(y_1,\ldots,y_k)$ with $\sum_i y_i=0$, let $C_{k,n}(y)$ be the $n$-th
torus Fourier coefficient, with the convention above, of
\[
  \eta\longmapsto
  \prod_{i=1}^k h(\eta_1+y_i,\eta_2+\alpha y_i).
\]
At $y=0$, this is the $n$-th Fourier coefficient of $h^k$.  The Fourier
coefficients of $h$ at $0,\pm e_1,\pm e_2$ are respectively
$1,\kappa/2,\kappa/2,\kappa/2,\kappa/2$, all positive.  Writing $n$ as a sum
of $|n_1|$ copies of $\operatorname{sgn}(n_1)e_1$ and $|n_2|$ copies of
$\operatorname{sgn}(n_2)e_2$, with the remaining summands equal to zero,
produces a strictly positive term in the convolution formula for the
$n$-th coefficient of $h^k$.  All other terms are nonnegative, so
\[
  C_{k,n}(0)>0.
\]
The function $C_{k,n}$ is continuous and symmetric in the variables $y_i$.
Thus
\[
  \{y:\operatorname{Re}C_{k,n}(y)>0\}
\]
descends to an open neighbourhood of the zero shape.  Choose a nonzero
$0\leq\varphi\in C_c(\Sh_k(\mathbb R))$ supported there.  Taking the $n$-th
coefficient in \eqref{eq:all-order-torus-density} gives
\[
  \operatorname{Re}\widehat G_{k,\varphi}(n)
  =k\int_{\mathbb R^{k-1}}
     \varphi([y])\operatorname{Re}C_{k,n}(y)\,dy>0.
\]
Hence \eqref{eq:all-order-diffuse-bartlett} has an atom at
$n_1+\alpha n_2$.  Given $\varepsilon>0$, choose $n\neq0$ with
$0<|n_1+\alpha n_2|<\varepsilon$, which is possible because
$\mathbb Z+\alpha\mathbb Z$ is dense in $\mathbb R$ for irrational $\alpha$,
and put $k=|n_1|+|n_2|$.  For the shape
test above,
\[
  \sigma^{(k)}_{\varphi,\varphi}(B_\varepsilon^*)>0.
\]
No positive spectral gap therefore works at every order.
\end{proof}

\begin{remark}[An open all-order rigidity problem]
\label{rem:all-order-open-question}
Proposition~\ref{prop:common-gap-periodic} and
Proposition~\ref{prop:all-order-diffuse-stealthy} leave the following question.
If an ergodic uniformly discrete point process of positive intensity on
$\mathbb R$ is $k$-stealthy for every finite $k$, with the gap allowed
to depend on \(k\), must it be periodic?  Proposition~\ref{prop:flc-first-order-periodic} answers this affirmatively when the
configurations have finite local complexity, already under $1$-stealthiness.  Proposition~\ref{prop:all-order-diffuse-stealthy} shows that
the corresponding statement fails without the point-process assumption.
\end{remark}

\section*{Statements and Declarations}

\paragraph{\textbf{Funding.}}
This work was supported by the Swedish Research Council under grant
VR 11253322.

\paragraph{\textbf{Competing interests.}}
The author has no relevant financial or non-financial interests to disclose.

\paragraph{\textbf{Data availability.}}
No datasets were generated or analysed during the current study.

\paragraph{\textbf{AI-assisted preparation.}}
During preparation of the manuscript, the author used ChatGPT (OpenAI) for
editorial assistance, consistency checking, and mathematical cross-checking,
and Aristotle (Harmonic) for automated manuscript and proof auditing.  All
AI-assisted output was reviewed and verified by the author, who takes full
responsibility for the mathematical content, references, and final text.


\end{document}